\providecommand{\nodepoint}[3]{\node[inner sep =0mm, outer sep =0mm] (#1) at (#2,#3) {\huge\bf$\cdot$}}
\let\origdoublepage\cleardoublepage
\newcommand{\clearemptydoublepage}{%
  \clearpage
  {\pagestyle{empty}\origdoublepage}%
}
\let\cleardoublepage\clearemptydoublepage
\newcommand{\R}{\mathbb{R}}
\newcommand{\N}{\mathbb{N}}
\newcommand{\K}{\mathcal{K}}
\newcommand{\eqdef}{\overset{\text{def}}{=}} 
\newcommand{\dd}{d} 
 \newcommand{\COV}[1]{\mathbf{Cov} \left[ #1 \right]}
\newcommand{\E}[1]{\mathbf{E}\left[#1\right] } 
\newcommand{\norm}[1]{\lVert#1\rVert}
\newcommand{\dotprod}[1]{\left< #1\right>}
\newcommand{\Tr}[1]{\mathbf{Tr}\left( #1\right)}
\providecommand{\Null}[1]{\mathbf{Null}\left( #1\right)}
\providecommand{\Rank}[1]{\mathbf{Rank}\left( #1\right)}
\providecommand{\myRange}[1]{\mathbf{Range}\left( #1\right)}
\newtheorem{lemma}{Lemma}
\newtheorem{proposition}[lemma]{Proposition}
\newtheorem{corollary}[lemma]{Corollary}
\newtheorem{definition}[lemma]{Definition}
\newtheorem{assumption}[lemma]{Assumption}
\newtheorem{theorem}[lemma]{Theorem}
\newtheorem{remark}[lemma]{Remark}
\renewcommand{\chaptermark}[1]{\markboth{\chaptername \ \thechapter. \ #1}{}} 		
      \OR\ifentrytype{incollection}\OR\ifentrytype{inproceedings}%
      \OR\ifentrytype{inreference}}
\begin{document}
\begin{titlepage}
\begin{center}

\begin{spacing}{2.5}
{\Huge \textbf{Sketch and Project: Randomized Iterative Methods for Linear Systems and Inverting Matrices}}
\end{spacing}
\vspace{1.0cm}


\begin{figure}[h]
\begin{center}
\includegraphics[width=0.5\textwidth, natwidth=610,natheight=642]{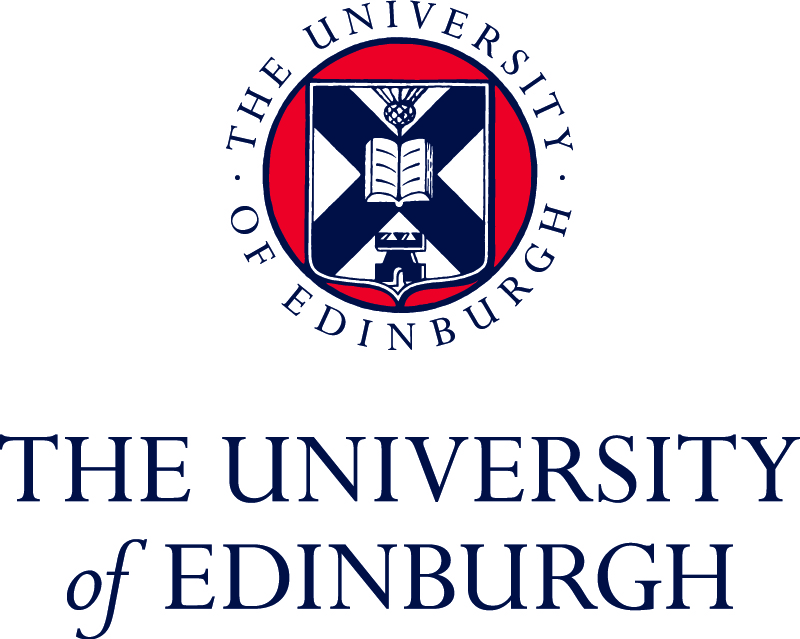}
\end{center}
\end{figure}
\vspace{1cm}

\singlespacing

{\LARGE Robert Mansel Gower} 

\vspace{1.5cm}
{\large Supervisor:\\
 Dr. Peter Richt\'arik}
\singlespacing
\vspace{1.5cm}
{\large 
Thesis submitted for the degree of Doctor of Philosophy\\
\vspace{0.9 cm}
School of Mathematics\\
\vspace{0.5 cm}
The University of Edinburgh\\
\vspace{0.5cm}
2016
}

\end{center}


\end{titlepage}
\onehalfspace
\addcontentsline{toc}{chapter}{Abstract} 
\chapter*{Abstract  \markboth{Abstract}{}}
\small

Probabilistic ideas and tools have recently begun to permeate into several fields where they had traditionally not played a major role, including fields such as numerical linear algebra and optimization. One of the key ways in which these ideas influence these fields is via the development and analysis of randomized algorithms for solving standard and new problems of these fields. Such methods are typically easier to analyze, and often lead to faster and/or more scalable and versatile methods in practice.

This thesis explores the design and analysis of new randomized iterative methods for solving linear systems and inverting matrices. The methods are based on a novel sketch-and-project framework. By sketching we mean, to start with a difficult problem and then randomly generate a simple problem that contains all the solutions of the original problem. After sketching the problem, we calculate the next iterate by projecting our current iterate onto the solution space of the sketched problem.

The starting point for this thesis is the development of an archetype randomized method for solving linear systems. Our method has six different but equivalent interpretations: sketch-and-project, constrain-and-approximate, random intersect, random linear solve, random update and random fixed point. By varying its two parameters -- a positive definite matrix (defining geometry), and a random matrix (sampled in an i.i.d. fashion in each iteration) -- we recover a comprehensive array of well known algorithms as special cases, including the randomized Kaczmarz method, randomized Newton method, randomized coordinate descent method and random Gaussian pursuit. We also naturally obtain variants of all these methods using blocks and importance sampling. However, our method allows for a much wider selection of these two parameters, which leads to a number of new specific methods. We prove exponential convergence of the expected norm of the error in a single theorem, from which existing complexity results for known variants can be obtained. However, we also give an exact formula for the evolution of the expected iterates, which allows us to give lower bounds on the convergence rate. 

We then extend our problem to that of finding the projection of given vector onto the solution space of a linear system. For this we develop a new randomized iterative algorithm: {\em stochastic dual ascent (SDA)}. The method is dual in nature, and iteratively solves the dual of the projection problem. The dual problem is a non-strongly concave quadratic maximization problem without constraints.
 In each iteration of SDA, a dual variable is updated by a carefully chosen point in a subspace  spanned by the columns of a random matrix drawn independently from a fixed distribution. The distribution plays the role of a parameter of the method.  Our complexity results hold for a wide family of distributions of random matrices, which opens the possibility to fine-tune the stochasticity of the method to particular applications.   
 We prove that  primal iterates associated with the dual process converge to the projection exponentially fast in expectation, and give a formula and an insightful lower bound for the convergence rate. We also prove that the same rate applies to dual function values, primal function values and the duality gap. Unlike traditional iterative methods, SDA converges under virtually no additional assumptions on the system (e.g., rank, diagonal dominance) beyond consistency. In fact, our lower bound  improves as the rank of the system matrix drops. By mapping our dual algorithm to a primal process, we uncover that the SDA method is the dual method with respect to the sketch-and-project method from the previous chapter. Thus our new more general convergence results for SDA carry over to the sketch-and-project method and all its specializations (randomized Kaczmarz, randomized coordinate descent...etc). When our method specializes to a known algorithm, we either recover the best known rates, or improve upon them. Finally, we show that the framework can be applied to the distributed average consensus problem to obtain  an array of new algorithms. The randomized gossip algorithm arises as a special case. 

In the final chapter, we extend our method for solving linear system to inverting matrices, and develop a family of methods with specialized variants that maintain symmetry or positive definiteness of the iterates. All the methods in the family converge globally and exponentially, with explicit rates. In special cases, we obtain stochastic block variants of several quasi-Newton updates, including   bad Broyden (BB), good Broyden (GB),  Powell-symmetric-Broyden (PSB), Davidon-Fletcher-Powell (DFP) and Broyden-Fletcher-Goldfarb-Shanno (BFGS). Ours are the first stochastic versions of these updates shown to converge to an inverse of a fixed matrix. Through a dual viewpoint we uncover a fundamental link between quasi-Newton updates and approximate inverse preconditioning.  Further, we develop an adaptive variant  of the randomized block BFGS (AdaRBFGS), where we modify the distribution underlying the stochasticity of the method throughout the iterative process to achieve faster convergence. By inverting several matrices from varied applications, we demonstrate that AdaRBFGS is highly competitive when compared to the well established Newton-Schulz and approximate preconditioning methods.  In particular, on large-scale problems our method outperforms the standard methods by orders of magnitude. The development of efficient methods for estimating the inverse of very large matrices is a  much needed tool for preconditioning and variable metric methods in the  big data era.


\newpage

\onehalfspace
\addcontentsline{toc}{chapter}{Lay summary} 
\chapter*{Lay Summary \markboth{Lay summary}{}}
\small

This thesis explores the design and analysis of methods (algorithms) for solving two common problems: solving linear systems of equations and inverting matrices. 
Many engineering and quantitative tasks require the solution of one of these two problems.
In particular, the need to solve linear systems of equations is ubiquitous in essentially all quantitative areas of human endeavour, including industry and science. Specifically, linear systems are a central problem in numerical linear algebra, and play an important role in computer science, mathematical computing, optimization, signal processing, engineering, numerical analysis, computer vision, machine learning,  and many other fields. This thesis proposes new methods for solving large dimensional linear systems and inverting large matrices that use tools and ideas from probability.

The advent of large dimensional linear systems of equations, based on big data sets, poses a challenge. On these large linear systems, the traditional methods for solving linear systems can take an exorbitant amount of time. To address this issue we propose a new class of randomized methods that are capable of quickly obtaining approximate solutions. This thesis lays the foundational work of this new class of randomized methods for solving linear systems and inverting matrices. The main contributions are providing a framework to design and analyze new and existing methods for solving linear systems. In particular, our framework unites many existing methods. For inverting matrices we also provide a framework for designing and analysing methods, but moreover, using this framework we design a highly competitive method for computing an approximate inverse of truly large scale positive definite matrices. Our new method often outperforms previously known methods by several orders of magnitude on large scale matrices. 

%
%
\newpage

\singlespacing
\addcontentsline{toc}{chapter}{Author's Declaration}
\chapter*{Author's Declaration  \markboth{Author's Declaration}{}}
\vspace{.3in}
\begin{spacing}{1.6}
\noindent
\small
I declare that this thesis has been composed solely by myself and that it has not been submitted, either in whole or in part, in any previous application for a degree.
Except where otherwise acknowledged, the work presented is entirely my own.
During the course of the PhD program I co-authored six papers~\cite{Gower2012Sparsity,Gower2014,Gower2014c,Gower2015,Gower2015c,Gower2016}, all of which I was the first author.
 This thesis is based on three of these papers, with the table below indicating which chapters are on which papers.
 I confirm that I contributed to all the results within the papers on which this thesis is based.

\begin{center}
\begin{tabular}{|c||cccc|} \hline
Chapter & \ref{ch:introduction} & \ref{ch:linear_systems} & \ref{ch:SDA} & \ref{ch:inverse} \\ \hline
Paper & \cite{Gower2015,Gower2015c,Gower2016}  &\cite{Gower2015} & \cite{Gower2015c} & \cite{Gower2016} \\ \hline
\end{tabular}
\end{center}

\vspace{1.75in}
\noindent
Robert Mansel Gower\\ 
29th of February 2016 
\end{spacing}
\clearpage

\onehalfspace
\chapter*{Acknowledgments  \markboth{Acknowledgments}{}}
\vspace{-.1in}
\small
\noindent

%


I am immensely grateful to my supervisor Dr. Peter Richt\'arik.  Peter has been a mentor to me on all fronts of being a researcher. From the very process of developing novel research directions, to clear and elegant mathematical writing, delivering the best presentations, and the many facets of applying for a job academia. Peter is my role model for being a researcher and supervisor.
Furthermore, his attempts at besting me at table tennis were also admirable. 

I would like to thank Prof. Jacek Gondzio for numerous research discussions, support and collaboration. Many thanks to my second supervisor Dr. Andreas Grothey for  accompanying the progress of the PhD through the years and career discussions.
I am grateful to my examination committee, Prof. Nicholas J. Higham and Prof. Ben Leimkuhler for their many detailed pointers, suggestions for improvement and for their time and insightful questions. 

I am indebted to the school of Mathematics of the University of Edinburgh, for not only providing me with a wonderful work environment, funding yearly trips to the Firbush sports center, but, most of all, for directly funding my PhD. I am most grateful to the Dr. Laura Wisewell Travel scholarship for funding my conference travels expenses in 2013 and 2015. The chance to participate in international conferences, with all the experts in my area in one place, was invaluable. 


One of the many highlights of my PhD years was a chance encounter Prof. Donald Goldfarb at the  Optimization and Big Data 2015 Workshop in Edinburgh. There we discovered that we both have impeccable taste in research projects, and had independently arrived at the same extension to Don's hailed BFGS method. Prof. Donald Goldfarb has since been an inspiration to me, a great enthusiastic collaborator, and warm person in general. Thanks also to Don for having me over at Columbia University, I look forward to our continued work together.

My brother, Dr. Artur Gower, has been a constant support throughout my PhD studies and throughout my life (he even ventured into the world just before me to check that the coast was clear). Being a year ahead of me on a PhD program in Ireland, Art has given me the heads-up on how to write good science, applying for funding, and finally how to land a job in science. Art went so far as to read draft's of my papers, proposals and co-authored a paper with me. For his ricochet advice, originally directed toward my brother Artur but then finding its way to me, I would like to thank Prof. Michel Destrade.  

Thanks to my many friends in Edinburgh,  without whom life in Edinburgh would have been as grey as the weather and stone that envelopes it. In particular, to my cohort (in order of their appearance) Pablo Gonzalez Brevis, Kimon Fountoulakis, Tim Schultz, Hanyi Chen, Jakub Kone\v{c}n\'y, Dominik Csiba and Nicolas Loizou I extend a heartfelt thanks for many discussions and their friendship.
I would specially like to thank my current and past flatmates Tarek Alabbas and Clara Vergez. Thanks Tarek for your friendship through all the years and being my fellow breakfast musketeer.
The only reason I made it to work at any reasonable time was down to Clara banging on my bedroom door early in the morning for breakfast.
 Clara you have been an amazing flatmate, friend, and salsa co-star; thanks bro-ster.
%
%


Furthermore, I want to thank my mother Elza Maria, who made my existence possible, and for her unconditional and immeasurable love and support. 

Finally, many thanks to Jess (aka leao fofinha), for her loving support, companionship, being my sous-chef, deputy nutritionist and just being awesome in general. 
\vfill
\begin{figure}[!h]
\centering
\includegraphics[width=2cm, natwidth=610,natheight=642]{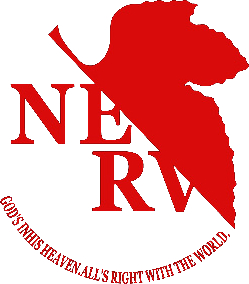} \\
{\small It takes some Nerv to do a PhD}
\end{figure}


\clearpage
\singlespacing
\small
\setcounter{tocdepth}{2}	
\addcontentsline{toc}{chapter}{Table of Contents}  
\tableofcontents




\onehalfspace
\addcontentsline{toc}{chapter}{List of Symbols}
\chapter*{List of Symbols \markboth{List of Symbols}{}}
\vspace{-.1in}
\small
\noindent
\begin{tabular}{ll}
$\dotprod{x,y}$ & The standard Euclidean inner product of $x,y \in \R^n$, $\dotprod{x,y} = \sum_{i=1}^n x_i y_i$\\
$\norm{x}_2$ & The standard Euclidean norm of $x \in \R^n,$ $\norm{x}_2 = \sqrt{\dotprod{x,x}}$\\
$\norm{x}_B$ & The norm defined by symmetric positive definite $B \in \R^{n\times n}$, $\norm{x}_B = \sqrt{\dotprod{Bx,x}}$\\
$e^i$ & The $i$th coordinate vector in $\R^m$\\
$f_i$ & The $i$th coordinate vector in $\R^n$\\
$\dim (V)$ & The dimension of $V$ where $V$ is a subspace\\
$\Rank{M}$ & The rank of $M$, where $M$ is a matrix\\
$\Null{M}$  & The nullspace of $M$, $\Null{M}=\{x \, | \, Mx =0 \}$\\
$\myRange{M}$ & The rangespace of $M$, e.g. if $M \in \R^{m\times n}$ then $\myRange{M}=\{Mx \, | \, x \in \R^n\}$\\
$\lambda_{\max}(M)$ & The largest eigenvalue of $M$\\
$\lambda_{\min}(M)$ & The smallest eigenvalue of $M$\\
$\lambda^+_{\min}(M)$ & The smallest nonzero eigenvalue of $M$ (assuming that $M$ is a nonzero matrix) \\
$M^{\dagger}$ & The Moore-Penrose pseudoinverse of $M$.\\
 $\Tr{M}$ & The trace of $M$, e.g. if $M \in \R^{n\times n}$ then $\Tr{M} = \sum_{i=1}^n M_{ii}$ \\
$\norm{M}_F$ & The Frobenius norm of $M$, $\norm{M}_F = \sqrt{\Tr{M^\top M}}$ \\
$\norm{M}_2$ & The spectral norm of $M$, $\norm{M}_2 = \max_{\norm{v}_2=1} \norm{Mv}_2 = \sqrt{\lambda_{\max}(M^\top M)}$ \\
$\norm{M}_B$ & $\eqdef  \norm{B^{1/2}MB^{-1/2}}_2 = \max_{\norm{v}_B=1} \norm{Mv}_B $ \\
$\norm{M}_B^*$ & $\eqdef  \norm{B^{1/2}MB^{1/2}}_2$\\[0.3cm]
\end{tabular}
\noindent {\bf \large Special Matrices}\\[0.3cm]
\begin{tabular}{lcl}
$A$ & $\eqdef$ & The $m \times n$ real system matrix (In Chapter~\ref{ch:inverse} we use $m=n$)\\
$S$ & $\eqdef$ & A random $m\times q$ matrix\\
$B$ & $\eqdef$ & A symmetric positive definite $n \times n$ matrix\\
$Z$ & $\eqdef$ & $A^\top  S(S^\top  A B^{-1}A^\top  S)^{\dagger}S^\top A$ \\
$H$ & $\eqdef$ & $\mathbf{E}_{S\sim {\cal D}} \left[ S\left(S^\top AB^{-1}A^\top S\right)^{\dagger}S^\top\right]$
\end{tabular}

\clearpage
\cleardoublepage
\renewcommand{\baselinestretch}{0.5} 

\titleformat{\chapter}[display]
  {\normalfont\Large\raggedleft}
  {\MakeUppercase{\chaptertitlename}%
    \rlap{ \resizebox{!}{1.5cm}{\thechapter} \rule{5cm}{1.5cm}}}
  {10pt}{\Huge}
\titlespacing*{\chapter}{0pt}{30pt}{20pt}
\onehalfspacing
\chapter[Introduction]{Introduction}
\chaptermark{Introduction}
\label{ch:introduction} 
{
\epigraph{\emph{We can only see a short distance ahead, but we can see plenty there that needs to be done.}}{Alan Turing}
\let\clearpage\relax
\section{Introduction: What's to Come}
}
\label{secChOne:whats}

This thesis explores the design and analysis of new randomized iterative methods for solving linear systems and inverting matrices. 
All the methods presented in this thesis are globally and linearly convergent. Consequently, the methods are well suited to quickly obtain approximate solutions. 

 The methods are based on a novel sketch-and-project framework. By sketching we mean, to start with a difficult problem and then randomly generate a simple problem that contains all the solutions of the original problem. For instance, consider the linear system
\begin{equation}\label{eq:linsysintro}Ax=b,\end{equation}
where $A\in \R^{m\times n},x\in \R^n$ and $b\in \R^m$. We suppose throughout the thesis that there exists a solution $x^* \in \R^n$ to the linear system, that is, the linear system is \emph{consistent}. Let $S \in \R^{m\times q}$ be a random matrix with the same number of rows as $A$ but far fewer columns ($q \ll n$.) The resulting \emph{sketched} linear system is given by
\[S^\top A x = S^\top b,\]
which has a relatively small number of rows, and is thus easier to solve. There has been a concerted effort into designing the distribution of $S$ with the property that the solution set of the sketched linear system is close to the solution set of the original linear system with high probability, particularly so for solving linear systems that arise from the least-squares problem~\cite{Pilanci2014,Mahoney:2011,Drineas2011}. Determining an $S$ with such a property can be difficult and often depends on properties of $A$ that are  expensive to compute.
Here we take a different approach and use  sketching combined with a projection process. We apply our sketch-and-project technique not only to solve linear systems, but also to find the projection of a vector onto the solution space of a linear system and to invert matrices (by sketching, for example, the inverse equation $AX=I$).

Throughout the entirety of this thesis, we assume that we can access the system matrix $A$ through matrix-vector products. Thus every element $A_{ij}$ of the system matrix may not be explicitly available. The methods presented here are designed with this restriction in mind and are thus compatible with the setting where $A$ can only be accessed as an operator. 

In the remainder of this section we give a summary of each chapter of this thesis. The detailed proofs and careful deductions of any claims made here are left to the chapters.

\paragraph{Chapter~\ref{ch:linear_systems}: Linear Systems with Sketch-and-Project.}
In Chapter~\ref{ch:linear_systems} we develop an iterative process that gradually refines an approximate solution to~\eqref{eq:linsysintro} using a sequence of sketching matrices, as opposed to the one shot sketching method. To describe our method,  let $x^k\in \R^n$ be our current estimate of the solution to~\eqref{eq:linsysintro}. We obtain an improved estimate by projecting $x^k$ onto the solution space of a sketched system
\begin{equation} \label{C1eq:sketchproj} x^{k+1} = \arg \min_x \norm{x^k-x}_B^2 \quad \mbox{subject to} \quad S^\top Ax=S^\top b,\end{equation}
where $S$ is drawn in each iteration independently from a pre-specified distribution, $B$ is a positive definite matrix and $\norm{x}_B^2 \eqdef \dotprod{Bx,x}.$
This iterative process has a closed form solution given by
\begin{equation} \label{C1eq:xupdate}x^{k+1} = x^k - B^{-1}A^\top S(S^\top A B^{-1}A^\top S)^{\dagger}S^\top(Ax^k-b),\end{equation}
where $\dagger$ denotes the (Moore-Penrose) pseudoinverse.
Using the closed form expression for the update~\eqref{C1eq:xupdate} we show  that the iterates converge if $A$ has full column rank and under mild assumptions on the distribution of $S$. In particular, the convergence analysis will depend heavily on the following random matrix 
\[Z \eqdef A^\top S(S^\top A B^{-1}A^\top S)^{\dagger}S^\top A,\]
which governs the iterative process~\eqref{C1eq:xupdate}. 
Indeed, we will show that when $A$ has full column rank and for any $x_0 \in \R^n,$ the iterates~\eqref{C1eq:xupdate} converge to the unique solution $x^* \in \R^n$ of the linear system exponentially fast according to
\begin{equation} \label{ch:one:Enormconv}
\E {\norm{x^{k} -x^{*} }_B^2 } \leq \rho^k \;\cdot\; \norm{x^{0} - x^{*}}_B^2,
\end{equation}
where \[\rho \eqdef  1- \lambda_{\min}(B^{-1/2}\E{Z}B^{-1/2}).\]
By $B^{-1/2}$ we denote the unique positive definite square root of $B^{-1}.$ Therefore $B^{-1/2}B^{-1/2} =B^{-1}.$ We use $\E{\cdot}$ to denote the expectation operator. For instance, $\E{Z}$ is the expected value of $Z$. Since $Z$ is a function of $S$, which is the only random component of $Z$, we have that $\E{Z}$ is an expectation taken over the distribution of $S$. 
Because of the importance that $Z$ plays in this thesis, later in this chapter in Section~\ref{C1sec:tools}, we prove several properties of $Z.$ 

In Section~\ref{C2sec:discrete} we design a discrete distribution for $S$ that yields easily interpretable  convergence rates in terms of a scaled condition number.
Furthermore, we show that by choosing $B$ and the distribution of $S$  appropriately we recover a comprehensive array of well known algorithms as special cases, such as the randomized Kaczmarz method~\cite{Strohmer2009} and randomized Coordinate Descent~\cite{Leventhal2010}, demonstrating the expressive power of the framework. Having established convergence rates for each method defined by $(B,S)$, we explore questions such as: \emph{what distribution of $S$ yields the optimal convergence rates}?
We determine that the optimal distribution of $S$, chosen from a family of discrete distributions, is the solution to a particular semidefinite program. This result determines, for instance, the optimal distribution for selecting the rows of the linear system in the randomized Kaczmarz method.  Furthermore, the optimized randomized Kaczmarz method is shown to converge significantly faster than the randomized Kaczmarz method using the standard distribution for $S$.

Our framework also allows for $S$ to have a continuous distribution, and to give an insight into the possibilities, we present three methods based on a Gaussian sketching matrix $S$ and give convergence rates for each. We then conclude the chapter with further numeric experiments that compare the different methods presented throughout the chapter.

\paragraph{Chapter~\ref{ch:SDA}: Stochastic Dual Ascent for Finding the Projection of a Vector onto a Linear System.}
In Chapter~\ref{ch:SDA} we consider the more general problem of finding the projection of a given vector $c \in \R^n$ onto the solution space of a linear system, that is
\begin{equation}  \min_{x \in \R^n} \quad   \tfrac{1}{2}\|x-c\|_B^2 \quad
\text{subject to} \quad   Ax=b. \label{C1eq:P}
\end{equation}
 To solve this projection problem we develop a new randomized iterative algorithm: {\em stochastic dual ascent (SDA)}. The method is dual in nature, and iteratively solves the dual of the projection problem~\eqref{C1eq:P}. The dual problem is a non-strongly concave quadratic maximization problem without constraints given by
\begin{equation}\label{C1eq:D} \max_{y \in \R^m} \quad (b-Ac)^\top y - \tfrac{1}{2}\|A^\top y\|_{B^{-1}}^2.
\end{equation} 
Each iterate $y^{k+1} \in \R^m$ of the SDA method is carefully chosen from a random affine space that passes through the previous iterate
\begin{equation}\label{C1eq:SDA-compact0}
 y^{k+1} \in  y^k + \myRange{S}, 
\end{equation}
where $S$ is a random matrix drawn independently from a fixed distribution.
Specifically, $y^{k+1}$ is the point with least norm that maximizes the dual objective in~\eqref{C1eq:D} constrained within the random affine space~\eqref{C1eq:SDA-compact0}.


By mapping our dual iterates~\eqref{C1eq:SDA-compact0} to primal iterates, we uncover that the SDA method is a dual version of the sketch-and-project method~\eqref{C1eq:xupdate}.
We then proceed to strengthen our convergence results established in the previous chapter. 
First, we do away with the assumption that $A$ has full column rank and consider any nonzero matrix $A$ and consistent linear system. In this more general setting, we prove that for
$x_0 \in c + \myRange{B^{-1}A^\top}$  the primal iterates converge to the solution of~\eqref{C1eq:P} exponentially fast in expectation  according to~\eqref{ch:one:Enormconv}  with a convergence rate of 
\[\rho = 1- \lambda_{\min}^+(B^{-1/2}A^THAB^{-1/2}),\]
where
\begin{equation}\label{eq:Hintro}
H = \E{S(S^\top AB^{-1}A^\top S)^{\dagger}S^\top},
\end{equation}
 and $\lambda_{\min}^+(B^{-1/2}A^THAB^{-1/2})$ denotes the smallest nonzero eigenvalue of $B^{-1/2}A^THAB^{-1/2}$. 
%
  The only condition for this convergence to hold is that $H$ be nonsingular.  We completely characterize the discrete distributions of $S$ for which $H$ is nonsingular in Section~\ref{sec:Hnonsingular}. This shows, for instance, that the Kaczmarz method converges so long as the system matrix has no zero rows.  Thus assuming that $A$ has full column rank, as is required in the convergence theorems in Chapter~\ref{ch:linear_systems}, is an unnecessary assumption for proving convergence of the sketch-and-project method. But the full column rank  assumption makes the proofs of convergence simpler and thus, for pedagogic reasons, we have presented the proofs assuming that $A$ has full column rank earlier on in Chapter~\ref{ch:linear_systems}. We present further improvements in the convergence analysis and give a tight and insightful lower bound for the convergence rate that depends on the rank of $A.$

 We also prove that the same rate of convergence $\rho$ governs the convergence of the dual function values, primal function values and the duality gap. Unlike traditional iterative methods, SDA converges under virtually no additional assumptions on the system (e.g., rank, diagonal dominance) beyond consistency. In fact, our lower bound  improves as the rank of the system matrix drops. 
When our method specializes to a known algorithm, we either recover the best known rates, or improve upon them. Finally, we show that the framework can be applied to the distributed average consensus problem to obtain  an array of new algorithms. The randomized gossip algorithm arises as a special case~\cite{Boyd2006,OlshevskyTsitsiklis2009}. 

%

\paragraph{Chapter~\ref{ch:inverse}: Randomized Matrix Inversion.}
In Chapter~\ref{ch:inverse} we extend our method for solving linear systems to inverting matrices, and develop a family of methods with a specialized variant which maintains symmetry or positive definiteness of the iterates.

The initial insight into our matrix inversion methods comes from the simple observation that for a nonsingular matrix $A \in \R^{n\times n}$ the inverse is the solution in $X$ to  one of the \emph{inverse equations} 
\[AX =I \quad \mbox{or}\quad XA =I.\]
Our method for inverting $A$ calculates the new iterate $X_{k+1} \in \R^{n\times n}$ by projecting the previous iterate $X_k \in \R^{n\times n}$ onto the solution space of a sketched version of one of the two inverse equations: either the \emph{row} sketched variant $S^\top AX=S^\top$ or the \emph{column} sketched variant $XAS = S,$ where $S \in \R^{n \times q}$ is a random matrix drawn from a fixed distribution at each iteration.
For example, using the column sketched variant a new iterate is calculated by solving
\begin{equation} \label{C1eq:sketchprojinve} X_{k+1} = \arg \min_{X\in \R^{n\times n}} \norm{X_k-X}_{F(B)}^2 \quad \mbox{subject to} \quad XAS=S,\end{equation}
where the norm is the weighted Frobenius norm.
 When $A$ is symmetric, it can be advantageous to maintain symmetries in the iterates $X_k.$ We propose a sketch-and-project method that maintains symmetry in the iterates by imposing symmetry as a constraint in
 \begin{equation} \label{C1eq:sketchprojinvesym} X_{k+1} = \arg \min_{X\in \R^{n\times n}} \norm{X_k-X}_{F(B)}^2 \quad \mbox{subject to} \quad XAS=S, \quad X = X^\top.\end{equation}
 All the methods we present converge globally and linearly, with the same explicit rate of convergence $\rho= 1- \lambda_{\min}(B^{-1/2}\E{Z}B^{-1/2})$.
 In special cases, we obtain stochastic block variants of several quasi-Newton updates, including  the Broyden-Fletcher-Goldfarb-Shanno (BFGS) update. Ours are the first stochastic quasi-Newton updates shown to converge to an inverse of a fixed matrix. Through a dual viewpoint we uncover a fundamental link between quasi-Newton updates and approximate inverse preconditioning methods, which results in a new interpretation of the quasi-Newton methods. For instance, the BFGS update is the solution in $X$ to 
  \[X_{k+1} = \arg_X  \min_{X\in \R^{n\times n}, Y \in \R^{n\times q}}  \frac{1}{2}\norm{X -A^{-1}}_{F(A)}^2 \quad \mbox{subject to} \quad X = X_k + SY^\top +YS^\top,\]
  for a particular (deterministic) choice of $S.$
This shows that the BFGS udpate can be interpreted as a projection of $A^{-1}$ onto a space of symmetric matrices.

  With explicit convergence rates for each method characterized by the distribution of $S$, we again raise the question of selecting a distribution of $S$ that results in a method with a faster convergence rate. Though different from the setting of solving a linear system, the goals of finding the inverse of $A$ and of designing a distribution of $S$ that results in an improved convergence rate are in synchrony. In particular, for many choices of $B$, it will transpire that the covariance of $S$ should be an estimate of the inverse of $A.$ One way to interpret this result is that $S$ should be chosen so that it not only sketches/compresses the inverse equations $AX=I$ or $XA =I$, but also, it should improve the condition number of these equations.
  
    This reasoning leads us to develop an adaptive variant of a randomized block BFGS (AdaRBFGS), where the distribution of $S$ depends on $X_k$. By inverting several matrices from varied applications, we demonstrate that AdaRBFGS is highly competitive when compared with the well established Newton-Schulz~\cite{Schulz1933} and the approximate inverse preconditioning methods~\cite{Chow1998,Saad2003,Gould1998,Benzi1999}. In particular, on large-scale problems our method outperforms the standard methods by orders of magnitude.
 Since the inspiration behind AdaRBFGS method comes from the desire to design an {\em optimal adaptive} distribution for $S$ by examining the convergence rate, this work  also highlights the importance of developing algorithms with explicit convergence rates. 

\paragraph{Organization of Thesis}
Chapters~\ref{ch:linear_systems},~\ref{ch:SDA} and~\ref{ch:inverse} are largely based on the papers~\cite{Gower2015,Gower2015c} and~\cite{Gower2016}, respectively.
Excluding preliminary results in linear algebra presented  in Section~\ref{C1sec:tools}, each of these chapter is mostly self-contained, including the objective, contributions, definitions and notation.

\section{Why Randomized Methods}
Why use randomization in algorithmic design? 
We can answer this question, in practical terms, by  measuring the advantages of using randomized algorithms as compared with existing deterministic methods. The advantages in using randomized methods include: often algorithms that are easier to analyze and implement, better convergence in terms of improved convergence rates or range of convergence (the randomized methods are almost always globally convergent), lower memory requirements, and more scalable and parallelizable methods in practice. 

To substantiate these claims, we now compare the sketch-and-project randomized methods~\eqref{C1eq:sketchproj} for solving the linear system~\eqref{eq:linsysintro} to stationary methods and the Krylov methods. Note that all the iterative methods mentioned here benefit from using a preconditioner, so much so, they are often only used in conjunction with a preconditioner. But to compare the methods on a equal footing, we assume no preconditioning strategy has been applied.

\subsection{A Case Study Comparing to Stationary Methods} 
Iterative methods that fit the simple form
\begin{equation}\label{eq:stationarym}
x^{k+1} = G x^{k-1} + c,
\end{equation}
are known as Stationary Methods, where $G \in \R^{n\times n}$ is the \emph{iteration matrix} and $c \in \R^n$ is the \emph{bias term}. Here neither $G$ nor $c$ depend on the iteration count. Methods that fit the format~\eqref{eq:stationarym} include the Jacobi method, the
Gauss-Seidel method, the Successive Overrelaxation (SOR) method and the Symmetric Successive Overrelaxation (SSOR) method~\cite{barrett1994,Saad1986}.

 The sketch-and-project methods presented in this thesis~\eqref{C1eq:xupdate} would fit the format~\eqref{eq:stationarym} if it were not for the fact that $G$ and $c$ in our methods are randomly generated, and thus, can differ from one iteration to the next. 
 
 Despite this difference, the sketch-and-project methods and the stationary  methods share many similarities.
 
  \paragraph{Low memory requirements.}  Stationary methods and~\eqref{C1eq:xupdate} are both easy to implement and have low memory requirements. Often only the previous iterate $x^k$ needs to be stored to enable the calculation of the next iterate. 
  
  \paragraph{Existence of convergence rates.} A stationary method only converges if the spectral radius of $G$ is less than one~\cite{Saad1986}, that is, if $\rho(G) <1.$ Often $G$ is constructed by splitting the system matrix $A$, e.g., for square systems the Jacobi method iteration matrix is $G = I-D^{-1}(A-D)$ where $D = \mbox{diag}(A_{11}, \ldots, A_{nn})$. Thus for $\rho(G)<1$ to hold one needs strong assumptions on the spectrum of $A.$ Again using the Jacobi method as an example, if $A$ is strictly diagonally dominant, then $\rho(G)<1$ holds. In contrast,  the sketch-and-project methods converge for virtually any matrix $A$ with an explicit convergence rate, as we show in Chapter~\ref{ch:SDA}.
  
  \paragraph{Parallelizable.}  Due to their simple recurrence relationship, the stationary methods lead to straight forward parallel and distributed variants, see  Section 2.5 in~\cite{Bertsekas:1989}. Furthermore, variants of stationary method for solving nonlinear systems have also been adapted to parallel architectures~\cite{BertsekasT91,Robinson2015}. The sketch-and-project methods are similar in this aspect, in that, they are amenable to parallel implementations. 
  For instance, the coordinate descent is member of the sketch-and-project family, and various distributed and parallel variants of coordinate descent have been designed to solve optimization problems~\cite{shotgun,Fercoq2013a,PCDM,TTD}.
  

\subsection{A Case Study Comparing to Krylov Methods}

The Krylov methods are a well studied and established class of iterative methods for solving linear systems.  In fact, the sketch-and-project methods share a certain similarity to Krylov methods. This can be seen by using the following equivalent dual formulation of~\eqref{C1eq:sketchproj} given by
\begin{equation} x^{k+1} \; = \; \arg\min_{x\in \R^n} \norm{x\phantom{^k}- x^{*}}_B^2 \quad \mbox{subject to} \quad x \in x^{k} + \myRange{B^{-1}A^\top S}.  \label{ch:one:RF}
\end{equation}
We refer to~\eqref{ch:one:RF} as the \emph{constrain and approximate} viewpoint, because a new iterate $x^{k+1}$ is selected as the best possible \emph{approximation} to the solution $x^*$ \emph{constrained} to a randomly generated affine space. Later in Section~\ref{sec:sixviews} we prove that~\eqref{C1eq:sketchproj} and~\eqref{ch:one:RF} are equivalent.

Formulation~\eqref{ch:one:RF} is similar to the framework often used to describe Krylov methods~\cite[Chapter 1]{Liesen2014}, which is
\begin{equation} \label{eq:krylov}
x^{k+1} = \arg\min_{x\in \R^n} \norm{x- x^{*}}_B^2 \quad \mbox{subject to} \quad x \in x^{0} + \K_{k+1},
\end{equation}
where $\K_{k+1}\subset \R^n$ is a $(k+1)$--dimensional subspace.
Note that the constraint $x \in x^{0}+\K_{k+1}$ is an affine space that passes through $x^{0}$, as opposed to passing through $x^{k}$ in the sketch-and-project formulation~\eqref{ch:one:RF}.
The objective $\|x-x^{*}\|^2_B $ is a generalization of the residual, where $B=A^\top A$ is used to characterize minimal residual methods (MINRES and GMRES)~\cite{Paige1975,Saad1986} and $B=A$ is used to describe the Conjugate Gradients (CG) method~\cite{Hestenes1952}. Progress from one iteration to the next is guaranteed by using expanding nested search spaces at each iteration, that is, $\K_k \subset \K_{k+1}.$

\paragraph{Low memory requirements.} To make an efficient Krylov method, the problem~\eqref{eq:krylov} should not be solved from scratch, but rather, one should build upon the knowledge that the previous iterate $x^k$ is the minima restricted to $\K_k$, and calculate $x^{k+1}$ by updating $x^k$ in an inexpensive manner. This inexpensive update is typically achieved through a \emph{short recurrence} update,
that is, an update applied to $x^k$ of the following form
\[x^{k+1} = x^k + \sum_{i=1}^{s} p^{k+2-i},\]
using a small number $s\in \N$ of vectors $p^{k+2-s},\ldots, p^{k+1}$.
To arrive at a short recurrence, one needs to carefully design an orthonormal basis for the spaces $\K_k.$ Such a short recurrence does not always exist. By the Faber-Manteuffel Theorem~\cite{faber1984}, the sufficient and necessary conditions for a short recurrence in a Krylov method are that the system matrix is a nonsingular normal matrix with respect to the geometry defined by the $B$ matrix.
\footnote{Though one cannot guarantee the existence of a short recurrence for a singular matrix in general, there do exist short recurrences for particular singular matrices that arise from deflation techniques~\cite{Gaul2014}.}

In contrast, the sketch-and-project methods  automatically have a short recurrence by design, as can be seen through~\eqref{C1eq:xupdate}, where $x^{k+1}$ is calculated by adding a single random vector to $x^k$ (a very short recurrence). Instead of using ``growing'' search spaces to guarantee convergence, the sketch-and-project methods are guaranteed to converge when the distribution of $S$ is such that the search space in~\eqref{ch:one:RF} ``covers'' the space of interest with a nonzero probability\footnote{As shown later in Chapter~\ref{ch:SDA}, this condition is captured in the requirement that $H$~\eqref{eq:Hintro} be nonsingular.}. Not only can we guarantee convergence on even singular matrices but, our lower bounds indicate that convergence improves for lower rank matrices.
%
 
%

\paragraph{Existence of convergence analysis.}  As for convergence rates, only certain instantiations of the Krylov methods, such as the CG, MINRES and GMRES methods have well understood convergence rates. These three aforementioned methods are only applicable when the system matrix $A$ is symmetric positive definite, symmetric and nonsingular, respectively. Again in contrast, the sketch-and-project methods converge for virtually any matrix $A$ with an explicit convergence rate, as we show in Chapter~\ref{ch:SDA}.


\paragraph{Improved convergence rate.} As an example of how the rate of convergence of a Krylov method compares against the rate of convergence of a sketch-and-project type method, let us consider the setting where $A$ is positive definite. In this setting the CG method (a Krylov method) and the Coordinate Descent (\emph{CD}) method (a sketch-and-project method) are the methods of choice. The iteration complexity of the CG method is $O(\sqrt{\lambda_{\max}(A)/\lambda_{\min}(A)})$ while the iteration complexity of the CD method is $O(\Tr{A}/\lambda_{\min}(A))$ (proof in Section~\ref{C2sec:shs98ss}). Thus the CD method requires at least the square of the number of iterations that the CG method requires to reach the same relative accuracy (ignoring the expectation). But this iteration complexity needs to be counter balanced with the very low iteration cost of the CD method. That is, an iteration of the CD method costs $O(n)$ while an iteration of the CG method costs $O(n^2)$ (ignoring possible gains in efficiency due to sparsity for both methods). This is a significant difference in iteration cost when solving large dimensional linear systems. Strohmer and Vershynin~\cite{Strohmer2009} give examples of when this lower cost of the CD method\footnote{In the paper~\cite{Strohmer2009} the authors compare the iteration complexity and iteration cost of the randomized Kaczmarz method against the Conjugate Gradients method applied to the least-squares problem. But their comparisons hold verbatim for the CD method and the CG method applied to a positive definite linear system.} pays off for the higher iteration complexity. Furthermore, accelerated versions of the CD method~\cite{Lee2013,LiuWright-AccKacz-2016,Wright:ABCRRO,Fercoq2013a} improve the iteration complexity of the CD method to $O(\sqrt{\Tr{A}/\lambda_{\min}(A)})$ at the cost of only a constant increase in the iteration cost. Thus when taking iteration cost into account, the accelerated CD methods asymptomatically achieve a solution with higher accuracy than the the CG method for the same computational cost. We do not consider accelerated variants of the CD method, or the sketch-and-project method, in this thesis. For further insight into how accelerated sketch-and-project methods compare with the CG method, see ~\cite{LiuWright-AccKacz-2016} for a comparison to an accelerated randomized Kaczmarz method and~\cite{Lee2013} for a comparison to an accelerated CD method.
%

%
%

\paragraph{Parallelizable.} Where the  sketch-and-project methods truly diverge from Krylov methods is in how parallelizable they are. Though we do not address this in this thesis, methods based on~\eqref{C1eq:sketchproj} are easy to adapt to a parallel architecture. In contrast, adapting the Krylov methods towards parallel computing is challenging.  This is, in part, because of the delicate orthogonality conditions that need to be enforced on the basis of the search spaces $\K_k$ in order to guarantee short recurrences and convergence.  A few of the current strategies towards adapting the Krylov methods for a parallel implementations are the following.

The conjugate gradients method can be paired with domain decomposition methods for solving linear systems that result from discretizing PDE's  for an overall successful distributed method~\cite{DongarraHL2016}. But in this strategy the gains in parallelism come from the domain decomposition method and not from the conjugate gradient method \emph{per se}. 
 
Efforts towards designing a distributed or communication efficient variant of Krylov methods are focused on two fronts: exploring the parallelism in matrix-vector and vector-vector products performed in the the Krylov methods~\cite{Ballard2014}, and the so called $s$--step Krylov methods.

The $s$--step methods, such as the $s$--step conjugate gradients methods~\cite{Chronopoulos1989}, re-order the computations of standard Krylov methods so that $s$ iterations can be performed simultaneously and in parallel. Though implementing fast and reliable $s$--step methods come with two challenges: achieving numerical stability is more challenging than traditional Krylov methods~\cite{Ballard2014} and correctly identifying the communication bottlenecks is difficult as it depends on the nonzero structure of the system matrix~\cite{Ballard2014}.


The sketch and project methods, on the other hand, are relatively easy to adapt to a parallel setting. First, the methods~\eqref{C1eq:sketchproj} make use of shared memory parallelism through their \emph{block} variants. By block variants we refer to the setting when $S$ has more than one column. Through experiments and complexity analysis we see that block variants converge much fast than their ``single column'' counterparts. Furthermore, the   main computational bottleneck, calculating $S^\top A$, can be completely amortized using multi-thread matrix-matrix products. This feature has been explored in implementing parallel coordinate descent methods~\cite{PCDM}. 
 In a distributed computing setting, the independence of the sampling of the matrix $S$ allows for distributed implementations, which has already been explored again for coordinate descent methods~\cite{Richtarik2013a,NIPSdistributedSDCA,Maymounkov2010}.

Thus there are a number of clear advantages in using the sketch-and-project methods, as compared with Krylov methods. This discussion has only been a small window into the advantages of randomized methods for solving linear systems. But randomized methods are having a profound impact on other related fields such as optimization methods, in particular, in minimizing partially separable functions, randomized methods are considered the state-of-the-art due, in part, to their fast convergence.  Also in numerical linear algebra, with new randomized methods for solving least squares that, according to the experiments in~\cite{Maymounkov2010}, outperform the long-standing benchmark LAPACK.
%

%
%
\section{Tools of the Trade} \label{C1sec:tools}

In this section we present several lemmas concerning pseudoinverses and projections. Though these lemmas are elementary in nature we include them for completeness since they are called upon several times throughout the thesis.


\subsection{Pseudoinverse}\label{C1subsec:pseudo}
   The pseudoinverse matrix was introduced by
Moore~\cite{Moore1920} and Penrose~\cite{Penrose1955} in their pioneering work, though our exposition and definition follows that of~\cite{Desoer1963}.

The following lemma is a standard result in linear algebra required in defining the pseudoinverse and projections.
\begin{lemma}\label{lem:WGW}For any matrix $W$ and symmetric positive definite matrix $G$,
\begin{equation}\label{eq:8ys98hs}\Null{W} = \Null{W^\top G W}\end{equation}
and
\begin{equation}\label{eq:8ys98h986ss}\myRange{W^\top } = \myRange{W^\top G W}.\end{equation}
\end{lemma}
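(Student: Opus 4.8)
The plan is to prove the two identities in the natural order, deriving the range identity from the nullspace identity by a dimension count (or, equivalently, by passing to orthogonal complements). Throughout, the only property of $G$ we need is that $z^\top G z = 0$ forces $z = 0$, which is exactly positive definiteness, together with $G = G^\top$.

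First I would establish~\eqref{eq:8ys98hs}. The inclusion $\Null{W} \subseteq \Null{W^\top G W}$ is immediate: if $Wx = 0$ then $W^\top G W x = W^\top G (Wx) = 0$. For the reverse inclusion, suppose $W^\top G W x = 0$. Multiplying on the left by $x^\top$ gives $(Wx)^\top G (Wx) = 0$, and since $G$ is symmetric positive definite this implies $Wx = 0$, i.e. $x \in \Null{W}$. Hence the two nullspaces coincide.

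Next I would deduce~\eqref{eq:8ys98h986ss}. The easy inclusion is $\myRange{W^\top G W} \subseteq \myRange{W^\top}$, since any vector $W^\top G W x$ is of the form $W^\top u$ with $u = GWx$. For the opposite inclusion I would compare dimensions: by the rank--nullity theorem applied to $W$ and to $W^\top G W$ (both acting on the same domain $\R^n$), and using~\eqref{eq:8ys98hs}, we get
\[
\dim \myRange{W^\top G W} \;=\; n - \dim \Null{W^\top G W} \;=\; n - \dim \Null{W} \;=\; \dim \myRange{W^\top}.
\]
A subspace contained in another subspace of the same (finite) dimension must equal it, so $\myRange{W^\top} = \myRange{W^\top G W}$. (Alternatively, one can argue directly: $\myRange{W^\top} = \Null{W}^\perp$ and, using $G^\top = G$, $\myRange{W^\top G W} = \Null{(W^\top G W)^\top}^\perp = \Null{W^\top G W}^\perp$, and then~\eqref{eq:8ys98hs} gives the claim upon taking orthogonal complements.)

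There is no real obstacle here; the only point that requires care is invoking positive definiteness (not just symmetry) at the single step where $(Wx)^\top G (Wx) = 0$ is converted into $Wx = 0$, and making sure the dimension argument is applied to operators on a common domain so that rank--nullity lines up.
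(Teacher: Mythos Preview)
Your proof is correct and follows essentially the same approach as the paper: the nullspace identity via positive definiteness of $G$ (the paper phrases it as $\|G^{1/2}Ws\|^2 = 0 \Rightarrow G^{1/2}Ws = 0$, which is your argument in slightly different notation), and then the range identity by passing to orthogonal complements. Your primary route for~\eqref{eq:8ys98h986ss} uses a rank--nullity dimension count rather than orthogonal complements directly, but you also note the orthogonal-complement argument, which is exactly what the paper does.
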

\begin{proof} 
In order to establish \eqref{eq:8ys98hs}, it suffices to show the inclusion $\Null{W} \supseteq \Null{W^\top G W}$ since the reverse inclusion trivially holds. Letting $s\in \Null{W^\top G W}$, we see that $\|G^{1/2}Ws\|^2=0$, which implies $G^{1/2}Ws=0$. Therefore, $s\in \Null{W}$. Finally, \eqref{eq:8ys98h986ss} follows from \eqref{eq:8ys98hs} by taking orthogonal complements. Indeed, $\myRange{W^\top}$ is the orthogonal complement of $\Null{W}$ and $\myRange{W^\top G W}$ is the orthogonal complement of $\Null{W^\top G W}$.
\end{proof}
  
 The pseudoinverse is a matrix that shares many properties with the inverse matrix.
Given a real matrix $M \in \R^{m\times n},$ when the nullspace of $M$ contains a nonzero vector then $M$, seen as a linear transformation 
 \[M: \R^n \mapsto \myRange{M},\]
  is not injective and thus $M$ has no inverse. But we can construct a \emph{pseudoinverse} of $M$.
The pseudoinverse is constructed by considering a restriction of $M$ that is injective and invertible, and then extending this restriction. Specifically, consider the restriction 
\[M|_{\myRange{M^\top}}:\myRange{M^\top } \mapsto \myRange{M}. \]
 Note that this restriction  is defined on the orthogonal complement of the nullspace of $M$, and thus removes the ``troublesome'' subspace that prevents $M$ from being invertible.
 Indeed, this restriction is invertible since $\myRange{MM^\top }=\myRange{M}$ and $\Null{MM^\top }=\Null{M^\top }$ by Lemma~\ref{lem:WGW}, thus the restriction is surjective and injective.
 The following extension of the inverse of this restriction is what we call the pseudoinverse, see Definition~\ref{def:pseudoinverse}.
\begin{definition} \label{def:pseudoinverse} Let $M\in \R^{m\times n}$ be any real matrix. $M^{\dagger}\in \R^{n\times m}$ is said to be the pseudoinverse if
\begin{enumerate}[i)]
\item $M^{\dagger} Mx = x $ for all $x \in \myRange{M^\top }.$
\item $M^{\dagger} x = 0$ for all $x \in \Null{M^\top }.$
\end{enumerate}
\end{definition}
Item $i)$ defines $M^{\dagger}$ on
\[ \myRange{MM^\top } \overset{\eqref{eq:8ys98h986ss}}=\myRange{M},\] and item $ii)$ defines $M^{\dagger}$ on $\Null{M^\top }$. Thus the two items together define $M^{\dagger}$ uniquely over $\myRange{M}\oplus \Null{M^\top } = \R^m.$

For the original, and equivalent, definition of pseudoinverse see~\cite{Moore1920} and Penrose~\cite{Penrose1955}. Alternatively, for a definition of pseudoinverse based on the SVD decomposition see Section 5.2.2 in~\cite{Golub2013}.

We now collect the properties of the pseudoinverse that we use through the thesis in a sequence of lemmas.
\begin{lemma} \label{lem:pseudo} $ M M^\dagger M = M$
\end{lemma}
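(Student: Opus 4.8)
The plan is to unwind Definition~\ref{def:pseudoinverse} against the orthogonal decomposition of the domain. Recall that $\R^n = \myRange{M^\top} \oplus \Null{M}$, since $\myRange{M^\top}$ is precisely the orthogonal complement of $\Null{M}$ (this is the same elementary fact used in the proof of Lemma~\ref{lem:WGW}). So the first step is to fix an arbitrary $x \in \R^n$ and write $x = u + v$ with $u \in \myRange{M^\top}$ and $v \in \Null{M}$.

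Next I would compute $Mx$ and $M^\dagger M x$ using this splitting. Since $v \in \Null{M}$ we have $Mx = Mu + Mv = Mu$. Feeding this into $M^\dagger$ and invoking item i) of Definition~\ref{def:pseudoinverse} (applicable because $u \in \myRange{M^\top}$), we get $M^\dagger M x = M^\dagger (Mu) = u$. Then $M M^\dagger M x = Mu = M(u+v) = Mx$, where the middle equality again uses $Mv = 0$.

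Since $x \in \R^n$ was arbitrary, the matrices $M M^\dagger M$ and $M$ agree as linear maps on all of $\R^n$, hence $M M^\dagger M = M$, as claimed.

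There is essentially no obstacle here: the only ingredient beyond the definition of $M^\dagger$ is the orthogonal complement identity $\R^n = \myRange{M^\top} \oplus \Null{M}$, which is standard linear algebra and already in play in Lemma~\ref{lem:WGW}. The one thing to be careful about is to apply item i) of the definition only to the component $u$ lying in $\myRange{M^\top}$, rather than to $x$ itself, and to note that item ii) is not even needed for this particular identity (the $\Null{M^\top}$ component never enters because we only ever apply $M^\dagger$ to $Mu \in \myRange{M}$).
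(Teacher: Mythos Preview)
Your proof is correct and takes essentially the same approach as the paper: decompose an arbitrary vector into its $\myRange{M^\top}$ and $\Null{M}$ components, then apply item~i) of Definition~\ref{def:pseudoinverse} to the former. The paper's version is slightly more terse (writing $M M^\dagger M z = M y$ in one step), but the argument is identical.
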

\begin{proof}
Let $z \in \R^n$ and consider the decomposition $z =  y +x $ where $y \in \myRange{M^\top }$ and $x \in \Null{M}.$ By item i) of Definition~\ref{def:pseudoinverse} we have 
\[M M^\dagger M z = M y =M(y+x)=Mz.\]
\end{proof}
\begin{lemma} \label{lem:pseudosym} If $M$ is symmetric then $M^{\dagger}$ is symmetric.
\end{lemma}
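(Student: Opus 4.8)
The plan is to use the characterization of the pseudoinverse given in Definition~\ref{def:pseudoinverse} together with the symmetry hypothesis $M = M^\top$. The key observation is that when $M$ is symmetric, $\myRange{M^\top} = \myRange{M}$ and $\Null{M^\top} = \Null{M}$, so the two defining conditions for $M^\dagger$ become: $M^\dagger M x = x$ for all $x \in \myRange{M}$, and $M^\dagger x = 0$ for all $x \in \Null{M}$. My strategy is to show that $(M^\dagger)^\top$ satisfies these same two defining conditions, and then invoke the uniqueness of the pseudoinverse (established in the discussion following Definition~\ref{def:pseudoinverse}, since the two items define $M^\dagger$ uniquely on $\myRange{M} \oplus \Null{M} = \R^m$) to conclude $(M^\dagger)^\top = M^\dagger$.

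First I would verify condition (ii) for $(M^\dagger)^\top$: I need $(M^\dagger)^\top x = 0$ for all $x \in \Null{M}$. Take such an $x$, and let $v \in \R^m$ be arbitrary. Then $\dotprod{(M^\dagger)^\top x, v} = \dotprod{x, M^\dagger v}$. Now decompose $v = y + w$ with $y \in \myRange{M}$ and $w \in \Null{M^\top} = \Null{M}$; by item (ii) of the definition applied to $M$, $M^\dagger w = 0$, so $M^\dagger v = M^\dagger y \in \myRange{M^\top} = \myRange{M}$ (since by Lemma~\ref{lem:WGW} applied to the definition, $M^\dagger$ maps into $\myRange{M^\top}$). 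Wait — more carefully: item (i) says $M^\dagger M x = x$ on $\myRange{M^\top}$, and since $y \in \myRange{M} = \myRange{MM^\top}$ we can write $y = Mu$ for suitable $u \in \myRange{M^\top}$, giving $M^\dagger y = M^\dagger M u = u \in \myRange{M^\top} = \myRange{M}$. Since $x \in \Null{M}$ is orthogonal to $\myRange{M^\top} = \myRange{M}$, we get $\dotprod{x, M^\dagger v} = \dotprod{x, u} = 0$. As $v$ was arbitrary, $(M^\dagger)^\top x = 0$.

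Next I would verify condition (i) for $(M^\dagger)^\top$: I need $(M^\dagger)^\top M x = x$ for all $x \in \myRange{M^\top} = \myRange{M}$. Using Lemma~\ref{lem:pseudo} (so $M M^\dagger M = M$) and symmetry of $M$, observe $(M^\dagger)^\top M = (M^\dagger)^\top M^\top = (M M^\dagger)^\top$. Taking $x \in \myRange{M}$, write $x = Mz$ for some $z$; then $(M M^\dagger)^\top M z$ and I want this to equal $Mz$. Equivalently I want $M M^\dagger M z' = M z'$ after transposing appropriately — this should fall out by pairing against an arbitrary vector and using $M M^\dagger M = M$ together with $M = M^\top$: for arbitrary $w$, $\dotprod{(M M^\dagger)^\top M z, w} = \dotprod{M z, M M^\dagger w} = \dotprod{z, M^\top M M^\dagger w} = \dotprod{z, M M M^\dagger w}$, and I'd massage this using $M M^\dagger M = M$; alternatively it is cleaner to directly check $(M^\dagger)^\top$ agrees with $M^\dagger$ on a basis adapted to the decomposition $\R^m = \myRange{M} \oplus \Null{M}$. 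The main obstacle I anticipate is precisely this bookkeeping with transposes and the orthogonal decomposition — none of it is deep, but one must be careful to use symmetry of $M$ exactly where needed (to identify $\myRange{M^\top}$ with $\myRange{M}$ and $\Null{M^\top}$ with $\Null{M}$) and to invoke Lemma~\ref{lem:pseudo} and Lemma~\ref{lem:WGW} for the range/nullspace identities. Once both defining properties are checked, uniqueness of the pseudoinverse immediately gives $(M^\dagger)^\top = M^\dagger$, i.e. $M^\dagger$ is symmetric.
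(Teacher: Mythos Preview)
Your strategy---show that $(M^\dagger)^\top$ satisfies the two defining conditions of Definition~\ref{def:pseudoinverse} and invoke uniqueness---is valid, and your verification of condition~(ii) is clean. However, your treatment of condition~(i) gets tangled: the computation $\dotprod{z, M M M^\dagger w}$ does not directly reduce via $MM^\dagger M = M$, and you never actually close the argument. It can be fixed (decompose $w = w_1 + w_2$ with $w_1 \in \myRange{M}$, $w_2 \in \Null{M}$, show $MM^\dagger w = w_1$, then use $x \in \myRange{M} \perp \Null{M}$), but this is more work than you indicate.

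The paper takes a different and more economical route: rather than verifying the defining properties for $(M^\dagger)^\top$, it directly shows the bilinear forms agree, i.e.\ $z_1^\top (M^\dagger)^\top z_2 = z_1^\top M^\dagger z_2$ for all $z_1, z_2$. Using the decomposition $z_i = M y_i + x_i$ with $y_i \in \myRange{M}$ and $x_i \in \Null{M}$ (available by symmetry), both sides reduce in two lines to $y_1^\top M y_2$. This sidesteps the transpose bookkeeping entirely and avoids any appeal to uniqueness. Your approach is conceptually fine and arguably more ``structural'', but the paper's computation is shorter and does not leave loose ends.
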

\begin{proof}
Let $z_1,z_2 \in \R^n$ and consider the decompositions $z_1 = M y_1 +x_1 $ and $z_2 = M y_2 +x_2$ where $y_1, y_2 \in \myRange{M}$ and $x_1,x_2 \in \Null{M},$ which by the symmetry of $M$ always exist.
It follows that
\begin{eqnarray*}
z_1^\top  (M^{\dagger})^\top  z_2 &=&  (M^{\dagger}z_1)^\top  z_2\\
&\overset{\text{Definition~\ref{def:pseudoinverse} item ii)}}{=}& (M^{\dagger}My_1)^\top  z_2\\
&\overset{\text{Definition~\ref{def:pseudoinverse} item i)}}{=}& y_1^\top  z_2\\
& = & y_1^\top My_2.
\end{eqnarray*}
and 
\begin{eqnarray*}
z_1^\top  M^{\dagger} z_2 
&\overset{\text{Definition~\ref{def:pseudoinverse} item ii)}}{=}& z_1^\top  M^{\dagger}My_2\\
&\overset{\text{Definition~\ref{def:pseudoinverse} item i)}}{=}& z_1^\top  y_2\\
& = & y_1^\top My_2,
\end{eqnarray*}
thus $(M^{\dagger})^\top  = M^{\dagger}.$
\end{proof}

\begin{lemma} \label{lem:pseudoposdef} If $M$ is symmetric positive semidefinite then $M^{\dagger}$ is symmetric positive semidefinite.
\end{lemma}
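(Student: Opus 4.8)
The plan is to leverage the two immediately preceding lemmas. Symmetry of $M^\dagger$ is already handed to us by Lemma~\ref{lem:pseudosym}, so the only real content is positive semidefiniteness, i.e.\ showing $z^\top M^\dagger z \ge 0$ for every $z \in \R^n$.

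First I would use symmetry of $M$ together with Lemma~\ref{lem:WGW} (applied with $G = I$) to record that $\Null{M^\top} = \Null{M}$ and $\myRange{M^\top} = \myRange{M}$, so that $\R^n = \myRange{M} \oplus \Null{M^\top}$ and an arbitrary $z$ admits an orthogonal decomposition $z = My + x$ with $y \in \myRange{M} = \myRange{M^\top}$ and $x \in \Null{M} = \Null{M^\top}$. Applying the two defining properties of the pseudoinverse from Definition~\ref{def:pseudoinverse} — item ii) annihilates $x$, and item i) gives $M^\dagger M y = y$ precisely because $y \in \myRange{M^\top}$ — yields $M^\dagger z = y$.

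Then I would compute $z^\top M^\dagger z = (My + x)^\top y = y^\top M y + x^\top y$. The cross term $x^\top y$ vanishes since $x \in \Null{M}$ and $y \in \myRange{M^\top}$ lie in mutually orthogonal subspaces (they are orthogonal complements of one another), leaving $z^\top M^\dagger z = y^\top M y \ge 0$ by positive semidefiniteness of $M$. Combined with the symmetry supplied by Lemma~\ref{lem:pseudosym}, this establishes that $M^\dagger$ is symmetric positive semidefinite.

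I do not anticipate a genuine obstacle here; the proof is essentially bookkeeping. The only points meriting care are verifying that the decomposition $z = My + x$ is legitimate (it is, by the direct-sum statement above for symmetric $M$) and that the vector fed into item i) of Definition~\ref{def:pseudoinverse} genuinely lies in $\myRange{M^\top}$. As an alternative route one could first derive the companion identity $M^\dagger M M^\dagger = M^\dagger$ from the same decomposition, and then write $M^\dagger = (M^\dagger)^\top M M^\dagger$ using Lemma~\ref{lem:pseudosym}, exhibiting $M^\dagger$ in the manifestly positive semidefinite form $N^\top M N$ with $N = M^\dagger$.
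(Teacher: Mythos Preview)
Your proof is correct and follows essentially the same approach as the paper's: decompose $z = My + x$ with $y \in \myRange{M}$ and $x \in \Null{M}$ (using symmetry of $M$), apply the two defining properties of the pseudoinverse to reduce $z^\top M^\dagger z$ to $y^\top M y \ge 0$, and invoke Lemma~\ref{lem:pseudosym} for symmetry. Your treatment of the cross term via orthogonality of $x$ and $y$ is slightly more explicit than the paper's terse chain of equalities, but the underlying argument is identical.
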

\begin{proof}
For any $z \in \R^n$, consider the decomposition $z = M y +x $ where $y \in \myRange{M}$ and $x \in \Null{M},$ which by the symmetry of $M$ always exists. It follows that
\[z^\top  M^{\dagger} z \overset{\text{Definition~\ref{def:pseudoinverse} item ii)}}{=} y^\top M M^{\dagger}My \overset{\text{Definition~\ref{def:pseudoinverse} item i)}}{=} y^\top My \geq 0, \]
which shows that $M^{\dagger}$ is positive semidefinite. The symmetry of $M^{\dagger}$ follows from Lemma~\ref{lem:pseudosym}. 
\end{proof}

\begin{lemma}\label{lem:pseudoproj}
 The matrix $M^{\dagger}M$ projects orthogonally onto $ \myRange{M^\top }$ and along $\Null{M}.$ 
\end{lemma}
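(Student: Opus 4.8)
The plan is to verify the two defining properties of an orthogonal projector onto $\myRange{M^\top}$ along $\Null{M}$: first, that $M^\dagger M$ acts as the identity on $\myRange{M^\top}$ and as zero on $\Null{M}$ (so it is a projector with the claimed range and kernel), and second, that this projector is self-adjoint (so the projection is orthogonal, i.e., the range and kernel are orthogonal complements). Since $\R^n = \myRange{M^\top} \oplus \Null{M}$ is an orthogonal direct sum (these are orthogonal complements of one another), it suffices to check the action of $M^\dagger M$ separately on each summand.

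First I would take any $y \in \myRange{M^\top}$ and apply item i) of Definition~\ref{def:pseudoinverse} directly: $M^\dagger M y = y$. Next I would take any $x \in \Null{M}$; then $Mx = 0$, so trivially $M^\dagger M x = M^\dagger 0 = 0$. Writing an arbitrary $z \in \R^n$ as $z = y + x$ with $y \in \myRange{M^\top}$ and $x \in \Null{M}$, we get $M^\dagger M z = y$, which is precisely the orthogonal projection of $z$ onto $\myRange{M^\top}$. This already shows $M^\dagger M$ is idempotent with range $\myRange{M^\top}$ and kernel $\Null{M}$.

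To conclude that the projection is \emph{orthogonal} rather than merely oblique, I would note that a projector is an orthogonal projector exactly when its range and kernel are mutually orthogonal, which here holds because $\myRange{M^\top}$ and $\Null{M}$ are orthogonal complements (a standard fact, used already in Lemma~\ref{lem:WGW}). Alternatively, one can argue via Lemma~\ref{lem:pseudo}: since $M M^\dagger M = M$, the matrix $P = M^\dagger M$ satisfies $MP = M$, hence $M(I-P) = 0$, so $\myRange{I-P} \subseteq \Null{M}$; combined with $\myRange{P} = \myRange{M^\top M^\dagger{}^\top}\subseteq \myRange{M^\top}$ from item i) applied appropriately, and a dimension count, one recovers the decomposition. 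I would present the first, more direct route.

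There is no serious obstacle here — the statement is essentially a restatement of Definition~\ref{def:pseudoinverse} once one recalls the orthogonal decomposition $\R^n = \myRange{M^\top} \oplus \Null{M}$. The only point requiring a word of care is justifying \emph{orthogonality} of the projection (as opposed to just the projector property), and for that I would simply invoke the orthogonality of $\myRange{M^\top}$ and $\Null{M}$ established in the discussion around Lemma~\ref{lem:WGW}.
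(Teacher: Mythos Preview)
Your proposal is correct and follows essentially the same approach as the paper: decompose an arbitrary $z = y + x$ with $y \in \myRange{M^\top}$ and $x \in \Null{M}$, use Definition~\ref{def:pseudoinverse} to show $M^\dagger M z = y$, and conclude orthogonality from the fact that $\myRange{M^\top}$ and $\Null{M}$ are orthogonal complements. The paper's proof is slightly terser but the argument is identical.
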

\begin{proof} Consider the orthogonal decomposition $z = y +x$ where $y \in \myRange{M^\top }$ and $x \in \Null{M}.$ Then \[M^{\dagger}M z = M^{\dagger}M y = y,\]
where we used  item ii) then item i) of Definition~\ref{def:pseudoinverse}. The result now follows by observing that $\myRange{M^\top }$ and $\Null{M}$ are orthogonal complements.\end{proof}
 
\begin{lemma} \label{lem:pseudoleastnorm}
Consider the consistent linear system $Mx=d$ where $M,x$ and $d$ are of conforming dimensions. It follows that
\begin{equation}\label{eq:pseudoleastnorm}M^{\dagger}d = \arg \min_x \norm{x}_2^2 \quad \mbox{subject to} \quad Mx=d.
\end{equation} 
\end{lemma}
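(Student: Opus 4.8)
The plan is to exploit the orthogonal decomposition $\R^n = \myRange{M^\top} \oplus \Null{M}$ together with the two preceding lemmas on the pseudoinverse. First I would check feasibility of the candidate point $M^{\dagger}d$: since the system $Mx=d$ is consistent, $d\in\myRange{M}$, so $d=My$ for some $y$; then by Lemma~\ref{lem:pseudo} we get $M(M^{\dagger}d)=MM^{\dagger}My=My=d$, so $M^{\dagger}d$ indeed satisfies the constraint.

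Next I would take an arbitrary feasible point $x$ with $Mx=d$ and write its orthogonal decomposition $x = x_R + x_N$ with $x_R\in\myRange{M^\top}$ and $x_N\in\Null{M}$. Since these two subspaces are orthogonal complements, $\norm{x}_2^2 = \norm{x_R}_2^2 + \norm{x_N}_2^2 \ge \norm{x_R}_2^2$, with equality if and only if $x_N=0$. It remains to identify $x_R$. Applying Lemma~\ref{lem:pseudoproj}, the matrix $M^{\dagger}M$ projects orthogonally onto $\myRange{M^\top}$ along $\Null{M}$, hence $M^{\dagger}Mx = x_R$; but $Mx=d$, so $x_R = M^{\dagger}d$. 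Therefore every feasible $x$ satisfies $\norm{x}_2^2 \ge \norm{M^{\dagger}d}_2^2$, with equality precisely when $x=M^{\dagger}d$, which establishes~\eqref{eq:pseudoleastnorm} (and in fact shows the minimizer is unique).

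There is no serious obstacle here; the only point requiring a little care is making sure the candidate $M^{\dagger}d$ is feasible in the first place, which is exactly where consistency of the system is used, and then correctly invoking the projection characterization of $M^{\dagger}M$ from Lemma~\ref{lem:pseudoproj} rather than reproving it. Everything else is the standard Pythagorean argument on the orthogonal splitting of $x$.
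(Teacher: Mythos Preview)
Your proof is correct and uses essentially the same ingredients as the paper: feasibility via Lemma~\ref{lem:pseudo}, the identification $M^{\dagger}d\in\myRange{M^\top}$ via Lemma~\ref{lem:pseudoproj}, and the Pythagorean split $\R^n=\myRange{M^\top}\oplus\Null{M}$. The only cosmetic difference is that the paper first performs the change of variables $z=x-M^{\dagger}d$ and then minimizes $\|z+M^{\dagger}d\|_2^2$ over $z\in\Null{M}$, whereas you decompose an arbitrary feasible $x$ directly and read off $x_R=M^{\dagger}Mx=M^{\dagger}d$; both routes arrive at the same Pythagorean inequality.
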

\begin{proof}
As $d \in \myRange{M}$ we have from Lemma~\ref{lem:pseudo} that $MM^{\dagger}d =d$. Using the change of variables $z = x-M^{\dagger}d$ in~\eqref{eq:pseudoleastnorm}
gives
\begin{equation}\label{eq:a3fa3fa}
z^*\eqdef  \arg\min_z \norm{z+M^{\dagger}d}_2^2, \quad \mbox{subject to} \quad z \in \Null{M}. 
\end{equation}
By Lemma~\ref{lem:pseudoproj} we have that $M^{\dagger}d \in \myRange{M^\top } = \Null{M}^{\perp}.$ Consequently  
\[\norm{z+M^{\dagger}d}_2^2 = \norm{z}_2^2 + \norm{M^{\dagger}d}_2^2 \geq \norm{z}_2^2,\]
 thus the minimum $z^*$ of~\eqref{eq:a3fa3fa} is achieved at $z^*=0,$ from which it follows that the minimum of~\eqref{eq:pseudoleastnorm} is achieved at $x=z^*+ M^{\dagger}d = M^{\dagger}d.$
\end{proof}

\subsection{Projection matrices}

The following proposition is a variant of a standard  result of linear algebra (which is often presented in the $B=I$ case). While the results are folklore and easy to establish, in the proof of our main theorems we need certain details which are hard to find in textbooks on linear algebra, and hence hard to refer to. For the benefit of the reader, we include the  detailed statement and proof.

\begin{proposition} [Decomposition and Projection]\label{prop:decomposition} Let $M \in \R^{m\times n}$ by a real matrix and $B \in \R^{n\times n}$ a symmetric positive definite matrix. Each $x\in \R^n$ can be decomposed in a unique way as $x = s(x) + t(x)$, where $s(x)\in \myRange{B^{-1}M^\top}$  and $t(x)\in \Null{M}$. Moreover, the decomposition can be computed explicitly as
\begin{equation}
\label{eq:98hs8hss}s(x) =  \arg \min_{s} \left\{ \|x-s\|_B \;:\; s\in \myRange{B^{-1}M^\top} \right\}=  B^{-1} Z_M x \end{equation}
and
\begin{equation}
\label{eq:98hs8htt}t(x) = \arg \min_{t} \left\{ \|x-t\|_B \;:\; t\in \Null{M} \right\}= (I - B^{-1}Z_M) x,\end{equation}
where
\begin{equation}\label{eq:Z_M}Z_M\eqdef M^\top (MB^{-1}M^\top)^\dagger  M.\end{equation}
Hence, the matrix $B^{-1}Z_M$ is a projection in the $B$-norm onto $\myRange{B^{-1}M^\top}$, and $I-B^{-1}Z_M$ is a projection in the $B$-norm onto $\Null{M}$. Moreover, for all $x\in \R^n$ we have $\|x\|_B^2 = \|s(x)\|_B^2 + \|t(x)\|_B^2$, with
\begin{equation}\label{eq:iuhiuhpp}\|t(x)\|_B^2 = \|(I-B^{-1}Z_{M})x\|_B^2 = x^\top (B-Z_{ M}) x\end{equation}
and
\begin{equation}\label{eq:iuhiuhppss}\|s(x)\|_B^2 = \|B^{-1}Z_{M} x\|_B^2 = x^\top Z_{M} x.\end{equation}
Finally, 
\begin{equation}\label{eq:ugisug7sss}\Rank{M} = \Tr{B^{-1}Z_M}.\end{equation}
\end{proposition}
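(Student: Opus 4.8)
The plan is to establish the five assertions of the proposition in a natural order, building each on the previous ones, with Lemma~\ref{lem:WGW} and the pseudoinverse lemmas of Section~\ref{C1subsec:pseudo} doing most of the heavy lifting. First I would verify the existence and uniqueness of the decomposition $x = s(x) + t(x)$. Uniqueness is the easy half: if $s_1 + t_1 = s_2 + t_2$ with $s_i \in \myRange{B^{-1}M^\top}$ and $t_i \in \Null{M}$, then $s_1 - s_2 = t_2 - t_1$ lies in both subspaces, and I claim these subspaces intersect trivially. Indeed, if $B^{-1}M^\top w \in \Null{M}$ then $MB^{-1}M^\top w = 0$, so by Lemma~\ref{lem:WGW} (applied with $W = M^\top$, $G = B^{-1}$) we get $M^\top w \in \Null{MB^{-1}M^\top} = \Null{M^\top}$, hence $B^{-1}M^\top w = 0$. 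For existence, together with the dimension count $\dim \myRange{B^{-1}M^\top} + \dim \Null{M} = \Rank{M} + (n - \Rank{M}) = n$, the trivial intersection forces $\R^n = \myRange{B^{-1}M^\top} \oplus \Null{M}$.

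Next I would identify the explicit formulas. The key computation is to check that $B^{-1}Z_M x \in \myRange{B^{-1}M^\top}$ (immediate from $Z_M = M^\top(\cdots)M$) and that $(I - B^{-1}Z_M)x \in \Null{M}$; the latter amounts to showing $M B^{-1} Z_M = M$, i.e. $M B^{-1} M^\top (MB^{-1}M^\top)^\dagger M = M$. By Lemma~\ref{lem:pseudo} the matrix $N N^\dagger N = N$ for $N = MB^{-1}M^\top$, but I need the ``one-sided'' identity $N N^\dagger M = M$; this follows because $\myRange{M^\top} = \myRange{N}$ by Lemma~\ref{lem:WGW}, so each column of $M$ (viewed via $M^\top$... more precisely, writing $M = N U$ for a suitable $U$ using $\myRange{M^\top} = \myRange{MB^{-1}M^\top}$ applied rowwise, or directly: for any $v$, $M^\top v \in \myRange{N}$, and $N N^\dagger$ is the identity on $\myRange{N}$ by Lemma~\ref{lem:pseudoproj} since $N$ is symmetric positive semidefinite). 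Once the two membership facts hold, uniqueness of the decomposition identifies $s(x) = B^{-1}Z_M x$ and $t(x) = (I-B^{-1}Z_M)x$, and since $B^{-1}Z_M$ fixes $\myRange{B^{-1}M^\top}$ and kills $\Null{M}$, it is the $B$-norm projection onto $\myRange{B^{-1}M^\top}$ — the variational characterizations in \eqref{eq:98hs8hss}--\eqref{eq:98hs8htt} follow from $B$-orthogonality of the two summands. That $B$-orthogonality, $\dotprod{B s(x), t(x)} = 0$, I would check directly: $s(x)^\top B (I - B^{-1}Z_M) x = x^\top Z_M B^{-1} B (I - B^{-1}Z_M) x = x^\top(Z_M - Z_M B^{-1} Z_M)x$, and $Z_M B^{-1} Z_M = Z_M$ follows from the same $NN^\dagger M = M$ identity together with symmetry of $(MB^{-1}M^\top)^\dagger$ (Lemma~\ref{lem:pseudosym}). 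The Pythagorean identity $\|x\|_B^2 = \|s(x)\|_B^2 + \|t(x)\|_B^2$ and the formulas \eqref{eq:iuhiuhpp}--\eqref{eq:iuhiuhppss} are then routine expansions using $Z_M = Z_M^\top$ and $Z_M B^{-1} Z_M = Z_M$.

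Finally, for \eqref{eq:ugisug7sss} I would use that $B^{-1}Z_M$ is a projection (idempotent: $(B^{-1}Z_M)^2 = B^{-1}Z_M$, again from $Z_M B^{-1} Z_M = Z_M$) onto $\myRange{B^{-1}M^\top}$, so its trace equals the dimension of its range, which is $\Rank{B^{-1}M^\top} = \Rank{M^\top} = \Rank{M}$ since $B^{-1}$ is invertible. Here I use the standard fact that the trace of an idempotent matrix equals the rank of its range (its number of unit eigenvalues).

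I expect the main obstacle to be the ``one-sided'' pseudoinverse identity $MB^{-1}M^\top(MB^{-1}M^\top)^\dagger M = M$ (equivalently $Z_M B^{-1}Z_M = Z_M$), since the lemmas in Section~\ref{C1subsec:pseudo} are phrased for $M M^\dagger M$ rather than for this mixed product; the resolution is to invoke $\myRange{M^\top} = \myRange{MB^{-1}M^\top}$ from Lemma~\ref{lem:WGW} so that $(MB^{-1}M^\top)(MB^{-1}M^\top)^\dagger$ acts as the identity on the columns of $M^\top$, via Lemma~\ref{lem:pseudoproj}. Everything else is bookkeeping with symmetry and idempotence.
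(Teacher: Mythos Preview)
Your proposal is correct and proceeds somewhat differently from the paper. For existence you use a dimension count (trivial intersection plus $\Rank{M} + (n - \Rank{M}) = n$), whereas the paper is constructive: since $Mx \in \myRange{M} \overset{\text{Lem.~\ref{lem:WGW}}}{=} \myRange{MB^{-1}M^\top}$, it picks $u$ with $Mx = MB^{-1}M^\top u$ and sets $s = B^{-1}M^\top u$, $t = x - s$. For the explicit formula you \emph{verify} that $B^{-1}Z_M x$ and $(I - B^{-1}Z_M)x$ lie in the correct subspaces and then appeal to uniqueness; the paper instead \emph{derives} the formula variationally, reading off the first-order condition $Mx = MB^{-1}M^\top y$ of $\min_y \tfrac{1}{2}\|x - B^{-1}M^\top y\|_B^2$ and selecting the least-norm solution $y = (MB^{-1}M^\top)^\dagger Mx$ via Lemma~\ref{lem:pseudoleastnorm}. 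The paper's route thereby sidesteps your ``one-sided'' identity $NN^\dagger M = M$ entirely, absorbing it into Lemma~\ref{lem:pseudoleastnorm}; your route makes that identity the crux and handles it cleanly once the range equality is in hand. One slip to correct: throughout your discussion of that identity you write $\myRange{M^\top} = \myRange{MB^{-1}M^\top}$, but applying Lemma~\ref{lem:WGW} with $W = M^\top$ and $G = B^{-1}$ gives $\myRange{M} = \myRange{MB^{-1}M^\top}$ (both are subspaces of $\R^m$); it is the columns of $M$, not of $M^\top$, on which $NN^\dagger$ must act as the identity.
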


\begin{proof} Fix arbitrary $x\in \R^n$. We first establish existence of the decomposition. By Lemma~\ref{lem:WGW} applied to $W=M^\top$ and $G=B^{-1}$ we know that there exists $u$ such that $Mx = MB^{-1}M^\top u$. Now let $s = B^{-1}M^\top u$ and $t = x-s$. Clearly, $s\in \myRange{B^{-1}M^\top}$ and $t\in \Null{M}$.  For  uniqueness, consider two decompositions: $x = s_1+ t_1$ and $x=s_2 + t_2$. Let $u_1,u_2$ be vectors such that $s_i = B^{-1}M^\top u_i$, $i=1,2$. Then $MB^{-1}M^\top(u_1-u_2)=0$. Invoking Lemma~\ref{lem:WGW} again, we see that $u_1-u_2\in \Null{M^\top}$, whence $s_1 = B^{-1}M^\top u_1 = B^{-1}M^\top u_2 = s_2$. Therefore, $t_1 = x - s_1 = x-s_2 = t_2$, establishing uniqueness.

Note that $s = B^{-1}M^\top y$, where $y$ is any solution of the optimization problem 
\[\min_y \tfrac{1}{2}\|x-B^{-1}M^\top y\|_B^2.\]
The first order necessary and sufficient optimality conditions are $Mx = MB^{-1}M^\top y$. In particular, we may choose $y$ to be the least norm solution of this system, which by Lemma~\ref{lem:pseudoleastnorm} is given $y=(MB^{-1}M^\top)^\dagger Mx$, from which \eqref{eq:98hs8hss} follows. The variational formulation \eqref{eq:98hs8htt}  can be established in  a similar way, again via first order optimality conditions (note that the closed form formula \eqref{eq:98hs8htt} also directly follows from \eqref{eq:98hs8hss} and the fact that $t = x - s$). 

Next,  since $x=s+t$ and $s^\top B t = 0$,  
\begin{equation}\label{eq:09u0hss}
\|t\|_B^2=(t+s)^\top B t = x^\top B t \overset{\eqref{eq:98hs8htt}}{=} x^\top B (I-B^{-1}Z_{ M})x = x^\top (B - Z_{M}) x
\end{equation}
and
\[ \|s\|_B^2 = \|x\|_B^2 - \|t\|_B^2  \overset{\eqref{eq:09u0hss}}{=} x^\top Z_M x.\]

It only remains to establish \eqref{eq:ugisug7sss}. Since  $B^{-1}Z_M$ projects onto $\myRange{B^{-1}M^\top}$ and since the trace of a projection is equal to the dimension of the space they project onto, we have $\Tr{B^{-1}Z_M} = \dim(\myRange{B^{-1}M^\top}) = \dim(\myRange{M^\top}) = \Rank{M}$.
\end{proof}

All the iterative methods presented in the thesis are based on projections. In particular, the projections that govern most of the iterative methods here are constructed from the  
matrix
\begin{equation}\label{eq:Z-C1def}
Z \eqdef Z_{S^\top A} \overset{\eqref{eq:Z_M}}{=} A^\top S(S^\top AB^{-1}A^\top S)^{\dagger}S^\top A.
\end{equation}
We now collect in the following lemma several properties pertaining to $Z$ that are repeatedly used throughout the thesis.


\begin{lemma}\label{ch:one:lem:Z}
The matrix $Z$ defined in~\eqref{eq:Z-C1def} is symmetric positive semidefinite. Furthermore 
$B^{-1}Z$ is a projection with respect to the $B$--norm 
such that
\begin{equation}\label{eq:BZprojdef}
\myRange{B^{-1} Zx}  =  \myRange{B^{-1}A^\top S}  \quad \text{and}  \quad \myRange{I-B^{-1}Z}  = \Null{S^\top A},
\end{equation}
and $B^{-1/2}ZB^{-1/2}$ is a projection with respect to the standard Euclidean geometry, consequently
\begin{equation}\label{eq:B12ZB12proj}
\norm{(I-B^{-1/2} Z B^{-1/2})x}_2^2 = \dotprod{(I-B^{-1/2} Z B^{-1/2})x,x}, \quad \forall x \in \R^n,
\end{equation}
and
\begin{equation}\label{eq:B12ZB12trace}
\Tr{B^{-1/2}ZB^{-1/2}} = \Rank{A^\top S}.
\end{equation} 
\end{lemma}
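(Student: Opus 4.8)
The plan is to reduce everything to Proposition~\ref{prop:decomposition} applied with $M = S^\top A$. With that choice $M^\top = A^\top S$ and $MB^{-1}M^\top = S^\top A B^{-1} A^\top S$, so the matrix $Z_M = M^\top(MB^{-1}M^\top)^\dagger M$ of~\eqref{eq:Z_M} is precisely the matrix $Z$ of~\eqref{eq:Z-C1def} (this identification is already recorded in~\eqref{eq:Z-C1def}). Hence Proposition~\ref{prop:decomposition} directly yields that $B^{-1}Z$ is a projection in the $B$-norm onto $\myRange{B^{-1}M^\top}=\myRange{B^{-1}A^\top S}$ and that $I-B^{-1}Z$ is a projection in the $B$-norm onto $\Null{M}=\Null{S^\top A}$; since a projection matrix has column space equal to the subspace onto which it projects, this is exactly~\eqref{eq:BZprojdef}. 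For symmetry and positive semidefiniteness of $Z$, I would note that $MB^{-1}M^\top$ is symmetric positive semidefinite, so Lemma~\ref{lem:pseudosym} gives that $(MB^{-1}M^\top)^\dagger$ is symmetric, whence $Z^\top = M^\top((MB^{-1}M^\top)^\dagger)^\top M = Z$, and Lemma~\ref{lem:pseudoposdef} gives $x^\top Z x = (Mx)^\top(MB^{-1}M^\top)^\dagger(Mx)\ge 0$ (equivalently, positive semidefiniteness follows from~\eqref{eq:iuhiuhppss}, which writes $x^\top Zx$ as a squared $B$-norm).

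The remaining assertions live in the standard Euclidean geometry, and the bridge is the elementary identity $B^{-1/2}ZB^{-1/2} = B^{1/2}(B^{-1}Z)B^{-1/2}$, i.e. $B^{-1/2}ZB^{-1/2}$ is the conjugate of $B^{-1}Z$ by $B^{1/2}$. Conjugation preserves idempotency, so since $B^{-1}Z$ is idempotent (being a projection, by the previous paragraph), so is $B^{-1/2}ZB^{-1/2}$; moreover $B^{-1/2}ZB^{-1/2}$ is symmetric because $Z$ and $B^{-1/2}$ are. A symmetric idempotent matrix is an orthogonal projection in the Euclidean sense, hence so is its complement $I-B^{-1/2}ZB^{-1/2}$, and for any orthogonal projection $P$ one has $\norm{Px}_2^2 = \dotprod{P^\top Px,x} = \dotprod{Px,x}$, which establishes~\eqref{eq:B12ZB12proj}. (Alternatively,~\eqref{eq:B12ZB12proj} can be read off from~\eqref{eq:iuhiuhpp} after substituting $x\mapsto B^{-1/2}x$ and using $B^{1/2}(I-B^{-1}Z)B^{-1/2}=I-B^{-1/2}ZB^{-1/2}$.) Finally, cyclic invariance of the trace gives $\Tr{B^{-1/2}ZB^{-1/2}}=\Tr{B^{-1}Z}$, and~\eqref{eq:ugisug7sss} gives $\Tr{B^{-1}Z}=\Rank{S^\top A}=\Rank{A^\top S}$, which is~\eqref{eq:B12ZB12trace}.

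I do not expect a serious obstacle here: once $Z$ is identified with $Z_{S^\top A}$, the bulk of the lemma is a transcription of Proposition~\ref{prop:decomposition}. The one point deserving care is the passage to Euclidean geometry — keeping straight that conjugation by $B^{1/2}$ turns the $B$-orthogonal projection $B^{-1}Z$ into the \emph{Euclidean}-orthogonal projection $B^{-1/2}ZB^{-1/2}$, which is what makes~\eqref{eq:B12ZB12proj} and~\eqref{eq:B12ZB12trace} fall out — together with the small verifications that $B^{-1/2}ZB^{-1/2}$ is symmetric and idempotent.
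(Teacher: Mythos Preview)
Your proposal is correct and follows essentially the same approach as the paper: both reduce everything to Proposition~\ref{prop:decomposition} with $M=S^\top A$, and the paper's argument for positive semidefiniteness (factoring $Z=(A^\top SG)(A^\top SG)^\top$ with $GG^\top=(S^\top AB^{-1}A^\top S)^\dagger$) is just a repackaging of your appeal to Lemma~\ref{lem:pseudoposdef}. The one tactical difference is that for the Euclidean-geometry claims \eqref{eq:B12ZB12proj}--\eqref{eq:B12ZB12trace} the paper simply re-applies Proposition~\ref{prop:decomposition} a second time with $M=S^\top AB^{-1/2}$ and $B=I$, whereas you transfer the $B$-projection to a Euclidean one via conjugation by $B^{1/2}$ and cyclic invariance of the trace; both routes are equally short and valid.
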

\begin{proof}
First note that $(B^{-1/2}A^\top S)^\top B^{-1/2}A^\top S =S^\top AB^{-1}A^\top S$ is symmetric positive semidefinite, and consequently by Lemma~\ref{lem:pseudoposdef} the matrix $(S^\top AB^{-1}A^\top S)^{\dagger}$ is also symmetric positive semidefinite. Thus there exists $G$ such that $GG^\top  =(S^\top AB^{-1}A^\top S)^{\dagger} $ and consequently $ (A^\top S G)( A^\top S G)^\top  = Z$ which proves that $Z$ is symmetric positive semidefinite. 


By Lemma~\ref{prop:decomposition} (with $M=S^\top A$) we have that $B^{-1}Z$ is a projection, and~\eqref{eq:BZprojdef} follows  by~\eqref{eq:98hs8hss} and~\eqref{eq:98hs8htt}. Again by Lemma~\ref{prop:decomposition} (with $M=S^\top AB^{-1/2}$ and $B=I$) we have that $B^{-1/2}ZB^{-1/2}$ projects orthogonally onto $\myRange{B^{-1/2}A^\top S}$, whence~\eqref{eq:B12ZB12proj} and~\eqref{eq:B12ZB12trace} follow from~\eqref{eq:iuhiuhpp} and~\eqref{eq:ugisug7sss}, respectively.
\end{proof}

\subsection{Random variables and the random matrix $S$}
As explained in Section~\ref{secChOne:whats}, the methods proposed in this thesis depend on a random matrix $S\in \R^{m \times q}$. Consequently the iterates of these methods are random variables.
Throughout the thesis we make little to no assumption on the distribution of $S$, and unless explicitly stated, the reader should assume that $S$ is a random matrix in the most general sense.  Here we formalize what is a random variable and what is a random matrix in the most general sense, that is, in the probability measure sense.
 For the reader that is not familiar with probability spaces and measure theory, we suggest the book~\cite{williams1991probability} as quick an enjoyable introduction.

To formalize the notion of a random variable we need the definition of a  \emph{probability space.} A  probability space $(\Omega,\mathcal{F},P)$ is defined by three objects:
\begin{enumerate}
\item The $\Omega$ is a given set known as the \emph{sample space}. It contains all the possible~\emph{outcomes} (elements).
\item The $\mathcal{F}$ is a set of subsets of $\Omega.$ Specifically, it is a $\sigma$--algebra over $\Omega.$ It contains all the possible \emph{events} (subsets) we would like to consider.
\item The $P$ is a function that maps from $\mathcal{F}$ to $[0, \, 1].$  That is, given an event $E \in \mathcal{F}$ it return the probability $P(E) \in [0, \, 1]$ of $E$ occurring. Moreover, $P$ is a probability measure and thus $P(\Omega) =1, P(\emptyset) =0$ and $P$ satisfies the  countable additivity property~\cite{williams1991probability}.
\end{enumerate} 

Often one is not interested in the probability space itself, but in functions over this probability space called random variables. Consider the map $r: \Omega \rightarrow \R.$ We say $r$ is a \emph{random variable} when
\[ \{\omega \, : \, r(\omega)\leq a\} \subset \mathcal{F} \quad \quad \forall a \in \R.\]
An equivalent statement is as follows: The function $r$ is a random variable if the inverse image of $r$ over the interval $(-\infty, a]$ is contained in $\mathcal{F}$ for every $a \in \R.$
 
 A random matrix is simply a matrix valued map where each element is a random variable. That is, consider a map $S: \Omega \rightarrow \R^{m\times q}$ where $m,q\in \N.$ We say that $S$ is a \emph{random matrix} when each element of $S$ is a random variable. For brevity, and as is customary, we use $S \in \R^{m\times q}$ as a shorthand for $S(\omega) \in \R^{m\times q}$ for all $\omega \in \Omega.$  A \emph{random vector} is a random matrix that has only one column or one row. 
We now provide an example of a random matrix. Note that this example, and in fact all the examples in this thesis, are simple enough as to not require this formal probability measure context.
 

 \paragraph{Example:} Let $e_i \in \R^{m}$ be the $i$th coordinate vector. Let $S = e_i$ with probability $1/m$ for all $i \in \{1,\ldots,m\}.$ In other words, $P(S =e_i) = 1/m$ for all $i \in \{1,\ldots,m\}.$ We will now show that $S$ is a random matrix by constructing a suitable probability space. Let $\Omega = \{1,\ldots, m\}$, let $\mathcal{F} = 2^\Omega$ be the power set of $\Omega$ and let $P: \mathcal{F} \rightarrow [0, 1] $ be any probability measure that satisfies $P(\{i\}) = 1/m$ for $i=1,\ldots, m.$  Then the map defined by $S(i) = e_i$ is our desired random matrix.  
 

\subsection{Convergence of a random sequence} \label{sec:convrandseq}

As the methods presented in the thesis depend on a random matrix $S$
 the iterates of our methods are themselves random variables. 
To guarantee that the iterates converge to the desired solution we need to establish the convergence of a sequence of random variables. Throughout the thesis we use two notions of the convergence of random variables; the convergence of the norm of the expectation and the convergence of the expected norm, which we describe here. 

Consider a sequence of random matrices $(Y^k)_k$ on $\R^{m\times q}.$
Let $\dotprod{\cdot, \cdot}$ and $\norm{Y}^2 = \dotprod{Y,Y}$ be an inner product and induced norm, respectively.
  We say that the norm of the expectation of $(Y^k)_k$ converges to zero with rate $\rho \in [0,\, 1)$ if
\begin{equation}\label{ch:one:eq:normexpconv}
\norm{\E{Y^k}} \leq \rho^k \norm{Y^0}.
\end{equation}
 Furthermore, from~\eqref{ch:one:eq:normexpconv} we see that $\E{Y^k} \rightarrow 0,$ and thus the sequence converges in expectation to zero.

We say that the expected norm of $(Y^k)_k$ converges to zero with rate $\rho$ if
\begin{equation}\label{ch:one:eq:expnormconv}
\E{\norm{Y^k}^2} \leq \rho^k \norm{Y^0}^2.
\end{equation}
Note that the order of the expectation operator and the norm are now exchanged in relation to~\eqref{ch:one:eq:normexpconv}.
The convergence of the expected norm implies the convergence of the norm of expectation, as we prove in Lemma~\ref{lem:convrandvar}. In this lemma we also show that the convergence~\eqref{ch:one:eq:expnormconv} implies \emph{convergence in probability}.
We say that $Y^k$ converges in probability to zero if for every $\epsilon>0$ we have that
\begin{equation} \label{eq:convinprob}
 \lim_{k\rightarrow \infty}  \mathbb{P}\left(\norm{Y^k}^2 \geq \epsilon \norm{Y^0}^2 \right) =0. 
\end{equation} 

\begin{lemma} \label{lem:convrandvar}
 The convergence of the expected norm~\eqref{ch:one:eq:expnormconv} implies  
 the convergence of the norm of the expectation~\eqref{ch:one:eq:normexpconv} as can be seen through the equality
\begin{equation}\label{eq:convequality}
 \E{\big\| Y^k \big\|^2} = \big\|\E{Y^k}\big\|^2 + \E{\big\| Y^k - \E{Y^k}\big\|^2}.\end{equation}
 Furthermore, the convergence of the expected norm~\eqref{ch:one:eq:expnormconv} implies convergence in probability.
\end{lemma}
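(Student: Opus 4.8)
The plan is to prove the two assertions in order. First I would establish the identity~\eqref{eq:convequality}, which is the familiar bias–variance decomposition applied to the random matrix (or vector) $Y^k$. Writing $\mu = \E{Y^k}$, I would start from $Y^k = (Y^k - \mu) + \mu$ and expand the squared norm using the inner product: $\norm{Y^k}^2 = \norm{Y^k-\mu}^2 + 2\dotprod{Y^k-\mu,\mu} + \norm{\mu}^2$. Taking expectations and using linearity of $\E{\cdot}$ together with $\E{Y^k - \mu} = 0$, the cross term $\E{\dotprod{Y^k-\mu,\mu}} = \dotprod{\E{Y^k-\mu},\mu} = 0$ vanishes, and $\norm{\mu}^2$ is deterministic, giving~\eqref{eq:convequality}. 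Since both summands on the right-hand side of~\eqref{eq:convequality} are nonnegative, we get $\norm{\E{Y^k}}^2 \le \E{\norm{Y^k}^2}$; combining this with~\eqref{ch:one:eq:expnormconv} yields $\norm{\E{Y^k}}^2 \le \rho^k \norm{Y^0}^2$, hence $\norm{\E{Y^k}} \le \rho^{k/2}\norm{Y^0}$, which in particular establishes~\eqref{ch:one:eq:normexpconv} (with rate $\sqrt{\rho}$, and certainly the qualitative convergence $\E{Y^k}\to 0$ claimed in the statement).

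Second, I would derive convergence in probability from~\eqref{ch:one:eq:expnormconv} via Markov's inequality. For fixed $\epsilon > 0$, the nonnegative random variable $\norm{Y^k}^2$ satisfies
\[
\mathbb{P}\left(\norm{Y^k}^2 \ge \epsilon \norm{Y^0}^2\right) \le \frac{\E{\norm{Y^k}^2}}{\epsilon \norm{Y^0}^2} \le \frac{\rho^k \norm{Y^0}^2}{\epsilon \norm{Y^0}^2} = \frac{\rho^k}{\epsilon}.
\]
Since $\rho \in [0,1)$, the right-hand side tends to $0$ as $k \to \infty$, which is exactly~\eqref{eq:convinprob}. (One should note the trivial case $\norm{Y^0} = 0$ separately, where $Y^k = 0$ almost surely for all $k$ by~\eqref{ch:one:eq:expnormconv} and the claim is immediate.)

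I do not anticipate a serious obstacle here: both parts are standard. The only point requiring a little care is making sure the inner-product manipulations in the first part are legitimate when $Y^k$ is matrix-valued rather than scalar-valued — but since $\dotprod{\cdot,\cdot}$ is an inner product on $\R^{m\times q}$ and $\E{\cdot}$ acts entrywise, linearity and the vanishing of the cross term go through verbatim. The mild subtlety of whether the rate is $\rho$ or $\sqrt{\rho}$ for the norm-of-expectation convergence is worth a remark, but does not affect the stated conclusion that convergence of the expected norm implies convergence of the norm of the expectation and convergence in probability.
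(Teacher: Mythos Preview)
Your proposal is correct and follows essentially the same approach as the paper: both establish the identity~\eqref{eq:convequality} by elementary algebra (you expand $\norm{(Y^k-\mu)+\mu}^2$; the paper adds and subtracts $\E{\norm{Y^k}^2}$ and rearranges), drop the nonnegative variance term to get $\norm{\E{Y^k}}^2 \le \E{\norm{Y^k}^2}$, note the resulting rate is $\sqrt{\rho}$, and then invoke Markov's inequality for convergence in probability.
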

\begin{proof} Let $(Y^k)_k$ be a sequence of random vectors that converges to zero according to~\eqref{ch:one:eq:expnormconv}.
The convergence of the norm of expectation follows from the equality
\begin{align}
\big\|\E{Y^k}\big\|^2 &= \big\|\E{Y^k}\big\|^2  + \E{\norm{Y^k}^2} - \E{\norm{Y^k}^2} \nonumber \\
&=\E{\norm{Y^k}^2} - \left(\E{\norm{Y^k}^2} -2\E{\dotprod{Y^k,\E{Y^k}}} 
+ \big\|\E{Y^k}\big\|^2\right) \nonumber\\
& = \E{\big\|Y^k \big\|^2}  - \E{\big\| Y^k - \E{Y^k}\big\|^2}. \label{C1eq:convcovar}
\end{align}
Indeed, since $\E{\big\| Y^k - \E{Y^k}\big\|^2} \geq 0$ we have that
\[\big\|\E{Y^k}\big\|^2 \leq \E{\big\| Y^k\big\|^2}   \overset{\eqref{ch:one:eq:expnormconv}}{\leq} \rho^k \norm{Y^0}^2. \]
Consequently the norm of the expected error converges with rate $\sqrt{\rho}.$ Finally, let $\epsilon>0.$ Using Markov's inequality we have
\begin{align}\label{eq:probconv}
 \mathbb{P}(\norm{Y^k}^2 \geq \epsilon \norm{Y^0}^2) &\leq \frac{\E{\norm{Y^k}^2}}{\epsilon \norm{Y^0}^2}
  \overset{\eqref{ch:one:eq:expnormconv}}{\leq}  \frac{\rho^k}{\epsilon}.
\end{align}
Thus $ \mathbb{P}(\norm{Y^k}^2 \geq \epsilon \norm{Y^0}^2 ) \rightarrow 0$ as $k\rightarrow \infty.$
\end{proof}

In every chapter that follows, we will present several convergence results of random sequences. In particular, in Chapter~\ref{ch:linear_systems} we prove the convergence of the expected norm~\eqref{ch:one:eq:expnormconv} and the convergence of the norm of the expectation~\eqref{ch:one:eq:normexpconv} of a sequence of random vectors $Y^k =x^k-x^*$. In 
Chapter~\ref{ch:inverse} we prove analogous convergence results of a sequence of random matrices $Y^k = X_k-A^{-1}.$ Thus Lemma~\ref{lem:convrandvar} is important as it sheds light on how these two types of convergence~\eqref{ch:one:eq:expnormconv} and~\eqref{ch:one:eq:normexpconv} are related.

%
%
%
%
%

Convergence according to~\eqref{ch:one:eq:expnormconv} is also commonly referred to as \emph{linear convergence}.  This is because, as explained in the next Section~\ref{secChone:itercompl}, the number of iterations required to reach a certain precision grows linearly and proportionally to $\left.1 \right/ (1-\rho).$ 
Another common synonym to linear convergence, that we sometimes use here, is to say that $Y^k$ converges \emph{exponentially fast} to  zero. This is because the expected norm of $Y^k$ decreases according to the exponential function $\rho^k.$ 

 Note that if $Y^k$ is a random vector defined on $\R^m$ with the standard Euclidean inner product then
$\E{\left\| Y^k - \E{Y^k}\right\|_2^2} =\sum_{i=1}^m\E{ (Y^k_i - \E{Y^k_i})^2}  = \sum_{i=1}^m \mathbf{Var}(Y^k_i)$,
where $z_i^k$ denotes the $i$th element of $Y^k.$ Thus, in this case, the equality~\eqref{eq:convequality} also shows that if the norm of expectation converges and the variance of $Y^k_i$  converges to zero for $i=1,\ldots, m$, then the expected norm converges.

\subsection{Iteration complexity} \label{secChone:itercompl}

Both types of convergence~\eqref{ch:one:eq:normexpconv} and~\eqref{ch:one:eq:expnormconv}  can be recast as iteration complexity bounds using the following lemma. With this lemma we can stipulate an lower bound on how many iterations are required to bring the sequence within an $\epsilon>0$ relative distance of its limit point. 

\begin{lemma} \label{lem:itercomplex}
Consider the sequence $(\alpha_k)_k \in \R_+$ of positive scalars that converges to zero according to
\begin{equation}\label{eq:alphaconv} \alpha_k \leq \rho^k \, \alpha_0,\end{equation}
where $\rho \in [0, 1).$
For a given $1>\epsilon >0$ we have that
\begin{equation} \label{C1eq:itercomplex}k\geq \frac{1}{1-\rho} \log\left(\frac{1}{\epsilon}\right)  \quad \Rightarrow \quad \alpha_k \leq \epsilon\, \alpha_0.
\end{equation}
\end{lemma}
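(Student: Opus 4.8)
The plan is to establish the implication in \eqref{C1eq:itercomplex} by a direct manipulation of the hypothesis \eqref{eq:alphaconv}. First I would note that it suffices to find a condition on $k$ which guarantees $\rho^k \leq \epsilon$, since then $\alpha_k \leq \rho^k \alpha_0 \leq \epsilon \alpha_0$ follows immediately. Taking logarithms, $\rho^k \leq \epsilon$ is equivalent to $k \log(1/\rho) \geq \log(1/\epsilon)$ (using that $\rho \in (0,1)$ so $\log(1/\rho) > 0$; the degenerate case $\rho = 0$ is trivial since then $\alpha_k = 0$ for all $k \geq 1$), i.e. to $k \geq \log(1/\epsilon) / \log(1/\rho)$.

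The remaining step is to compare the clean bound $\frac{1}{1-\rho}\log(1/\epsilon)$ appearing in the statement with the exact threshold $\frac{1}{\log(1/\rho)}\log(1/\epsilon)$. Since $\log(1/\epsilon) > 0$, it is enough to show $\frac{1}{1-\rho} \geq \frac{1}{\log(1/\rho)}$, equivalently $\log(1/\rho) \geq 1-\rho$, equivalently (writing $u = 1/\rho > 1$) $\log u \geq 1 - 1/u$ for $u > 1$. This is a standard elementary inequality: the function $g(u) = \log u - 1 + 1/u$ satisfies $g(1) = 0$ and $g'(u) = 1/u - 1/u^2 = (u-1)/u^2 > 0$ for $u > 1$, so $g(u) > 0$ on $(1,\infty)$. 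Hence $\frac{1}{1-\rho}\log(1/\epsilon) \geq \frac{1}{\log(1/\rho)}\log(1/\epsilon)$, so any $k$ satisfying the former bound also satisfies the latter, which gives $\rho^k \leq \epsilon$ and therefore $\alpha_k \leq \epsilon \alpha_0$.

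There is no real obstacle here; the only mild subtlety is handling the boundary cases cleanly ($\rho = 0$ trivially, and noting strict positivity of the logarithms to justify dividing and reversing inequalities safely), plus recalling the elementary bound $\log(1/\rho) \geq 1-\rho$ which is what makes the stated (cruder but more transparent) complexity bound valid. I would present the argument in essentially the three moves above: reduce to $\rho^k \leq \epsilon$, take logs, and invoke $1 - \rho \leq \log(1/\rho)$.
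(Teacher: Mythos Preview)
Your proof is correct and follows essentially the same route as the paper: reduce to $\rho^k \leq \epsilon$, take logarithms, and invoke the elementary inequality $\log(1/\rho) \geq 1-\rho$ (equivalently $\frac{1}{1-\rho}\log(1/\rho) \geq 1$), with the $\rho=0$ case handled separately. The only minor difference is that you supply a derivative argument for the key inequality, whereas the paper simply states it.
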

\begin{proof}
First note that if $\rho=0$ the result follows trivially. Assuming $\rho \in (0,\,1)$,
rearranging~\eqref{eq:alphaconv} and applying the logarithm to both sides gives
\begin{equation} \label{eq:logalphaconv}
 \log\left(\frac{\alpha_0}{\alpha_k}\right) \geq   k  \log\left(\frac{1}{\rho}\right).
 \end{equation}
Now using that
\begin{equation}\label{eq:logineq}
\frac{1}{1-\rho} \log\left(\frac{1}{\rho}\right) \geq 1,
\end{equation}
for all $\rho \in (0,1)$ and assuming that
\begin{equation}\label{eq:kiterassump}
k\geq \frac{1}{1-\rho} \log\left(\frac{1}{\epsilon}\right) ,
\end{equation}
 we have that
 \begin{eqnarray*}
  \log\left(\frac{\alpha_0}{\alpha_k}\right) & \overset{\eqref{eq:logalphaconv}}{\geq}&
  k \log\left(\frac{1}{\rho}\right)  \\
  & \overset{\eqref{eq:kiterassump}}{\geq} &  \frac{1}{1-\rho} \log\left(\frac{1}{\rho}\right) \log\left(\frac{1}{\epsilon}\right)\\
  & \overset{\eqref{eq:logineq}}{\geq} &
 \log\left(\frac{1}{\epsilon}\right) 
 \end{eqnarray*}
Applying exponentials to the above inequality gives~\eqref{C1eq:itercomplex}.
\end{proof}

As an example of the use this lemma, consider the sequence of random vectors $(Y^k)_k$ for which the expected norm converges to zero according to~\eqref{ch:one:eq:expnormconv}. Then applying Lemma~\ref{lem:itercomplex} with $\alpha_k = \E{\norm{Y^k}^2}$ for a given $1>\epsilon>0$ states that
 \[k \geq \frac{1}{1-\rho} \log\left(\frac{1}{\epsilon}\right) \quad \Rightarrow \quad \E{\norm{Y^k}^2} \leq \epsilon\, \norm{Y^0}^2.\]
 
To give further insight into the implications of the convergence of the expected norm,  for a given $1>\epsilon >0$ consider the sequence $\alpha_0 =1/\epsilon$ and $\alpha^k = 	 \mathbf{P}\left(\norm{Y^k}_2^2 \geq \epsilon \norm{Y^0}_2^2\right)$ for $k \geq 1.$ From~\eqref{eq:probconv} we know that this sequence converges according to $\alpha^k \leq \rho^k \alpha^0.$ We can now use Lemma~\ref{lem:itercomplex} to determine how many iterates are required so that $\norm{Y^k}_2^2 \leq \epsilon \norm{Y^0}_2^2 $ with high probability. Indeed, let $\delta \in (0,\, 1)$ then by Lemma~\ref{lem:itercomplex} we have that 
 \[k \geq \frac{1}{1-\rho} \log\left(\frac{1}{\epsilon\delta}\right)  \quad \Rightarrow \quad \mathbf{P}\left(\norm{Y^k}_2^2 \geq \epsilon  \norm{Y^0}_2^2\right) \leq \delta.\]
  This shows that convergence of the expected norm is almost as good as linear convergence without the expectation, that is, one can guarantee the iterates are relatively close with a high probability  at the cost of only an additional logarithmic growth in the number of iterates. 
  
  
  
  
%
%
%
\chapter[Randomized Iterative Methods for Solving Linear Systems]{Randomized Iterative Methods for Linear Systems}
\chaptermark{Randomized Iterative Methods for Linear Systems}
\label{ch:linear_systems} 
{
\epigraph{\emph{Guid gear comes in sma’ bulk}. \\
  Good things come in small sizes (like sketched linear systems!)
}{Scottish proverb.} 

\let\clearpage\relax
\section{Introduction}
}

The need to solve linear systems of equations is ubiquitous in essentially all quantitative areas of human endeavour, including industry and science. Linear systems are a central problem in numerical linear algebra, and play an important role in computer science, mathematical computing, optimization, signal processing, engineering, numerical analysis, computer vision, machine learning,  and many other fields.  For instance, in the field of large scale optimization, there is a growing interest in inexact and approximate Newton-type methods for ~\cite{Dembo1982,Eisenstat1994b,Bellavia1998,Zhao2010a,Wang2013,Gondzio2013}, which can benefit from fast subroutines for calculating approximate solutions of linear systems.  In machine learning, applications arise for the problem of finding optimal configurations in Gaussian Markov Random Fields \cite{GMRFbook}, in graph-based semi-supervised learning and other graph-Laplacian problems \cite{Bengio+al-ssl-2006}, least-squares SVMs, Gaussian processes and more. 

In a large scale setting, direct methods can suffer from two shortcomings. First, direct methods often require direct access to individual elements of the system matrix and thus need the system matrix to be stored on RAM. But the dimensions and density of the problem at hand maybe such that the system matrix does not fit on RAM. Second, the complexity of direct methods is of order $O(n^3)$ which can be prohibitively slow when $n$ is large.


 While classical iterative methods are deterministic, 
recent breakthroughs suggest that randomization can play a powerful role in the design and analysis of efficient algorithms~\cite{Strohmer2009,Leventhal2010,Needell2010,Drineas2011,Zouzias2013a,Lee2013,Ma2015, Richtarik2015c} which are in many situations competitive  or better than existing deterministic methods.

In this chapter we develop the sketch-and-project family of randomized methods for solving linear systems that are well suited to quickly calculating approximate solutions.

\subsection{Background and related work}

The literature on solving linear systems via iterative methods is vast and has a long history~\cite{Kelley1995,Saad2003}.  For instance, the Kaczmarz method, in which one cycles through the rows of the system and each iteration is formed by projecting the current point to the hyperplane formed by the active row, dates back to the 30's~\cite{Kaczmarz1937}. The Kaczmarz method is just one example of an array of row-action methods for linear systems (and also, more generally, feasibility and optimization problems)  which were studied in the second half of the 20th century~\cite{rowaction1981}. 

Research into the Kaczmarz method was reignited in 2009 by Strohmer and Vershynin~\cite{Strohmer2009}, who gave a brief and elegant proof that a randomized variant thereof enjoys an exponential error decay (also know as ``linear convergence''). This has triggered much research into developing and analyzing randomized linear solvers. 

It should be mentioned at this point that the randomized Kaczmarz (RK) method  arises as a special case (when one considers quadratic objective functions) of the stochastic gradient descent (SGD) method for {\em convex optimization} which can be traced back to the seminal work of  Robbins and Monro's  on stochastic approximation~\cite{RobbinsMonro:1951}. Subsequently, intensive research went into studying various extensions of the SGD method. However, to the best of our knowledge, no complexity results with exponential error decay were established prior to the aforementioned work of  Strohmer and Vershynin~\cite{Strohmer2009}. This is the reason behind our choice of  \cite{Strohmer2009} as the starting point of our discussion.

Motivated by the results of Strohmer and Vershynin~\cite{Strohmer2009}, Leventhal and Lewis~\cite{Leventhal2010} utilize similar techniques to establish the first bounds for {\em randomized coordinate descent} methods for solving systems with positive definite matrices, and systems arising from least squares problems~\cite{Leventhal2010}.  These bounds are similar to those for the RK method. This development was later picked up by the optimization and machine learning communities, and much progress has been made in generalizing these early results in countless ways to various structured  convex optimization problems. For a brief up to date account of the  development in this area, we refer the reader to \cite{Fercoq2013a,ALPHA} and the references therein.

The RK method and its analysis have been further extended to  the least-squares problem~\cite{Needell2010,Zouzias2013a} and the  block setting~\cite{Needell2012,Needell2014}. In~\cite{Ma2015} the authors extend the randomized coordinate descent and the RK methods to the problem of solving underdetermined systems.  The authors of~\cite{Ma2015,Ramdas2014} analyze side-by-side the  randomized coordinate descent  and RK method,  for least-squares, using a convenient notation in order to point out their similarities. Our work takes the next step, by analyzing these, and many other methods, through a genuinely general analysis. Also in the spirit of unifying the analysis of different methods, in~\cite{Oswald2015} the authors provide a unified analysis of iterative Schwarz methods and Kaczmarz methods. 

The use of random Gaussian directions as search directions  in zero-order (derivative-free) minimization algorithm was recently suggested~\cite{Nesterov2011}. Our Gaussian positive definite and Gaussian least-squares in Sections~\ref{C2sec:gaussA} and~\ref{C2sec:gaussATA}, respectively, are special cases of these zero order methods applied to linear systems.
 More recently, Gaussian directions have been combined with exact and inexact line-search into a single {\em random pursuit} framework~\cite{Stich2014a}, and further utilized within a randomized variable metric method~\cite{Stich2014,Stich2015}.
\section{Contributions and Overview}

Given a real matrix $A \in \R^{m \times n}$  and a real vector $b \in \R^m$, in this chapter we consider the linear system
\begin{equation}\label{eq:Axb}Ax =b.\end{equation}
We shall assume throughout that the system is {\em consistent}:  there exists $x^*$ for which $Ax^*=b$.   

We now comment on the main contribution of this chapter.

{\em 1. New method.} We develop a novel, fundamental, and surprisingly simple  {\em randomized iterative method} for solving \eqref{eq:Axb}.

{\em 2. Six equivalent formulations.} Our method allows for several seemingly different but nevertheless equivalent formulations. First, it can be seen as a {\em sketch-and-project} method, in which the system \eqref{eq:Axb} is replaced by its {\em random sketch}, and then the current iterate is projected onto the solution space of the sketched system. We can also view  it as a {\em constrain-and-approximate} method, where we constrain the next iterate to live in a particular random affine space passing through the current iterate, and then pick the point from this subspace which best approximates the optimal solution. Third,  the method can be seen as an   iterative solution of a sequence of random (and simpler) linear equations. The method also allows for a simple geometrical interpretation: the new iterate is defined as the unique intersection of two random affine spaces which are orthogonal complements. The fifth viewpoint gives a closed form formula for the  {\em random update} which needs to be applied to the current iterate in order to arrive at the new one. Finally, the method can be seen as a {\em random fixed point iteration.}

{\em 3. Special cases.} These multiple viewpoints enrich our interpretation of the method, and enable us to draw previously unknown links between several existing algorithms. Our algorithm has two parameters, an $n\times n$ positive definite matrix $B$ defining geometry of the space, and a random matrix $S$.  Through combinations of these two parameters, in special cases our method recovers several well known algorithms.  For instance, we recover the randomized Kaczmarz method of Strohmer and Vershyinin~\cite{Strohmer2009}, randomized coordinate descent method  of Leventhal and Lewis~\cite{Leventhal2010}, random pursuit~\cite{Nesterov2011,Stich2015,Stich2014,Stich2012} (with exact line search), and the stochastic Newton method recently proposed by Qu et al \cite{Qu2015}. However, our method is more general, and leads to i) various generalizations and improvements of the aforementioned methods (e.g., block setup, importance sampling), and ii) completely new methods. Randomness enters our framework in a very general form, which allows us to obtain a {\em Gaussian Kaczmarz method}, {\em Gaussian descent},  and more. 

{\em 4. Complexity: general results.} When $A$ has full column rank, our framework allows us to determine the complexity of these methods using a single analysis. Our main results are summarized in Table~\ref{ch:two:tab:complexity},
where $\{x^k\}$ are the iterates of our method,  $Z$ is a random matrix dependent on the data matrix $A$, parameter matrix $B\in \R^{n\times n}$ and random parameter matrix $S \in \R^{m\times q}$, defined as \begin{equation}\label{eq:Z-first}
Z \eqdef A^\top S(S^\top AB^{-1}A^\top S)^{\dagger}S^\top A,
\end{equation}

where $\dagger$ denotes the (Moore-Penrose) pseudoinverse. For the definition of pseudoinverse, see Section~\ref{C1subsec:pseudo}. Moreover, $\norm{x}_B \eqdef  \sqrt{\dotprod{x,x}_B}$, where  $\dotprod{x,y}_B \eqdef x^\top By$, for all $x,y \in \R^{n}$.

As we shall see later, we will often consider setting $B=I$, $B=A$ (if $A$ is positive definite) or $B=A^\top A$ (if $A$ is of full column rank). In particular, we first
show that the convergence rate  $\rho$ is always bounded between zero and one. We also show that as soon as $\E{Z}$ is invertible (which can only happen if $A$ has full column rank, which then implies that $x^*$ is unique), we have $\rho<1$, and the method converges.
Besides establishing a bound involving the {\em expected norm of the error} (see the last line of Table~\ref{ch:two:tab:complexity}), we also obtain bounds involving the {\em norm of the expected error} (second line of Table~\ref{ch:two:tab:complexity}). Studying the expected sequence of iterates directly is very fruitful, as it allows us to establish an {\em exact characterization} of the evolution of the expected iterates (see the first line of Table~\ref{ch:two:tab:complexity}) through a {\em linear fixed point iteration}. 


Both of these theorems on the convergence of the method can be recast as iteration complexity bounds by using Lemma~\ref{lem:itercomplex}.
For instance from Theorem~\ref{ch:two:theo:normEconv} in Table~\ref{ch:two:tab:complexity} we observe that for a given $\epsilon >0$ we have that
\begin{equation} \label{ch:two:eq:itercomplex}k \geq \frac{1}{1-\rho} \log\left(\frac{1}{\epsilon}\right) \quad \Rightarrow \quad \norm{\E{x^k-x^*}}_B \leq \epsilon \norm{x^0-x^*}_B.
\end{equation}


{\em 5. Complexity: special cases.} Besides these generic results, which hold without any major restriction on the sampling matrix $S$ (in particular, it can be either discrete or continuous), we give a specialized result applicable to discrete sampling matrices $S$ (see Theorem~\ref{theo:convsingleS}). In the special cases for which rates are known, our analysis  recovers the existing rates. 


\begin{table}
\centering
\begin{tabular}{|c|c|}
\hline
& \\
 $\E {x^{k+1} -x^{*}} = \left(I  - B^{-1}\E{Z}\right) \E{x^{k} - x^{*}}  $ & Theorem~\ref{ch:two:theo:normEconv}\\
 & \\
$\norm{\E {x^{k+1} -x^{*}}}_B \leq \rho \; \cdot \; \norm{\E{x^{k} - x^{*}}}_B
 $ & Theorem~\ref{ch:two:theo:normEconv}\\
 & \\
$ \E {\norm{x^{k+1} -x^{*} }_B^2 } \leq \rho \;\cdot\; \E{ \norm{x^{k} - x^{*}}_B^2}$  & Theorem~\ref{ch:two:theo:Enormconv}\\
& \\
 \hline
 \end{tabular}
 \caption{Our main complexity results. The convergence rate is: $\rho = 1- \lambda_{\min}(B^{-1/2}\E{Z}B^{-1/2}).$}
 \label{ch:two:tab:complexity}
 \end{table}

{\em 6. Extensions.} Our approach opens up many avenues for further development and research. For instance, it is possible to extend the results to the case when $A$ is not necessarily of full column rank, which we do in Chapter~\ref{ch:SDA}. Furthermore, as our results hold for a wide range of distributions, new and efficient variants of the general method can be designed for problems of specific structure by fine-tuning the stochasticity to the structure. Similar ideas can be applied to design randomized iterative algorithms for finding the inverse of a very large matrix, which is the focus of Chapter~\ref{ch:inverse}.

\section{One Algorithm in Six Disguises} \label{sec:sixviews}Our method has {\em two parameters}: i) an $n\times n$ positive definite matrix $B$ which is used to define the $B$-inner product and the induced $B$-norm by \begin{equation} \label{eq:B-innerprod}\langle x, y\rangle_{B}\eqdef \langle Bx,y\rangle, \qquad \|x\|_B \eqdef \sqrt{ \langle x, x \rangle_B},\end{equation}
where $\langle \cdot,\cdot \rangle$ is the standard Euclidean inner product,  and ii) a random matrix $S\in \R^{m\times q}$, to be drawn in an i.i.d.\ fashion at each iteration. We stress that we do not restrict the number of columns of $S$; indeed, we even allow $q$ to vary (and hence,  $q$ is a random variable).

\subsection{Six viewpoints}

Starting from $x^k\in\R^n$, our method draws a random matrix $S$ and uses it to generate a new point $x^{k+1}\in\R^n$. As proven at the end of this section, our iterative method can be formulated in {\em six seemingly different but equivalent ways:}

\paragraph{1. Sketching Viewpoint: Sketch-and-Project.} $x^{k+1}$ is the nearest point to $x^k$ which solves a {\em sketched} version of the original linear system:
\begin{equation}
\boxed{\quad x^{k+1} \quad = \quad \arg\min_{x\in \R^n} \norm{x- x^{k}}_B^2 \quad \mbox{subject to} \quad  S^\top  Ax = S^\top  b \quad} \label{ch:two:NF} 
\end{equation}
This viewpoint arises very naturally. Indeed, since the original system \eqref{eq:Axb} is assumed to be complicated, we replace it by a simpler system---a {\em random sketch} of the original system \eqref{eq:Axb}---whose solution set $\{x \;|\; S^\top  Ax = S^\top  b\}$ contains all solutions of the original system. However, this system will typically have many solutions, so in order to define a method, we need a way to select one of them. The idea is to try to preserve as much of the information learned so far as possible, as condensed in the current point $x^k$. Hence, we  pick the solution which is closest to $x^k$. 

\paragraph{2. Optimization Viewpoint: Constrain-and-Approximate.} $x^{k+1}$ is the best approximation of $x^*$ in a random space passing through $x^k$:
\begin{equation}\boxed{\; x^{k+1} \; = \; \arg\min_{x\in \R^n} \norm{x\phantom{^k}- x^{*}}_B^2 \quad \mbox{subject to} \quad x = x^{k} + B^{-1}A^\top S y, \quad y \;\text{is free} \;} \label{ch:two:RF}
\end{equation}

The above step has the following interpretation. We  choose a random affine space containing $x^k$, and constrain our method to choose the next iterate from this space. We then do as well as we can on this space; that is, we pick $x^{k+1}$ as the point which best approximates $x^*$. Note that $x^{k+1}$ does not depend on which solution $x^*$ is used in \eqref{ch:two:RF} (this can be best seen by considering the geometric viewpoint, discussed next).

 \begin{figure}[!h] \centering
\includegraphics[width =7cm]{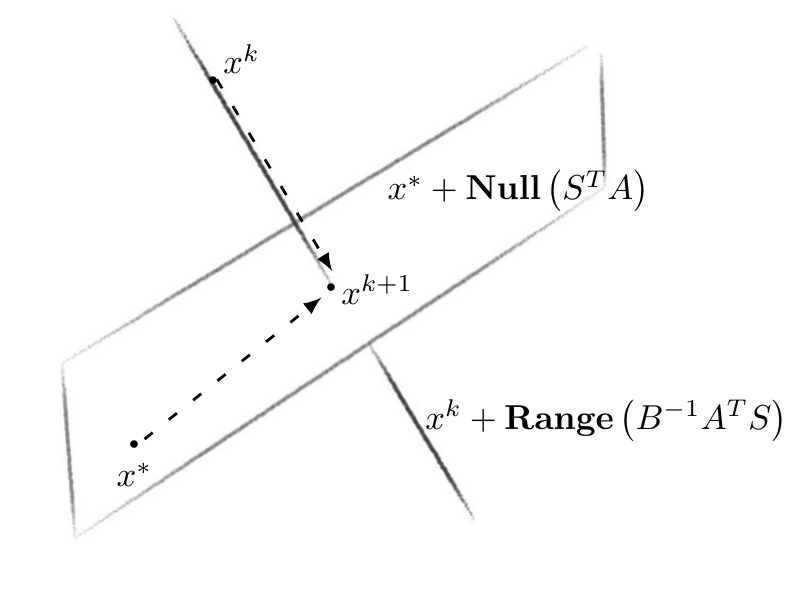}
\caption{\footnotesize The geometry of our algorithm. The next iterate, $x^{k+1}$, arises as the intersection of two random affine spaces: $x^k + \myRange{B^{-1}A^\top  S}$ and $x^* + \Null{S^\top A}$ (see \eqref{eq:geometry}). The spaces are orthogonal complements of each other with respect to the $B$-inner product, and hence $x^{k+1}$ can equivalently be written as the projection, in the $B$-norm, of $x^k$ onto $x^* +\Null{S^\top  A}$ (see \eqref{ch:two:NF}), or the projection of $x^*$  onto $x^k +\myRange{B^{-1}A^\top  S}$ (see \eqref{ch:two:RF}). The intersection $x^{k+1}$ can also be expressed as the solution of a system of linear equations (see \eqref{eq:2systems}). Finally, the new error $x^{k+1}-x^*$ is the projection, with respect to the $B$-inner product, of the current error $x^k-x^*$ onto $\Null{S^\top A}$. This gives rise to a random fixed point formulation (see \eqref{eq:xZupdate}).}
\label{ch:two:fig:proj}
\end{figure}

\paragraph{3. Geometric viewpoint: Random Intersect.} $x^{k+1}$ is the (unique) intersection of two affine spaces:
\begin{equation} \label{eq:geometry}
\boxed{ \quad \{x^{k+1}\} \quad =\quad  \left( x^* + \Null{S^\top  A}\right) \quad \bigcap \quad \left(x^k + \myRange{B^{-1}A^\top  S}\right) \quad}
\end{equation}
First, note that the first affine space above does not depend on the choice of $x^*$ from the set of optimal solutions of \eqref{eq:Axb}. A basic result of linear algebra says that the nullspace of an arbitrary matrix is the orthogonal complement of the range space of its transpose. Hence, whenever we have $h\in \Null{S^\top A}$ and $y\in \R^q$, where $q$ is the number of rows of $S$, then $\dotprod{h, B^{-1}A^\top  S y}_B =\dotprod{h, A^\top  S y} = 0$. It follows that the two spaces in \eqref{eq:geometry} are orthogonal complements with respect to the $B$-inner product  and as such, they intersect at a unique point (see Figure~\ref{ch:two:fig:proj}).

 \paragraph{4. Algebraic viewpoint: Random Linear Solve.} Note that $x^{k+1}$ is the (unique) solution (in $x$) of a linear system (with variables $x$ and $y$):
\begin{equation}\label{eq:2systems}
\boxed{\quad x^{k+1} \quad = \quad \text{solution of }\quad  S^\top  A x = S^\top  b, \quad x = x^k + B^{-1}A^\top  S y\quad } \end{equation}
This system is clearly equivalent to \eqref{eq:geometry}, and can alternatively be written as:
\begin{equation}\label{eq:view-algebraic}\begin{pmatrix}S^\top  A & 0  \\
B & -A^\top  S\end{pmatrix} \begin{pmatrix}x \\y\end{pmatrix} = \begin{pmatrix}S^\top  b \\ B x^k\end{pmatrix}.\end{equation}
Hence, our method reduces the solution of the (complicated) linear  system \eqref{eq:Axb} into a sequence of (hopefully simpler) random systems of the form \eqref{eq:view-algebraic}.

 \paragraph{5. Algebraic viewpoint: Random Update.} 
 By plugging the second equation in \eqref{eq:2systems} into the first, we eliminate $x$ and obtain the system $(S^\top  A B^{-1}A^\top  S) y = S^\top (b-Ax^k)$. Note that for all solutions $y$ of this system we must have $x^{k+1} = x^k + B^{-1}A^\top  S y$. In particular,  we can choose the solution $y=y^k$ of minimal Euclidean norm, which by Lemma~\ref{lem:pseudoleastnorm} is given by $y^k = (S^\top  A B^{-1}A^\top  S)^{\dagger}S^\top (b-Ax^k)$. This leads to an expression for $x^{k+1}$ with an explicit form of the {\em random update} which must be applied to $x^k$ in order to obtain $x^{k+1}$:
\begin{equation}\label{eq:MP} 
\boxed{\quad x^{k+1} = x^k - B^{-1}A^\top  S(S^\top  A B^{-1}A^\top  S)^{\dagger}S^\top (Ax^k-b) \quad }\end{equation}
In some sense, this form is the standard: it is customary for iterative techniques to be written in the form $x^{k+1}=x^k + d^k$, which is precisely what \eqref{eq:MP}  does.

\paragraph{6. Analytic viewpoint: Random Fixed Point.} 
Note that  iteration~\eqref{eq:MP} can be written as 
\begin{equation}\label{eq:xZupdate}
\boxed{\quad x^{k+1} -  x^* \quad =\quad (I-  B^{-1}Z)(x^{k} -x^*)\quad}
\end{equation} 
where $Z$ is defined in \eqref{eq:Z-first} and where we used the fact that $Ax^{*} =b$. Matrix $Z$ plays a central role in our analysis, and can be used to construct explicit projection matrices of the two projections depicted in Figure~\ref{ch:two:fig:proj}.

The equivalence between these six viewpoints is formally captured in the next statement.

\begin{theorem}[Equivalence]\label{thm:equivalence} The six viewpoints are equivalent: they all produce the same (unique) point $x^{k+1}$.
\end{theorem}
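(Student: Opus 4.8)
The plan is to run all six formulations through a single object: the $B$-orthogonal decomposition of the current error supplied by Proposition~\ref{prop:decomposition} applied with $M=S^\top A$. Decompose $x^k-x^* = s+t$, where $s\in\myRange{B^{-1}A^\top S}$ and $t\in\Null{S^\top A}$; by \eqref{eq:98hs8hss} together with \eqref{eq:Z_M} we have $s = B^{-1}Z(x^k-x^*)$, with $Z=Z_{S^\top A}$ being precisely the matrix in \eqref{eq:Z-first}. I claim every viewpoint produces the same point $x^{k+1}=x^k-s=x^*+t$, and I would verify this formulation by formulation, each of which produces a unique answer (strict convexity of $\|\cdot\|_B^2$ on an affine set for \eqref{ch:two:NF}, \eqref{ch:two:RF}; uniqueness of the intersection for \eqref{eq:geometry}; uniqueness in $x$ for \eqref{eq:2systems}; explicit formulas for \eqref{eq:MP}, \eqref{eq:xZupdate}).

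First, since the system is consistent, $S^\top b = S^\top Ax^*$, so the feasible set in \eqref{ch:two:NF} is the affine space $x^*+\Null{S^\top A}$; substituting $x=x^*+w$ and using the variational characterisation \eqref{eq:98hs8htt} of Proposition~\ref{prop:decomposition} applied to $x^k-x^*$ identifies the $B$-nearest feasible point to $x^k$ as $x^*+t=x^k-s$. Symmetrically, the feasible set in \eqref{ch:two:RF} is $x^k+\myRange{B^{-1}A^\top S}$; applying \eqref{eq:98hs8hss} to the vector $x^*-x^k$ and using linearity of $s(\cdot)=B^{-1}Z(\cdot)$ (so $s(x^*-x^k)=-s$) shows the $B$-nearest such point to $x^*$ is again $x^k-s$. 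Note that replacing $x^*$ by any other solution $\bar x^*$ changes $x^*-x^k$ by an element of $\Null{A}\subseteq\Null{S^\top A}$, whose $\myRange{B^{-1}A^\top S}$-component vanishes; hence \eqref{ch:two:RF}, and likewise \eqref{eq:geometry}, \eqref{eq:2systems}, \eqref{eq:xZupdate}, are independent of the choice of optimal solution.

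Next, $x^k-s$ lies in $x^*+\Null{S^\top A}$ (its difference from $x^*$ is $t$) and in $x^k+\myRange{B^{-1}A^\top S}$ (its difference from $x^k$ is $-s$), so it lies in the intersection \eqref{eq:geometry}; uniqueness of that intersection follows from the observation, recalled just before \eqref{eq:geometry}, that $\Null{S^\top A}$ and $\myRange{B^{-1}A^\top S}$ are $B$-orthogonal complements, so two translates of them meet in exactly one point. Formulation \eqref{eq:2systems} is merely the algebraic restatement of \eqref{eq:geometry} — its first equation pins $x$ to $x^*+\Null{S^\top A}$, its second to $x^k+\myRange{B^{-1}A^\top S}$ — so its solution in $x$ is again $x^k-s$. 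For \eqref{eq:MP}, eliminating $x$ from \eqref{eq:2systems} yields the consistent system $(S^\top AB^{-1}A^\top S)y=S^\top(b-Ax^k)$; by Lemma~\ref{lem:pseudoleastnorm} its least-norm solution is $y^k=(S^\top AB^{-1}A^\top S)^\dagger S^\top(b-Ax^k)$, and $x^k+B^{-1}A^\top S y^k = x^k-B^{-1}A^\top S(S^\top AB^{-1}A^\top S)^\dagger S^\top(Ax^k-b) = x^k - B^{-1}Z(x^k-x^*) = x^k-s$, using $Ax^*=b$; any other solution $y$ differs from $y^k$ by a vector in $\Null{S^\top AB^{-1}A^\top S}=\Null{A^\top S}$ (Lemma~\ref{lem:WGW}) and so gives the same $x^{k+1}$. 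Finally \eqref{eq:xZupdate} reads $x^{k+1}-x^* = (x^k-x^*)-B^{-1}Z(x^k-x^*)=t$, i.e. $x^{k+1}=x^*+t=x^k-s$ once more.

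This is essentially bookkeeping once Proposition~\ref{prop:decomposition} is in hand, so I do not foresee a serious obstacle; the points requiring the most care are (i) checking that \eqref{ch:two:RF}, \eqref{eq:geometry}, \eqref{eq:2systems}, \eqref{eq:MP}, \eqref{eq:xZupdate} do not depend on which optimal $x^*$ is used, which is where $\Null{A}\subseteq\Null{S^\top A}$ enters; (ii) the uniqueness of the intersection point in \eqref{eq:geometry}, which rests on the $B$-orthogonality of the two subspaces; and (iii) verifying that the pseudoinverse formula in \eqref{eq:MP}, which a priori is only one particular solution of the eliminated system, coincides with the projection — this is exactly Lemma~\ref{lem:pseudoleastnorm} combined with \eqref{eq:98hs8hss}.
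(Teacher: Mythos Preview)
Your proposal is correct and takes essentially the same approach as the paper. The paper's proof is a one-liner deferring to the preceding discussion and the caption of Figure~\ref{ch:two:fig:proj}, both of which rest on exactly the $B$-orthogonal decomposition of Proposition~\ref{prop:decomposition} (with $M=S^\top A$) that you invoke; your write-up is simply a careful, explicit expansion of that sketch, including the points (independence of the choice of $x^*$, uniqueness of the intersection, and the pseudoinverse formula via Lemma~\ref{lem:pseudoleastnorm}) that the paper leaves implicit in the surrounding text.
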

\begin{proof} The proof is simple, and follows  directly from the above discussion. In particular, see the caption of Figure~\ref{ch:two:fig:proj}.
\end{proof}

\subsection{Projection matrices}

The explicit projection matrices of the projections depicted in Figure~\ref{ch:two:fig:proj} can be constructed using the $Z$ matrix.
%
Indeed, recall that $S$ is a $m\times q$ random matrix (with $q$ possibly being random),  and that $A$ is an $m\times n$ matrix. Let us define the random quantity \begin{equation}\dd \eqdef \Rank{S^\top  A}\end{equation} and notice that  $d\leq \min\{q,n\}$,
\begin{equation} \label{eq:dimproj}
\dim \left(\myRange{B^{-1}A^\top  S}\right) = \dd, \qquad \text{and}\qquad \dim\left(\Null{S^\top A}\right) = n-\dd.
\end{equation}

Recall that~\eqref{eq:BZprojdef} shows that $B^{-1}Z$ is a projection onto $\myRange{B^{-1}A^\top  S}$ and along $\Null{S^\top A}.$
This sheds additional light on Figure~\ref{ch:two:fig:proj} as it gives explicit expressions for the associated projection matrices. This also shows that $I-B^{-1}Z$ is a projection and thus implies that $I-B^{-1}Z$ is a {\em contraction} with respect to the $B$-norm, which means that the random fixed point iteration~\eqref{eq:xZupdate} has only very little room not to work. While $I-B^{-1}Z$ is not a strict contraction, under some reasonably weak assumptions on $S$ it will be a strict contraction in expectation, which  ensures convergence. We shall state these assumptions and develop the associated convergence theory for our method in Section~\ref{C2sec:convergence} and Section~\ref{C2sec:discrete}.

\section{Special Cases: Examples} \label{C2sec:examples}


In this section we briefly mention how by selecting the parameters $S$ and $B$ of our method we recover several existing methods. The list is by no means comprehensive and merely serves the purpose of an illustration of the flexibility of our algorithm. All the associated complexity results we present in this section, can be recovered from Theorem~\ref{theo:convsingleS}, presented later in Section~\ref{C2sec:discrete}. 

\subsection{The one step method} 

When $S$ is an $m\times m$ invertible matrix with probability one, then the system $S^\top Ax=S^\top b$ is equivalent to solving $Ax=b,$ thus the  solution to~\eqref{ch:two:NF} must be $x^{k+1}=x^*$, independently of matrix $B.$
Our convergence theorems also predict this one step behaviour, since $\rho =0$ (see Table~\ref{ch:two:tab:complexity}).


\subsection{Random vector sketch}
When $S = s \in \R^{m}$ is restricted to being a random column vector, then from~\eqref{eq:MP} a step of our method is given by
\begin{equation}\label{eq:vecsketch}x^{k+1} = x^{k}  - \frac{s^\top (A x^{k}-b)}{ s^\top AB^{-1}A^\top s} B^{-1}A^\top s,
\end{equation}
if $A^\top s \neq 0$ and $x^{k+1} =x^k$ otherwise. This is because the pseudoinverse of a scalar $\alpha \in \R$ is given by \[\alpha^{\dagger} = 
\begin{cases}
1/\alpha  & \mbox{if } \alpha \neq 0\\
0 &  \mbox{if } \alpha = 0.
\end{cases}
\]
Next we describe several well known specializations of the random vector sketch and for brevity, we write the updates in the form of~\eqref{eq:vecsketch} and leave implicit that when the denominator is zero, no step is taken.
\subsection{Randomized Kaczmarz} \label{ch:two:sec:RK}

If we choose $S=e^i$ (unit coordinate vector in $\R^m$) and $B=I$ (the identity matrix), in view of \eqref{ch:two:NF} we obtain the method:
\begin{equation} \label{eq:RKintro} x^{k+1} = \arg\min_{x\in \R^n} \norm{x- x^{k}}_2^2 \quad \mbox{ subject to } \quad  A_{i:}x =b_{i}.
\end{equation}
Using~\eqref{eq:MP}, these iterations can be calculated with
\begin{equation}\label{eq:RKiterate} \boxed{x^{k+1} = x^{k} - \frac{A_{i:} x^{k}-b_{i}}{\norm{A_{i:}}_2^2}(A_{i:})^\top } \end{equation}

\paragraph{Complexity.} When $i$ is selected at random, this is the randomized Kaczmarz (\emph{RK}) method~\cite{Strohmer2009}. A specific non-uniform probability distribution for $S$ yields simple and easily interpretable (but not necessarily optimal) complexity bound. In particular, by selecting $i$ with probability proportional to the magnitude of row $i$ of $A$, that is $p_i = \norm{ A_{i:} }_2^2/\norm{A}_F^2$,  it follows from Theorem~\ref{theo:convsingleS} that RK enjoys the following complexity bound:
\begin{equation}\label{eq:RKconv}\E {\norm{x^{k} -x^{*} }_2^2 } \leq  \left(1  -  \frac{\lambda_{\min}\left(A^\top  A \right)}{\norm{A}_F^2} \right)^k \norm{x^{0} - x^{*}}_2^2.\end{equation}
This result was first established by Strohmer and Vershynin \cite{Strohmer2009}. 
We also provide new convergence results in Theorem~\ref{ch:two:theo:normEconv}, based on the convergence of the norm of the expected error. Theorem~\ref{ch:two:theo:normEconv} applied to the RK method gives
\begin{equation}\label{eq:RKconv2}\norm{\E {x^{k} -x^{*} } }_2^2 \leq  \left(1  -  \frac{\lambda_{\min}\left(A^\top  A \right)}{\norm{A}_F^2} \right)^{2k} \norm{x^{0} - x^{*}}_2^2.\end{equation}
Now the convergence rate appears squared, which is a better rate, though, the expectation has moved inside the norm, which is a weaker form of convergence as proven in Lemma~\ref{lem:convrandvar}.
  
Analogous results for the convergence of the norm of the expected error holds  for all the methods we present, though we only illustrate this with the RK method.

\paragraph{Re-interpretation as SGD with exact line search.} Using the ``Constrain and Approximate'' formulation~\eqref{ch:two:RF}, randomized Kaczmarz method can also be written as
\[x^{k+1} =\arg \min_{x\in \R^n} \norm{x- x^*}_2^2 \quad \mbox{ subject to } \quad  x = x^k + y(A_{i:})^\top , \quad y \in \R, \]
with probability $p_i$. Writing the least squares function $f(x) = \tfrac{1}{2}\|Ax-b\|_2^2$ as
\[f(x) = \sum_{i=1}^m p_i f_i(x), \qquad f_i(x) = \frac{1}{2 p_i}(A_{i:}x - b_i)^2,\]
we see that the random vector $\nabla f_i(x) = \tfrac{1}{p_i}(A_{i:}x-b_i) (A_{i:})^\top $ is an unbiased estimator of the  gradient of $f$ at $x$. That is,  $\E{\nabla f_i(x)} = \nabla f(x)$. Notice that RK takes a step in the direction $-\nabla f_i(x)$. This is true even when $A_{i:}x-b_i = 0$, in which case, the RK does not take any step. Hence, RK takes a step in the direction of the negative stochastic gradient. This means that it is equivalent to the Stochastic Gradient Descent (SGD) method. However,  the stepsize choice is very special: RK chooses the stepsize which leads to the point which is closest to $x^*$ in the Euclidean norm. 
   
Later in Section~\ref{subsec:RKvsRCA} in Chapter~\ref{ch:SDA} we give yet another interpretation of the RK method, namely, that the RK method is the equivalent to applying the randomized coordinate descent method to the dual of the least-norm problem.

\subsection{Randomized Coordinate Descent: positive definite case}  \label{C2sec:shs98ss}
 
 If $A$ is symmetric positive definite, then we can choose $B= A$ and $S = e^i$  in~\eqref{ch:two:NF}, which results in 
\begin{equation} \label{eq:CDpdintro}
x^{k+1} \eqdef \arg\min_{x\in \R^n} \norm{x- x^{k}}_A^2 \quad \mbox{subject to} \quad (A_{i:})^\top  x =b_{i},
\end{equation}
where we used the symmetry of $A$ to get $(e^i)^\top A = A_{i:}=(A_{:i})^\top .$
The solution to the above, given by~\eqref{eq:MP}, is 
\begin{equation}\label{eq:09j0s9jsss}
\boxed{x^{k+1} = x^{k} -  \frac{(A_{i:})^\top x^{k}-b_i}{A_{ii}}e^{i}}\end{equation}
\paragraph{Complexity.} When $i$ is chosen randomly, this is the  \emph{Randomized CD} method (CD-pd). Applying Theorem~\ref{theo:convsingleS}, we see the probability distribution $p_i = A_{ii}/\Tr{A}$
results in a convergence with
\begin{equation}\label{eq:CDposconv}
\E {\norm{x^{k} -x^{*} }_{A}^2 } \leq \left(1-  \frac{\lambda_{\min}\left(A \right)}{\Tr{A}}\right)^k \norm{x^{0} - x^{*}}_{A}^2.\end{equation}
  This result was first established by Leventhal and Lewis~\cite{Leventhal2010}.
  
\paragraph{Interpretation.} Using the Constrain-and-Approximate formulation~\eqref{ch:two:RF}, this method can be interpreted as
\begin{equation} \label{eq:CDpdRF}
x^{k+1} =\arg \min \|x-x^*\|_A^2\quad \mbox{ subject to } \quad  x = x^k + y e^i, \quad y \in \R, \end{equation}
with probability $p_i$. Using the identity $Ax^* = b$, it is easy to check that the function $f(x)=\tfrac{1}{2}x^\top  A x - b^\top  x$ satisfies: $\|x-x^*\|_A^2 = 2f(x) + b^\top  x^*$. Therefore,  \eqref{eq:CDpdRF} is equivalent to
\begin{equation} \label{eq:CDpdRFxx}
x^{k+1} =\arg \min f(x) \quad \mbox{ subject to } \quad  x = x^k + y e^i, \quad y\in \R. \end{equation}
The iterates~\eqref{eq:09j0s9jsss} can also be written as
\[x^{k+1} = x^k - \frac{1}{L_i}\nabla_{i} f(x^k) e^i,\]
where $L_i = A_{ii}$ is the Lipschitz constant of the gradient of $f$ corresponding to coordinate $i$ and $\nabla_{i} f(x^k)$ is the $i$th partial derivative of $f$ at $x^k$.

\subsection{Randomized block Kaczmarz}

Our framework also extends to new block formulations of the randomized Kaczmarz method. Let   $R$ be a random subset of $[m]$ and let $S =I_{:R}$ be a column concatenation of the columns of the $m\times m$ identity matrix $I$ indexed by $R$. Further, let $B=I$. Then \eqref{ch:two:NF} specializes to

\[
 x^{k+1}  =  \arg\min_{x\in \R^n} \norm{x- x^{k}}_2^2 \quad \mbox{subject to} \quad  A_{R:}x =  b_{R} .
\]
In view of~\eqref{eq:MP}, this can be equivalently written as 
\begin{equation}\label{eq:blockRK}\boxed{x^{k+1}=x^k - (A_{R:})^\top  (A_{R:} (A_{R:})^\top )^{\dagger}(A_{R:}x^k - b_{R})} \end{equation}

\paragraph{Complexity.} From Theorem~\ref{ch:two:theo:Enormconv} we obtain the following new complexity result:

\[\E{\|x^{k}-x^*\|_2^2} \leq  \left(1-\lambda_{\min}\left(\E{(A_{R:})^\top  (A_{R:} (A_{R:})^\top )^{\dagger}A_{R:}}\right)\right)^k\|x^0-x^*\|^2_2.\]

To obtain a more meaningful convergence rate, we would need to bound the smallest eigenvalue of $\E{(A_{R:})^\top  (A_{R:} (A_{R:})^\top )^{\dagger}A_{R:}}.$ This has been done in~\cite{Needell2012,Needell2014} when the image of $R$ defines a row paving of $A$.  Our framework paves the way for analysing the convergence of new block methods for a large set of possible random subsets $R,$ including, for example, overlapping partitions.

 \subsection{Randomized Newton: positive definite case}
  
 If $A$ is symmetric positive definite, then we can choose $B= A$ and $S = I_{:C}$, a column concatenation of the columns of $I$ indexed by $C$, which is a random subset  of $\{1,\ldots, n\}$.  In view of~\eqref{ch:two:NF}, this results in 
\begin{equation} \label{eq:CDpdblock}
x^{k+1} \eqdef \arg\min_{x\in \R^n} \norm{x- x^{k}}_A^2 \quad \mbox{subject to} \quad (A_{:C})^\top  x =b_{C}.
\end{equation}
In view of \eqref{eq:MP}, we can equivalently write the method as
  \begin{equation} \boxed{ \quad x^{k+1}  \quad = \quad x^k -  I_{:C} ((I_{:C})^\top  A I_{:C})^{-1}  (I_{:C})^\top  (Ax^k - b) \quad }\label{eq:Method1}
 \end{equation}

\paragraph{Complexity.} Clearly, iteration \eqref{eq:Method1} is well defined  as long as $C$ is nonempty with probability 1. Such $C$ is referred to in~\cite{Qu2015} as a ``non-vacuous'' sampling. From Theorem \ref{ch:two:theo:Enormconv} we obtain the following convergence rate:
\begin{align}\label{eq:Method1xxx}
\E { \norm{x^{k} -x^{*}}_{A}^2 } & \leq  \rho^k \|x^0 - x^*\|_A^2 \nonumber  \\
&=
\left(1-  \lambda_{\min}\left( \E{ I_{:C}  ((I_{:C})^\top  A I_{:C})^{-1} (I_{:C})^\top  A} \right)\right)^k \norm{x^{0} - x^{*}}_{A}^2.
\end{align}

The convergence rate of this particular method was first established and studied in \cite{Qu2015}.  Moreover, it was shown in \cite{Qu2015} that $\rho<1$ if one additionally assumes that the probability that $i \in C$ is positive for each column $i\in \{1, \ldots, n\}$, i.e., that $C$ is a ``proper'' sampling.


\paragraph{Interpretation.}   Using  formulation~\eqref{ch:two:RF}, and in view of the equivalence between $f(x)$ and $\|x-x^*\|_A^2$ discussed in Section~\ref{C2sec:shs98ss},  the Randomized Newton method can be equivalently written as
\[x^{k+1} =\arg \min_{x\in \R^n} f(x) \quad \mbox{ subject to } \quad  x = x^k + I_{:C}\, y, \quad y \in \R^{|C|}. \]
The next iterate is determined by advancing from the previous iterate over a subset of coordinates such that $f$ is minimized. Hence, an exact line search is performed in a random $|C|$ dimensional subspace.

Method~\eqref{eq:Method1} was first studied by Qu et al~\cite{Qu2015}, and referred therein as  ``Method~1'', or {\em Randomized Newton Method}. The name comes from the observation that the method inverts random principal submatrices of $A$ and that in the special case when $C=\{1, \ldots, n\}$ with probability 1, it specializes to the Newton method (which in this case converges in a single step). 
The expression $\rho$ defining the convergence rate of this method is rather involved and it is not immediately obvious what is gained by performing a search in a higher dimensional subspace ($|C|>1$) rather than in the one-dimensional subspaces ($|C|=1$), as is standard in the optimization literature. Let us write  $\rho = 1-\sigma_\tau$ in the case when the $C$ is chosen to be a subset of $\{1,\ldots, n\}$ of size $\tau$, uniformly at random. In view of Lemma~\ref{lem:itercomplex}, the method takes $\tilde{O}(1/\sigma_\tau)$ iterations to converge, where the tilde notation suppresses logarithmic terms. It was shown in \cite{Qu2015}  that $1/\sigma_\tau \leq 1/(\tau \sigma_1)$. That is, one can expect to obtain at least {\em superlinear speedup} in $\tau$ --- this is what is gained by moving to blocks / higher dimensional subspaces. For further details and additional properties of the method we refer the reader to \cite{Qu2015}.

\subsection{Randomized Coordinate Descent: least-squares version} 

By choosing $S=Ae^{i} =:A_{:i}$ as the $i$th column of $A$ and $B=A^\top A$, the resulting iterates~\eqref{ch:two:RF} are given by
\begin{equation} \label{eq:CDLSintro}
x^{k+1} = \arg\min_{x\in \R^n} \norm{Ax-b}_2^2 \quad \mbox{ subject to } \quad  x = x^{k} + y \, e^{i}, \quad y \in \R.
\end{equation}
When $i$ is selected at random, this is the Randomized Coordinate Descent method (\emph{CD-LS})  applied to the least-squares problem: $\min_x \|Ax-b\|_2^2$.  Using~\eqref{eq:MP}, these iterations can be calculated with
\begin{equation}\label{eq:098sh98hs} \boxed{x^{k+1} = x^{k} - \frac{(A_{:i})^\top (A x^{k} -b)}{\norm{A_{:i}}_2^2} e^{i} } \end{equation}

\paragraph{Complexity.}
Applying Theorem~\ref{theo:convsingleS}, we see that by selecting $i$ with probability proportional to magnitude of column $i$ of $A$, that is $p_i = \norm{ A_{:i} }_2^2/\norm{A}_F^2$, 
results in a convergence with
\begin{equation}\label{eq:CDLSconv}
\E {\norm{x^{k} -x^{*} }_{A^\top A}^2 } \leq \rho^k \|x^0-x^*\|^2_{A^\top  A} =   \left(1  -  \frac{\lambda_{\min}\left(A^\top  A \right)}{\norm{A}_F^2} \right)^k \norm{x^{0} - x^{*}}_{A^\top A}^2.\end{equation}
  This result was first established by Leventhal and Lewis~\cite{Leventhal2010}.
  
\paragraph{Interpretation.} 
  Using the Constrain-and-Approximate formulation~\eqref{ch:two:RF}, the CD-LS method can be interpreted as
\begin{equation} \label{eq:CDLSRF}
x^{k+1} =\arg \min_{x\in \R^n} \norm{x- x^*}_{A^\top A}^2 \quad \mbox{ subject to } \quad  x = x^k + y e^i , \quad y \in \R.\end{equation}
The  CD-LS method selects a coordinate to advance from the previous iterate $x^k$, then performs an exact minimization of the least squares function over this line.
This is equivalent to applying coordinate descent to the least squares problem $\min_{x\in \R^n} f(x) \eqdef \tfrac{1}{2}\|Ax-b\|_2^2.$ The iterates~\eqref{eq:CDLSintro} can be written as
\[x^{k+1} =x^k -\frac{1}{L_i}\nabla_i f(x^k) e^i,\] 
 where $L_i \eqdef \norm{A_{:i}}_2^2$ is the Lipschitz constant of the gradient 
 corresponding to coordinate $i$ and $\nabla_{i} f(x^k)$ is the $i$th partial derivative of $f$ at $x^k$.

\section{Convergence: General Theory} \label{C2sec:convergence}

We shall present two complexity theorems: we first study the convergence of $\norm{\E{x^{k}-x^*}}_B$ , and then move on to analysing the convergence of $\E{\norm{x^{k}-x^*}}_B$. 
Both theorems depend on the same convergence rate $\rho \in [0,\, 1]$, which we examine in the next section. In particular, we show that $\rho <1$ if and only if $A$ has full column rank. Thus the convergence results in this section only prove that the method convergence when $A$ has full column rank. Later in Chapter~\ref{ch:SDA} we extend these convergence results, and show that $A$ need not have full column rank.

\subsection{The rate of convergence}

All of our convergence theorems (see Table~\ref{ch:two:tab:complexity}) depend on the  convergence rate
\begin{equation}\label{eq:rho}\rho \eqdef 1 - \lambda_{\min}(B^{-1/2}\E{Z}B^{-1/2}).\end{equation} 
To show that the rate is meaningful, in Lemma~\ref{lem:rho1} we prove that $0\leq \rho \leq 1$. We also give an alternative expression and provide a meaningful lower bound for $\rho$.

\begin{lemma}\label{lem:rho1} The quantity $\rho$ defined in~\eqref{eq:rho} satisfies:
\begin{equation} \label{eq:rho_lower}0 \leq 1-\dfrac{\E{d}}{n}\leq \rho \leq 1,\end{equation}
where $\dd=\Rank{S^\top A}$. Furthermore
\begin{equation} \label{eq:rhoopnorm}\rho = \norm{I - B^{-1/2}\E{Z}B^{-1/2}}_2.\end{equation} 
\end{lemma}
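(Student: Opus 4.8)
The plan is to reduce everything to properties of the single random matrix
$W \eqdef B^{-1/2}ZB^{-1/2}$. By Lemma~\ref{ch:one:lem:Z}, $Z$ is symmetric positive semidefinite, so $W$ is symmetric positive semidefinite, and $W$ is a projection with respect to the standard Euclidean geometry (equation~\eqref{eq:B12ZB12proj}); a symmetric positive semidefinite idempotent matrix is an orthogonal projection, so its eigenvalues are all $0$ or $1$, i.e. $0 \preceq W \preceq I$ almost surely. The second ingredient is the trace identity~\eqref{eq:B12ZB12trace}, which together with $\Rank{A^\top S} = \Rank{S^\top A} = d$ gives $\Tr{W} = d$ almost surely.

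First I would prove $0 \le \rho \le 1$. Taking expectations in the (almost sure) Loewner inequalities $0 \preceq W \preceq I$ and using that $v^\top \E{W} v = \E{v^\top W v}$ for every fixed $v \in \R^n$ shows $\E{W}$ is symmetric with $0 \preceq \E{W} \preceq I$, hence all eigenvalues of $\E{W}$ lie in $[0,1]$. In particular $\lambda_{\min}(\E{W}) \in [0,1]$, so $\rho = 1 - \lambda_{\min}(\E{W}) \in [0,1]$, which also makes $\rho$ a well-defined nonnegative quantity as claimed.

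Next, the lower bound $1 - \E{d}/n \le \rho$. Since $\Tr{\E{W}} = \E{\Tr{W}} = \E{d}$ and the smallest eigenvalue of any $n\times n$ symmetric matrix is at most the average of its $n$ eigenvalues, $\lambda_{\min}(\E{W}) \le \tfrac{1}{n}\Tr{\E{W}} = \E{d}/n$; subtracting from $1$ gives $\rho \ge 1 - \E{d}/n$. The leftmost inequality $0 \le 1 - \E{d}/n$ is then immediate from $d = \Rank{S^\top A} \le \min\{q,n\} \le n$, so $\E{d} \le n$.

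Finally, for~\eqref{eq:rhoopnorm}: from $0 \preceq \E{W} \preceq I$ we get $0 \preceq I - \E{W} \preceq I$, so $I - \E{W}$ is symmetric positive semidefinite and its spectral norm equals its largest eigenvalue, namely $\lambda_{\max}(I - \E{W}) = 1 - \lambda_{\min}(\E{W}) = \rho$. I do not anticipate a genuine obstacle here; the only point requiring a word of care is the passage of the Loewner-order bounds through the expectation (monotonicity of $\E{\cdot}$ and preservation of symmetry), which follows entrywise from the definition of the expectation of a random matrix, and noting that $\E{W}$ exists because the entries of an orthogonal projection are bounded in absolute value by $1$.
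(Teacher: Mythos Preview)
Your proposal is correct and follows essentially the same route as the paper: both reduce to properties of $W = B^{-1/2}ZB^{-1/2}$ via Lemma~\ref{ch:one:lem:Z}, use the trace identity~\eqref{eq:B12ZB12trace} together with $\lambda_{\min}(\E{W}) \le \Tr{\E{W}}/n$ for the lower bound, and use symmetry of $I-\E{W}$ for~\eqref{eq:rhoopnorm}. The only stylistic difference is that the paper obtains $\lambda_{\min}(\E{W}),\lambda_{\max}(\E{W})\in[0,1]$ by invoking Jensen's inequality on the convex maps $A\mapsto \lambda_{\max}(A)$ and $A\mapsto -\lambda_{\min}(A)$, whereas you pass the Loewner inequalities $0\preceq W\preceq I$ directly through the expectation via quadratic forms---your argument is slightly more elementary and avoids citing convexity of eigenvalue functionals.
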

\begin{proof}
Recall from Lemma~\ref{ch:one:lem:Z} that $B^{-1/2}ZB^{-1/2}$ is a projection,
whence the spectrum of $B^{-1/2}ZB^{-1/2}$ is contained in $\{0,1\}$. Using this,  combined with the fact that the mapping $A \mapsto \lambda_{\max}(A)$ is convex on the set of symmetric matrices  and  Jensen's inequality, we get
\begin{equation}\label{eq:a89h93a8} \lambda_{\max} (B^{-1/2} \E{Z}B^{-1/2}) \leq\E{\lambda_{\max}(B^{-1/2}ZB^{-1/2})} \leq 1.
\end{equation}
The inequality $\lambda_{\min}(B^{-1/2}\E{Z}B^{-1/2})~\geq~0$ can be shown analogously using convexity of the mapping $A\mapsto -\lambda_{\min}(A)$. Thus, $ \lambda_{\min}(B^{-1/2}\E{Z}B^{-1/2}) \in [0, 1]$, which implies   $0 \leq \rho \leq 1.$
We now refine the lower bound. As the trace of a matrix is equal to the sum of its eigenvalues, we have
\begin{equation}\label{ch:one:eq:traceb1} \E{\Tr{B^{-1/2}ZB^{-1/2}}} =\Tr{\E{B^{-1/2}ZB^{-1/2}}} \geq n \, \lambda_{\min}(\E{B^{-1/2}ZB^{-1/2}}). \end{equation}
From~\eqref{eq:B12ZB12trace} we that have $\Tr{B^{-1/2}ZB^{-1/2}} = \dd.$ 
Thus rewriting~\eqref{ch:one:eq:traceb1} gives $1-\E{\dd}/n \leq \rho.$ Finally, from the symmetry of $Z$ it follows that 
$(I-B^{-1/2}\E{Z}B^{-1/2})$ is symmetric, and consequently
\begin{align*}
\norm{(I-B^{-1/2}\E{Z}B^{-1/2})}_2
&= \lambda_{\max} (I-B^{-1/2}\E{Z}B^{-1/2})\\
&=  \left(1-\lambda_{\min}(B^{-1/2}\E{Z}B^{-1/2})\right)  = \rho.\nonumber
\end{align*}
\end{proof}

The lower bound on $\rho$ in~\eqref{eq:rho_lower} has a natural interpretation which makes intuitive sense. We shall present it from the perspective of the Constrain-and-Approximate formulation~\eqref{ch:two:RF}. As the dimension ($\dd$) of the search space $B^{-1}A^\top S$ increases (see \eqref{eq:dimproj}), the lower bound on $\rho$ decreases, and a faster convergence is possible. For instance, when $S$ is restricted to being a random column vector, as it is in the RK~\eqref{eq:RKiterate}, CD-LS~\eqref{eq:098sh98hs} and CD-pd~\eqref{eq:CDposconv} methods, the convergence rate is bounded with $1 -1/n\leq \rho.$ Using Lemma~\ref{lem:itercomplex}, this translates into the simple iteration complexity bound of $k \geq n \log(1/\epsilon)$. On the other extreme, when the search space is large, then the lower bound  is close to zero, allowing room for the method to be faster.

We now characterize circumstances under which $\rho$ is strictly smaller than one.
\begin{lemma}\label{lem:rho} If $\E{Z}$ is invertible, then $\rho <1$, $A$ has full column rank and $x^*$ is unique.
\end{lemma}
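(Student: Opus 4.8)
The plan is to derive the three conclusions one after another, each being a short consequence of the invertibility of $\E{Z}$ together with the structural facts about $Z$ already recorded in Lemma~\ref{ch:one:lem:Z}. No new machinery is needed.

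First I would establish $\rho<1$. Since $B^{-1/2}$ is nonsingular, invertibility of $\E{Z}$ is equivalent to invertibility of $B^{-1/2}\E{Z}B^{-1/2}=\E{B^{-1/2}ZB^{-1/2}}$ (the map $M\mapsto B^{-1/2}MB^{-1/2}$ is linear, so it commutes with the expectation). By Lemma~\ref{ch:one:lem:Z} each realization $B^{-1/2}ZB^{-1/2}$ is an orthogonal projection, hence symmetric positive semidefinite; taking expectations preserves symmetry and positive semidefiniteness, so $B^{-1/2}\E{Z}B^{-1/2}$ is symmetric positive semidefinite. A symmetric positive semidefinite matrix that is also nonsingular is positive definite, so $\lambda_{\min}(B^{-1/2}\E{Z}B^{-1/2})>0$, whence $\rho=1-\lambda_{\min}(B^{-1/2}\E{Z}B^{-1/2})<1$.

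Next I would show that $A$ has full column rank, i.e. $\Null{A}=\{0\}$. Take any $v\in\Null{A}$. Then $S^\top Av=0$ for every realization of $S$, so from $Z=A^\top S(S^\top AB^{-1}A^\top S)^{\dagger}S^\top A$ we get $Zv=0$ pointwise, and therefore $\E{Z}v=0$. Since $\E{Z}$ is invertible this forces $v=0$, so $\Null{A}=\{0\}$ and $A$ has full column rank. Finally, uniqueness of $x^*$ is immediate: if $Ax^*=b=A\tilde x$, then $x^*-\tilde x\in\Null{A}=\{0\}$, so $x^*=\tilde x$.

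There is essentially no obstacle here: the argument is a direct unwinding of definitions. The only two points worth stating explicitly are the interchange of the expectation with the linear congruence $M\mapsto B^{-1/2}MB^{-1/2}$, and the elementary fact that a positive semidefinite matrix which is invertible is in fact positive definite; both are routine.
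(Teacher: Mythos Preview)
Your proof is correct and follows essentially the same approach as the paper's own proof: positive definiteness of $B^{-1/2}\E{Z}B^{-1/2}$ gives $\rho<1$, the inclusion $\Null{A}\subseteq\Null{\E{Z}}=\{0\}$ gives full column rank, and uniqueness of $x^*$ follows immediately from that. Your version is slightly more explicit in justifying why a nonsingular positive semidefinite matrix is positive definite and in spelling out the uniqueness argument, but the structure and key ideas are identical.
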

\begin{proof}
Assume that $\E{Z}$ is invertible. First, this means that $B^{-1/2}\E{Z}B^{-1/2}$ is positive definite, which in view of \eqref{eq:rho} means that $\rho <1.$ If $A$ did not have full column rank, then there would be $0\neq x\in \R^n$ such that $Ax=0$. However, we then have $Zx = 0$ and also $\E{Z}x=0$, contradicting the assumption that $\E{Z}$ is invertible. Finally, since $A$ has full column rank, $x^*$ must be unique (recall that we assume throughout this chapter that the system $Ax=b$ is consistent).
\end{proof}

\subsection{Exact characterization and norm of  expectation}

We now state a theorem which exactly characterizes the evolution of the expected iterates through a linear fixed point iteration. As a consequence, we obtain a convergence result for the norm of the expected error.  While we do not highlight this in the text, this theorem can be applied to all the particular instances of our general method we detail throughout this chapter.


For any $M\in \R^{n\times n}$ let us define
\begin{equation}\label{eq:specnorm}
\|M\|_B \eqdef \max_{\norm{x}_B=1} \|Mx\|_B.
\end{equation}

\begin{theorem}[Norm of expectation]\label{ch:two:theo:normEconv} For every  $x^*\in \R^n$ satisfying $Ax^*=b$ we have
\begin{equation} \label{eq:Eerror}
\E {x^{k+1} -x^{*}} = \left(I  - B^{-1}\E{Z}\right) \E{x^{k} - x^{*}}.
 \end{equation}
Moreover, the induced $B$-norm of the iteration matrix $I-B^{-1}\E{Z}$ is equal to $\rho$:
\begin{equation}\label{eq:rhofixedpoint} 
\|I-B^{-1}\E{Z}\|_B =  1 - \lambda_{\min}(B^{-1/2}\E{Z}B^{-1/2}) = \rho.\end{equation}
Therefore,
 \begin{equation} \label{ch:two:eq:normEconv}
\norm{\E {x^{k} -x^{*}} }_B \leq\rho^{k} \norm{x^{0} - x^{*}}_B.
 \end{equation}
\end{theorem}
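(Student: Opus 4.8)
The plan is to start from the random fixed point formulation~\eqref{eq:xZupdate}, which states that $x^{k+1}-x^* = (I-B^{-1}Z)(x^k-x^*)$, where $Z$ depends only on the random matrix $S$ drawn in iteration $k$, drawn i.i.d.\ and hence independently of $x^k$. First I would take expectations of both sides conditioned on $x^k$: by independence of the freshly sampled $S$ from $x^k$, the tower property gives $\E{x^{k+1}-x^*} = \E{(I-B^{-1}Z)}\,\E{x^k-x^*} = (I-B^{-1}\E{Z})\,\E{x^k-x^*}$, which is exactly~\eqref{eq:Eerror}. This is the only place where the probabilistic structure enters; the rest is linear algebra.

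Next I would compute the induced $B$-norm of the iteration matrix $M \eqdef I-B^{-1}\E{Z}$. The natural device is the similarity $x \mapsto B^{1/2}x$, which is an isometry from $(\R^n,\norm{\cdot}_B)$ to $(\R^n,\norm{\cdot}_2)$; under it $M$ is conjugated to $B^{1/2}MB^{-1/2} = I - B^{-1/2}\E{Z}B^{-1/2}$, so $\norm{M}_B = \norm{I-B^{-1/2}\E{Z}B^{-1/2}}_2$. Since $Z$ is symmetric (Lemma~\ref{ch:one:lem:Z}), so is $\E{Z}$ and hence so is $I-B^{-1/2}\E{Z}B^{-1/2}$; therefore its spectral norm equals its largest eigenvalue in absolute value. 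Invoking Lemma~\ref{lem:rho1}, specifically~\eqref{eq:rhoopnorm} together with the bounds $0\le\rho\le1$ established there (which guarantee the spectrum of $B^{-1/2}\E{Z}B^{-1/2}$ lies in $[0,1]$, so no absolute value is lost), we get $\norm{M}_B = 1-\lambda_{\min}(B^{-1/2}\E{Z}B^{-1/2}) = \rho$, which is~\eqref{eq:rhofixedpoint}.

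Finally, iterating~\eqref{eq:Eerror} gives $\E{x^k-x^*} = M^k (x^0-x^*)$, and submultiplicativity of the induced $B$-norm yields $\norm{\E{x^k-x^*}}_B \le \norm{M}_B^k \norm{x^0-x^*}_B = \rho^k\norm{x^0-x^*}_B$, which is~\eqref{ch:two:eq:normEconv}. I do not anticipate a genuine obstacle here: the proof is essentially an assembly of~\eqref{eq:xZupdate}, independence of the sampling, the $B^{1/2}$-conjugation trick, and Lemma~\ref{lem:rho1}. The only point requiring a little care is making the conditioning/tower-property argument precise — one should note $x^k$ is measurable with respect to the $\sigma$-algebra generated by $S^0,\ldots,S^{k-1}$ while the $S$ used to form $x^{k+1}$ is independent of that $\sigma$-algebra — but this is routine given the i.i.d.\ assumption stated in Section~\ref{sec:sixviews}.
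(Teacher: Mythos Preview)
Your proposal is correct and follows essentially the same approach as the paper: the paper also derives \eqref{eq:Eerror} via conditional expectation on $x^k$ followed by the tower rule, establishes \eqref{eq:rhofixedpoint} by the $B^{1/2}$-conjugation trick combined with \eqref{eq:rhoopnorm} from Lemma~\ref{lem:rho1}, and then unrolls the recurrence (in the transformed variable $r^k = B^{1/2}(x^k-x^*)$) to obtain \eqref{ch:two:eq:normEconv}. The only cosmetic difference is that the paper writes out the change-of-variables computation for $\|I-B^{-1}\E{Z}\|_B = \|I-B^{-1/2}\E{Z}B^{-1/2}\|_2$ explicitly rather than invoking the isometry argument abstractly.
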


\begin{proof}
Taking expectations conditioned on $x^k$ in~\eqref{eq:xZupdate}, we get
\begin{equation}\label{eq:0suj9sj}\E{x^{k+1}-x^* \;|\; x^k} = (I-B^{-1}\E{Z})(x^k-x^*).\end{equation}
Taking expectation again gives
\begin{eqnarray*}
\E{x^{k+1}-x^*} &=& \E{\E{x^{k+1}-x^* \;|\; x^k}} \\
&\overset{\eqref{eq:0suj9sj}}{=}& \E{(I-B^{-1}\E{Z})(x^k-x^*)} \\ &=& (I-B^{-1}\E{Z})\E{x^k-x^*},\end{eqnarray*}
and thus~\eqref{eq:Eerror} holds. The equivalence~\eqref{eq:rhofixedpoint} follows by 
\begin{eqnarray*}
\rho &\overset{\eqref{eq:rhoopnorm}}{=} &
\norm{I - B^{-1/2}\E{Z}B^{-1/2}}_2 \\
 &=& \max_{\norm{v}_2=1} \norm{(I - B^{-1/2}\E{Z}B^{-1/2})v}_2\\
&\overset{(v=B^{1/2}w)}{=}& \max_{\norm{B^{1/2}w}_2=1}
 \norm{B^{1/2}B^{-1/2}(I - B^{-1/2}\E{Z}B^{-1/2})B^{1/2}w}_2\\
&=&\max_{\norm{w}_B=1} \norm{(I - B^{-1}\E{Z})w}_B = \norm{I - B^{-1}\E{Z}}_B.
\end{eqnarray*}
 For all $k$, define $r^{k} \eqdef B^{1/2}(x^{k}-x^*).$ Left multiplying~\eqref{eq:Eerror} by $B^{1/2}$ gives
\[\E{r^{k+1}} = (I-B^{-1/2}\E{Z}B^{-1/2})\E{r^{k}}.\]
Applying the norms to both sides we obtain the estimate
\begin{equation}\label{eq:normEr}
\norm{\E {r^{k+1}} }_2 \leq \norm{I-B^{-1/2}\E{Z}B^{-1/2}}_2 \, \norm{\E {r^k} }_2. 
\end{equation}
The claim~\eqref{ch:two:eq:normEconv} now follows by observing~\eqref{eq:rhoopnorm},
that $\norm{r_k}_2 = \norm{x^{k}-x^*}_B$ and unrolling the recurrence in~\eqref{eq:normEr}.

%
\end{proof}

\subsection{Expectation of norm}
We now turn to analysing the convergence of the expected norm of the error, for which we need the following technical lemma.
  
\begin{theorem}[Expectation of norm]\label{ch:two:theo:Enormconv} If $\E{Z}$ is positive definite, then
\begin{equation} \label{ch:two:eq:Enormconv}
 \E {\|x^{k} -x^{*} \|_B^2 } \leq \rho^k \norm{x^{0} - x^{*}}_B^2,
 \end{equation}
 where $\rho<1$ is given in~\eqref{eq:rho}.
\end{theorem}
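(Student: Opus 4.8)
The plan is to follow the standard one-step-contraction argument, working in the transformed coordinates $r^k \eqdef B^{1/2}(x^k - x^*)$ so that everything becomes a statement about the orthogonal projection $P \eqdef B^{-1/2} Z B^{-1/2}$ with respect to the standard Euclidean geometry. First I would left-multiply the fixed-point recurrence~\eqref{eq:xZupdate} by $B^{1/2}$ to obtain $r^{k+1} = (I - B^{-1/2} Z B^{-1/2}) r^k = (I-P) r^k$, where the random matrix $P$ depends only on the sketch $S$ drawn at iteration $k$ and is independent of $r^k$. Since $P$ is a projection (Lemma~\ref{ch:one:lem:Z}), $I - P$ is also a projection, and in particular $(I-P)^\top(I-P) = I-P$, so that $\norm{r^{k+1}}_2^2 = (r^k)^\top (I-P) r^k$. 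This is exactly the place where identity~\eqref{eq:B12ZB12proj} is used.

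Next I would take expectation conditioned on $x^k$ (equivalently on $r^k$). Using independence of the current sketch from $r^k$,
\[
\E{\norm{r^{k+1}}_2^2 \mid r^k} = (r^k)^\top \left(I - \E{B^{-1/2} Z B^{-1/2}}\right) r^k
\le \lambda_{\max}\!\left(I - \E{B^{-1/2} Z B^{-1/2}}\right) \norm{r^k}_2^2.
\]
By Lemma~\ref{lem:rho1}, specifically~\eqref{eq:rhoopnorm} together with the symmetry of $I - B^{-1/2}\E{Z}B^{-1/2}$, the largest eigenvalue equals $1 - \lambda_{\min}(B^{-1/2}\E{Z}B^{-1/2}) = \rho$. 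Under the hypothesis that $\E{Z}$ is positive definite, Lemma~\ref{lem:rho} gives $\rho < 1$. Hence $\E{\norm{r^{k+1}}_2^2 \mid r^k} \le \rho \norm{r^k}_2^2$.

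Finally I would take the full expectation and apply the tower property to get $\E{\norm{r^{k+1}}_2^2} \le \rho\, \E{\norm{r^k}_2^2}$, then unroll the recursion to obtain $\E{\norm{r^k}_2^2} \le \rho^k \norm{r^0}_2^2$. Translating back via $\norm{r^k}_2 = \norm{x^k - x^*}_B$ yields~\eqref{ch:two:eq:Enormconv}. I do not anticipate a serious obstacle here: the only subtlety is making sure the conditional-expectation/independence step is stated cleanly (the sketch $S_k$ at step $k$ is drawn i.i.d.\ and independently of the history, so $P_k$ is independent of $r^k$), and correctly invoking~\eqref{eq:B12ZB12proj} to turn the squared norm into the quadratic form $(r^k)^\top(I-P)r^k$ rather than $(r^k)^\top (I-P)^2 r^k$ — though since $I-P$ is idempotent these coincide anyway. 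The bound $\rho < 1$ is not actually needed for the inequality~\eqref{ch:two:eq:Enormconv} itself, only for it to imply genuine convergence, but it is recorded as part of the statement and follows immediately from Lemma~\ref{lem:rho}.
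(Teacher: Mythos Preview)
Your proposal is correct and follows essentially the same argument as the paper: pass to $r^k = B^{1/2}(x^k - x^*)$, use that $B^{-1/2}ZB^{-1/2}$ is a projection (via~\eqref{eq:B12ZB12proj}) to turn $\|r^{k+1}\|_2^2$ into the quadratic form $(r^k)^\top(I - B^{-1/2}ZB^{-1/2})r^k$, take conditional expectation, bound by $\rho$ using~\eqref{eq:rhoopnorm}, then tower and unroll. The only cosmetic difference is that the paper bounds the quadratic form by the operator norm while you bound it by the largest eigenvalue, which coincide here by symmetry.
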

\begin{proof}
Let $r^k = B^{1/2}(x^k-x^*)B^{1/2}$. Taking expectation in~\eqref{eq:xZupdate} conditioned on $r^k$ gives
\begin{eqnarray*}
\E{\|r^{k+1}\|_2^2 \,\ | \,\ r^k} &\overset{\eqref{eq:xZupdate}}{=} & \E{\|(I-B^{-1/2}ZB^{-1/2})r^k\|_2^2 \,\ | \,\ r^k }  \\
&\overset{\eqref{eq:B12ZB12proj}}{=}&  \left< (I-B^{-1/2}\E{Z}B^{-1/2})r^k,r^k\right>  \nonumber\\
& \leq & \norm{I-B^{-1/2}\E{Z}B^{-1/2}}_2 \, \|r^k\|_2^2.
\end{eqnarray*}
Using~\eqref{eq:rhoopnorm}, taking expectation again and unrolling the recurrence gives the result. 
\end{proof}

The convergence rate $\rho$ of the expected norm of the error is ``worse'' than the $\rho^2$ rate of convergence of the norm of the expected error in Theorem~\ref{ch:two:theo:normEconv}.
 This should not be misconstrued as Theorem~\ref{ch:two:theo:normEconv} offering a ``better'' convergence rate than Theorem~\ref{ch:two:theo:Enormconv}, because, as explained in Lemma~\ref{lem:convrandvar}, convergence of the expected norm of the error is a stronger type of convergence. More importantly, the exponent is not of any crucial importance; clearly, an exponent of $2$ manifests itself only in halving the number of iterations (see Lemma~\ref{lem:itercomplex}).

\section{Methods Based on Discrete Sampling}\label{C2sec:discrete}
When $S$ has a discrete distribution, we can establish under reasonable assumptions when $\E{Z}$ is positive definite (Proposition~\ref{pro:Ediscrete}),  we can optimize the convergence rate in terms of the chosen probability distribution, and finally, determine a  probability distribution for which the convergence rate is expressed in terms of the scaled condition number (Theorem~\ref{theo:convsingleS}).


\begin{assumption}\label{ass:complete}
The random matrix $S$ has a discrete distribution. In particular,  $S= S_i \in \R^{m \times q_i}$ with probability  $p_i>0$, $\sum_{i=1}^r p_i =1$, where $S_i^\top A$ has full row rank and $q_i \in \N,$ for $i=1,\ldots, r$. Furthermore $\mathbf{S} \eqdef [S_1, \ldots, S_r] \in \R^{m\times \sum_{i=1}^r q_i}$
is such that $A^\top \mathbf{S}$ has full row rank.
\end{assumption}

For simplicity, sampling $S$ satisfying the above assumption will be called a {\em complete discrete sampling}. We now give an example of such a sampling. If $A$ has full column rank  and each row of $A$ is not strictly zero, $S =e^i$ with probability $p_i =1/n$, for $i =1,\ldots, n,$ then $\mathbf{S} =I$ and $S$ is a complete discrete sampling. In fact, from any basis of $\R^n$ we can construct a complete discrete sampling in an analogous way. 

When $S$ is a complete discrete sampling, then $S^\top A$ has full row rank and 
\[(S^\top  A B^{-1}A^\top  S)^{\dagger}=(S^\top  A B^{-1}A^\top  S)^{-1}.\] Therefore we replace the pseudoinverse in~\eqref{eq:MP} and~\eqref{eq:xZupdate} by the inverse.
 Furthermore, using a complete discrete sampling guarantees convergence of the resulting method.

\begin{proposition}\label{pro:Ediscrete} 
If $S$ is a complete discrete sampling,   $\E{Z}$ is positive definite.
\end{proposition}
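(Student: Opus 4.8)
The plan is to write $\E{Z}$ as a convex combination of the per-realization matrices $Z_i$, to observe that each summand is symmetric positive semidefinite, and then to show that no nonzero vector is annihilated by all of them simultaneously; positivity of the probabilities will then upgrade ``positive semidefinite'' to ``positive definite''.

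First I would record that, since $S$ is a complete discrete sampling, each $S_i^\top A$ has full row rank, so (as already noted just before the statement, or via Lemma~\ref{lem:WGW} with $W=A^\top S_i$ and $G=B^{-1}$) the matrix $S_i^\top A B^{-1}A^\top S_i$ is nonsingular. Hence
\[
\E{Z} \;=\; \sum_{i=1}^r p_i Z_i, \qquad Z_i \eqdef A^\top S_i \bigl(S_i^\top A B^{-1}A^\top S_i\bigr)^{-1} S_i^\top A, \qquad p_i>0 .
\]
Each $Z_i$ is exactly the matrix $Z_M$ of Proposition~\ref{prop:decomposition} for $M=S_i^\top A$, hence by Lemma~\ref{ch:one:lem:Z} it is symmetric positive semidefinite; therefore $\E{Z}$ is symmetric positive semidefinite, and it only remains to check that $\Null{\E{Z}}=\{0\}$.

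Next I would fix $x\in\R^n$ with $x^\top \E{Z}x=0$. Since $p_i>0$ and $x^\top Z_i x\ge 0$ for every $i$, necessarily $x^\top Z_i x=0$ for all $i$. Applying~\eqref{eq:iuhiuhppss} of Proposition~\ref{prop:decomposition} with $M=S_i^\top A$ gives $x^\top Z_i x = \|s_i(x)\|_B^2$, where $x=s_i(x)+t_i(x)$ is the $B$-orthogonal decomposition with $s_i(x)\in\myRange{B^{-1}A^\top S_i}$ and $t_i(x)\in\Null{S_i^\top A}$. Because $B$ is positive definite, $\|s_i(x)\|_B^2=0$ forces $s_i(x)=0$, i.e. $x=t_i(x)\in\Null{S_i^\top A}$. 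As this holds for all $i$,
\[
x \;\in\; \bigcap_{i=1}^r \Null{S_i^\top A} \;=\; \Null{\mathbf{S}^\top A},
\]
the last equality because $\mathbf{S}^\top A$ is the vertical stacking of the blocks $S_i^\top A$.

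Finally, Assumption~\ref{ass:complete} says $A^\top\mathbf{S}$ has full row rank, so its transpose $\mathbf{S}^\top A$ has full column rank, whence $\Null{\mathbf{S}^\top A}=\{0\}$ and thus $x=0$. This shows $x^\top\E{Z}x>0$ for every $x\neq 0$, so $\E{Z}$ is positive definite. The only mildly delicate point is bookkeeping about which matrix is ``full row rank'' versus ``full column rank'' when moving between $A^\top\mathbf{S}$, $\mathbf{S}^\top A$, and the individual blocks; the rest is a direct assembly of Proposition~\ref{prop:decomposition} and Lemma~\ref{ch:one:lem:Z}.
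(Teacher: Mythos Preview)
Your proof is correct, but it takes a different route from the paper. The paper instead produces an explicit factorization: defining the block-diagonal matrix
\[
D \eqdef \mbox{diag}\left(\sqrt{p_1}(S_1^\top A B^{-1}A^\top S_1)^{-1/2},\ldots,\sqrt{p_r}(S_r^\top A B^{-1}A^\top S_r)^{-1/2}\right),
\]
it computes
\[
\E{Z} \;=\; \sum_{i=1}^r p_i\, A^\top S_i (S_i^\top A B^{-1}A^\top S_i)^{-1} S_i^\top A \;=\; (A^\top \mathbf{S} D)(D\mathbf{S}^\top A),
\]
and concludes positive definiteness directly from the full-row-rank assumption on $A^\top\mathbf{S}$ and the invertibility of $D$.

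Your nullspace argument is arguably cleaner as a standalone proof, and it avoids introducing the auxiliary matrix $D$. The paper's route, however, buys more than the proposition itself: the identity $\E{Z} = A^\top \mathbf{S} D^2 \mathbf{S}^\top A$ is reused verbatim in the proof of Theorem~\ref{theo:convsingleS} to derive the interpretable convergence rate~\eqref{ch:one:eq:rhoconv}. So while both arguments establish the proposition, the factorization is doing double duty downstream.
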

\begin{proof}
Let 
\begin{equation}\label{eq:defD}
D \eqdef	 \mbox{diag}\left( \sqrt{p_1}((S_1)^\top  A B^{-1}A^\top  S_1)^{-1/2}, \ldots, 
\sqrt{p_r}((S_r)^\top  A B^{-1}A^\top  S_r)^{-1/2}\right)
\end{equation}
which is a block diagonal matrix, and is well defined and invertible as $S_i^\top  A$  has full row rank for $i=1,\ldots, r$.
 Taking the expectation of $Z$~\eqref{eq:Z-first} gives
\begin{align}
\E{Z} &= \sum_{i=1}^r A^\top  S_i (S_i^\top  A B^{-1}A^\top  S_i)^{-1}S_i^\top  A p_i \nonumber \\
&=  A^\top \left(\sum_{i=1}^r   S_i \sqrt{p_i}(S_i^\top  A B^{-1}A^\top  S_i)^{-1/2} (S_i^\top  A B^{-1}A^\top  S_i)^{-1/2}  \sqrt{p_i}S_i^\top  \right) A \nonumber \\
&= \left(  A^\top  \mathbf{S} D\right)  \left( D \mathbf{S}^\top   A\right), \label{ch:one:eq:EZdiscrete}
\end{align} which is positive definite because $A^\top \mathbf{S}$ has full row rank and $D$ is invertible. 
\end{proof}

 With $\E{Z}$ positive definite, we can apply the convergence Theorem~\ref{ch:two:theo:normEconv} and~\ref{ch:two:theo:Enormconv}, and the resulting method converges.
 
 
 \subsection{Optimal probabilities}\label{ch:two:sec:optprob}
 
We can choose the discrete probability distribution that
 optimizes the convergence rate. For this, according to Theorems~\ref{ch:two:theo:Enormconv} and~\ref{ch:two:theo:normEconv} we need to find $p=(p_1,\dots,p_r)$ that maximizes the minimal eigenvalue of $B^{-1/2}\E{Z}B^{-1/2}$.
Let $S$ be a complete discrete sampling and fix the sample matrices $S_1,\dots, S_r$. Let us denote $Z=Z(p)$ as a function of $p=(p_1,\dots,p_r)$.  Then we can also think of the spectral radius as a function of $p$ where \[\rho(p) = 1 - \lambda_{\min}(B^{-1/2}\E{Z(p)}B^{-1/2}).\]

If we let $\Delta_r =\left\{p = (p_1,\dots,p_r) \in \R^r \;:\; \sum_{i=1}^r p_i =1, \; p\geq 0\right\}$, the problem of minimizing the spectral radius (i.e., optimizing the convergence rate) can be written as
 \[\rho^* \quad \eqdef\quad \min_{p\in \Delta_r} \rho(p) \quad = \quad 1 - \max_{p\in \Delta_r} \lambda_{\min} (B^{-1/2}\E{Z(p)}B^{-1/2}).\]
 This can be cast as a convex optimization problem, by first re-writing 
\begin{align*}
B^{-1/2}\E{Z(p)}B^{-1/2} &= \sum_{i=1}^r p_i \left(B^{-1/2}A^\top  S_i (S_i^\top  A B^{-1}A^\top  S_i)^{-1}S_i^\top  AB^{-1/2}  \right)\\
	 &= \sum_{i=1}^r p_i \left(V_i (V_i^\top  V_i)^{-1}V_i^\top   \right),
\end{align*} 
 where $V_i = B^{-1/2}A^\top  S_i.$ Thus
\begin{equation}\label{eq:opt_sampling}\rho^* \quad = \quad 1 - \max_{p\in \Delta_r} \lambda_{\min}  \left( \sum_{i=1}^r p_i V_i (V_i^\top V_i)^{-1} V_i^\top   \right).\end{equation} 
 To obtain $p$ that maximizes the smallest eigenvalue, we solve 
 \begin{align}
 \max_{p,t} \,\, &\quad t  \nonumber \\
 \mbox{subject to}& \quad \sum_{i=1}^r p_i \left(V_i (V_i^\top  V_i)^{-1}V_i^\top   \right) \succeq t\cdot  I, \label{eq:optconv}\\
 & \quad p \in \Delta_r. \nonumber
 \end{align}
 Despite~\eqref{eq:optconv} being a convex semi-definite program{\footnote{When preparing a revision of the paper on which this chapter is based, we have learned about the existence of prior work~\cite{Dai2014} where the authors have also characterized the probability distribution that optimizes the convergences rate of the RK method as the solution to an SDP.}, which is apparently a harder problem than solving the original linear system, investing the time into solving~\eqref{eq:optconv} using a solver for convex conic programming such as \texttt{cvx}~\cite{cvx} can pay off, as we show in Section~\ref{C2sec:numopt}. Though for a practical method based on this, we would need to develop an approximate solution to~\eqref{eq:optconv} which can be efficiently calculated.

 \subsection{Convenient probabilities} \label{ch:two:sec:convprob}

Next we develop a choice of probability distribution that yields a convergence rate that is easy to interpret. This result is an extension 
of Strohmer and Vershynin's~\cite{Strohmer2009} non-uniform probability distribution for the Kaczmarz method. Our extension includes a wide range of methods, such as the randomized Kaczmarz, randomized coordinate descent, as well as their block variants. However, it is more general, and covers many other possible particular algorithms, which arise by choosing a particular set of sample matrices $S_i$, for $i=1,\ldots, r.$

\begin{theorem} \label{theo:convsingleS}  Let $S$ be a complete discrete sampling such that $S = S_i \in \R^{m}$ with probability 
\begin{equation}\label{ch:one:eq:convprob}p_i~=~\dfrac{\Tr{S_i^\top  AB^{-1}A^\top S_i}}{\norm{B^{-1/2}A^\top \mathbf{S}}_F^2},\quad \mbox{for } \quad i=1,\ldots, { r}.
\end{equation}
Then the iterates~\eqref{eq:MP} satisfy 
\begin{equation} \label{eq:expnormcon}
\E{\norm{x^{k} -x^{*} }_B^2} \leq \rho_c^k \,\norm{x^{0} - x^{*}}_B^2,\end{equation}
where
\begin{equation}\label{ch:one:eq:rhoconv} \rho_c = 1- \frac{\lambda_{\min}\left(\mathbf{S}^\top  A B^{-1}A^\top  \mathbf{S} \right)}{\norm{B^{-1/2}A^\top \mathbf{S}}_F^2}.
\end{equation}
\end{theorem}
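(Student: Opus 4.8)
The plan is to verify the hypotheses of Theorem~\ref{ch:two:theo:Enormconv} for this specific choice of probabilities and then simply read off the constant $\rho_c$ from the general rate $\rho = 1 - \lambda_{\min}(B^{-1/2}\E{Z}B^{-1/2})$. Since $S$ is assumed to be a complete discrete sampling, Proposition~\ref{pro:Ediscrete} already guarantees $\E{Z}$ is positive definite, so Theorem~\ref{ch:two:theo:Enormconv} applies and we obtain $\E{\norm{x^k-x^*}_B^2} \leq \rho^k \norm{x^0-x^*}_B^2$ with $\rho = 1-\lambda_{\min}(B^{-1/2}\E{Z}B^{-1/2})$. Hence the entire task reduces to showing that with the probabilities in~\eqref{ch:one:eq:convprob} we have
\[
\lambda_{\min}\!\left(B^{-1/2}\E{Z}B^{-1/2}\right) \;\geq\; \frac{\lambda_{\min}\!\left(\mathbf{S}^\top A B^{-1}A^\top \mathbf{S}\right)}{\norm{B^{-1/2}A^\top \mathbf{S}}_F^2}.
\]

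First I would note that since each $S_i \in \R^m$ is a single column, $S_i^\top A B^{-1}A^\top S_i$ is a nonnegative scalar, and $\Tr{S_i^\top A B^{-1}A^\top S_i} = \norm{B^{-1/2}A^\top S_i}_2^2$; also $\sum_i \norm{B^{-1/2}A^\top S_i}_2^2 = \norm{B^{-1/2}A^\top \mathbf{S}}_F^2$, so the $p_i$ in~\eqref{ch:one:eq:convprob} indeed sum to one and are positive (the denominator is nonzero since $A^\top\mathbf{S}$ has full row rank). Next, I would compute $\E{Z}$ as in the proof of Proposition~\ref{pro:Ediscrete}: with the vector $S_i$, $Z = A^\top S_i (S_i^\top A B^{-1}A^\top S_i)^{-1}S_i^\top A$, so
\[
\E{Z} = \sum_{i=1}^r p_i \,\frac{A^\top S_i S_i^\top A}{\norm{B^{-1/2}A^\top S_i}_2^2}.
\]
Plugging in $p_i = \norm{B^{-1/2}A^\top S_i}_2^2 / \norm{B^{-1/2}A^\top \mathbf{S}}_F^2$, the scalar denominators cancel and we get the clean formula
\[
\E{Z} \;=\; \frac{1}{\norm{B^{-1/2}A^\top \mathbf{S}}_F^2}\sum_{i=1}^r A^\top S_i S_i^\top A \;=\; \frac{A^\top \mathbf{S}\mathbf{S}^\top A}{\norm{B^{-1/2}A^\top \mathbf{S}}_F^2}.
\]
This cancellation is the key computational point, and it is precisely what the probabilities in~\eqref{ch:one:eq:convprob} were engineered to produce.

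It then remains to estimate $\lambda_{\min}(B^{-1/2}\E{Z}B^{-1/2}) = \lambda_{\min}\big(B^{-1/2}A^\top \mathbf{S}\mathbf{S}^\top A B^{-1/2}\big)/\norm{B^{-1/2}A^\top \mathbf{S}}_F^2$. Writing $V \eqdef B^{-1/2}A^\top \mathbf{S}$, the matrices $VV^\top$ and $V^\top V = \mathbf{S}^\top A B^{-1}A^\top \mathbf{S}$ have the same nonzero eigenvalues; since $A^\top \mathbf{S}$ has full row rank, $V$ has full row rank, so $VV^\top$ is nonsingular and $\lambda_{\min}(VV^\top) = \lambda_{\min}^+(V^\top V) = \lambda_{\min}(\mathbf{S}^\top A B^{-1}A^\top \mathbf{S})$ (the last eigenvalue is positive by the full row rank assumption on $A^\top\mathbf{S}$). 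Also $\norm{V}_F^2 = \norm{B^{-1/2}A^\top \mathbf{S}}_F^2$. Substituting gives $\lambda_{\min}(B^{-1/2}\E{Z}B^{-1/2}) = \lambda_{\min}(\mathbf{S}^\top A B^{-1}A^\top \mathbf{S})/\norm{B^{-1/2}A^\top \mathbf{S}}_F^2$, so $\rho = \rho_c$ exactly (not just an inequality), and~\eqref{eq:expnormcon} follows from Theorem~\ref{ch:two:theo:Enormconv}.

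The main obstacle is essentially bookkeeping rather than a genuine difficulty: one must be careful that $S_i$ being a vector makes $S_i^\top A B^{-1}A^\top S_i$ a scalar (so the ``inverse'' is genuine, not a pseudoinverse, consistent with the complete discrete sampling assumption), and one must correctly invoke the identity relating the nonzero spectra of $VV^\top$ and $V^\top V$ together with the full-rank hypothesis to conclude $\lambda_{\min}(VV^\top)$ equals the smallest (necessarily nonzero) eigenvalue of $V^\top V$. Everything else is a direct substitution into Theorem~\ref{ch:two:theo:Enormconv}.
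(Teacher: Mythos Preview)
Your proposal is correct and follows essentially the same route as the paper: compute $\E{Z}$ explicitly (the paper does this via the block-diagonal matrix $D$ from Proposition~\ref{pro:Ediscrete}, whereas you exploit the fact that each $S_i$ is a vector to see directly that $D^2$ collapses to a scaled identity and $\E{Z} = A^\top\mathbf{S}\mathbf{S}^\top A/\norm{B^{-1/2}A^\top\mathbf{S}}_F^2$), then bound $\lambda_{\min}(B^{-1/2}\E{Z}B^{-1/2})$ and invoke Theorem~\ref{ch:two:theo:Enormconv}. The paper writes the argument for general block $S_i$ and obtains the inequality $\rho\leq\rho_c$, remarking afterwards that equality holds in the single-vector case---precisely what you establish directly; one small caveat is that your claim $\lambda_{\min}^+(V^\top V)=\lambda_{\min}(\mathbf{S}^\top AB^{-1}A^\top\mathbf{S})$ requires $A^\top\mathbf{S}$ to also have full column rank (otherwise only $\geq$ holds), but this does not affect the theorem since only $\rho\leq\rho_c$ is needed.
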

\begin{proof}
Let $t_i = \Tr{S_i^\top  A B^{-1}A^\top  S_i}$, and with~\eqref{ch:one:eq:convprob} in~\eqref{eq:defD} we have
\[ D^2 =\frac{1}{\norm{B^{-1/2}A^\top \mathbf{S}}_F^2}\mbox{diag}\left(t_1 (S_1^\top  A B^{-1}A^\top  S_1)^{-1}, \ldots,
t_r (S_r^\top  A B^{-1}A^\top  S_r)^{-1}\right),
 \]
thus
\begin{equation}\label{eq:Dlambda} \lambda_{\min}(D^2) = \frac{1}{\norm{B^{-1/2}A^\top \mathbf{S}}_F^2}\min_i\left\{ \frac{t_i}{\lambda_{\max}(S_i^\top  A B^{-1}A^\top  S_i)} \right\} \geq \frac{1}{\norm{B^{-1/2}A^\top \mathbf{S}}_F^2}. 
\end{equation}
Applying the above in~\eqref{ch:one:eq:EZdiscrete} gives
\begin{align}
\lambda_{\min}\left(B^{-1/2}\E{Z}B^{-1/2} \right)
&= \lambda_{\min}\left(B^{-1/2}  A^\top  \mathbf{S} D^2 \mathbf{S}^\top   A B^{-1/2}\right) \nonumber\\
&= \lambda_{\min}\left(  \mathbf{S}^\top   A B^{-1} A^\top  \mathbf{S} D^2\right) \nonumber\\
& \geq  \lambda_{\min}\left(\mathbf{S}^\top   A B^{-1} A^\top  \mathbf{S}  \right)\lambda_{\min}(D^2) \label{eq:prodpdeig}\\
& \geq  \frac{\lambda_{\min}\left(\mathbf{S}^\top   A B^{-1} A^\top  \mathbf{S}  \right)}{\norm{B^{-1/2}A^\top \mathbf{S}}_F^2}, \nonumber
\end{align} 
where in the first step we used the fact that for arbitrary matrices $B,C$ of appropriate sizes, $\lambda_{\min}(BC)=\lambda_{\min}(CB)$, and in the first inequality the fact that if $B,C \in \R^{n\times n}$ are positive definite, then $\lambda_{\min}(BC) \geq \lambda_{\min}(B)\lambda_{\min}(C)$. Finally
\begin{equation}\label{eq:convrhobound}
1 - \lambda_{\min}\left(B^{-1/2}\E{Z}B^{-1/2} \right) \leq 1 - \frac{\lambda_{\min}\left(\mathbf{S}^\top   A B^{-1} A^\top  \mathbf{S}  \right)}{\norm{B^{-1/2}A^\top \mathbf{S}}_F^2}.\end{equation}
The result~\eqref{eq:expnormcon} follows by applying Theorem~\ref{ch:two:theo:Enormconv}.
\end{proof}

The convergence rate $\lambda_{\min}\left(\mathbf{S}^\top   A B^{-1} A^\top  \mathbf{S}\right)/ \norm{B^{-1/2}A^\top \mathbf{S}}_F^2$ is known as the scaled condition number, and naturally appears in other numerical schemes, such as matrix inversion~\cite{Edelman1992,Demmel1988}. When $S_i =s_i \in \R^n$ is a column vector then \[p_i~=~\left((s_i)^\top  AB^{-1}A^\top  s_i\right)/\norm{B^{-1/2}A^\top \mathbf{S}}_F^2,\] for $i=1,\ldots r.$ In this case, the bound~\eqref{eq:Dlambda} is an equality and $D^2$ is a scaled identity, so~\eqref{eq:prodpdeig} and consequently~\eqref{eq:convrhobound} are equalities. For block methods, it is different story, and there is much more slack in the inequality~\eqref{eq:convrhobound}. So much so, 
the convergence rate~\eqref{ch:one:eq:rhoconv} does not indicate any advantage of using a block method (contrary to numerical experiments).
  To see the advantage of a block method, we need to use the exact expression for $\lambda_{\min}(D^2)$ given in~\eqref{eq:Dlambda}. Though this results in a somewhat harder to interpret convergence rate, one could use a so called \emph{matrix paving} to explore this convergence rate,  as was done for the block Kaczmarz method (see~\cite{Needell2014,Needell2012} for more details).

By appropriately choosing $B$ and $S$, this theorem applied to RK method~\eqref{eq:RKintro}, the CD-LS method~\eqref{eq:CDLSintro} and the CD-pd method~\eqref{eq:CDpdintro},  yields the convergence results~\eqref{eq:RKconv},~\eqref{eq:CDLSconv} and~\eqref{eq:CDposconv}, respectively, for single column sampling or block methods alike.

This theorem also suggests a preconditioning strategy, in that, a faster convergence rate will be attained if $\mathbf{S}$ is an approximate inverse of $B^{-1/2}A^\top .$ For instance, in the RK method where $B=I$, this suggests that an accelerated convergence can be attained if $S$ is a random sampling of the rows of a preconditioner (approximate inverse) of $A.$

 \section{Methods Based on Gaussian Sampling} \label{C2sec:gauss}

In this section we shall describe variants of our method in the case when $S$ is a Gaussian  vector with mean $0\in \R^m$ and  a positive definite covariance matrix $\Sigma\in \R^{m \times m}$.  That is, $S =\zeta \sim N(0,\Sigma)$. This applied to~\eqref{eq:MP} results in iterations of the form
\begin{equation}\label{eq:gaussupdate}
\boxed{x^{k+1} = x^{k}  - \frac{\zeta ^\top (A x^{k}-b)}{\zeta^\top AB^{-1}A^\top \zeta } B^{-1}A^\top \zeta}
\end{equation}

Unlike the discrete methods in Section~\ref{C2sec:examples},  to calculate an iteration of~\eqref{eq:gaussupdate} we need to compute the product of a matrix with a dense vector $\zeta$.
This significantly raises the cost of an iteration. Though in our numeric tests in Section~\ref{C2sec:numerics}, the faster convergence of the Gaussian method often pays off for their high iteration cost. 

To analyze the complexity of the resulting method let $\xi \eqdef B^{-1/2}A^\top  S,$
which is also Gaussian, distributed as $\xi\sim N(0, \Omega)$,
where $\Omega\eqdef B^{-1/2}A^\top  \Sigma A B^{-1/2}.$
In this section we  assume {{\em $A$ has full column rank}, so that $\Omega$ is always positive definite. The complexity of the method can be established through
\begin{eqnarray} 
\rho &=& 1 - \lambda_{\min}\left(\E{ B^{-1/2} ZB^{-1/2}}\right) 
 =  1- \lambda_{\min}\left(\E{\frac{\xi\xi^\top }{\norm{\xi}^2_2}}\right).
 \end{eqnarray}
 We can simplify the above by using the lower bound
 \[ \E{\frac{\xi\xi^\top }{\norm{\xi}^2_2}} \succeq \frac{2}{\pi}\frac{\Omega}{\Tr{\Omega}},\] 
which is proven in Lemma~\ref{lem:gaussdiag} in the Appendix of this chapter. Thus  
 \begin{equation}\label{eq:rhoboundgauss} 1- \frac{1}{n} \leq \rho \leq 1 -\frac{2}{\pi}\frac{\lambda_{\min}(\Omega)}{\Tr{\Omega}},
 \end{equation}
 where we used the general lower bound in~\eqref{eq:rho_lower}.
 Lemma~\ref{lem:gaussdiag} also shows that $\E{\xi\xi^\top /\norm{\xi}^2_2}$ is positive definite,  thus Theorem~\ref{ch:two:theo:Enormconv} guarantees that the expected norm of the error of all Gaussian methods converges exponentially to zero. This bound is tight upto a constant factor. Indeed,  if $A=I=\Sigma$ then $\xi \sim N(0,I)$ and $\E{\xi\xi^\top /\norm{\xi}^2_2} = \tfrac{1}{n} I,$  which yields \[1-\dfrac{1}{n}\leq \rho  \leq 1- \dfrac{2}{\pi }\cdot \dfrac{1}{n}.\]

 When $n=2$, then in  
Lemma~\ref{lem:2Dgausscov} of the Appendix of this chapter we prove that
\[\E{\frac{\xi\xi^\top }{\norm{\xi}^2_2}} = \frac{\Omega^{1/2}}{\Tr{\Omega^{1/2}}},\]
which yields a very favourable convergence rate. 


\subsection{Gaussian Kaczmarz}\label{C2sec:gaussI}
Let $B=I$ and choose $\Sigma =I$ so that $S=\eta\sim N(0,I)$. Then~\eqref{eq:gaussupdate}  has the form
\begin{equation} \label{eq:gaussrkupdate}
\boxed{ x^{k+1} = x^{k} 
- \frac{\eta^\top  (Ax^{k}-b)}{\|A^\top  \eta\|_2^2} A^\top  \eta } \end{equation}
which we call the \emph{Gaussian Kaczmarz} (GK) method, for it is the analogous method to the Randomized Kaczmarz method in the discrete setting.  Using the formulation~\eqref{ch:two:RF},  for instance, the GK method can be interpreted as
\[x^{k+1} =\arg \min_{x\in \R^n} \norm{x- x^{*}}^2 \quad \mbox{ subject to } \quad  x = x^{k} + A^\top \eta y, \quad y\in \R. \]
Thus at each iteration, a random normal Gaussian vector $\eta$ is drawn and a search direction is formed by $A^\top \eta.$
 Then, starting from the previous iterate $x^{k}$,  an exact line search is performed over this search direction so that the euclidean distance from the optimal is minimized.

\subsection{Gaussian least-squares} \label{C2sec:gaussATA}
Let $B=A^\top A$ and choose $S\sim N(0,\Sigma)$ with $\Sigma=AA^\top $. It will be convenient to write $S=A\eta$, where $\eta\sim N(0,I)$. Then method \eqref{eq:gaussupdate} then has the form
\begin{equation} \label{eq:gausslsupdate}
\boxed{x^{k+1}  = x^{k} - \frac{\eta^\top  A^\top (Ax^{k} -b)}{\|A\eta\|_2^2} \eta}
 \end{equation}
 which we call the \emph{Gauss-LS} method. This method has a natural interpretation through formulation~\eqref{ch:two:RF} as
\[x^{k+1} =\arg \min_{x\in \R^n} \frac{1}{2}\|Ax-b\|_2^2 \quad \mbox{ subject to } \quad  x = x^{k} + y\eta, \quad y \in \R. \]
 That is, starting from $x^{k}$, we take a step in a random (Gaussian) direction, then perform an exact line search over this direction that minimizes the least squares error.
Thus the Gauss-LS method is the same as applying the Random Pursuit method~\cite{Stich2014} with exact line search to the Least-squares function.

\subsection{Gaussian positive definite}\label{C2sec:gaussA}
When $A$ is positive definite, we achieve an accelerated Gaussian method. 
Let $B =A$ and choose $S= \eta \sim N(0,I)$.  Method~\eqref{eq:gaussupdate} then has the form
\begin{equation}\label{eq:gausspd}\boxed{x^{k+1} 
= x^{k} - \frac{ \eta^\top (A x^{k}-b)}{\norm{\eta}_A^2} \eta}
 \end{equation}
 which we call the \emph{Gauss-pd} method.
 
Using  formulation~\eqref{ch:two:RF}, the  method can be interpreted as
\[x^{k+1} =\arg \min_{x\in \R^n} \left\{f(x) \eqdef \tfrac{1}{2} x^\top Ax -b^\top  x\right\} \quad \mbox{ subject to } \quad  x = x^{k} + y\eta, \quad y \in \R. \]
 That is, starting from $x^{k}$, we take a step in a random (Gaussian) direction, then perform an exact line search over this direction. Thus the Gauss-pd method is equivalent to applying the Random Pursuit method~\cite{Stich2014} with exact line search to $f(x).$
 
      All the Gaussian methods can be extended to  block versions. We illustrate this by designing a Block Gauss-pd method where $S \in \R^{n \times q}$ has   i.i.d.\ Gaussian normal entries and $B=A.$ This results in the  iterates
 \begin{equation} \label{eq:Bgausspd} x^{k+1} 
= x^{k} - S(S^\top AS)^{-1}S^\top (A x^{k}-b).
 \end{equation}

\section{Numerical Experiments} \label{C2sec:numerics}
 We perform some preliminary numeric tests on consistent overdetermined linear systems  and positive definite systems. 
   Everything was coded and run in MATLAB R2014b. 
Let $\kappa_2 = \norm{A}\norm{A^{\dagger}}$ be the $2-$norm condition number. In comparing different methods for solving overdetermined systems, we use the relative residual $\norm{Ax^k-b}_2/\norm{b}_2,$ while for positive definite systems we use $\norm{x^k-x^*}_A/\norm{x^*}_A$ as a relative residual measure. We run each method until the relative residual is below $10^{-4} =0.01 \%$ or until $400$ seconds in time is exceeded.
We test this relatively low precision of $0.01 \%$ because our applications of interest (e.g. ridge regression in machine learning) only require a low precision.

Note that when $A$ has full column rank, the convergence of the relative residual implies the convergence of norm error $\norm{x-x^*}$. Indeed, this follows from
\[\norm{Ax^k -b}_2^2 = \dotprod{A^\top A(x^k-x^*),x^k-x^*} \geq \lambda_{\min}(A^\top A) \norm{x^k-x^*}_2^2.\]
Consequently bringing the relative residual $\norm{Ax^k-b}_2/\norm{b}_2$ below $0.01 \%$ implies that
\[ \norm{x^k-x^*}_2 \leq \frac{\norm{b}_2}{\sqrt{\lambda_{\min}(A^\top A)}}10^{-4}.\]
  
When the solution $x^*$ and the right hand $b$ were not supplied by the data, we generated them as follows. The solution was generated using $x^*=$\texttt{rand}$(n,1)$, that is, each entry of $x^*$ is selected uniformly at random from the interval $[0, 1]$. The right hand side $b$ was set to $b=Ax^*.$
As the starting point we used $x_0=0 \in \R^n$ in all experiments.
   
In each figure we plot the relative residual in percentage on the vertical axis, starting with $100\%$. For the horizontal axis, we use either wall-clock time measured using the \texttt{tic-toc} MATLAB function or the total number of floating point operations (\emph{flops}). Specifically, we use an upper bound  on the number of flops performed in each iteration as a proxy. 
For example, the number of flops required to compute the matrix-vector product $A v$ is bounded by $O(nnz(A))$ from above. This bound is tight when $v$ is dense. When $v$ is not dense, as is the case when $v= e_i,$  we use an appropriately tight upper bound, such as $O(nnz(A_{i:}))$ when $v=e_i$.

In implementing the discrete sampling methods we used the convenient probability distributions~\eqref{ch:one:eq:convprob}.
  
All tests were performed on a Desktop with 64bit quad-core Intel(R) Core(TM) i5-2400S CPU @2.50GHz with 6MB cache size with a Scientific Linux release 6.4 (Carbon) operating system.

  
\subsection{Overdetermined linear systems}
First we compare the methods Gauss-LS~\eqref{eq:gausslsupdate} , CD-LS~\eqref{eq:CDLSintro} , Gauss-Kaczmarz~\eqref{eq:gaussrkupdate} and RK~\eqref{eq:RKiterate} methods on synthetic linear systems generated with the matrix functions {\tt rand} and {\tt sprandn}, see Figure~\ref{fig:oversynth}. The {\tt rand}$(m,n)$ function returns a  $m$-by-$n$ matrix  where each entry is a random variable selected uniformly at random from the interval $[0, 1]$. 
The high iteration cost of the Gaussian methods resulted in poor performance on the dense problem generated using \texttt{rand} in Figure~\ref{fig:randch1}. 
In Figure~\ref{fig:sprandn} we compare the methods on a sparse linear system generated  using the MATLAB sparse random matrix function \texttt{sprandn}($m,n$,{\tt density,rc}), where {\tt density} is the percentage of nonzero entries and {\tt rc} is the reciprocal of the condition number. The {\tt sprandn}($m,n$,{\tt density,rc}) returns a  random $m$-by-$n$ sparse matrix with approximately {\tt density}$\,\times m \times n$ normally distributed nonzero entries. Please consult \url{http://uk.mathworks.com/help/matlab/ref/sprandn.html} for more details on {\tt sprandn}.  On this sparse problem the Gaussian methods are more efficient, and converge at a similar rate in time to the discrete sampling methods.

\begin{figure}
    \centering
    \begin{subfigure}[t]{0.7\textwidth}
\includegraphics[width =  0.85\textwidth, height =4.5cm, trim= 32 270 47 285, clip ]{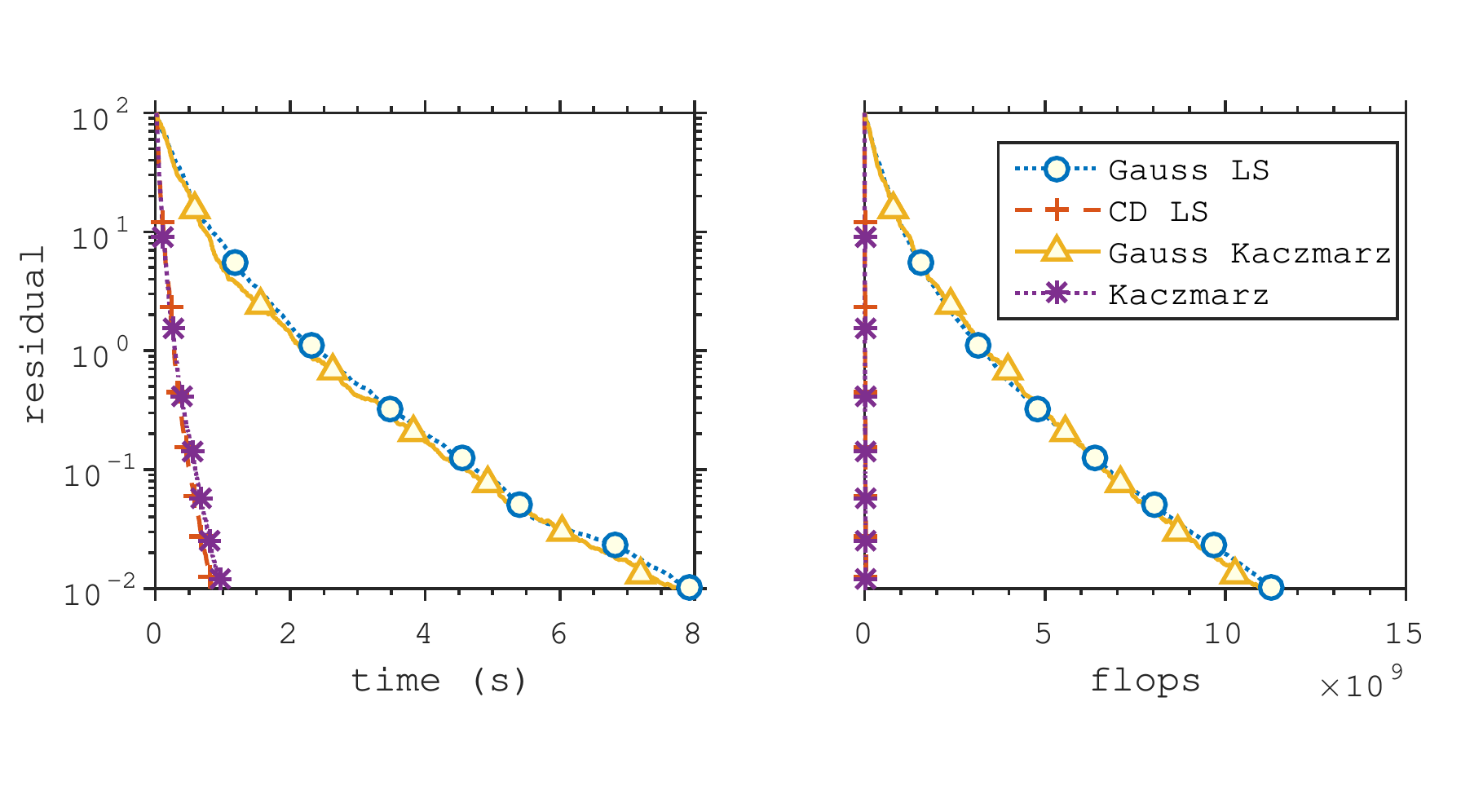}
        \caption{\texttt{rand}}\label{fig:randch1}
    \end{subfigure}%
    \hspace{0.05\textwidth}
    \begin{subfigure}[t]{0.7\textwidth}
\includegraphics[width =  0.85\textwidth, height =4.5cm, trim= 32 270 47 285, clip ]{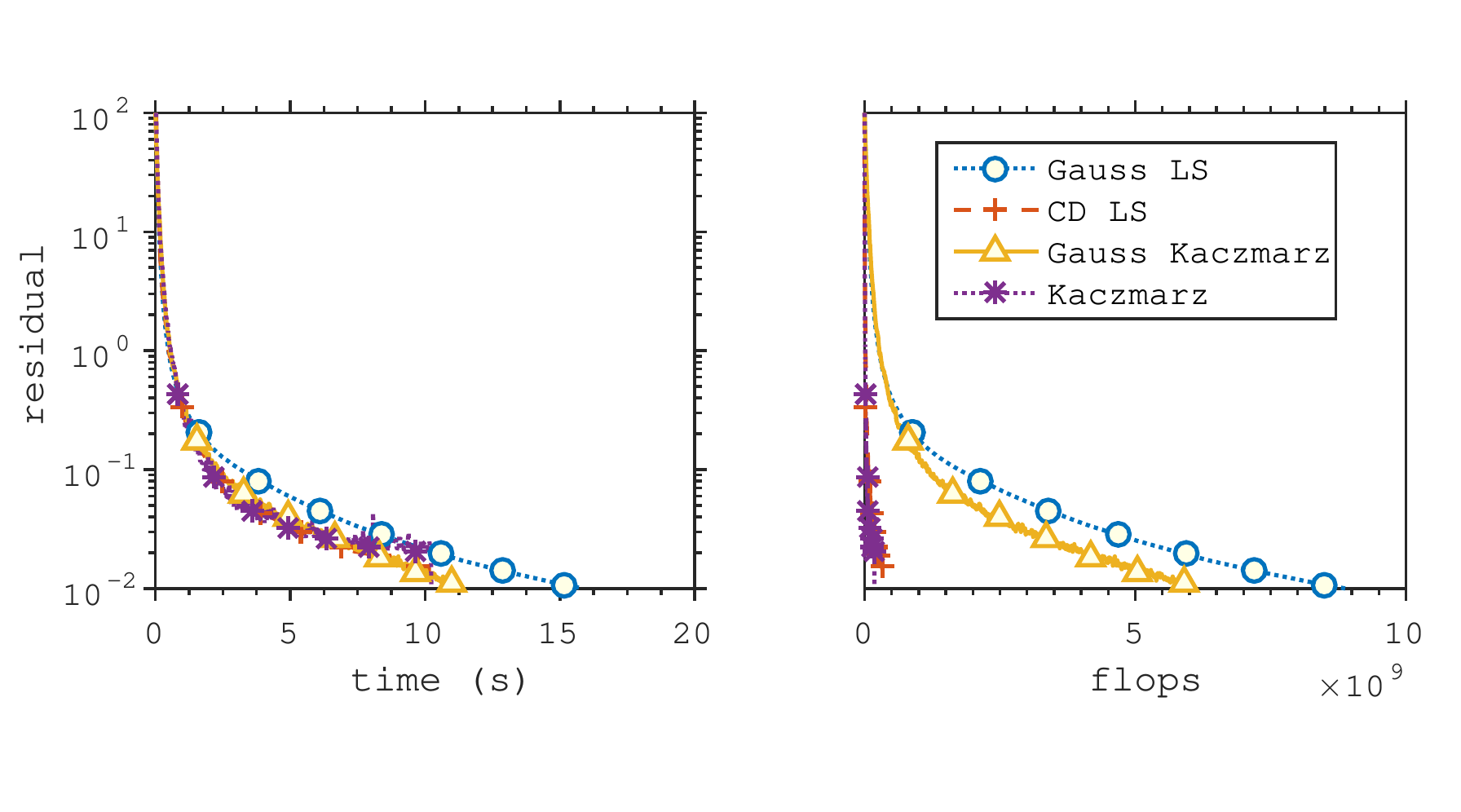}
        \caption{\texttt{sprandn}} \label{fig:sprandn}
    \end{subfigure}
    \caption{
The performance of the Gauss-LS, CD-LS, Gauss-Kaczmarz and RK methods on
    synthetic MATLAB generated problems (a) \texttt{rand}$(n,m)$ with $(m;n)=(1000,500)$ (b) \texttt{sprandn}($m,n$,{\tt density,rc}) with $(m;n) =(1000,500)$, {\tt density}$=1/\log(nm)$ and {\tt rc}$= 1/\sqrt{mn}$. In both experiments dense solutions were generated with $x^*=$\texttt{rand}$(n,1)$ and $b=Ax^*.$ }\label{fig:oversynth}
\end{figure}

In Figure~\ref{fig:overMM} we test two overdetermined linear systems taken from the the Matrix Market collection~\cite{Boisvert1997}. The collection also provides the right-hand side of the linear system.  Both of these systems are very well conditioned, but do not have full column rank, thus Theorem~\ref{ch:two:theo:Enormconv} does not apply. The four methods have a similar performance on Figure~\ref{fig:illc1033}, while the  Gauss-LS and  CD-LS method converge faster on~\ref{fig:well1033} as compared with the Gauss-Kaczmarz and Kaczmarz methods in terms of time taken. In terms of number of flops, the CD-LS and Kaczmarz method outperform the  Gauss-LS and Gauss-Kaczmarz methods.

\begin{figure}
    \centering
    \begin{subfigure}[t]{0.70\textwidth}
\includegraphics[width =  0.85\textwidth, height =4.5cm, trim= 32 270 47 285, clip ]{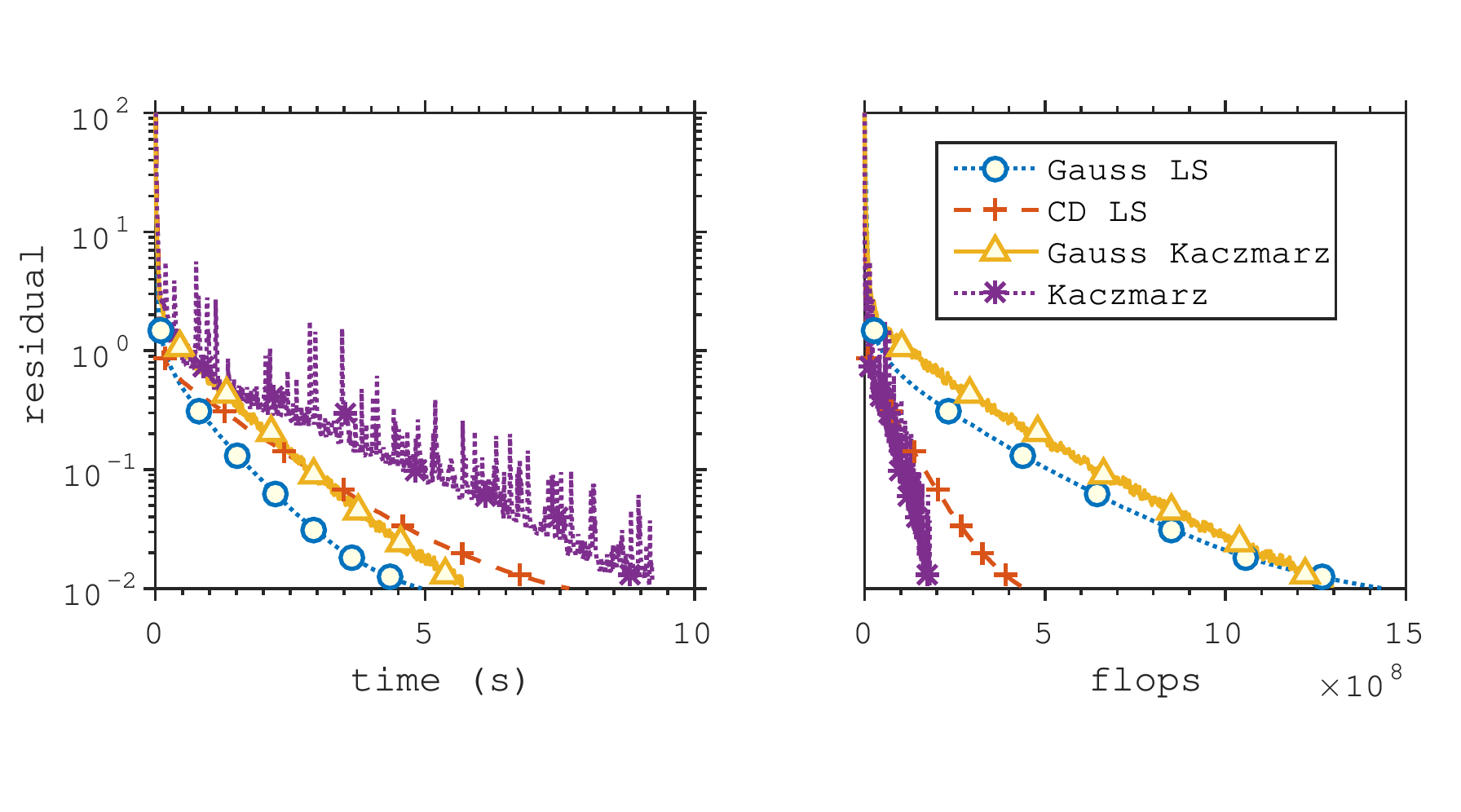}
        \caption{\texttt{illc1033}}\label{fig:illc1033}
    \end{subfigure}%
    \hspace{0.05\textwidth}
    \begin{subfigure}[t]{0.70\textwidth}
\includegraphics[width =  0.85\textwidth, height =4.5cm, trim= 32 270 47 285, clip ]{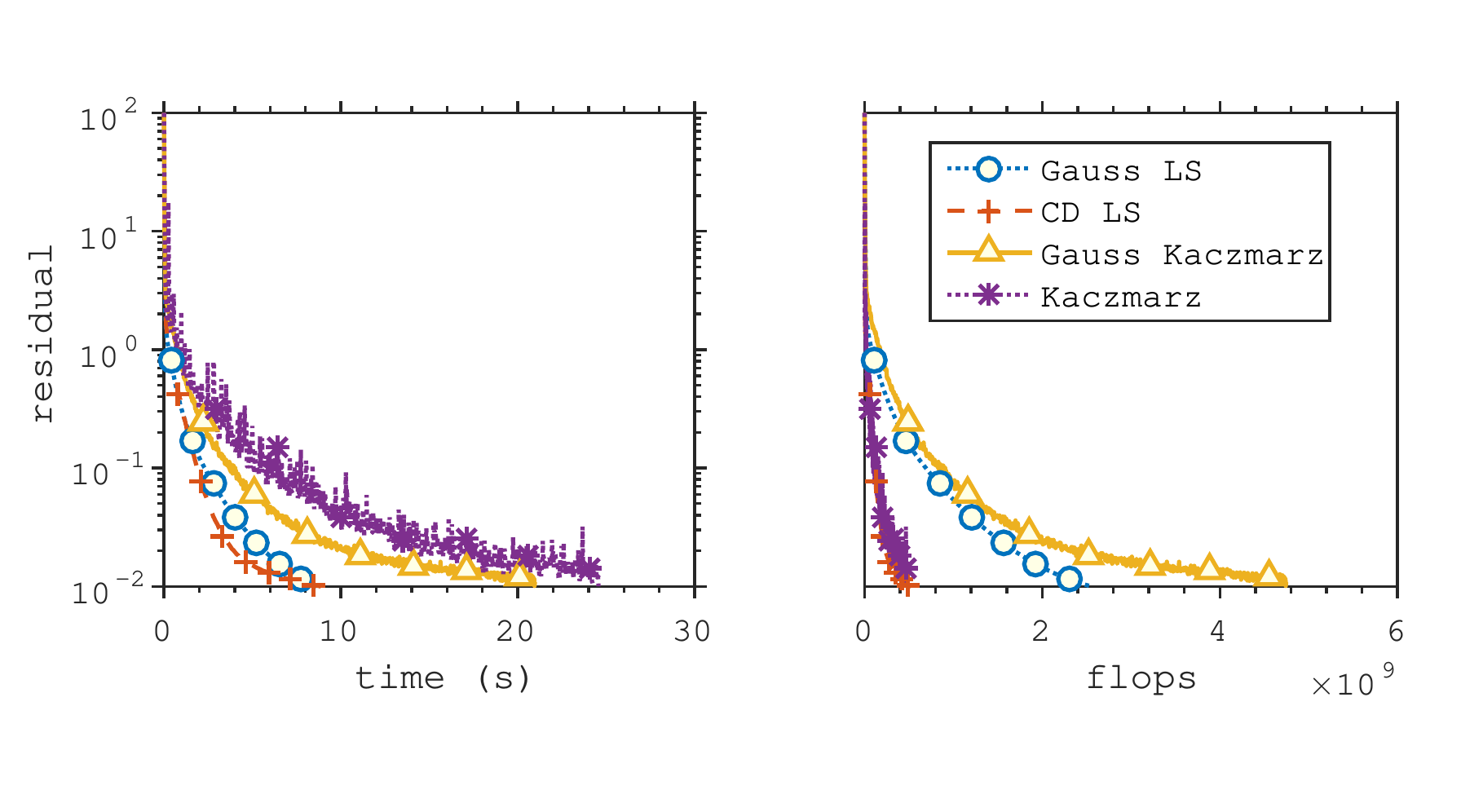}
        \caption{\texttt{well1033} } \label{fig:well1033}
    \end{subfigure}
    \caption{The performance of the Gauss-LS, CD-LS, Gauss-Kaczmarz and RK methods on
        linear systems (a) \texttt{well1033} where $(m;n) = (1850,750)$, $nnz = 8758$ and $\kappa_2 = 1.8$ (b) \texttt{illc1033} where $(m;n) =(1033;320)$, $nnz= 4732$ and $\kappa_2 = 2.1$, from the Matrix Market~\cite{Boisvert1997}.}\label{fig:overMM}
\end{figure}

Finally, we test  two problems, the \texttt{SUSY} problem  and the \texttt{covtype.binary} problem, from the library of support vector machine problems LIBSVM~\cite{Chang2011}. These problems do not form consistent linear systems, thus only the Gauss-LS and CD-LS methods are applicable, see Figure~\ref{ch:one:fig:LIBSVM}.  This is equivalent to applying the Gauss-pd and CD-pd to the least squares system $A^\top Ax=A^\top b,$ which is always consistent.

\begin{figure}
    \centering
    \begin{subfigure}[t]{0.7\textwidth}
        \centering
\includegraphics[width =  0.85\textwidth, height =4.5cm, trim= 28 270 47 285, clip ]{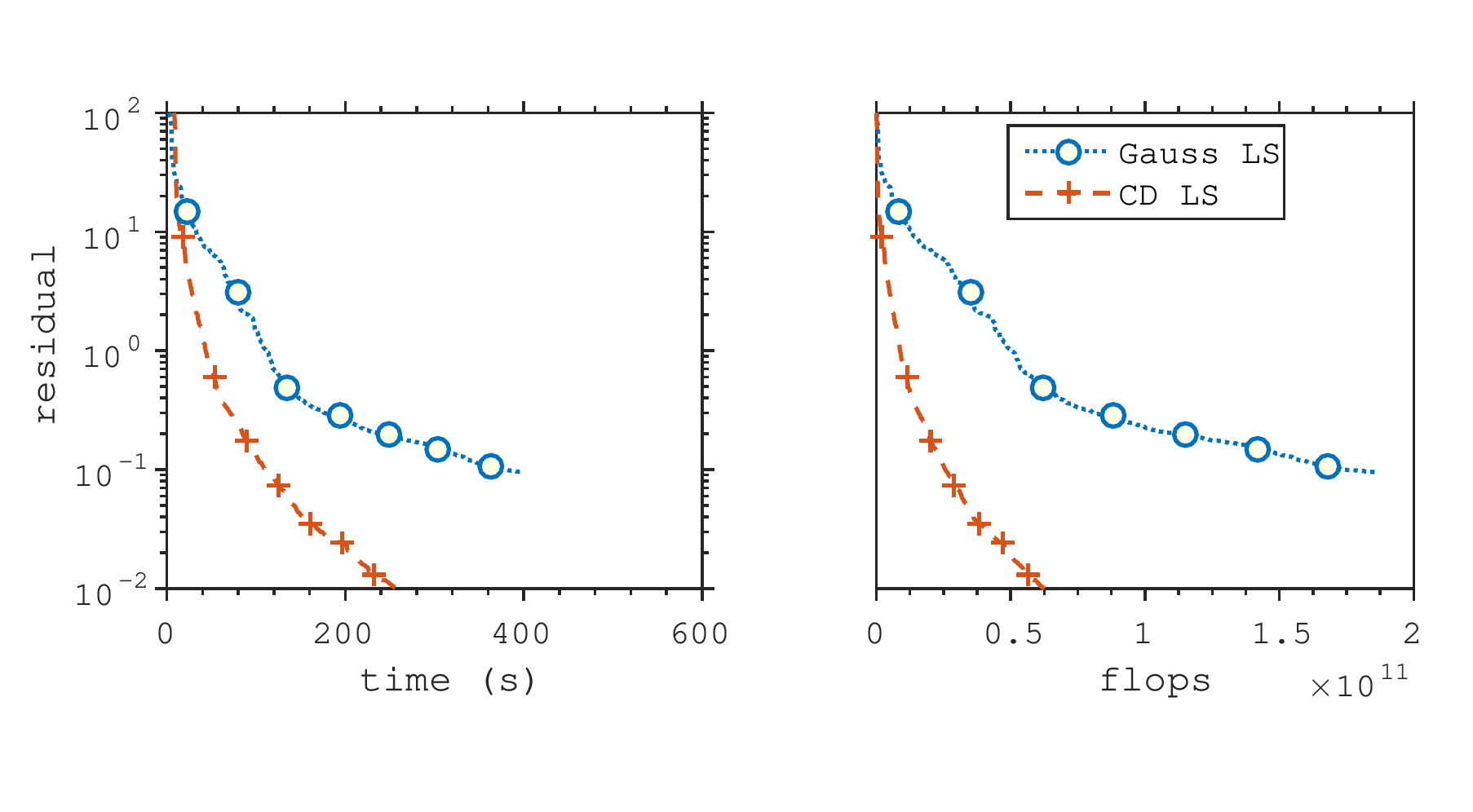}
        \caption{\texttt{SUSY}}
    \end{subfigure}%
    \hspace{0.05\textwidth}
    \begin{subfigure}[t]{0.7\textwidth}
        \centering
\includegraphics[width =  0.85\textwidth, height =4.5cm, trim= 30 270 47 285, clip ]{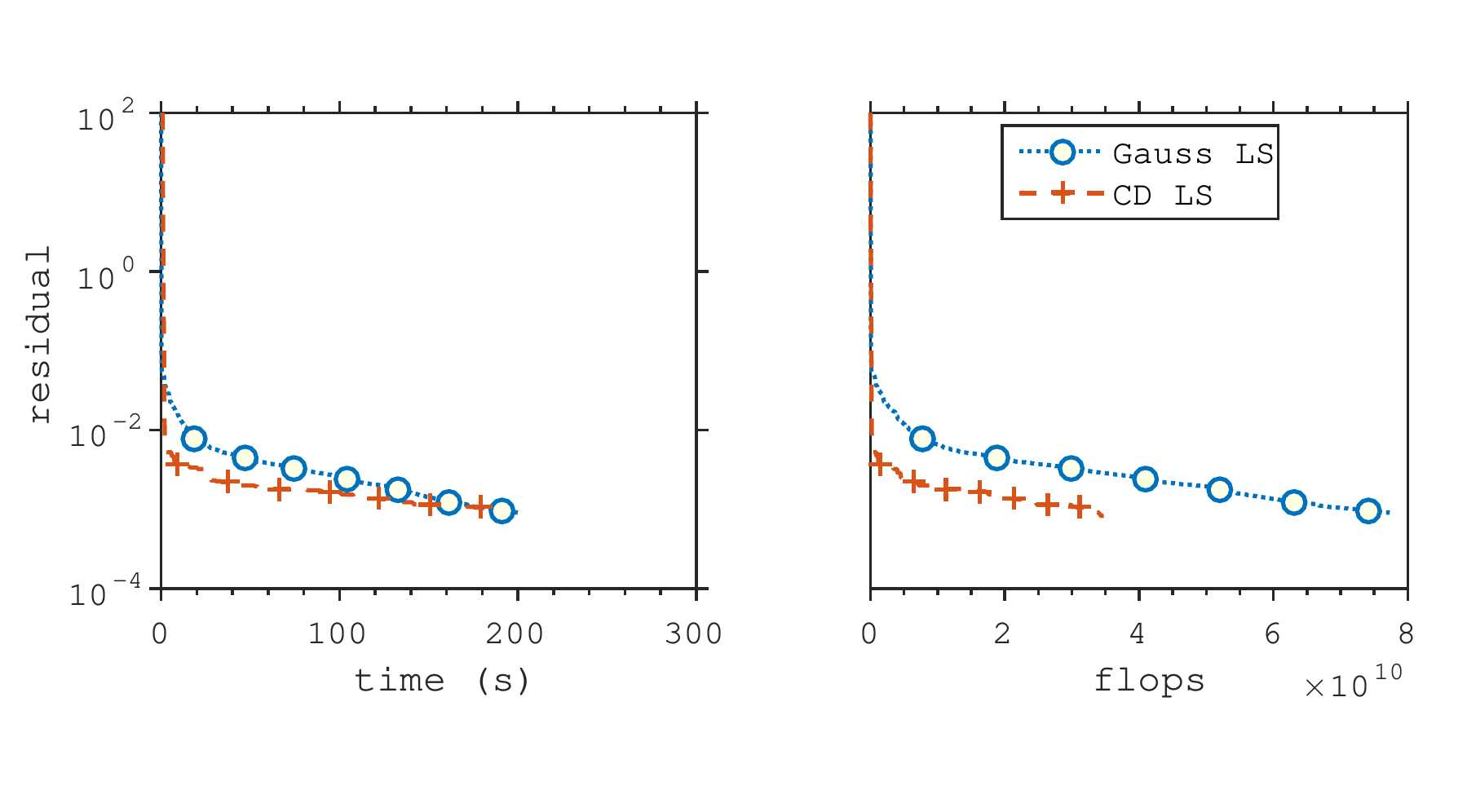}
        \caption{\texttt{covtype-libsvm-binary}}
    \end{subfigure}
    \caption{The performance of Gauss-LS and CD-LS methods on two LIBSVM test problems: (a)  \texttt{SUSY}: $(m;n)=(5\times 10^6; 18)$ (b) \texttt{covtype.binary}:  $(m;n)=(581,012; 54)$.} \label{ch:one:fig:LIBSVM}
\end{figure}
       
Despite the higher iteration cost of the Gaussian methods, their performance, in terms of the wall-clock time, is comparable to performance of the discrete methods when the system matrix is sparse.

\subsection{Bound for Gaussian convergence} \label{C2sec:numgaussbound}

Now we compare the error over the number iterations of the Gauss-LS method to theoretical rate of convergence given by the bound~\eqref{eq:rhoboundgauss}. For the Gauss-LS method~\eqref{eq:rhoboundgauss} becomes \[1-\frac{1}{n} \leq \rho \leq 1-\frac{2}{\pi}\lambda_{\min}\left( \frac{A^\top A}{\norm{A}_F^2}\right).\]
 In Figures~\ref{fig:overtheouniform} and~\ref{fig:overtheoliver} we compare the empirical and theoretical bound on a random Gaussian matrix and the \texttt{liver-disorders} problem~\cite{Chang2011}. Furthermore, we ran the Gauss-LS method 100 times and plot as a shaded region the outcomes within the 95\% and 5\% quantiles. These tests indicate that the bound is tight for well conditioned problems, such as Figure~\ref{fig:overtheouniform} in which the system matrix has a condition number equal to $1.94$. While in Figure~\ref{fig:overtheoliver} the system matrix has a condition number of $41.70$ and there is some much more slack between the empirical convergence and the theoretical bound. 
\begin{figure}
    \centering
    \begin{subfigure}[t]{0.47\textwidth}
\includegraphics[width =  \textwidth, trim= 110 280 100 285, clip ]{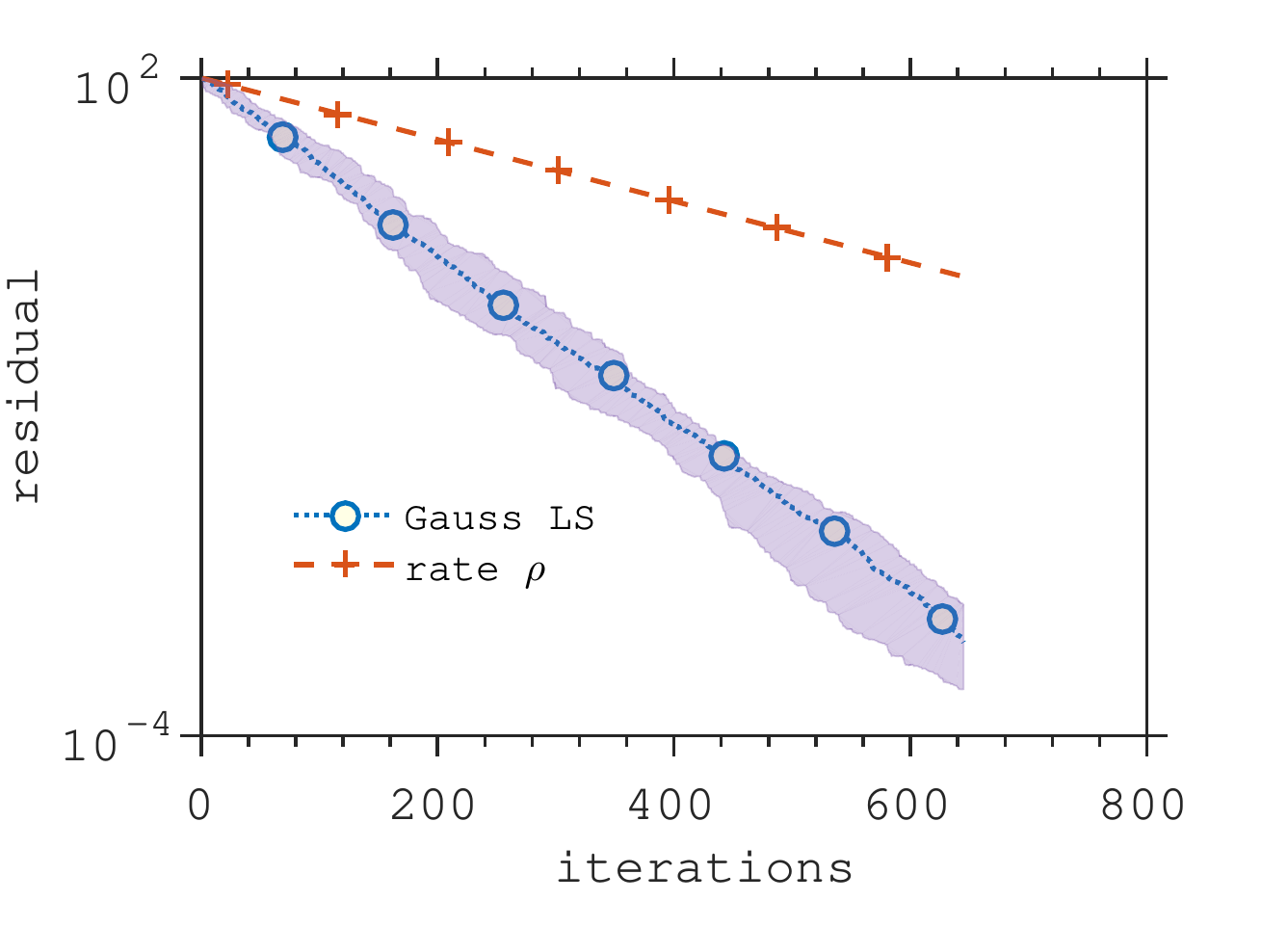}
        \caption{\texttt{rand}$(n,m)$}\label{fig:overtheouniform}
    \end{subfigure}%
    \hspace{0.05\textwidth}
    \begin{subfigure}[t]{0.47\textwidth}
\includegraphics[width =  \textwidth, trim= 110 280 100 285, clip ]{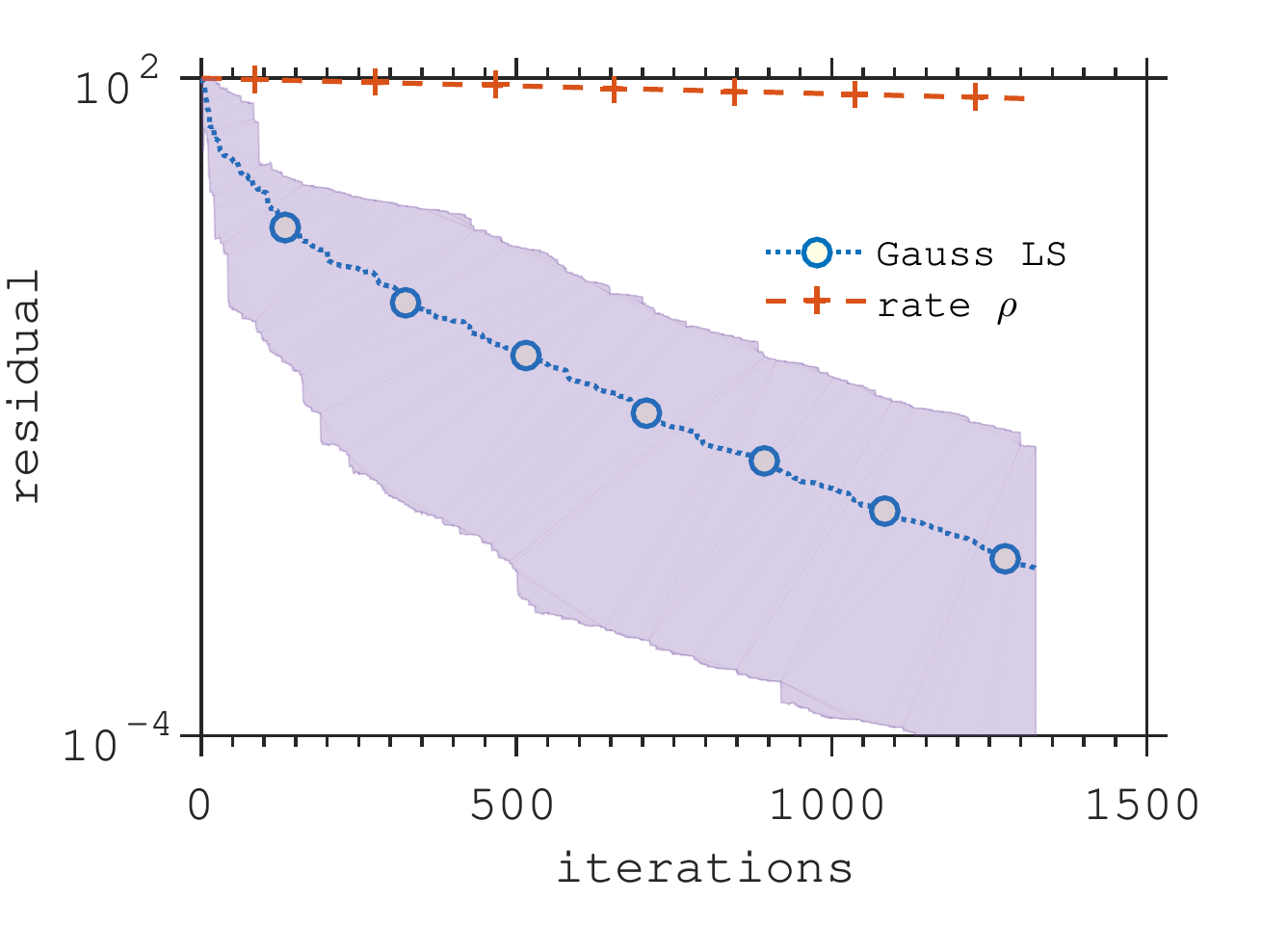}
        \caption{\texttt{liver-disorders}} \label{fig:overtheoliver}
    \end{subfigure}
    \caption{A comparison between the Gauss-LS method and the theoretical bound $\rho_{theo} \eqdef 1-\lambda_{\min}(A^\top A)/\norm{A}_F^2$ on (a) \texttt{rand}$(n,m)$ with $(m;n)=(500,50), \kappa_2 =1.94$ and a dense solution generated with $x^*=$\texttt{ rand}$(n,1)$ (b) \texttt{liver-disorders} with $(m;n) =(345,6)$ and $\kappa_2 = 41.70.$ }\label{fig:overtheo}
\end{figure}       

\subsection{Positive definite}
First we compare the two methods Gauss-pd~\eqref{eq:gausspd} and CD-pd~\eqref{eq:09j0s9jsss} on synthetic data in Figure~\ref{fig:2}.
\begin{figure}
    \centering
    \begin{subfigure}[t]{0.47\textwidth}
\includegraphics[width =  \textwidth, trim= 40 310 60 310, clip ]{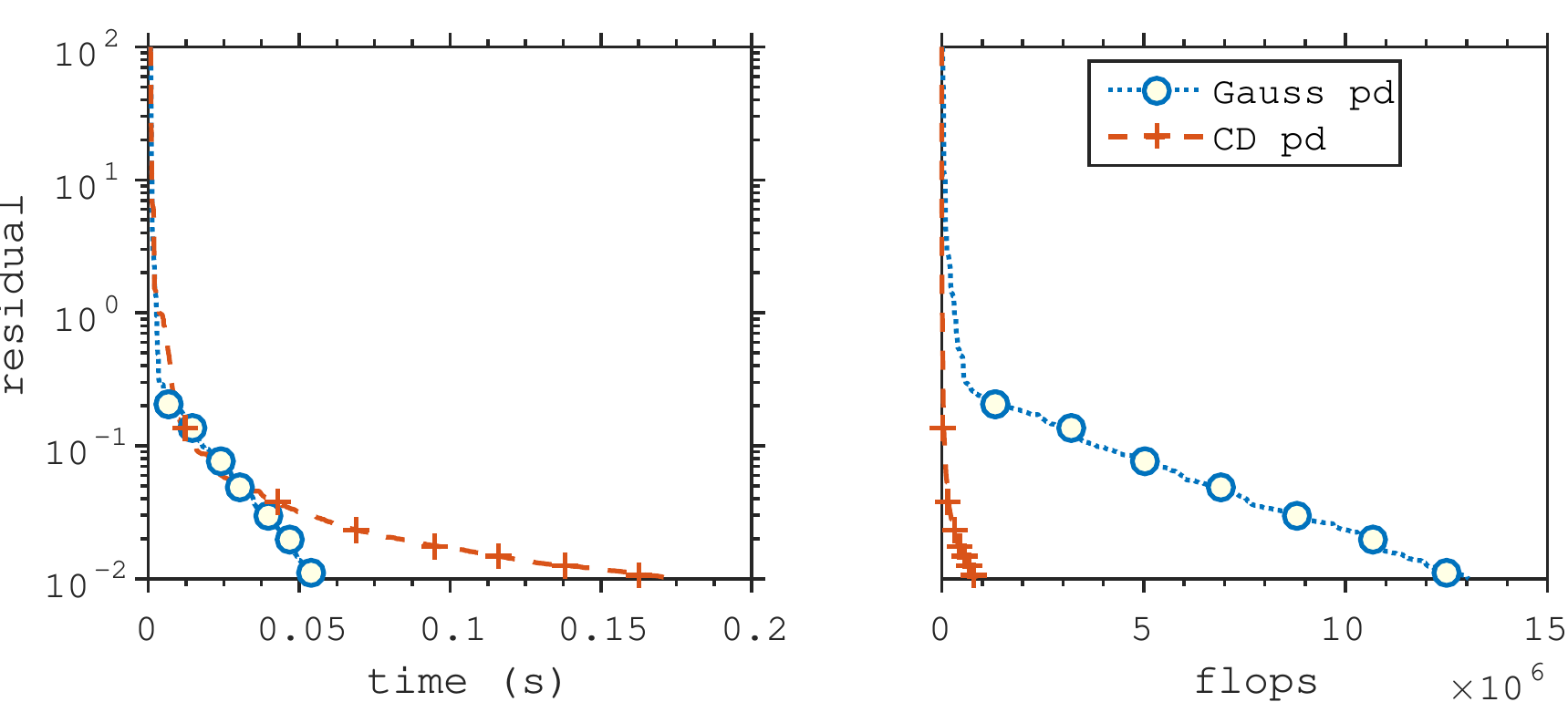}
    \end{subfigure}%
    \hspace{0.05\textwidth}
    \begin{subfigure}[t]{0.47\textwidth}
\includegraphics[width =  \textwidth, trim= 40 310 60 310, clip ]{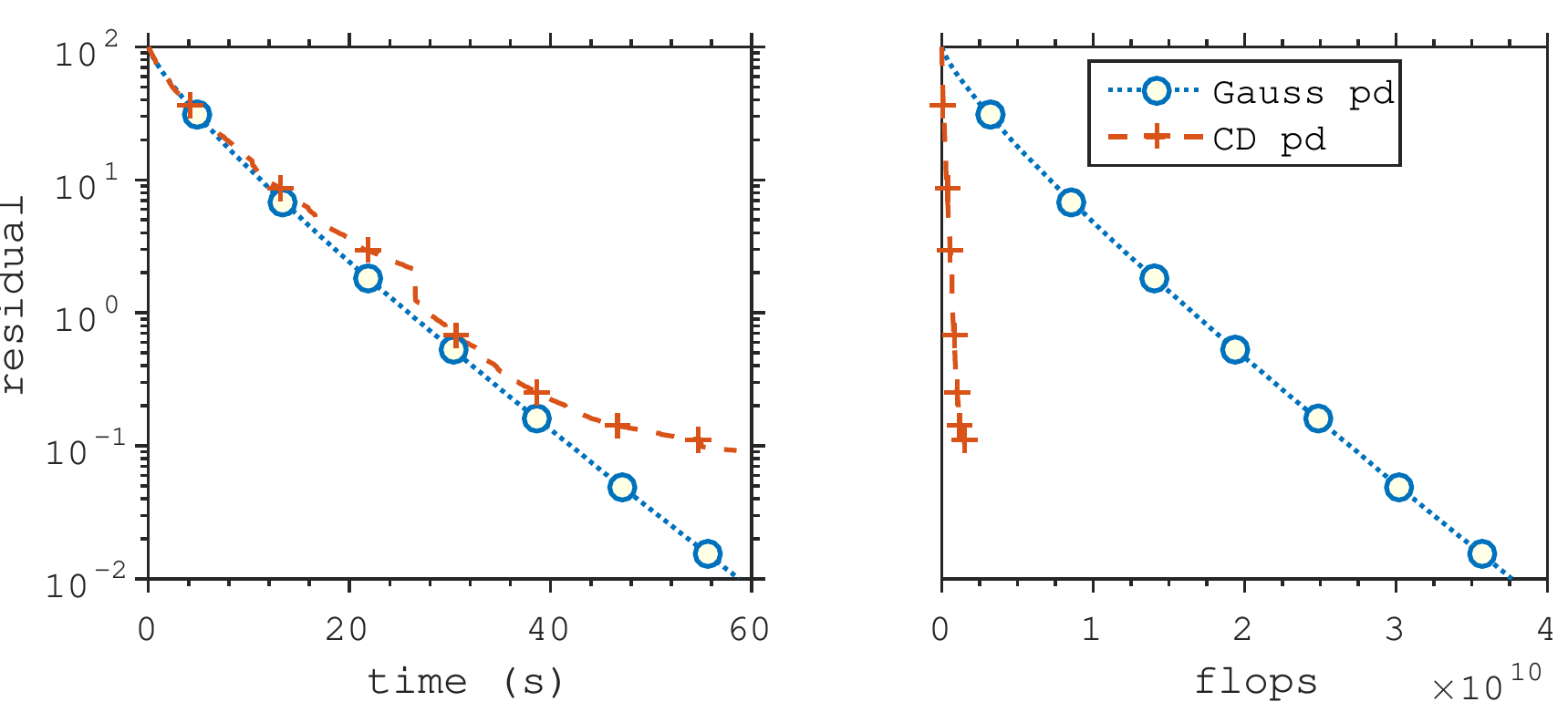}
    \end{subfigure}
    
\caption{Synthetic MATLAB generated problem. The Gaussian methods are more efficient on sparse matrices. LEFT: The \texttt{Hilbert Matrix} with $n=100$ and  condition number $\norm{A}\norm{A^{-1}} \approx 0.001133\times e^{349}$. RIGHT: Sparse random matrix $A=$ \texttt{sprandsym} ($n$, {\tt density}, {\tt rc}, {\tt type}) with $n=1000$, {\tt density}$ =1/\log(n^2)$ and ${\tt rc}= 1/n=0.001$. Dense solution generated with $x^{*}=$\texttt{rand}$(n,1).$}\label{fig:2}
\end{figure} 
Using the MATLAB function {\tt hilbert} we generate the positive definite Hilbert matrix which has a very high condition number, see Figure~\ref{fig:2}(LEFT). Indeed,  the $100\times 100$ Hilbert matrix we tested has a condition number of approximately $0.001133\times e^{349}$! Both methods converge slowly and, despite the dense system matrix, the Gauss-pd method has a similar performance to CD-pd. In Figure~\eqref{fig:2}(RIGHT) we compare the two methods on a system generated by the  MATLAB function \texttt{sprandsym} ($m$, $n$, {\tt density}, {\tt rc}, {\tt type}), where {\tt density} is the percentage of nonzero entries, {\tt rc} is the reciprocal of the condition number and {\tt type=1} returns a positive definite matrix. The Gauss-pd  and the CD-pd method have a similar performance in terms of wall clock time on this sparse problem.
\begin{figure}
    \centering
    \begin{subfigure}[t]{0.475\textwidth}
        \centering 
\includegraphics[width =  \textwidth, trim= 40 310 60 310, clip ]{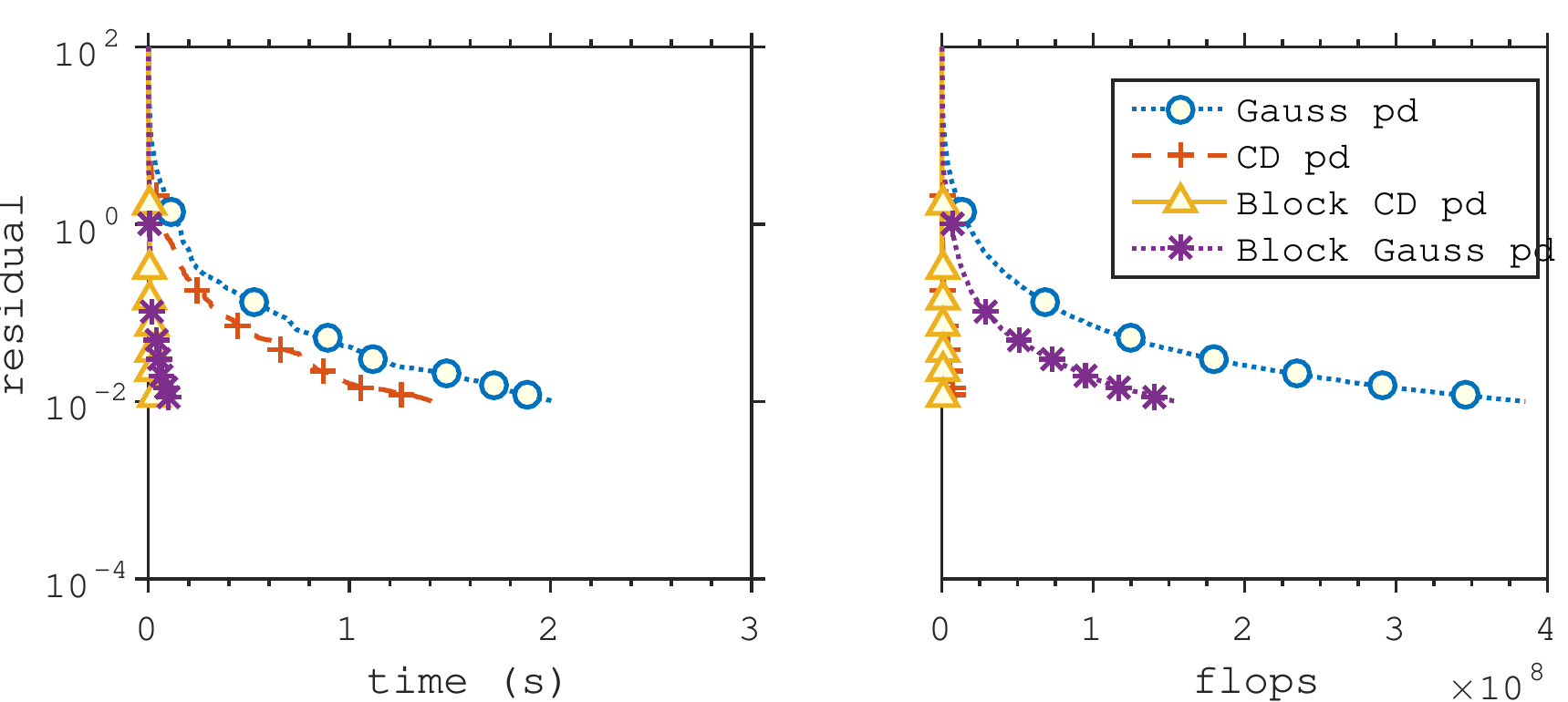}
        \caption{\texttt{aloi}}
    \end{subfigure}%
  \hspace{0.04\textwidth}
    \begin{subfigure}[t]{0.475\textwidth}
        \centering
\includegraphics[width =  \textwidth, trim= 40 310 60 310, clip ]{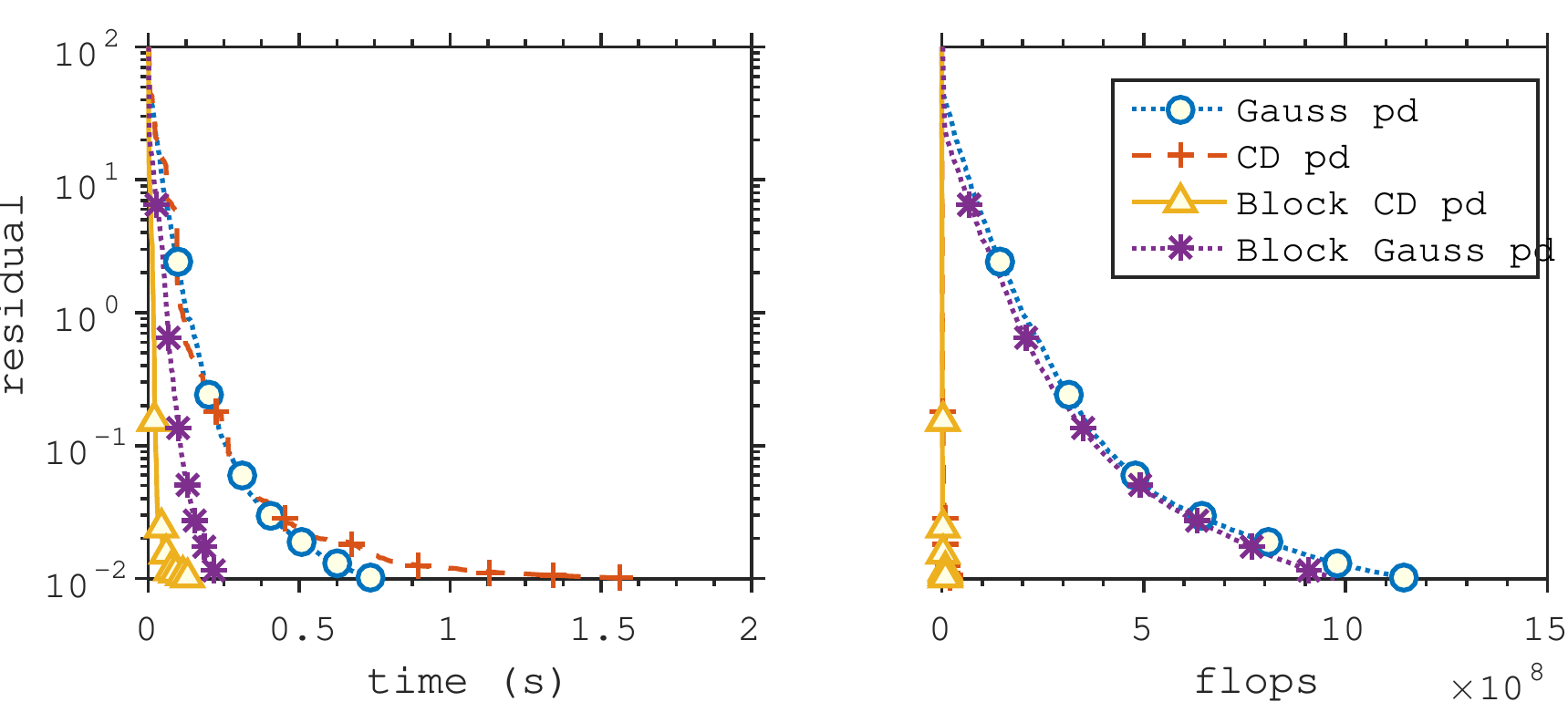}
        \caption{\texttt{protein}}
    \end{subfigure}
        \begin{subfigure}[t]{0.475\textwidth}
        \centering
\includegraphics[width =  \textwidth, trim=  40 310 60 310, clip ]{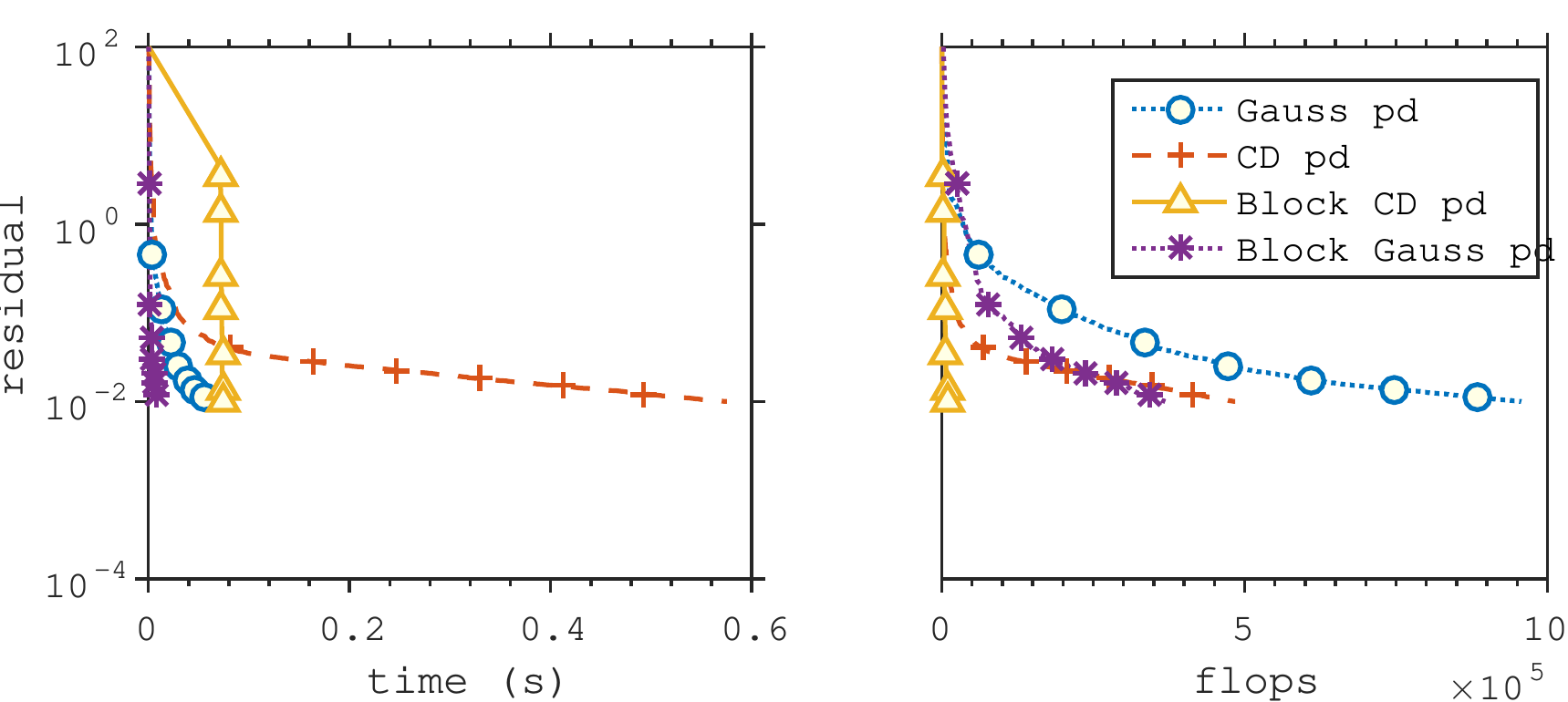}
        \caption{\texttt{SUSY}}
    \end{subfigure}%
  \hspace{0.04\textwidth}
    \begin{subfigure}[t]{0.475\textwidth}
        \centering
\includegraphics[width =  \textwidth, trim=  40 310 60 310, clip ]{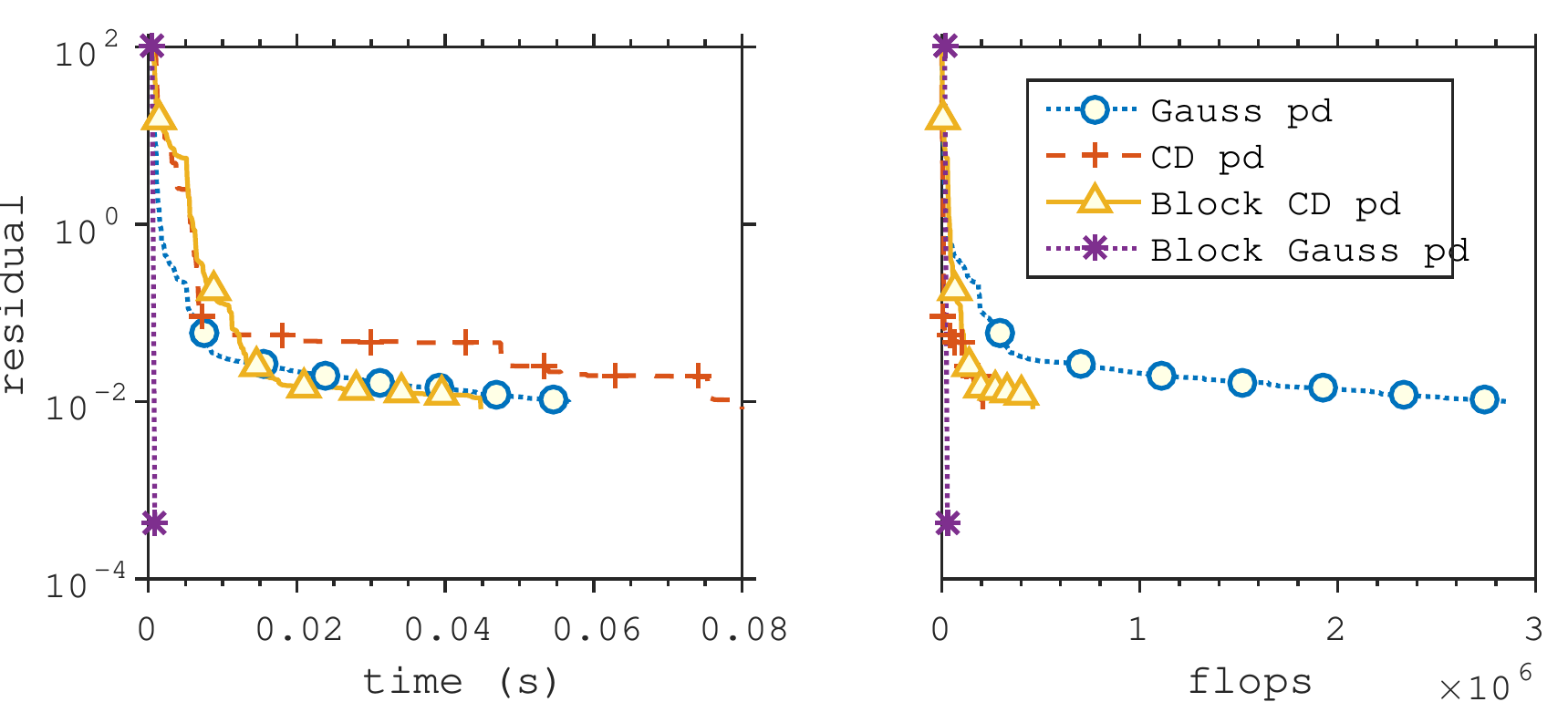}
        \caption{\texttt{covtype.binary}}
    \end{subfigure}
    \caption{The performance of Gauss-pd and CD-pd methods on four ridge regression problems: (a) \texttt{aloi}: $(m;n)=(108,000;128)$ (b) \texttt{protein}: $(m; n)=(17,766; 357)$ (c)  \texttt{SUSY}: $(m;n)=(5\times 10^6; 18)$ (d) \texttt{covtype.binary}:  $(m;n)=(581,012; 54)$.} \label{ch:one:fig:LIBSVMridge}
\end{figure}

 To appraise the performance gain in using block variants,
we perform tests using two block variants: the Randomized Newton method~\eqref{eq:CDpdblock},  which we will now refer to as the Block CD-pd method, and the Block Gauss-pd method~\eqref{eq:Bgausspd}.
We set the block size to $q= \sqrt{n}$ in both methods. To solve the $q\times q$ system required in the block methods, we use MATLAB's built-in direct solver, sometimes referred to as ``back-slash''.

Next we test the Newton system $\nabla^2 f(w_0) x = - \nabla f(w_0)$,  arising from four ridge-regression problems of the form 
 \begin{equation}\label{eq:ridgeMatrix}
\min_{w\in \R^n}f(w)\eqdef \tfrac{1}{2} \norm{Aw-b}_2^2 + \tfrac{\lambda}{2} \norm{w}_2^2,
\end{equation}
using data from LIBSVM~\cite{Chang2011}. In particular, we set
$w_0=0$ and use $\lambda =1$ as the regularization parameter, whence $\nabla f(w_0) = A^\top b$ and $\nabla^2 f(w_0) = A^\top A+ I$.

 In terms of wall clock time, the Gauss-pd method converged faster on all problems accept the \texttt{aloi} problem as compared with CD-pd. 
The two Block methods had a comparable performance on the \texttt{aloi} and the \texttt{protein} problem. 
The Block Gauss-pd method converged in one iteration on \texttt{covtype.binary} and was the fastest method on the \texttt{SUSY} problem.

We now compare the methods on two positive definite matrices from the Matrix Market collection~\cite{Boisvert1997}, see Figure~\ref{fig:pdMM}. The right-hand side was not supplied by the data set, and thus we generated $b$ using {\tt  rand(n,1)}. 
 The  Block CD-pd method converged much faster on both problems. 
The lower condition number  ($\kappa_2=12$) of the \texttt{gr\_30\_30-rsa} problem resulted in fast convergence of all methods, see Figure~\ref{fig:gr_30_30-rsa}. While the high condition number  ($\kappa_2=4.3 \cdot 10^4$)  of the \texttt{bcsstk18} problem, resulted in a slow convergence for all methods, see Figure~\ref{fig:bcsstk18-rsa}.

\begin{figure}
    \centering
    \begin{subfigure}[t]{0.47\textwidth}
\includegraphics[width =  \textwidth, trim= 40 310 60 310, clip ]{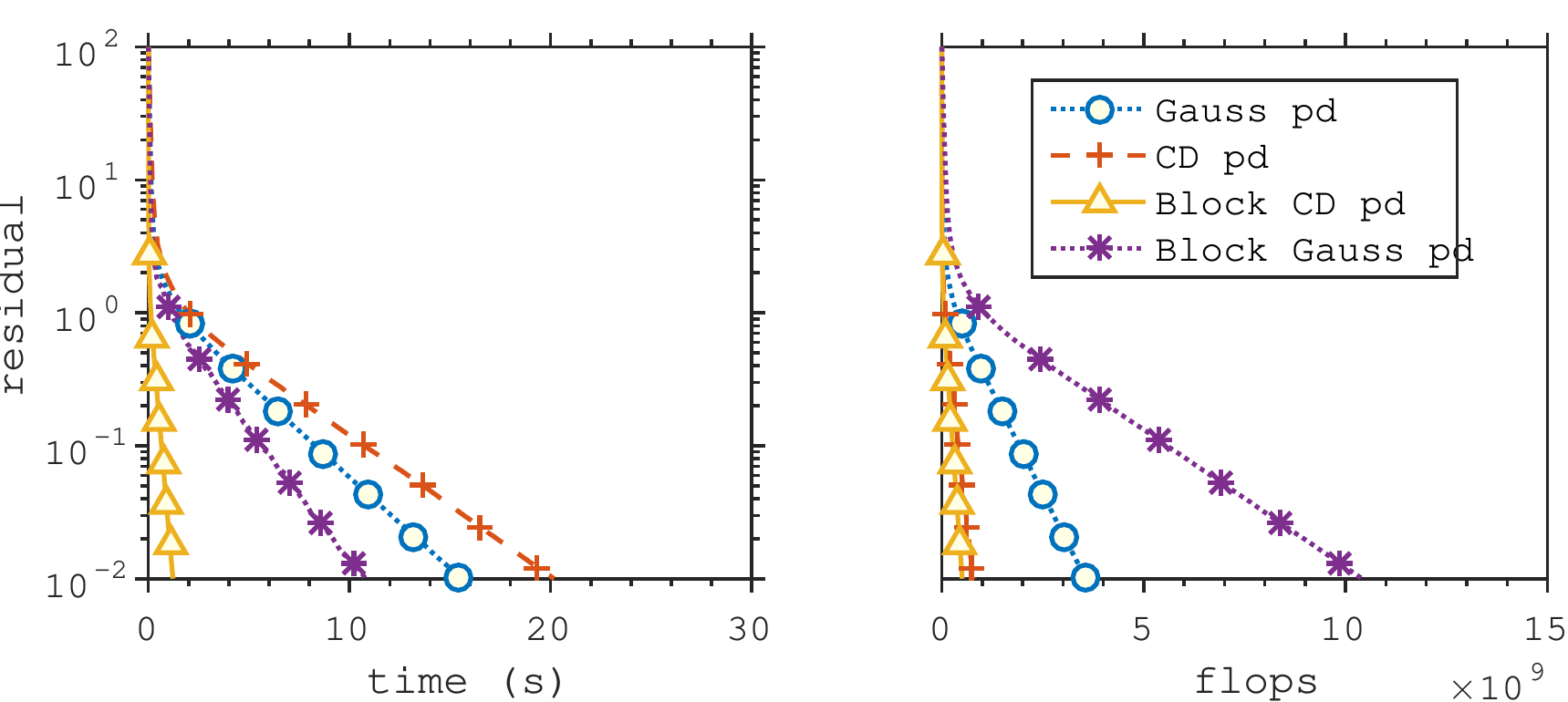}
        \caption{ \texttt{gr\_30\_30-rsa}}\label{fig:gr_30_30-rsa}
    \end{subfigure}%
    \hspace{0.05\textwidth}
    \begin{subfigure}[t]{0.47\textwidth}
\includegraphics[width =  \textwidth, trim= 40 310 60 310, clip ]{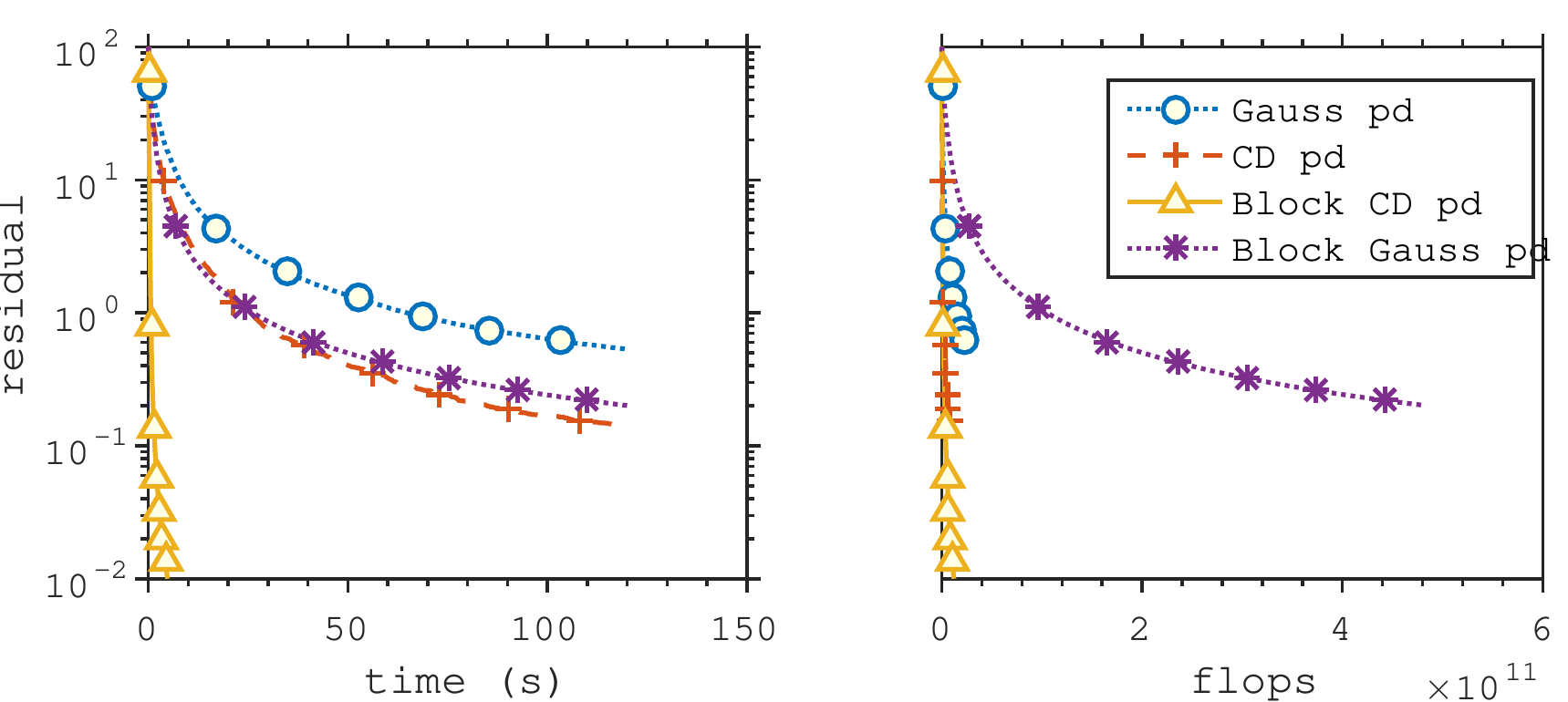}
        \caption{\texttt{bcsstk18}} \label{fig:bcsstk18-rsa}
    \end{subfigure}
    \caption{The performance of the Gauss-pd, CD-pd and the Block CD-pd methods on two linear systems from the MatrixMarket (a) \texttt{gr\_30\_30-rsa} with $n = 900$, $nnz = 4322$ ({\tt density}$=0.53\%$) and $\kappa_2 =12.$ (b) \texttt{bcsstk18} with $n = 11948$, $nnz=80519$ ({\tt density}$=0.1\%$) and  $\kappa_2 = 4.3 \cdot 10^{10} $.}\label{fig:pdMM}
\end{figure}

Despite the clear advantage of using a block variant, applying a block method that uses a direct solver can be infeasible on very ill-conditioned problems. As an example, applying the Block CD-pd to the Hilbert system, and using MATLAB back-slash solver to solve the inner $q\times q$ systems, resulted in large numerical inaccuracies, and ultimately, prevented the method from converging. This occurred because the submatrices of the Hilbert matrix are also very ill-conditioned.

\subsection{Comparison between optimized and convenient probabilities}\label{C2sec:numopt}
 We compare the practical performance of using the convenient probabilities~\eqref{ch:one:eq:convprob} against using the optimized probabilities by solving~\eqref{eq:optconv}. We solved~\eqref{eq:optconv} using the disciplined convex  programming solver \texttt{cvx}~\cite{cvx} for MATLAB.

In Table~\ref{tab:CD-pd-opt} we compare the different convergence rates for the CD-pd method, where $\rho_c$ is the convenient convergence rate~\eqref{ch:one:eq:rhoconv}, $\rho^*$ the optimized convergence rate, $(1-1/n)$ is the lower bound, and in the final ``optimized time(s)'' column the time taken to compute $\rho^*$. In Figure~\ref{fig:CD-pd-opt}, we compare the empirical convergence of the CD-pd method when using the convenient probabilities~\eqref{ch:one:eq:convprob} and CD-pd-opt, the CD-pd method with the optimized probabilities. We tested the two methods on four ridge regression problems and a synthetic positive definite system which is the square of a uniform random matrix:  $A~=~\bar{A}^\top \bar{A}$ where $\bar{A}=$\texttt{rand}$(50)$.

We ran each method for $60$ seconds. 

 In most cases using the optimized probabilities results in a much faster convergence, see Figures~\ref{fig:aloi-opt},~\ref{fig:liver-opt},~\ref{fig:mushrooms-opt} and~\ref{fig:uniform-opt}. In particular, the $7.401$ seconds spent calculating the optimal probabilities for \texttt{aloi} paid off with a convergence that was $55$ seconds faster. The \texttt{mushrooms} problem was insensitive to the choice of probabilities~\ref{fig:mushrooms-opt}. Finally despite $\rho^*$ being much less than $\rho_c$ on \texttt{covtype}, see Table~\ref{tab:CD-pd-opt}, using optimized probabilities  resulted in an initially slower method, though CD-pd-opt eventually catches up as CD-pd stagnates, see Figure~\ref{fig:covtype-opt}.
 
\begin{table} \centering
\footnotesize
\begin{tabular}{c|lll|c} \hline
data set & $\rho_c$ &$\rho^* $ & $1-1/n$ & optimized time(s) \\ \hline
\texttt{rand}(50,50) & $1-2\cdot 10^{-6}$ & $1-3.05\cdot 10^{-6}$ & $1-2.10^{-2}$ & 1.076\\
 {\tt mushrooms-ridge} & $1-5.86\cdot 10^{-6}$ & $1-7.15\cdot 10^{-6}$ & $1-8.93\cdot 10^{-3}$ &  4.632\\
  {\tt aloi-ridge} & $1-2.17\cdot 10^{-7}$ & $1-1.26\cdot 10^{-4}$ & $1-7.81\cdot 10^{-3}$ &  7.401\\
  {\tt liver-disorders-ridge} & $1-5.16\cdot 10^{-4}$ & $1-8.25\cdot 10^{-3}$ & $1-1.67\cdot 10^{-1}$ &  0.413\\  
   {\tt covtype.binary-ridge} & $1-7.57\cdot 10^{-14}$ & $1-1.48\cdot 10^{-6}$ & $1-1.85\cdot 10^{-2}$ &  1.449\\  	
\end{tabular}
\caption{Optimizing the convergence rate for CD-pd.}
\label{tab:CD-pd-opt}
\end{table}

\begin{figure}[!h]
    \centering
    \begin{subfigure}[t]{0.45\textwidth}
        \centering 
\includegraphics[width =  \textwidth, trim=100 280 100 290, clip ]{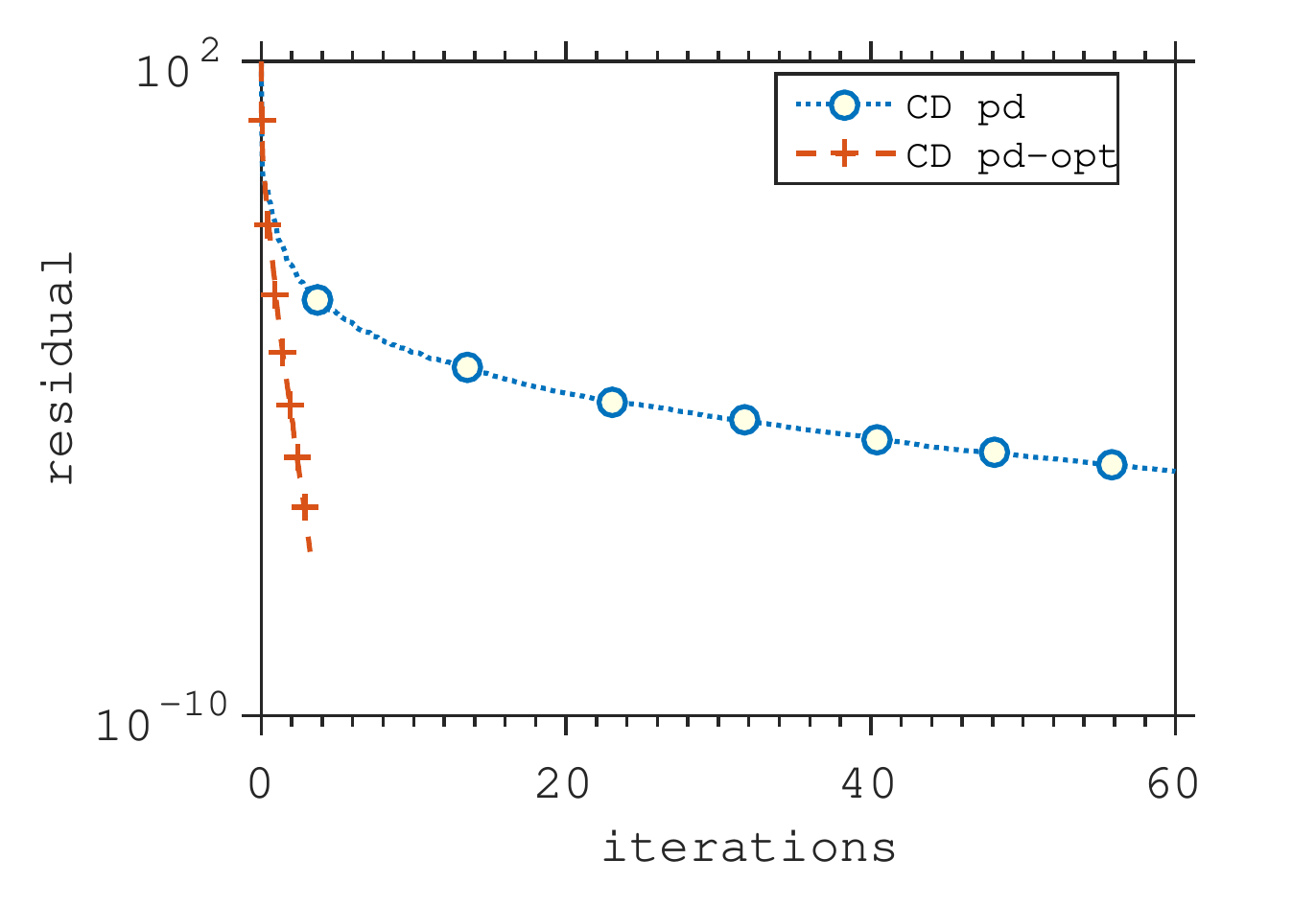}
        \caption{\texttt{aloi}}
        \label{fig:aloi-opt}
    \end{subfigure}%
  \hspace{0.02\textwidth}
    \begin{subfigure}[t]{0.45\textwidth}
        \centering
\includegraphics[width =  \textwidth, trim=100 280 100 290, clip ]{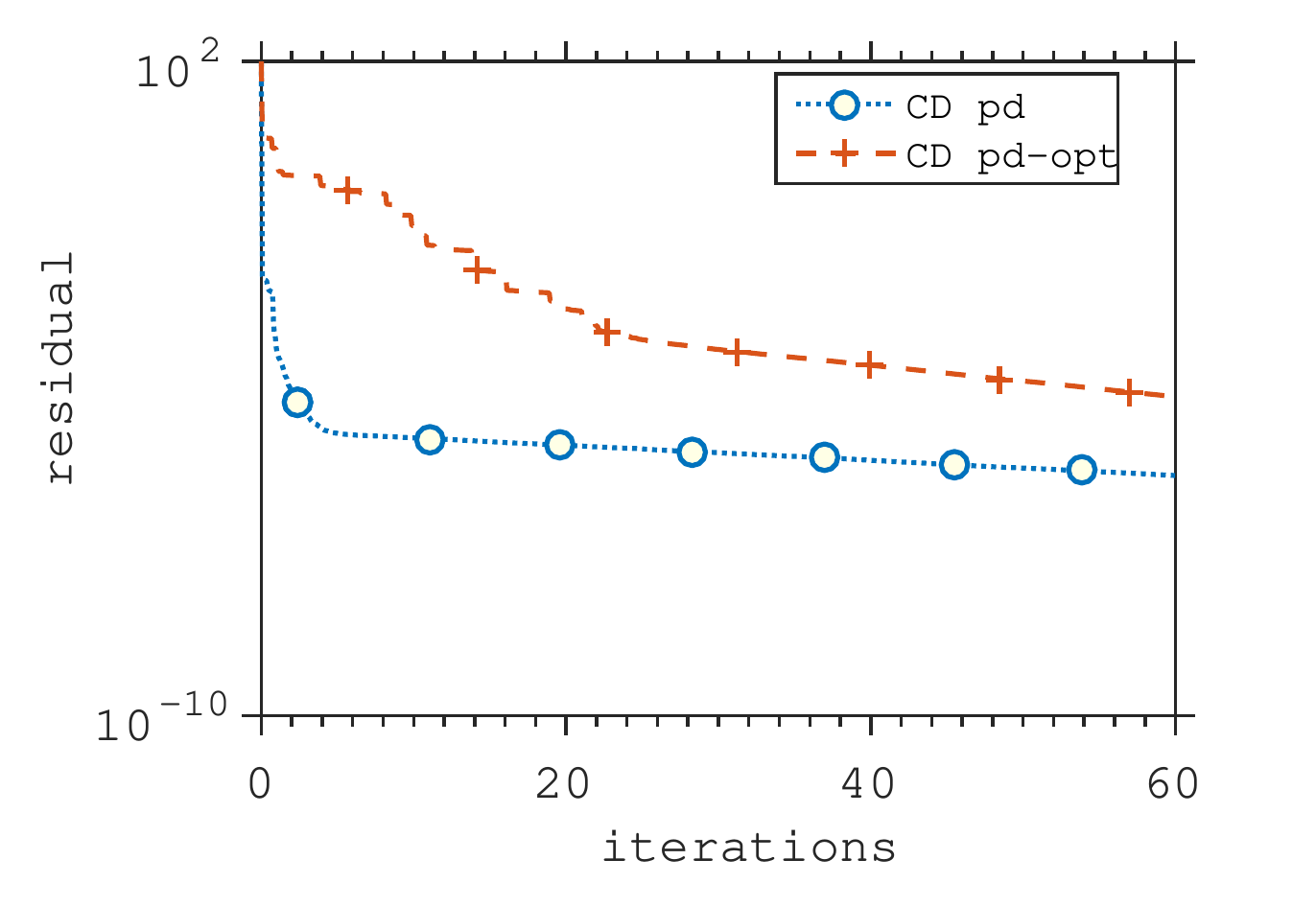}
        \caption{\texttt{covtype.libsvm.binary}}
        \label{fig:covtype-opt}
    \end{subfigure}
    \begin{subfigure}[t]{0.45\textwidth}
        \centering
\includegraphics[width =  \textwidth, trim=100 280 100 290, clip ]{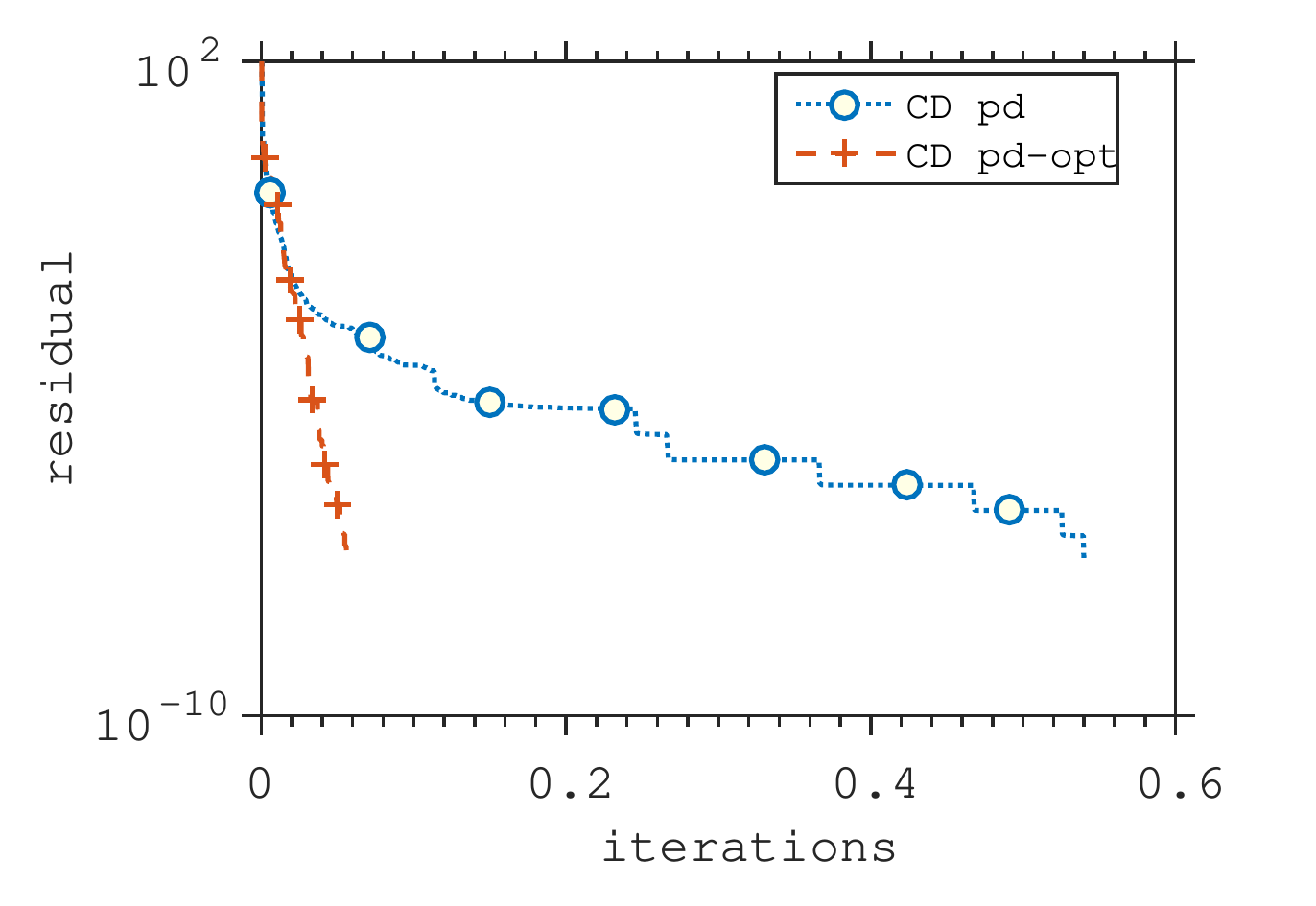}
        \caption{\texttt{liver-disorders-ridge}}
         \label{fig:liver-opt}
    \end{subfigure}
    \begin{subfigure}[t]{0.45\textwidth}
        \centering
\includegraphics[width =  \textwidth, trim=100 280 110 270, clip ]{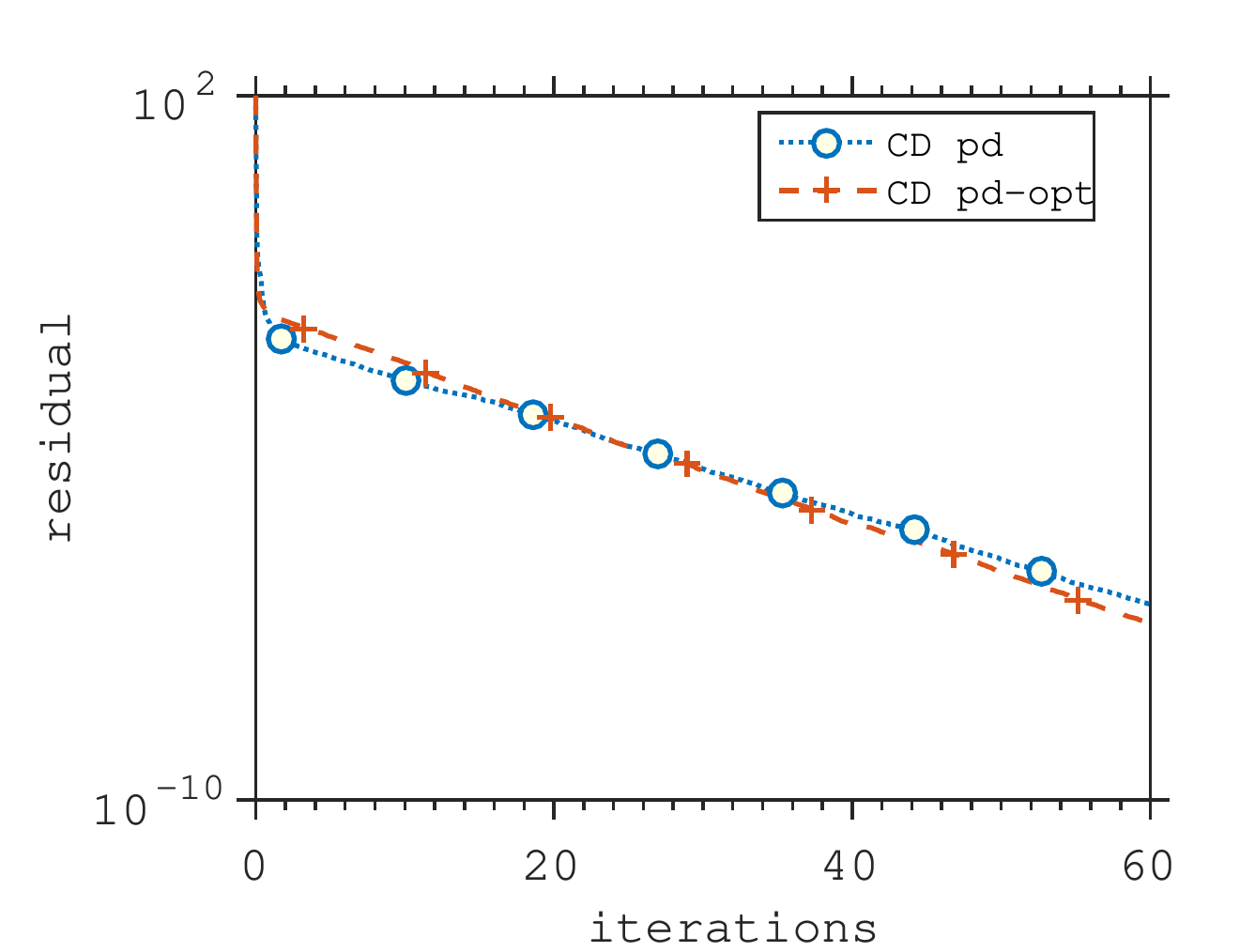}
        \caption{\texttt{mushrooms-ridge}}
         \label{fig:mushrooms-opt}
    \end{subfigure} 
    \begin{subfigure}[t]{0.45\textwidth}
        \centering
\includegraphics[width =  \textwidth, trim=100 280 100 290, clip ]{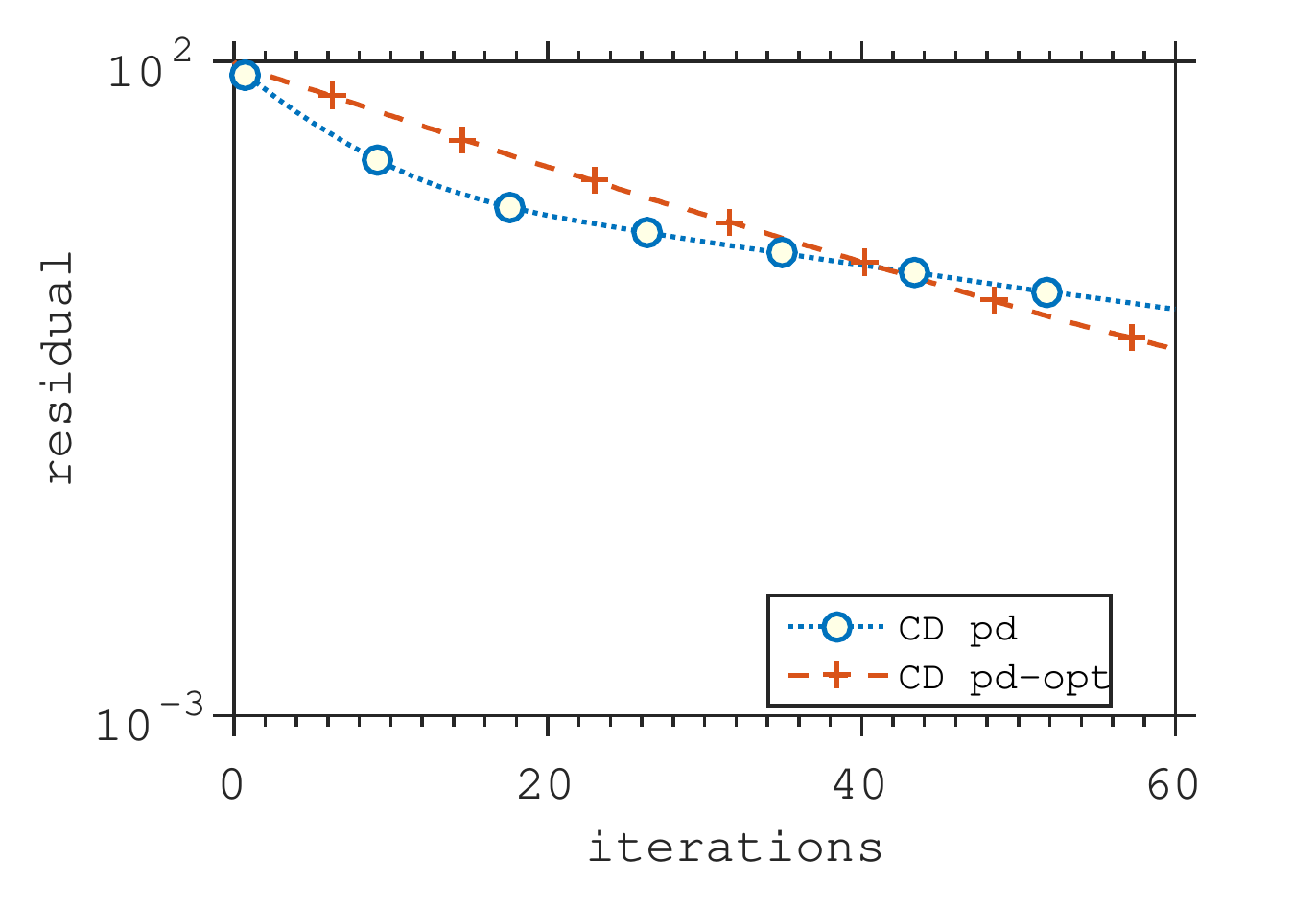}
        \caption{$50X50$ synthetic positive definite}
        \label{fig:uniform-opt}
    \end{subfigure}    
    \caption{The performance of CD-pd and optimized CD-pd methods on (a) \texttt{aloi}: $(m;n)=(108,000;128)$ (b) \texttt{covtype.binary}:  $(m;n)=(581,012; 54)$ (c) \texttt{liver-disorders}: $(m;n)=(345,6)$ (c)\texttt{mushrooms}: $(m;n) = (8124,112)$  (d) $A~=~\bar{A}^\top \bar{A}$ where $\bar{A}=$\texttt{rand}$(50)$.} \label{fig:CD-pd-opt}
\end{figure}

In Table~\ref{tab:RK-pd-opt} we compare the different convergence rates for the RK method. In Figure~\ref{fig:kaczmacz-opt}, we then compare the empirical convergence of the RK method when using the convenient probabilities~\eqref{ch:one:eq:convprob} and RK-opt, the RK method with the optimized probabilities by solving~\eqref{eq:optconv}. The rates $\rho^*$ and $\rho_c$ for the  \texttt{rand}(500,100) problem are similar, and accordingly, both the convenient and optimized variant converge at a similar rate in practice, see Figure~\ref{fig:kaczmacz-opt}b. While the difference in the rates  $\rho^*$ and $\rho_c$ for the  {\tt liver-disorders} is more pronounced, and in this case, the $0.83$ seconds invested in obtaining the optimized probability distribution paid off in practice, as the optimized method converged $1.25$ seconds before the RK method with the convenient probability distribution, see Figure~\ref{fig:kaczmacz-opt}a.

\begin{table} \centering
\footnotesize
\begin{tabular}{c|lll|c} \hline
data set & $\rho_c$ &$\rho^* $ & $1-1/n$ & optimized time(s) \\ \hline
\texttt{rand}(500,100) & $1-3.37\cdot 10^{-3}$ & $1-4.27\cdot 10^{-3}$ & $1-1\cdot 10^{-2}$ & 33.121\\
  {\tt liver-disorders} & $1-5.16\cdot 10^{-4}$ & $1-4.04\cdot 10^{-3}$ & $1-1.67 \cdot 10^{-1}$ &  0.8316\\
\end{tabular}
\caption{Optimizing the convergence rate for randomized Kaczmarz.}
\label{tab:RK-pd-opt}
\end{table}

\begin{figure}[!h]
    \centering
    \begin{subfigure}[t]{0.45\textwidth}
        \centering
\includegraphics[width =  \textwidth, trim=100 280 100 290, clip ]{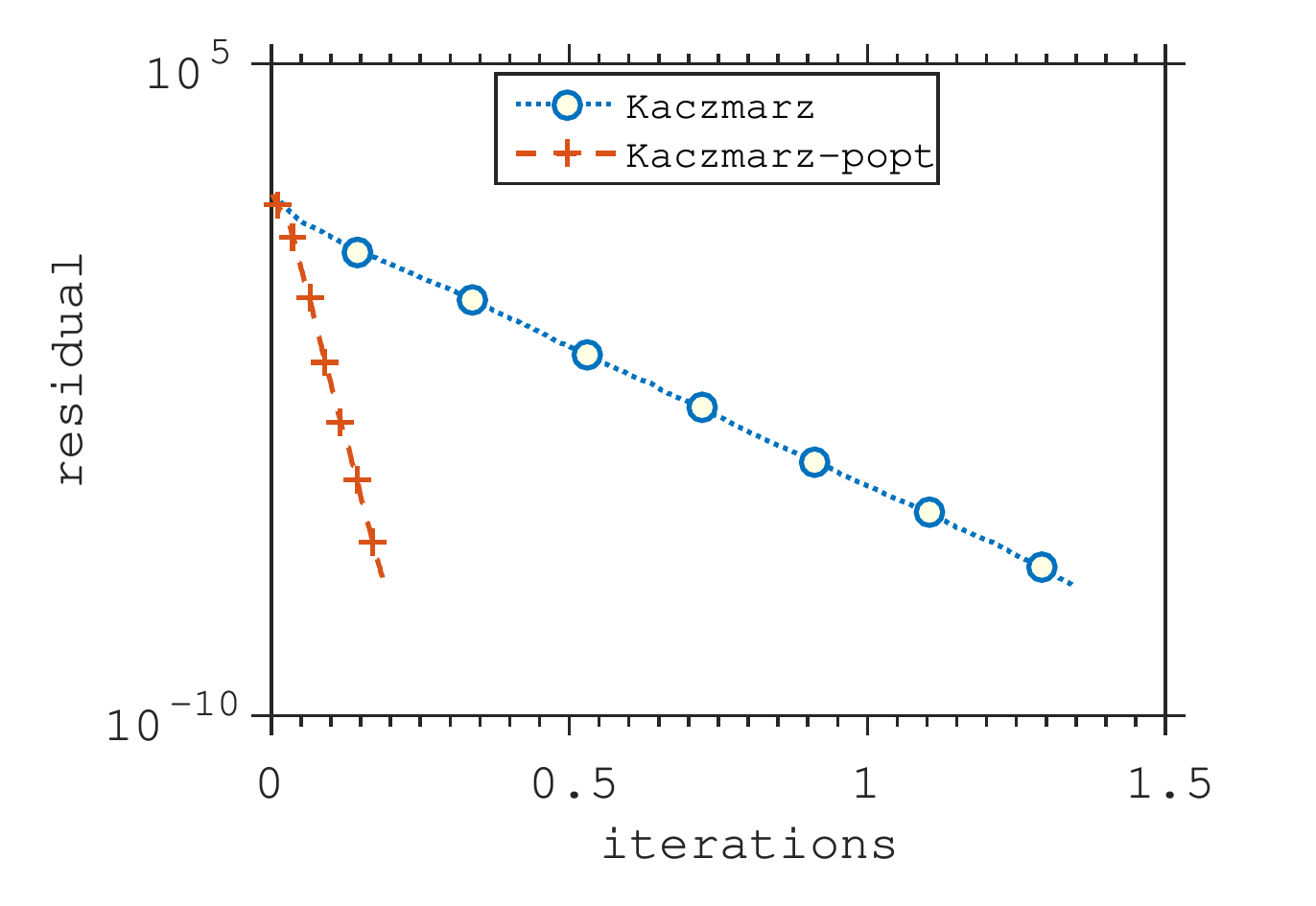}
        \caption{\texttt{liver-disorders-popt-k}}
    \end{subfigure}%
  \hspace{0.02\textwidth}
    \begin{subfigure}[t]{0.45\textwidth}
        \centering
\includegraphics[width =  \textwidth, trim=100 280 100 290, clip ]{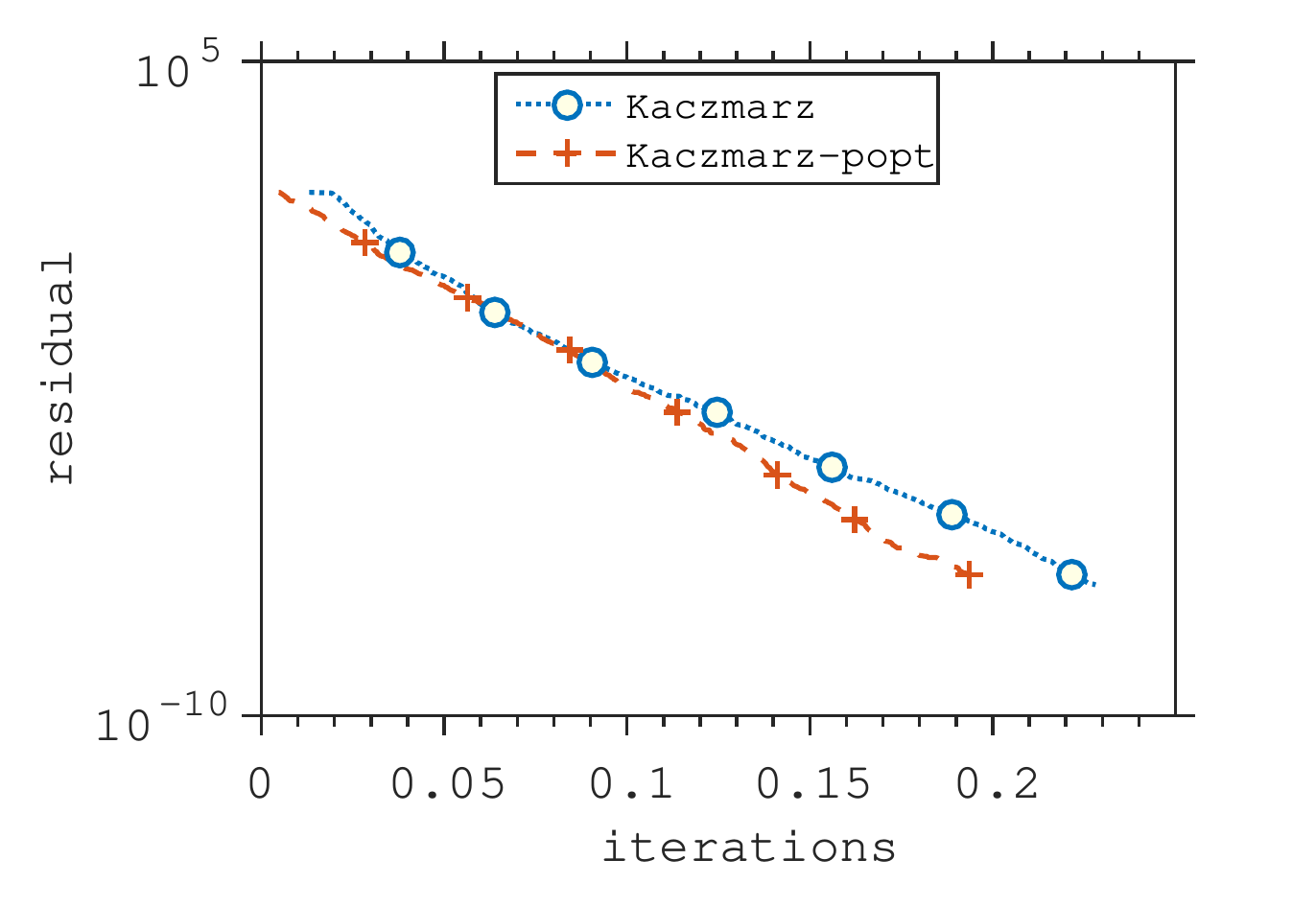}
        \caption{\texttt{rand}(500,100)}
    \end{subfigure}
    \caption{The performance of the Kaczmarz and optimized Kaczmarz methods on  (a) \texttt{liver-disorders}: $(m;n)=(345,6)$  (b) \texttt{rand}(500,100)}
\label{fig:kaczmacz-opt}
\end{figure} 

We conclude from these tests that the choice of the probability distribution can greatly affect the performance of the method. Hence, it is worthwhile to develop approximate solutions to~\eqref{eq:opt_sampling}.

 \subsection{Conclusion of numeric experiments}
 We now summarize the findings of our numeric experiments.
 \begin{itemize}
  \item Consistently across our experiments, in terms of number of flops taken to reach a desired precision, the three discrete sampling methods  CD-LS, CD-pd and Kaczmarz are the most efficient. 
That is, the Gaussian methods almost always require more flops to reach a solution with the same precision as their discrete sampling counterparts. This is due to the expensive matrix-vector product required by the Gaussian methods.
 \item In terms of wall-clock time, the Gaussian methods Guass-LS, Gauss-pd and Gauss  Kaczmarz are competitive as compared to the discrete sampling methods. This occurred because MATLAB  performs automatic multi-threading when calculating matrix-vector products, which was the bottleneck cost in the Gaussian methods. As our machine has four cores, this explains some of the difference observed when measuring performance in terms of  number of flops and wall clock time.
 \item  In terms of both time taken and flops, the block variants proved to be significantly more efficient.  
 \item Using the optimized probabilities~\eqref{eq:optconv} for the discrete sampling methods RK and CD-pd can result in significant speed-ups, as compared to RK and CD-pd using the convenient probabilities~\eqref{ch:one:eq:convprob}. So much so, that the time spent solving the SDP~\eqref{eq:optconv} using {\tt cvx}~\cite{cvx} often paid off. We can draw two interesting conclusions from this: (1) using convenient probabilities~\eqref{ch:one:eq:convprob} is not the best choice, but simply, the choice that provides easily interpretable convergence rates, (2) it is worth further investigating the use of optimization probabilities, for instance, one should investigate it is possible to obtain affordable approximate solutions to~\eqref{eq:optconv}. 
 \end{itemize}
\section{Summary} \label{C2sec:conclusion}

In this chapter we presented a unifying framework for the randomized Kaczmarz method, randomized Newton method, randomized coordinate descent method and  random Gaussian pursuit. Not only can we recover these methods by selecting appropriately the parameters $S$ and $B$, but also, we can analyze them and their block variants through a single Theorem~\ref{ch:two:theo:Enormconv}. Furthermore, we obtain a new lower bound for all these methods in Theorem~\ref{ch:two:theo:normEconv}, and in the discrete case, recover all known convergence rates expressed in terms of the scaled condition number in Theorem~\ref{theo:convsingleS}. 

Theorem~\ref{theo:convsingleS} also suggests a preconditioning strategy. Developing preconditioning methods are important for reaching a higher precision solution on ill-conditioned problems. For as we have seen in the numerical experiments, the randomized methods struggle to bring the solution within $10^{-2}\%$ relative residual when the matrix is ill-conditioned.

This is also a framework on which randomized methods for linear systems can be designed. As an example, we have designed a new block variant of RK, a new Gaussian Kaczmarz method and a new Gaussian block method for positive definite systems. Furthermore, the flexibility of our framework and the general convergence Theorems~\ref{ch:two:theo:Enormconv} and~\ref{ch:two:theo:normEconv} allows one to tailor the probability distribution of $S$ to a particular problem class. For instance, other continuous distributions such uniform, or other discrete distributions such Poisson might be more suited to a particular class of problems. 

Numeric tests reveal that the new Gaussian methods designed for overdetermined systems are competitive on sparse problems, as compared with the Kaczmarz and CD-LS methods. The Gauss-pd also proved competitive as compared with  CD-pd on all tests. Though, when applicable, the combined efficiency of using a direct solver and an iterative procedure, such as in  Block CD-pd method, proved the most efficient.

The work opens up many possible future venues of research.  Including investigating accelerated convergence rates through preconditioning strategies based on Theorem~\ref{theo:convsingleS} or by obtaining approximate optimized probability distributions~\eqref{eq:optconv}.  


\subsubsection*{Acknowledgments}
I would like to thank Prof. Sandy Davie for useful discussions relating to  Lemma~\ref{lem:2Dgausscov}, and Prof. Joel Tropp for help with formulating and proving  Lemma~\ref{lem:gaussdiag}. 

\section{Appendix: A Bound on the Expected Gaussian Projection Matrix}

We now bound the covariance of a random Gaussian vector projected onto the sphere. This bound is used to study the complexity of Gaussian methods in Section~\ref{C2sec:gauss}.
\begin{lemma} \label{lem:gaussdiag}
Let $D \in \R^{n\times n}$ be a positive definite diagonal matrix, $U \in \R^{n\times n}$ an orthogonal matrix and $\Omega  =UDU^\top $. If $u \sim N(0,D)$ and $\xi \sim N(0,\Omega)$  then 
\begin{equation}\label{eq:gaussdiag}\E{\frac{\xi \xi^\top }{\xi^\top \xi}} =U\E{\frac{u u^\top }{u^\top u}}U^\top  ,\end{equation}
and
\begin{equation}\label{eq:gaussupper}
\E{\frac{\xi \xi^\top }{\xi^\top \xi}} \succeq \frac{2}{\pi}\frac{\Omega}{\Tr{\Omega}}.
\end{equation}
\end{lemma}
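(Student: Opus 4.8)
The plan is to prove the identity \eqref{eq:gaussdiag} by a change of variables, and then to deduce the operator inequality \eqref{eq:gaussupper} by reducing it --- via \eqref{eq:gaussdiag} and the sign symmetries of a product Gaussian --- to a scalar estimate that turns out to be a single application of Cauchy--Schwarz. For \eqref{eq:gaussdiag}: since the left-hand side depends on $\xi$ only through its law, I would take $\xi = Uu$. As $U$ is orthogonal and $u\sim N(0,D)$, we have $Uu\sim N(0,UDU^\top) = N(0,\Omega)$, so this coupling is legitimate; moreover $\xi^\top\xi = u^\top U^\top U u = u^\top u$ and $\xi\xi^\top = U(uu^\top)U^\top$. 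Since $u\neq 0$ almost surely, $\frac{\xi\xi^\top}{\xi^\top\xi} = U\frac{uu^\top}{u^\top u}U^\top$ holds pathwise, and taking expectations gives \eqref{eq:gaussdiag}.

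For \eqref{eq:gaussupper}, put $M\eqdef\E{uu^\top/(u^\top u)}$. Using $\Tr{\Omega}=\Tr{UDU^\top}=\Tr{D}$, the inequality \eqref{eq:gaussupper} will follow from $M\succeq \tfrac{2}{\pi}\,D/\Tr{D}$, since conjugating by $U$ and invoking \eqref{eq:gaussdiag} turns this into $\E{\xi\xi^\top/(\xi^\top\xi)} = UMU^\top\succeq \tfrac{2}{\pi}\,UDU^\top/\Tr{D} = \tfrac{2}{\pi}\,\Omega/\Tr{\Omega}$. To prove $M\succeq\tfrac{2}{\pi}\,D/\Tr{D}$ I would first observe that $M$ is diagonal: writing $D=\mbox{diag}(d_1,\dots,d_n)$, the coordinates of $u$ are independent, so for $i\neq j$ the reflection $u_i\mapsto -u_i$ preserves the law of $u$ and the value of $\norm{u}_2^2$, whence $M_{ij}=\E{u_iu_j/\norm{u}_2^2} = -M_{ij}=0$. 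It then suffices to bound $M_{ii}=\E{u_i^2/\norm{u}_2^2}$ below by $\tfrac{2}{\pi}\,d_i/\Tr{D}$. Since $|u_i|/\norm{u}_2\le 1$ and $\E{\norm{u}_2^2}=\Tr{D}<\infty$, Cauchy--Schwarz with $X=|u_i|/\norm{u}_2$ and $Y=\norm{u}_2$ gives
\[
\left(\E{|u_i|}\right)^2 \;=\; \left(\E{\tfrac{|u_i|}{\norm{u}_2}\,\norm{u}_2}\right)^2 \;\le\; \E{\tfrac{u_i^2}{\norm{u}_2^2}}\cdot\E{\norm{u}_2^2} \;=\; M_{ii}\,\Tr{D}.
\]
As $u_i\sim N(0,d_i)$ we have $\E{|u_i|}=\sqrt{2 d_i/\pi}$, so $M_{ii}\ge \tfrac{2}{\pi}\,d_i/\Tr{D}$; since $M$ and $D/\Tr{D}$ are both diagonal, this is exactly $M\succeq\tfrac{2}{\pi}\,D/\Tr{D}$.

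The coupling in the first step and the moment identities $\E{\norm{u}_2^2}=\Tr{D}$ and $\E{|u_i|}=\sqrt{2d_i/\pi}$ are routine; the single idea carrying the argument is the choice of the Cauchy--Schwarz pairing for which the cross term collapses to $|u_i|$, giving the sharp constant $2/\pi = (\E|g|)^2$. The only point that needs a little care is the passage back from the scalar bounds on the entries $M_{ii}$ to the matrix inequality: this is valid precisely because the sign-flip symmetry forces $M$ to be diagonal, so that $M-\tfrac{2}{\pi}\,D/\Tr{D}$ is a diagonal matrix with nonnegative entries and hence positive semidefinite, after which conjugation by the orthogonal $U$ transports the estimate from $D$ to $\Omega$.
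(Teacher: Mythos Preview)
Your proof is correct and follows essentially the same route as the paper: couple $\xi=Uu$ to get \eqref{eq:gaussdiag}, observe that $M=\E{uu^\top/\|u\|_2^2}$ is diagonal by the sign-flip symmetry, and then bound $M_{ii}$ below by $\tfrac{2}{\pi}d_i/\Tr{D}$ via a single classical inequality. The only difference is that the paper phrases that last step as Jensen's inequality for the convex map $(x,y)\mapsto x^2/y$ on the positive orthant, whereas you use Cauchy--Schwarz with the pairing $X=|u_i|/\|u\|_2$, $Y=\|u\|_2$; the two formulations yield the identical bound $(\E{|u_i|})^2\le M_{ii}\,\E{\|u\|_2^2}$, so this is a cosmetic rather than a structural distinction.
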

\begin{proof}
Let us write $S(\xi)$ for the random vector $\xi/\|\xi\|_2$ (if $\xi=0$, we set $S(\xi)=0$).  Using this notation, we can write \[ \E{ \xi (\xi^\top  \xi)^{-1} \xi^\top   }  = \E{S(\xi)(S(\xi))^\top } = \COV{S(\xi)},\]
where the last identity follows since $\E{S(\xi)}=0$, which in turn holds as the Gaussian distribution is centrally symmetric. As $\xi = U u$,  note that  
\[S(u) = \frac{U^\top  \xi}{\|U^\top  \xi\|_2} = \frac{U^\top  \xi}{\|\xi\|_2} = U^\top  S(\xi).\]
Left multiplying both sides by $U$ we obtain
$U S(u) = S(\xi)$, from which we obtain
\[\COV{S(\xi)} = U \COV{ S(u) } U^\top ,  
\]
which is equivalent to~\eqref{eq:gaussdiag}.

  To prove\footnote{A version of Lemma~\ref{lem:gaussdiag} was conjectured in the original draft of the paper on which this chapter is based. Prof. Joel Tropp provided this formulation and the remainder of this proof.} \eqref{eq:gaussupper}, note first that  $M \eqdef \E{u u^\top /u^\top u}$ is a diagonal matrix.  One can verify this by direct calculation (informally, this holds because the entries of $u$ are independent and centrally symmetric). The $i$th diagonal entry is given by
\[M_{ii} = \E{\frac{u_{i}^2}{\sum_{j=1}^n u_j^2}}.\]

%
As the map $(x,y) \rightarrow x^2/y$ is convex on the positive orthant, we can apply Jensen's inequality, which gives
\[\E{\frac{u_{i}^2}{\sum_{j=1}^n u_j^2}} \geq \frac{\left(\E{|u_{i}|}\right)^2}{\sum_{j=1}^n \E{u_j^2}}
= \frac{2}{\pi}\frac{D_{ii}}{\Tr{D}},\]
which concludes the proof.   
\end{proof}

\section{Appendix: Expected Gaussian Projection Matrix in 2D}
\begin{lemma} \label{lem:2Dgausscov}
Let $\xi \sim N(0,\Omega)$ and $\Omega \in \R^{2\times 2}$ be a positive definite matrix, then
\begin{equation}\label{eq:2Dgausscov}\E{\frac{\xi \xi^\top }{\xi^\top \xi}} = \frac{\Omega^{1/2}}{\Tr{\Omega^{1/2}}}.\end{equation}
\end{lemma}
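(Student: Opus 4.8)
The plan is to reduce to a diagonal covariance via Lemma~\ref{lem:gaussdiag} and then evaluate one elementary trigonometric integral. Diagonalize $\Omega = UDU^\top$ with $U$ orthogonal and $D = \mathrm{diag}(d_1,d_2)$, $d_1,d_2>0$. Then $\Omega^{1/2} = UD^{1/2}U^\top$ and $\Tr{\Omega^{1/2}} = \Tr{D^{1/2}} = \sqrt{d_1}+\sqrt{d_2}$, so by~\eqref{eq:gaussdiag} it is enough to prove that for $u\sim N(0,D)$,
\[\E{\frac{uu^\top}{u^\top u}} = \frac{D^{1/2}}{\Tr{D^{1/2}}},\]
since conjugating this identity by $U$ yields exactly~\eqref{eq:2Dgausscov}.

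Write $M \eqdef \E{uu^\top/(u^\top u)}$. By the argument in the proof of Lemma~\ref{lem:gaussdiag} (the coordinates of $u$ are independent and centrally symmetric), $M$ is diagonal, and $M_{11}+M_{22} = \E{(u_1^2+u_2^2)/(u_1^2+u_2^2)} = 1$. Hence it suffices to compute $M_{11} = \E{u_1^2/(u_1^2+u_2^2)}$ and verify $M_{11} = \sqrt{d_1}/(\sqrt{d_1}+\sqrt{d_2})$, for then automatically $M_{22} = \sqrt{d_2}/(\sqrt{d_1}+\sqrt{d_2})$.

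To compute $M_{11}$, write $u_i = \sqrt{d_i}\,g_i$ with $g_1,g_2\sim N(0,1)$ independent, so $(g_1,g_2)$ is rotationally symmetric. Passing to polar coordinates $g_1 = R\cos\Theta$, $g_2 = R\sin\Theta$ with $\Theta$ uniform on $[0,2\pi)$ independent of $R$, the $R^2$ cancels in the ratio and
\[M_{11} = \frac{1}{2\pi}\int_0^{2\pi}\frac{d_1\cos^2\theta}{d_1\cos^2\theta + d_2\sin^2\theta}\,d\theta.\]
Set $I = \int_0^{2\pi}\cos^2\theta\,(d_1\cos^2\theta+d_2\sin^2\theta)^{-1}\,d\theta$ and $J = \int_0^{2\pi}\sin^2\theta\,(d_1\cos^2\theta+d_2\sin^2\theta)^{-1}\,d\theta$. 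Then $d_1 I + d_2 J = 2\pi$ and $I + J = \int_0^{2\pi}(d_1\cos^2\theta+d_2\sin^2\theta)^{-1}\,d\theta = 2\pi/\sqrt{d_1 d_2}$, using the standard evaluation $\int_0^{2\pi}(a\cos^2\theta+b\sin^2\theta)^{-1}\,d\theta = 2\pi/\sqrt{ab}$. Solving these two linear equations yields $I = 2\pi/\big(\sqrt{d_1}(\sqrt{d_1}+\sqrt{d_2})\big)$, hence $M_{11} = (2\pi)^{-1}d_1 I = \sqrt{d_1}/(\sqrt{d_1}+\sqrt{d_2})$, completing the reduction.

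The one step that is not pure bookkeeping is the identity $\int_0^{2\pi}(a\cos^2\theta+b\sin^2\theta)^{-1}\,d\theta = 2\pi/\sqrt{ab}$ for $a,b>0$; I would either quote a standard integral table or derive it by the substitution $t=\tan\theta$ on $(0,\pi/2)$ together with the period-$\pi$ and reflection symmetries of the integrand, reducing it to $4\int_0^\infty (a+bt^2)^{-1}\,dt = 4\cdot\tfrac{\pi}{2\sqrt{ab}}$. This is the main (and essentially only) obstacle; note that expressing $M_{11}$ through the two linear relations for $I$ and $J$ conveniently avoids any case distinction at $d_1 = d_2$.
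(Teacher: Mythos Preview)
Your proof is correct and follows the same overall structure as the paper: reduce to the diagonal case via Lemma~\ref{lem:gaussdiag}, note that the resulting matrix $M$ is diagonal with trace one, and compute $M_{11}$ by passing to polar coordinates and evaluating a $\theta$--integral.

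The difference lies in how the $\theta$--integral is handled. The paper computes $\int_0^{\pi}\cos^2\theta\,(\cos^2\theta+b^2\sin^2\theta)^{-1}\,d\theta$ directly: it multiplies through by $\sec^4\theta$, substitutes $v=\tan\theta$, applies partial fractions (producing a factor $1/(1-b^2)$), and then has to deal carefully with the singularity at $\theta=\pi/2$ via one--sided limits, plus treat the case $\sigma_x=\sigma_y$ separately from the outset. Your approach is cleaner: by pairing $I$ with its companion $J$ you get two linear relations, one of which is the textbook integral $\int_0^{2\pi}(a\cos^2\theta+b\sin^2\theta)^{-1}\,d\theta=2\pi/\sqrt{ab}$, and solving this $2\times 2$ system bypasses the partial fractions and the singularity bookkeeping. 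One small caveat: the linear system you write down is actually degenerate when $d_1=d_2$ (both equations collapse to $I+J=2\pi/d_1$), so strictly speaking your derivation of $I$ assumes $d_1\neq d_2$; but the resulting formula $M_{11}=\sqrt{d_1}/(\sqrt{d_1}+\sqrt{d_2})$ is continuous and manifestly correct at $d_1=d_2$, so a one--line continuity remark (or the symmetry observation $I=J$ in that case) closes the gap. Overall your argument is shorter and avoids the delicate limit calculation the paper performs.
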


\begin{proof}
Let $\Sigma = U D U^\top $ and $u \sim N(0,D).$ Given~\eqref{eq:gaussdiag} it suffices to show that 
\begin{equation}\label{eq:h98hs98hs9ss}\COV{S(u)} = \frac{D^{1/2}}{\Tr{D^{1/2}}},\end{equation}
which we will now prove.

Let $\sigma_x^2$ and $\sigma_y^2$ be the two diagonal elements of $D.$ First, suppose that $\sigma_x = \sigma_y.$ Then $u = \sigma_x \eta$ where $\eta \sim N(0,I)$ and
\[\E{\frac{u u^\top }{u^\top u}} = \frac{\sigma_x^2}{\sigma_x^2}\E{\frac{\eta \eta^\top }{\eta^\top \eta}} = \frac{1}{n}I = \frac{D^{1/2}}{\Tr{D^{1/2}}}. \]
Now suppose that $\sigma_x \neq \sigma_y.$ We calculate the diagonal terms of the covariance matrix by integrating
 \[\E{\frac{u_1^2}{u_1^2+u_2^2}}=\frac{1}{2\pi \sigma_x\sigma_y}\int_{\R^2} \frac{x^2}{x^2+y^2} e^{-\frac{1}{2}\left( x^2/\sigma_x^2 +y^2/\sigma_y^2 \right) } dxdy. \]

 Using polar coordinates $x= R \cos(\theta)$ and $y =R \sin(\theta)$ we have
\begin{equation}\int_{\R^2} \frac{x^2}{x^2+y^2} e^{-\frac{1}{2}\left( x^2/\sigma_x^2 +y^2/\sigma_y^2 \right) } dxdy =
\int_0^{2\pi}\int_{0}^{\infty} R\cos^2(\theta) e^{-\frac{R^2}{2} C(\theta) } dRd\theta,\label{eq:ondstep}
\end{equation}
where $C(\theta) \eqdef \left( \cos(\theta)^2/\sigma_x^2 +\sin(\theta)^2/\sigma_y^2 \right). $ Note that
\begin{equation}\label{eq:Rintfirst}
\int_{0}^{\infty} R e^{-\frac{C(\theta)R^2}{2} } dR = \left.-\frac{1}{C(\theta)}e^{-\frac{C(\theta)R^2}{2}} \right|_{0}^{\infty} =\frac{1}{C(\theta)}. 
\end{equation}
This applied in~\eqref{eq:ondstep} gives
\begin{align*} \E{\frac{u_1^2}{u_1^2+u_2^2}} &=\frac{1}{2\pi \sigma_x \sigma_y}  \int_0^{2\pi}\frac{\cos^2(\theta)}{ \cos(\theta)^2/\sigma_x^2 +\sin(\theta)^2/\sigma_y^2  }d\theta = \frac{b}{\pi}  \int_0^{\pi}\frac{\cos^2(\theta)}{ \cos^2(\theta) +b^2\sin^2(\theta)}d\theta, \end{align*}
where $b = \sigma_x/\sigma_y.$
Multiplying the numerator and denominator of the integrand by $\sec^4(x)$ gives the integral
\[\E{\frac{u_1^2}{u_1^2+u_2^2}} = \frac{b}{\pi} \int_0^{\pi}\frac{\sec^2(\theta)}{ \sec(\theta)^2\left( 1 +b^2\tan^2(\theta) \right) }d\theta.\]
Substituting $v=\tan(\theta)$ so that $v^2 +1=\sec^2(\theta)$, $dv =\sec^2(\theta)d \theta$ and using the partial fractions
\[\frac{1}{(v^2+1)\left( 1 +b^2v^2 \right)} = \frac{1}{1-b^2}\left( \frac{1}{v^2+1} -\frac{b^2}{b^2v^2+1} \right), \]
gives the integral
\begin{align}
\int \frac{dv}{ (v^2+1)\left( 1 +b^2 v^2 \right) } &=
\frac{1}{1-b^2}\left(\arctan(v)-b\arctan(b v) \right) \nonumber\\
&= \frac{1}{1-b^2}\left(\theta-b\arctan(b \tan(\theta))\right). \label{eq:A121}
\end{align} 
To apply the limits of integration, we must take care because of the singularity at $\theta=\pi/2$. For this, consider the limits 
\[\lim_{\theta \rightarrow (\pi/2)^-} \arctan(b\tan(\theta)) = \frac{\pi}{2}, \qquad \lim_{\theta \rightarrow (\pi/2)^+} \arctan(b\tan(\theta)) = -\frac{\pi}{2}.\]
Using this to evaluate~\eqref{eq:A121} on the limits of the interval $[0,\, \pi/2]$  gives 
\[\lim_{t \rightarrow (\pi/2)^-}\left.\frac{1}{1-b^2}\left(\theta-b\arctan(b \tan(\theta))\right)\right|_0^{t}=
 \frac{1}{1-b^2}\frac{\pi}{2}(1-b) = \frac{\pi}{2(1+b)}.\]
 Applying a similar argument for calculating the limits from  $\pi/2^+$ to $\pi$, we find
\[ \E{\frac{u_1^2}{u_1^2+u_2^2}} =\frac{2b}{\pi}  \frac{ \pi}{2(1+b)} =\frac{\sigma_x}{\sigma_y+\sigma_x}.\]
Repeating the same steps with $x$ swapped for $y$ we obtain the other diagonal element, which concludes the proof of~\eqref{eq:h98hs98hs9ss}.
\end{proof}  
\chapter[Stochastic Dual Ascent for Finding the Projection of a Vector onto a Linear System]{Stochastic Dual Ascent for Finding the Projection of a Vector onto a Linear System}
\chaptermark{SDA for Finding the Projection of a Vector onto a Linear System}
\label{ch:SDA}

{
\epigraph{\emph{Dyfal donc a dyr y garreg.}\\
Tapping~persistently~breaks~the~stone.}{Welsh proverb}
\let\clearpage\relax
\section{Introduction}
}

In this chapter we consider the more general problem of finding the projection of a given vector onto the solution space of a linear system. This projection problem includes the problem of determining the least norm solution of a linear system (when the given vector is the zero vector). To solve this projection problem, we develop a new randomized iterative algorithm---{\em stochastic dual ascent (SDA)}. The method is dual in nature: with the dual being a non-strongly concave quadratic maximization problem without constraints.

By mapping our dual iterates to primal iterates, we uncover that the SDA method is a dual version of the sketch-and-project method~\eqref{C1eq:xupdate}.
We then proceed to strengthen our convergence results established in Chapter~\ref{ch:linear_systems}. First, we do away with the assumption that the system matrix has full column rank that was required to establish convergence through Theorem~\ref{ch:two:theo:normEconv} and~\ref{ch:two:theo:Enormconv} and consider any matrix and consistent linear system. In this more general setting we show that the primal iterates still converge linearly with a convergence rate that is at least as small as the  convergence rate established in Chapter~\ref{ch:linear_systems}. 

Furthermore we give a formula and a tighter lower bound for the convergence rate. We also prove that the same rate of convergence applies to dual function values, primal function values and the duality gap. Unlike traditional iterative methods, SDA converges under virtually no additional assumptions on the system (e.g., rank, diagonal dominance) beyond consistency. In fact, our lower bound  improves as the rank of the system matrix drops. 

When our method specializes to a known algorithm, we either recover the best known rates, or improve upon them. Finally, we show that the framework can be applied to the distributed average consensus problem to obtain  an array of new algorithms. The randomized gossip algorithm arises as a special case~\cite{Boyd2006,OlshevskyTsitsiklis2009}. 



\section{Contributions and Overview}

\subsection{The problem:}

\begin{equation}\label{eq:Axbx}Ax =b,\end{equation}
where $A \in \R^{m \times n}$  and $b \in \R^m$. We shall only assume that the system is {\em consistent}, that is,  that  there exists $x^*$ for which $Ax^*=b$.  Note that we make no assumptions on $n$ or $m$ and all the configurations $m<n,$ $m=n$ and $m>n$ are allowed. While we assume the existence of a solution, we do not assume uniqueness. In situations with multiple solutions, one is often interested in finding a solution with specific properties. For instance, in compressed sensing and sparse optimization, one is interested in finding the least $\ell_1$-norm, or the least $\ell_0$-norm (sparsest) solution. 

In this chapter we shall focus on the canonical problem of finding the solution  of \eqref{eq:Axbx} closest, with respect to a Euclidean distance,  to a given vector $c\in \R^n$:
\begin{eqnarray}  \text{minimize} \quad \  && P(x)\eqdef \tfrac{1}{2}\|x-c\|_B^2 \notag\\
\text{subject to} \quad \  && Ax=b \label{eq:P}\\
&& x\in \R^n.\notag
\end{eqnarray}
where $B$ is an $n\times n$ symmetric positive definite matrix and 
$\|x\|_B \eqdef \sqrt{x^\top B x}$. By $x^*$ we denote the (necessarily) unique solution of \eqref{eq:P}. Of key importance in this chapter is the {\em dual problem}\footnote{Technically, this  is both the Lagrangian and Fenchel dual of \eqref{eq:P}.} to \eqref{eq:P}, namely
\begin{eqnarray}\label{eq:Dualfunc} \text{maximize}\quad \  && D(y)\eqdef (b-Ac)^\top y - \tfrac{1}{2}\|A^\top y\|_{B^{-1}}^2\\
\text{subject to} \quad \ && y \in \R^m. \notag
\end{eqnarray}

Due to the consistency assumption, strong duality holds and we have $P(x^*) = D(y^*)$, where $y^*$ is any dual optimal solution.

\subsection{A new family of stochastic optimization algorithms}

We propose to solve \eqref{eq:P} via a new method operating in the dual \eqref{eq:Dualfunc}, which we call {\em stochastic  dual ascent} (SDA). The iterates of SDA are of the form
\begin{equation}\label{eq:methoddual0} y^{k+1} = y^k + S \lambda^k,\end{equation}
where $S$ is a random matrix with $m$ rows  drawn in each iteration independently from a pre-specified distribution ${\cal D}$, which should be seen as a parameter of the method. In fact,  by varying ${\cal D}$, SDA should be seen as a family of algorithms indexed by $\cal D$, the choice of which leads to specific algorithms in this family.   By performing steps of the form \eqref{eq:methoddual0}, we are moving in the range space of the random matrix $S$. A key feature of SDA enabling us to prove strong  convergence results despite the fact that the dual objective is in general not strongly concave is the way in which  the ``stepsize'' parameter $\lambda^k$ is chosen: we choose $\lambda^k$ to be the {\em least-norm} vector for which $D(y^k + S\lambda)$ is maximized in $\lambda$. Plugging this $\lambda^k$ into~\eqref{eq:methoddual0}, we obtain the SDA method:
\begin{equation}\label{eq:SDA-compact0}
\boxed{\quad y^{k+1} =  y^k + S \left(S^\top A B^{-1} A^\top S\right)^\dagger S^\top \left(b - A \left( c+B^{-1}A^\top y^k \right) \right) \quad }
\end{equation}

To the best of our knowledge, a randomized optimization algorithm  with iterates of the {\em general} form \eqref{eq:methoddual0}  was not considered nor analyzed before. In the special case when $S$ is chosen to be a random unit coordinate vector, SDA specializes to  the {\em randomized coordinate descent method}, first analyzed by Leventhal and Lewis \cite{Leventhal2010}. 
In the special case when $S$ is chosen as a random column submatrix of the $m\times m$ identity matrix, SDA specializes to the {\em randomized Newton method} of Qu, Fercoq, Richt\'{a}rik and Tak\'{a}\v{c}~\cite{Qu2015}.

With the dual iterates $\{y^k\}$ we associate a sequence of primal iterates $\{x^k\}$ as follows:
\begin{equation} \label{eq:primaliterates0} x^k \eqdef c + B^{-1}A^\top y^k.\end{equation}
In combination with \eqref{eq:SDA-compact0}, this yields the primal iterative process
 \begin{equation}\label{eq:SDA-primal0}
\boxed{\quad x^{k+1} =  x^k - B^{-1}A^\top S \left(S^\top A B^{-1} A^\top S\right)^\dagger S^\top \left(A x^k  -b \right) \quad }
\end{equation}

Optimality conditions (see Section~\ref{subsec:OptCond}) imply that if $y^*$ is any dual optimal point, then $c+B^{-1}A^\top y^*$ is necessarily primal optimal and hence equal to $x^*$, the optimal solution of \eqref{eq:P}. Moreover,  we have the following useful and insightful correspondence between the quality of the primal and dual iterates (see Proposition~\ref{lem:correspondence}):
\begin{equation}\label{eq:iugs8gs}D(y^*) - D(y^k) = \tfrac{1}{2}\|x^k-x^*\|_B^2.\end{equation}
Hence, {\em dual convergence in  function values is equivalent to primal convergence in iterates.} 

This work belongs to a growing literature on randomized methods for various problems appearing in linear algebra, optimization and computer science. In particular, relevant methods include sketching algorithms, randomized Kaczmarz, stochastic gradient descent and their variants  \cite{Strohmer2009,Needell2010,Drineas2011,hogwild,Zouzias2013a, Needell2012, Ramdas2014, SAG, Takac2013,Johnson2013, Richtarik2015c, S2GD, proxSVRG, SAGA, mS2GD, ZhaoZhang2015,Needell2014, Dai2014, NeedellWard2015, Ma2015, Gower2015,Oswald2015,LiuWright-AccKacz-2016}
and randomized coordinate and subspace type methods and their variants \cite{Leventhal2010,Lin:2008:DCDM,ShalevTewari09,Nesterov2012,Wright:ABCRRO,
shotgun,Richtarik2014a,Nesterov2011,PCDM,tao2012stochastic,Necoara:Parallel,ICD,Necoara:rcdm-coupled,Richtarik2013a,Hydra2,SDCA,Fercoq-paralleladaboost,Fercoq2013a, Lee2013,Qu2015b,SPCDM,ALPHA,ESO,SPDC,Qu2015,NIPSdistributedSDCA,
Shalev-Shwartz2013b,APCG,Csiba2015,Gower2015}.

\subsection{The main results} 

We now describe two complexity theorems which form the core theoretical contribution of this chapter. The results hold for a wide family of distributions ${\cal D}$, which we describe next.

\paragraph{Weak assumption on ${\cal D}$.} In our analysis, we only impose a very weak assumption on $\cal D$. In particular, we only assume that the $m\times m$ matrix \begin{equation} \label{eq:H} H \eqdef \mathbf{E}_{S\sim {\cal D}} \left[ S\left(S^\top AB^{-1}A^\top S\right)^{\dagger}S^\top\right]\end{equation}
 is well defined and nonsingular.\footnote{Note that from Lemma~\ref{lem:pseudoposdef}, the pseudo pseudoinverse of a symmetric positive semidefinite matrix is again symmetric and positive semidefinite. As a result, if the expectation defining $H$ is finite, $H$ is also symmetric and positive semidefinite. Hence, we could equivalently assume that $H$ be positive definite.}
    Hence, we do not assume that $S$ is chosen from any particular random matrix ensemble. This makes it possible for practitioners to choose the best distribution specific to a particular application. 

We cast the first complexity result in terms of the primal iterates since solving \eqref{eq:P} is our main focus in this work. Let $\myRange{M}, \Rank{M}$ and $\lambda_{\min}^+(M)$ denote the range space, rank and the smallest nonzero eigenvalue of $M$, respectively.

\begin{theorem}[\bf Convergence of primal iterates and of the residual] \label{theo:Enormerror}
Assume that the matrix $H$, defined in \eqref{eq:H}, is nonsingular. Fix arbitrary $x^0\in \R^n$. The primal iterates $\{x^k\}$ produced by \eqref{eq:SDA-primal0} converge linearly in expectation to $x^* + t$, where  $x^*$ is the optimal solution of the primal problem \eqref{eq:P}, and $t$ is the projection of $x^0-c$ onto $\Null{A}$:
\begin{equation}\label{eq:def_of_t}t \eqdef \arg \min_{t'} \left\{ \|x^0-c - t'\|_B \;:\; t'\in \Null{A}\right\}.\end{equation}
In particular,  for all $k\geq 0$ we have
\begin{eqnarray}\label{eq:Enormerror} \quad \text{Primal iterates:} \quad &&\E{\norm{x^{k} - x^{*}- t}_{B}^2}\leq \rho^k \cdot \norm{x^{0} -  x^{*} - t}_{B}^2,\\
\label{eq:s98h09hsxxx} \text{Residual:} \quad && \E{\|A x^k-b\|_B} \leq \rho^{k/2} \|A\|_B \|x^0-x^*-t\|_B + \|At\|_B,\end{eqnarray}
where  $\|A\|_B \eqdef \max \{\|Ax\|_B \;:\; \|x\|_B\leq 1\}$ and 
\begin{equation}\label{ch:three:eq:rho} \rho \eqdef 1- \lambda_{\min}^+\left(B^{-1/2}A^\top H A B^{-1/2}\right).
\end{equation}
 Furthermore, the convergence rate is bounded by
\begin{equation} \label{eq:nubound}
1-\frac{\E{\Rank{S^\top A}}}{\Rank{A}}\leq \rho < 1.
\end{equation}

\end{theorem}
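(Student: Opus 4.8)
The plan is to reduce the primal recursion \eqref{eq:SDA-primal0} to a contraction on a fixed subspace and then read off the rate. Using consistency ($b=Ax^*$) and the matrix $Z$ from \eqref{eq:Z-C1def}, the iteration rewrites as $x^{k+1}-x^*=(I-B^{-1}Z)(x^k-x^*)$, exactly as in \eqref{eq:xZupdate}. I would then decompose $x^k-x^*=s^k+t^k$ via Proposition~\ref{prop:decomposition} with $M=A$, where $s^k\in\myRange{B^{-1}A^\top}$ and $t^k\in\Null{A}$. Since $\Null{A}\subseteq\Null{S^\top A}$ and $\myRange{B^{-1}A^\top S}\subseteq\myRange{B^{-1}A^\top}$, Lemma~\ref{ch:one:lem:Z} gives $(I-B^{-1}Z)t^k=t^k$ and $(I-B^{-1}Z)s^k\in\myRange{B^{-1}A^\top}$, so the two components evolve separately: $t^{k+1}=t^k=\dots=t^0$ and $s^{k+1}=(I-B^{-1}Z)s^k$. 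Because $x^*-c\in\myRange{B^{-1}A^\top}$ (optimality conditions, Section~\ref{subsec:OptCond}), the $\Null{A}$-component $t^0$ of $x^0-x^*$ equals that of $x^0-c$, i.e.\ the vector $t$ of \eqref{eq:def_of_t}; and $x^k-x^*-t=s^k$ stays in $\myRange{B^{-1}A^\top}$.

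For the rate, substitute $r^k\eqdef B^{1/2}s^k$, so that $\|x^k-x^*-t\|_B^2=\|r^k\|_2^2$ and $r^{k+1}=(I-B^{-1/2}ZB^{-1/2})r^k$ with $B^{-1/2}ZB^{-1/2}$ an orthogonal projection (Lemma~\ref{ch:one:lem:Z}). Conditioning on $r^k$ and using \eqref{eq:B12ZB12proj} gives $\E{\|r^{k+1}\|_2^2\mid r^k}=\dotprod{(I-W)r^k,r^k}$, where $W\eqdef B^{-1/2}\E{Z}B^{-1/2}=B^{-1/2}A^\top H A B^{-1/2}$ (pull the deterministic $A$ out of the expectation defining $H$ in \eqref{eq:H}). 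The key point is that nonsingularity of $H$ forces $\myRange{W}=\myRange{B^{-1/2}A^\top}$ — apply Lemma~\ref{lem:WGW} to $W=(H^{1/2}AB^{-1/2})^\top(H^{1/2}AB^{-1/2})$ — hence $r^k\in\myRange{W}$ for all $k$, so $\dotprod{Wr^k,r^k}\ge\lambda_{\min}^+(W)\|r^k\|_2^2$ and $\E{\|r^{k+1}\|_2^2\mid r^k}\le\rho\|r^k\|_2^2$ with $\rho=1-\lambda_{\min}^+(W)$ as in \eqref{ch:three:eq:rho}. Taking full expectations and unrolling yields \eqref{eq:Enormerror}. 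The residual bound \eqref{eq:s98h09hsxxx} then follows by writing $Ax^k-b=A(x^k-x^*-t)+At$, the triangle inequality, the definition of $\|A\|_B$, Jensen's inequality ($\E{\|s^k\|_B}\le\sqrt{\E{\|s^k\|_B^2}}$) and \eqref{eq:Enormerror}.

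It remains to establish \eqref{eq:nubound}. Since $A\neq 0$ and $H$ is positive definite, $W=(H^{1/2}AB^{-1/2})^\top(H^{1/2}AB^{-1/2})\neq 0$, so $\lambda_{\min}^+(W)>0$ and $\rho<1$. For the lower bound, $\myRange{W}=\myRange{B^{-1/2}A^\top}$ gives $\Rank{W}=\Rank{A}$, while $\Tr{W}=\E{\Tr{B^{-1/2}ZB^{-1/2}}}=\E{\Rank{A^\top S}}=\E{\Rank{S^\top A}}$ by \eqref{eq:B12ZB12trace}. As $W$ is symmetric positive semidefinite with exactly $\Rank{W}$ nonzero eigenvalues summing to $\Tr{W}$, its smallest nonzero eigenvalue obeys $\lambda_{\min}^+(W)\le\Tr{W}/\Rank{W}=\E{\Rank{S^\top A}}/\Rank{A}$, which rearranges to $\rho\ge 1-\E{\Rank{S^\top A}}/\Rank{A}$.

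The main obstacle is the subspace bookkeeping in the middle step: a genuine one-step decrease by the factor $\lambda_{\min}^+(W)$ is available only if the scaled errors $r^k$ never leave $\myRange{W}$, and this is precisely where the hypothesis that $H$ be nonsingular (equivalently, that the random spaces $\myRange{B^{-1}A^\top S}$ cover $\myRange{B^{-1}A^\top}$ with positive probability) is indispensable — without it the relevant eigenvalue would be $0$ and no contraction would take place. Once this is settled, the remaining ingredients (the error recursion, the conditional-expectation identity, and the trace/rank count) are routine.
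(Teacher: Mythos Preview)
Your proof is correct and follows essentially the same route as the paper. The only cosmetic differences are that the paper establishes $x^k-x^*-t\in\myRange{B^{-1}A^\top}$ by a short induction (Lemma~\ref{lem:error}) rather than your decomposition-and-invariance argument via Proposition~\ref{prop:decomposition}, and it packages the Rayleigh-quotient step as a standalone inequality (Lemma~\ref{lem:WGWtight}) rather than invoking $r^k\in\myRange{W}$ directly; the residual bound and the trace/rank lower bound are handled identically.
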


As show shown in Section~\ref{ch:two:sec:RK}, if we let $S$ be a unit coordinate vector chosen at random, $B$  be the identity matrix and set $c=0$, then \eqref{eq:SDA-primal0} reduces to the {\em randomized Kaczmarz (RK)} method proposed and analyzed in a seminal work of Strohmer and Vershynin \cite{Strohmer2009}. Theorem~\ref{theo:Enormerror}
implies that RK converges with an exponential rate so long as the system matrix has no zero rows (see Section~\ref{sec:discrete}).  To the best of our knowledge, such a result was not previously established: current convergence results for RK assume that the system matrix is full rank~\cite{Ma2015, Ramdas2014}. Not only do we show that the RK method converges to the least-norm solution for any consistent system, but we do so through a single all encompassing theorem covering a wide family of algorithms. Likewise, convergence of block variants of RK has only been established for full column rank~\cite{Needell2012,Needell2014}. Block versions of RK can be obtained from our generic method by choosing $B=I$ and $c=0$, as before, but letting $S$ to be a random column submatrix of the identity matrix. Again, our general complexity bound holds under no assumptions on $A$, as long as one can find $S$ such that $H$ becomes nonsingular.

The lower bound \eqref{eq:nubound} says that for a singular system matrix, the number of steps required by SDA to reach an expected accuracy is at best inversely proportional to the rank of $A$. If $A$ has row rank equal to one, for instance, then  RK converges in one step (this is no surprise, given that RK projects onto the solution space of a single row, which in this case, is the solution space of the whole system). Our lower bound in this case becomes $0$, and hence is 
tight. 

While Theorem~\ref{theo:Enormerror} is cast in terms of the primal  iterates, if we  assume that $x^0 = c+ B^{-1}A^\top y^0$ for some $y^0\in \R^m$, then an equivalent dual characterization follows by combining  \eqref{eq:primaliterates0} and \eqref{eq:iugs8gs}. In fact, in that case we can  also establish the convergence of the primal function values and of the duality gap. {\em No such results were previously known. }

\begin{theorem}[\bf Convergence of function values]\label{theo:2}
Assume that the matrix $H$, defined in \eqref{eq:H}, is nonsingular. Fix arbitrary $y^0\in \R^m$ and let $\{y^k\}$ be the SDA iterates produced by \eqref{eq:SDA-compact0}. Further, let $\{x^k\}$ be the associated primal iterates, defined by \eqref{eq:primaliterates0}, $OPT  \eqdef P(x^*)=D(y^*)$,
\[U_0 \eqdef \tfrac{1}{2}\|x^0-x^*\|_B^2 \overset{\eqref{eq:iugs8gs}}{=} OPT -D(y^0),\] 
and let $\rho$ be as in Theorem~\ref{theo:Enormerror}. Then for all $k\geq 0$ we have the following complexity bounds:
\begin{eqnarray} 
\quad \text{Dual suboptimality:} \quad &&\E{OPT - D(y^k)}\leq \rho^k U_0\label{eq:DUALSUBOPT} \\
\quad \text{Primal suboptimality:} \quad &&\E{P(x^k) - OPT}\leq \rho^k U_0 +  2 \rho^{k/2}  \sqrt{OPT \times U_0} \label{eq:PRIMALSUBOPT} \\
 \text{Duality gap:} \quad && \E{P(x^k) - D(y^k)}\leq 2\rho^k U_0 +  2 \rho^{k/2} \sqrt{OPT\times U_0}  \label{eq:GAPSUBOPT}
\end{eqnarray}

\end{theorem}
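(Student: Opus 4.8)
The plan is to derive all three bounds from Theorem~\ref{theo:Enormerror} together with the primal--dual correspondence~\eqref{eq:iugs8gs}. The first step is to observe that since $x^0 = c + B^{-1}A^\top y^0$ by~\eqref{eq:primaliterates0}, the vector $x^0 - c$ lies in $\myRange{B^{-1}A^\top}$, which by Proposition~\ref{prop:decomposition} (applied with $M = A$) is the $B$-orthogonal complement of $\Null{A}$. Hence the projection of $x^0 - c$ onto $\Null{A}$ vanishes, i.e.\ $t = 0$ in the notation of Theorem~\ref{theo:Enormerror}, and~\eqref{eq:Enormerror} therefore reads $\E{\norm{x^k - x^*}_B^2} \leq \rho^k \norm{x^0 - x^*}_B^2 = 2\rho^k U_0$, where the last equality is the definition of $U_0$.

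For the dual suboptimality bound~\eqref{eq:DUALSUBOPT}, I would apply~\eqref{eq:iugs8gs} at the $k$th iterate, $OPT - D(y^k) = \tfrac{1}{2}\norm{x^k - x^*}_B^2$, take expectations, and insert the bound $\E{\norm{x^k - x^*}_B^2} \leq 2\rho^k U_0$ just obtained; this gives $\E{OPT - D(y^k)} \leq \rho^k U_0$ directly.

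The primal suboptimality bound~\eqref{eq:PRIMALSUBOPT} needs slightly more care. Writing $P(x) = \tfrac{1}{2}\norm{x-c}_B^2$ and expanding via the elementary quadratic identity in the $B$-inner product,
\[
P(x^k) - OPT \;=\; \tfrac{1}{2}\norm{x^k - x^*}_B^2 \;+\; \langle x^* - c,\, x^k - x^*\rangle_B ,
\]
I would bound the cross term by Cauchy--Schwarz, $|\langle x^* - c,\, x^k - x^*\rangle_B| \leq \norm{x^* - c}_B\,\norm{x^k - x^*}_B = \sqrt{2\cdot OPT}\,\norm{x^k - x^*}_B$, using $\norm{x^* - c}_B^2 = 2P(x^*) = 2\cdot OPT$. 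Taking expectations, the quadratic term contributes $\rho^k U_0$ as before, while the cross term is controlled by Jensen's inequality (concavity of the square root): $\E{\norm{x^k - x^*}_B} \leq \sqrt{\E{\norm{x^k - x^*}_B^2}} \leq \sqrt{2 U_0}\,\rho^{k/2}$. Combining gives $\E{P(x^k) - OPT} \leq \rho^k U_0 + 2\rho^{k/2}\sqrt{OPT \times U_0}$.

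Finally, the duality gap bound~\eqref{eq:GAPSUBOPT} follows by adding the two previous estimates, since $P(x^k) - D(y^k) = (P(x^k) - OPT) + (OPT - D(y^k))$. The only mildly delicate point in the whole argument is the treatment of the cross term: one must pass the expectation inside the norm via Jensen, which is precisely what produces the $\rho^{k/2}$ factor (rather than $\rho^k$) and the geometric-mean quantity $\sqrt{OPT \times U_0}$ appearing in~\eqref{eq:PRIMALSUBOPT} and~\eqref{eq:GAPSUBOPT}. Beyond this bookkeeping I do not anticipate any substantive obstacle.
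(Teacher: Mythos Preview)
Your proposal is correct and follows essentially the same route as the paper: it first notes that $x^0 = c + B^{-1}A^\top y^0$ forces $t=0$ in Theorem~\ref{theo:Enormerror}, then obtains~\eqref{eq:DUALSUBOPT} via~\eqref{eq:iugs8gs}, derives~\eqref{eq:PRIMALSUBOPT} by expanding $P(x^k)-OPT$ quadratically, bounding the cross term with Cauchy--Schwarz and Jensen, and finally gets~\eqref{eq:GAPSUBOPT} by summing. The only cosmetic difference is that the paper writes the cross-term bound as $U_k + 2\sqrt{U_k}\sqrt{OPT}$ before taking expectations, whereas you phrase it directly in the $B$-inner product, but the argument is identical.
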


{\em In our analysis, no error bounds are necessary.}

\subsection{Chapter outline}

This chapter is structured as follows.  Section~\ref{sec:SDA}  describes the algorithm in detail, both in its dual and primal form, and establishes several useful identities. In Section~\ref{sec:discrete} we characterize discrete distributions for which our main assumption on $H$ is satisfied. We then specialize our method to several simple discrete distributions to better illustrate the results. We then show in Section~\ref{sec:gossip} how SDA can be applied to design new randomized gossip algorithms. We also show that our framework can recover some standard methods. Theorem~\ref{theo:Enormerror} is proved in Section~\ref{sec:proof} and Theorem~\ref{theo:2} is proved in Section~\ref{sec:proof2}. In Section~\ref{sec:experiments} we perform a simple experiment illustrating the convergence of the randomized Kaczmarz method on rank deficient linear systems. We then summarize in Section~\ref{sec:conclusion}. 


\section{Stochastic Dual Ascent} \label{sec:SDA}

By {\em stochastic  dual ascent} (SDA) we refer to a randomized optimization method for solving the dual problem \eqref{eq:Dualfunc} performing iterations of the form
\begin{equation}\label{eq:methoddual} y^{k+1} = y^k + S \lambda^k,\end{equation}
where $S$ is a random matrix with $m$ rows drawn in each iteration independently from a prespecified distribution. We shall not fix the number of columns of $S$; in fact, we even allow for the number of columns to be random. By performing steps of the form \eqref{eq:methoddual}, we are moving in the range space of the random matrix $S$, with $\lambda^k$ describing the precise linear combination of the columns used in computing the step. 
In particular, we shall choose $\lambda^k$ from the set 
\[Q^k \eqdef \arg \max_{\lambda} D(y^k + S\lambda) \overset{\eqref{eq:Dualfunc}}{=} \arg\max_{\lambda} \left\{ (b-Ac)^\top (y^k + S\lambda) - \tfrac{1}{2}\left\|A^\top (y^k + S \lambda)\right\|_{B^{-1}}^2\right\}.\]
Since $D$ is bounded above (a consequence of weak duality), this set is nonempty. Since $D$ is a concave quadratic, $Q^k$ consists of all those vectors $\lambda $ for which the gradient of the mapping $\phi_k(\lambda): \lambda \mapsto D(y^k + S\lambda)$ vanishes. 
This leads to the observation that $Q^k$ is the set of solutions of a random linear system:
\[Q^k =  \left\{\lambda \in \R^m \;:\; \left(S^\top A B^{-1}A^\top S \right) \lambda = S^\top \left(b - Ac - A B^{-1}A^\top y^k \right) \right\}.\]
If $S$ has a small number of columns, this is a small easy-to-solve system. 

A key feature of our method enabling us to prove exponential error decay despite the lack of strong concavity is the  way in which we choose $\lambda^k$ from $Q^k$. In  SDA, $\lambda^k$  is chosen to be the least-norm  element of $Q^k$,
\[\lambda^k \eqdef \arg\min_{\lambda \in Q^k} \|\lambda\|_2.\]
 Using Lemma~\ref{lem:pseudoleastnorm}, the least-norm solution to the above is given by
\begin{equation}\label{eq:lambda_closed_form}
\lambda^k = \left(S^\top A B^{-1} A^\top S\right)^\dagger S^\top \left(b - Ac - A B^{-1}A^\top y^k  \right).
\end{equation}
Note that if $S$ has only a few columns, then~\eqref{eq:lambda_closed_form} requires projecting the origin onto a small linear system. 
The SDA algorithm is obtained by combining \eqref{eq:methoddual} with \eqref{eq:lambda_closed_form}.

\begin{algorithm}[!h]
\begin{algorithmic}[1]
\State \textbf{parameter:} ${\cal D}$ = distribution over random matrices
\State Choose $y^0 \in \R^m$
\Comment Initialization
\For {$k = 0, 1, 2, \dots$}
	\State Sample an independent copy $S\sim {\cal D}$
	\State $\lambda^k = \left(S^\top A B^{-1} A^\top S\right)^\dagger S^\top \left(b - A c - AB^{-1}A^\top y^k  \right)$
	\State $y^{k+1} =  y^k + S  \lambda^k$
		\Comment Update the dual variable
\EndFor
\end{algorithmic}

\caption{Stochastic Dual Ascent (SDA)}
\label{alg:SDA}
\end{algorithm}

The method has one parameter: the distribution $\cal D$ from which the random matrices $S$ are drawn. Sometimes, one is interested in finding any solution of the system $Ax=b$, rather than the particular solution described by the primal problem 
\eqref{eq:P}. In such situations, $B$ and $c$ could also be seen as parameters.

\subsection{Optimality conditions} \label{subsec:OptCond}

For any $x$ for which $Ax=b$ and for any $y$ we have
\[P(x) - D(y) \overset{\eqref{eq:P}+\eqref{eq:Dualfunc}}{=} \tfrac{1}{2}\|x-c\|_B^2 + \tfrac{1}{2}\|A^\top y\|_{B^{-1}}^2 + (c-x)^\top A^\top y \geq 0,\]
where the inequality (weak duality) follows from the Fenchel-Young inequality\footnote{Let $U$ be a vector space equipped with an inner product $\langle \cdot, \cdot \rangle : U\times U \to \R$. Given a function $f:U \to \R$, its convex (or Fenchel) conjugate $f^*:U\to \R\cup \{+\infty\}$ is defined by $f^*(v) = \sup_{u \in U} \langle u, v \rangle - f(u)$. A direct consequence of this is the  Fenchel-Young inequality, which asserts that  $f(u) + f^*(v)\geq \langle u, v\rangle$ for all $u$ and $v$.  The inequality in the main text follows by choosing $f(u)=\tfrac{1}{2}\|u\|_B^2$ (and hence  $f^*(v)=\tfrac{1}{2}\|v\|^2_{B^{-1}}$), $u=x-c$ and $v=A^\top y$. If $f$ is differentiable, then equality holds if and only if $v=\nabla f(u)$. In our case, this condition is $x=c+B^{-1}A^\top y$. This, together with primal feasibility, gives  the optimality conditions \eqref{eq:opt_cond}. For more details on Fenchel duality, see \cite{bookBorweinLewis2006}.}. As a result, we obtain the following necessary and sufficient optimality conditions, characterizing primal and dual optimal points.

\begin{proposition} [Optimality conditions] \label{eq:prop_opt_cond}Vectors $x\in \R^n$ and $y\in\R^m$ are optimal for the primal \eqref{eq:P} and dual \eqref{eq:Dualfunc} problems respectively, if and only if they satisfy the following relation
\begin{equation} \label{eq:opt_cond}Ax = b, \qquad x = c + B^{-1} A^\top y.\end{equation}
\end{proposition}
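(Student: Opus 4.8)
The plan is to derive the two relations as the optimality (Karush--Kuhn--Tucker) conditions of the primal problem \eqref{eq:P}, exploiting the weak duality identity already displayed in Section~\ref{subsec:OptCond}. Recall that for every primal feasible $x$ (i.e.\ $Ax=b$) and every $y\in\R^m$,
\[P(x)-D(y)=\tfrac12\norm{x-c}_B^2+\tfrac12\norm{A^\top y}_{B^{-1}}^2+(c-x)^\top A^\top y\ \ge\ 0,\]
the inequality being an instance of the Fenchel--Young inequality applied to $f(u)=\tfrac12\norm{u}_B^2$ with $u=x-c$ and $v=A^\top y$. Since $f$ is differentiable with $\nabla f(u)=Bu$, this Fenchel--Young inequality holds with \emph{equality} precisely when $v=\nabla f(u)$, i.e.\ when $A^\top y=B(x-c)$, equivalently when $x=c+B^{-1}A^\top y$. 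This observation is the only analytic ingredient.

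\textbf{Sufficiency.} Suppose $Ax=b$ and $x=c+B^{-1}A^\top y$. Then $x$ is primal feasible, and by the remark above the displayed inequality is tight, so $P(x)=D(y)$. Combined with weak duality, which gives $P(x')\ge D(y)$ for every feasible $x'$ and $D(y')\le P(x)$ for every $y'\in\R^m$, this shows $x$ minimizes $P$ over the feasible set and $y$ maximizes $D$ over $\R^m$; hence both are optimal. \textbf{Necessity.} Conversely, let $x$ be primal optimal and $y$ dual optimal. Then $Ax=b$ by feasibility. By strong duality, which holds under the consistency assumption as recorded in Section~\ref{subsec:OptCond}, we have $P(x)=D(y)$, so the displayed inequality holds with equality. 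The equality case of Fenchel--Young then forces $A^\top y=B(x-c)$, i.e.\ $x=c+B^{-1}A^\top y$, which is the second relation in \eqref{eq:opt_cond}.

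I do not anticipate a genuine obstacle; the only point that needs care is bookkeeping about which implication rests on strong duality (necessity) and which is entirely elementary (sufficiency, which uses only weak duality and the equality case of Fenchel--Young). An alternative and essentially equivalent route is to note that \eqref{eq:P} is a feasible convex quadratic program with affine constraints, so the KKT conditions are necessary and sufficient for optimality: primal feasibility $Ax=b$ together with stationarity of the Lagrangian $L(x,y)=\tfrac12\norm{x-c}_B^2-y^\top(Ax-b)$, namely $B(x-c)-A^\top y=0$. Rearranging the stationarity condition gives $x=c+B^{-1}A^\top y$, so the KKT system is exactly \eqref{eq:opt_cond}.
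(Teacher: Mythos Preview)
Your proof is correct and takes essentially the same approach as the paper: the paper's justification (given in the text and footnote immediately preceding the proposition rather than in a separate proof environment) is precisely the Fenchel--Young argument you spell out, using the equality case $v=\nabla f(u)$ to obtain $x=c+B^{-1}A^\top y$. Your write-up is in fact more careful than the paper's, since you explicitly separate sufficiency (weak duality plus the equality case) from necessity (which needs strong duality), and you also supply the equivalent KKT derivation as an alternative.
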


In  view of this, it will be useful to define a linear mapping from $\R^m$ to $\R^n$ as follows:
\begin{equation}\label{eq:98s98hs}x(y) = c + B^{-1}A^\top y.\end{equation}

As an immediate corollary of Proposition~\ref{eq:prop_opt_cond} we observe that for any dual optimal $y^*$, the vector $x(y^*)$ must be primal optimal. Since the primal problem has a unique optimal solution, $x^*$,   we must necessarily have \begin{equation}\label{eq:opt_primal}x^* =x(y^*) = c + B^{-1} A^\top y^*.\end{equation}

Another immediate corollary of Proposition~\ref{eq:prop_opt_cond} is the following characterization of dual optimality:  $y$ is dual optimal if and only if
\begin{equation} \label{eq:98hs8h9sss}b  - Ac = AB^{-1}A^\top y.\end{equation}
Hence, the set of dual optimal solutions is
${\cal Y}^* = (AB^{-1}A^\top)^\dagger (b-Ac) + \Null{AB^{-1}A^\top}$. Since, $\Null{AB^{-1}A^\top} = \Null{A^\top}$ (see Lemma~\ref{lem:WGW}), we have
\[{\cal Y}^* = \left(AB^{-1}A^\top\right)^\dagger (b-Ac) + \Null{A^\top}.\]

Combining this with \eqref{eq:opt_primal}, we get

\[x^* = c + B^{-1}A^\top \left(AB^{-1}A^\top \right)^\dagger(b-Ac).\]

\begin{remark} [The dual is also a least-norm problem] Observe that:
\begin{enumerate}
\item 
The particular dual optimal point $y^* = (AB^{-1}A^\top)^\dagger (b-Ac)$ is the solution of the following optimization problem:
\begin{equation} \label{eq:iusiuh7ss}\min \left\{ \tfrac{1}{2}\|y\|^2_2 \;:\; A B^{-1} A^\top y = b-Ac\right\}.\end{equation}

Hence, this particular formulation of the dual problem has the same form as the primal problem: projection onto a linear system.

\item If $A^\top A$ is positive definite (which can only happen if $A$ is of full column rank, which means that $Ax=b$ has a unique solution and hence the primal objective function does not matter), and we choose $B=A^\top A$, then the dual constraint \eqref{eq:iusiuh7ss} becomes
\[A (A^\top A)^{-1}A^\top y = b - Ac.\]

This constraint has a geometric interpretation: we are seeking a vector $y$ whose orthogonal projection onto the column space of $A$ is equal to $b-Ac$. Hence the reformulated dual problem \eqref{eq:iusiuh7ss} is asking us to find the vector $y$ with this property having the least norm.

\end{enumerate}
\end{remark}

\subsection{Primal iterates associated with the dual iterates}

With the sequence  of dual  iterates $\{y^k\}$ produced by SDA  we can associate a sequence of primal iterates $\{x^k\}$  using the mapping \eqref{eq:98s98hs}:
\begin{equation} \label{eq:primaliterates} x^k \eqdef x(y^k) = c + B^{-1}A^\top y^k.\end{equation}
This leads to the following {\em primal version of the SDA method}.
\begin{algorithm}[!h]
\begin{algorithmic}[1]
\State \textbf{parameter:} ${\cal D}$ = distribution over random matrices
\State Choose $x^0 \in \R^n$
\Comment Initialization
\For {$k = 0, 1, 2, \dots$}
	\State Sample an independent copy $S\sim {\cal D}$
	\State $x^{k+1} =  x^k - B^{-1}A^\top S \left(S^\top A B^{-1} A^\top S\right)^\dagger S^\top (A x^k  -b )$
		\Comment Update the primal variable
\EndFor
\end{algorithmic}

\caption{Primal Version of Stochastic Dual Ascent (SDA-Primal)}
\label{alg:SDA-Primal}
\end{algorithm}

\begin{remark} \label{lem:5shsuss} 

A couple of observations:

\begin{enumerate}

\item {\em Self-duality.} If $A$ is positive definite, $c=0$, and if we choose $B=A$, then in view of \eqref{eq:primaliterates}  we have $x^k = y^k$ for all $k$, and hence Algorithms~\ref{alg:SDA} and \ref{alg:SDA-Primal} coincide. In this case, Algorithm~\ref{alg:SDA-Primal} can be described as {\em self-dual.}

\item {\em Space of iterates.} A direct consequence of the correspondence between the dual and primal iterates \eqref{eq:primaliterates}  is the following simple observation (a generalized version of this, which we prove later as Lemma~\ref{lem:error}, will be used  in the proof of Theorem~\ref{theo:Enormerror}): Choose $y^0\in \R^m$ and let $x^0 = c + B^{-1}A^\top y^0$. Then the iterates $\{x^k\}$ of Algorithm~\ref{alg:SDA-Primal} are of the form $x^k = c + B^{-1} A^\top y^k$ for some $y^k\in \R^m$.

\item {\em Starting point.} While we have defined the primal iterates of Algorithm~\ref{alg:SDA-Primal}  via a linear transformation of the dual iterates---see \eqref{eq:primaliterates}---we {\em can}, in principle,  choose $x^0$ arbitrarily, thus breaking the primal-dual connection which helped us to define the method. In particular, we can choose $x^0$ in such a way that there does not exist $y^0$ for which $x^0 = c + B^{-1}A^\top y^0$. As is clear from Theorem~\ref{theo:Enormerror}, in this case the iterates $\{x^k\}$ will not converge to $x^*$, but to $x^*+t$, where $t$ is the projection of $x^0-c$ onto the nullspace of $A$.
\end{enumerate}

\end{remark}

It is now clear that the iterates of Algorithm~\ref{alg:SDA-Primal} are the same as the iterates defined by the Random Update viewpoint~\eqref{eq:MP} of the sketch-and-project method~\eqref{ch:two:NF} in Chapter~\ref{ch:linear_systems}. Thus we have uncovered  a hidden dual nature of the sketch-and-project method. It is this dual relationship that allows us to formulate and prove convergence of the dual function values and duality gap proven in Theorem~\ref{theo:2}. In particular the duality gap gives a means to measure the distance from the optimality. This certificate of convergence is standard in optimization methods, but seldom appears in the numerical linear algebra literature.

\subsection{Relating the quality of the dual and primal iterates}

The following simple but insightful result (mentioned in the introduction) relates the ``quality'' of a dual vector $y$ with that of its primal counterpart, $x(y)$. It says that the dual suboptimality of $y$ in terms of function values is equal to the primal suboptimality of $x(y)$ in terms of distance.
 
\begin{proposition}\label{lem:correspondence} Let $y^*$ be any dual optimal point and $y\in \R^m$. Then
\[D(y^*) - D(y) = \tfrac{1}{2}\|x(y^*) - x(y)\|_B^2.\]
\end{proposition}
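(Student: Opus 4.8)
The identity is a direct computation, and the plan is to expand both sides and match them using the dual optimality condition. First I would rewrite the right-hand side. Since $x(y) = c + B^{-1}A^\top y$, we have $x(y^*) - x(y) = B^{-1}A^\top(y^*-y)$, and therefore
\[
\|x(y^*) - x(y)\|_B^2 = (y^*-y)^\top A B^{-1} B B^{-1} A^\top (y^*-y) = (y^*-y)^\top A B^{-1}A^\top(y^*-y) = \|A^\top(y^*-y)\|_{B^{-1}}^2,
\]
so it suffices to show $D(y^*) - D(y) = \tfrac{1}{2}\|A^\top w\|_{B^{-1}}^2$ where $w \eqdef y^* - y$.

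Next I would expand the left-hand side from the definition of $D$ in \eqref{eq:Dualfunc}. Writing $y^* = y + w$ and using bilinearity of $\dotprod{\cdot,\cdot}_{B^{-1}}$, the quadratic term gives $\|A^\top y^*\|_{B^{-1}}^2 = \|A^\top y\|_{B^{-1}}^2 + 2(A^\top y)^\top B^{-1}A^\top w + \|A^\top w\|_{B^{-1}}^2$, so that
\[
D(y^*) - D(y) = \big[(b-Ac) - AB^{-1}A^\top y\big]^\top w - \tfrac{1}{2}\|A^\top w\|_{B^{-1}}^2.
\]
The remaining task is to simplify the linear term. Here I would invoke the characterization of dual optimality \eqref{eq:98hs8h9sss}, which states $b - Ac = AB^{-1}A^\top y^*$ for every dual optimal $y^*$; substituting this gives $(b-Ac) - AB^{-1}A^\top y = AB^{-1}A^\top(y^*-y) = AB^{-1}A^\top w$, whence $\big[(b-Ac)-AB^{-1}A^\top y\big]^\top w = w^\top AB^{-1}A^\top w = \|A^\top w\|_{B^{-1}}^2$. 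Combining, $D(y^*) - D(y) = \|A^\top w\|_{B^{-1}}^2 - \tfrac{1}{2}\|A^\top w\|_{B^{-1}}^2 = \tfrac{1}{2}\|A^\top w\|_{B^{-1}}^2$, which matches the right-hand side computed above.

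There is no substantive obstacle in this argument — it is a two-line algebraic manipulation once the dual objective is expanded. The only point requiring care is that $y^*$ is merely \emph{some} dual optimal point and need not be unique; hence one must use the optimality condition in the form \eqref{eq:98hs8h9sss} (valid for all dual optima) rather than a closed-form expression for $y^*$. As a byproduct, combined with \eqref{eq:opt_primal} (i.e. $x(y^*) = x^*$), this also yields the useful identity $D(y^*) - D(y) = \tfrac{1}{2}\|x^* - x(y)\|_B^2$ quoted as \eqref{eq:iugs8gs} in the introduction.
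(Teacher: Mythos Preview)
Your proposal is correct and follows essentially the same approach as the paper: both expand $D(y^*)-D(y)$ directly from \eqref{eq:Dualfunc}, invoke the dual optimality condition \eqref{eq:98hs8h9sss} to handle the linear term, and then identify the resulting quadratic form with $\tfrac{1}{2}\|x(y^*)-x(y)\|_B^2$ via the affine map \eqref{eq:98s98hs}. The only cosmetic difference is that you compute the right-hand side first and introduce the shorthand $w=y^*-y$, whereas the paper works left-to-right in a single chain; the substance is identical.
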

\begin{proof}
Straightforward calculation shows that
\begin{eqnarray*}
D(y^*)  -D(y) &\overset{\eqref{eq:Dualfunc}}{=}& (b-Ac)^\top (y^* - y) - \tfrac{1}{2}(y^*)^\top A B^{-1} A^\top y^*  + \tfrac{1}{2}y^\top A B^{-1} A^\top y\\
&\overset{\eqref{eq:98hs8h9sss}}{=}&(y^*)^\top A B^{-1} A^\top (y^* - y) - \tfrac{1}{2}(y^*)^\top A B^{-1} A^\top y^*  + \tfrac{1}{2}y^\top A B^{-1} A^\top y\\
&=& \tfrac{1}{2}(y-y^*)^\top AB^{-1} A^\top (y-y^*)\\
&\overset{\eqref{eq:98s98hs}}{=}& \tfrac{1}{2}\|x(y) - x(y^*)\|_B^2. 
\end{eqnarray*}
\end{proof}

Applying this result to the sequence $\{(x^k,y^k)\}$ of dual iterates produced by SDA and their corresponding primal images, as defined in \eqref{eq:primaliterates}, we get the identity:
\[D(y^*)- D(y^k) = \tfrac{1}{2}\|x^k - x^*\|_B^2.\]
Therefore, {\em  dual convergence in function values $D(y^k)$  is equivalent to primal convergence in iterates $x^k$}. Furthermore, a direct computation leads to the following formula for the {\em duality gap}:
\begin{equation}\label{eq:dualitygap09709709}P(x^k ) - D(y^k) \overset{\eqref{eq:primaliterates}}{=} (AB^{-1}A^\top y^k + Ac - b)^\top y^k = -(\nabla D(y^k) )^\top y^k.\end{equation}

Note that computing the gap is significantly more expensive than the cost of a single iteration (in the interesting regime  when the number of columns of $S$ is small). Hence, evaluation of the duality gap should generally be avoided. If it is necessary to be certain about the quality of a solution however,  the above formula will be useful. The gap should then be computed from time to time only, so that this extra work does not significantly slow down the iterative process.

\section{Discrete Distributions} \label{sec:discrete}

Both the SDA algorithm and its primal counterpart are generic in the sense that the distribution  $\cal D$ is not specified beyond assuming that the matrix $H$ defined in \eqref{eq:H} is well defined and nonsingular. In this section we shall first characterize finite discrete distributions for which $H$ is nonsingular. We  then give a few examples of algorithms based on such distributions, and comment on our complexity results in more detail.

\subsection{Nonsingularity of $H$ for finite discrete distributions} \label{sec:Hnonsingular}

For simplicity, we shall focus on {\em finite discrete} distributions $\cal D$. That is, we set $S = S_i$ with probability $p_i>0$, where  $S_1,\dots,S_r$ are fixed matrices (each with $m$ rows).  The next theorem gives a necessary and sufficient condition for the matrix $H$ defined in \eqref{eq:H} to be nonsingular.
 
\begin{theorem}\label{thm:H} Let $\cal{D}$ be a finite discrete distribution, as described above. Then  $H$ is nonsingular if and only if
\[\myRange{[S_1S_1^\top A, \cdots, S_r S_r^\top A]} = \R^m .\]
\end{theorem}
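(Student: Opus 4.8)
The plan is to characterize $\Null{H}$ and show it is trivial precisely when the stated range condition holds. First I would record the explicit form of $H$ for a finite discrete distribution: from \eqref{eq:H},
\[
H = \sum_{i=1}^r p_i \, S_i \left(S_i^\top A B^{-1} A^\top S_i\right)^\dagger S_i^\top .
\]
By Lemma~\ref{lem:pseudoposdef}, each summand $S_i (S_i^\top A B^{-1} A^\top S_i)^\dagger S_i^\top$ is symmetric positive semidefinite (it has the form $S_i G_i G_i^\top S_i^\top$ where $G_i G_i^\top = (S_i^\top A B^{-1} A^\top S_i)^\dagger$, exactly as in the proof of Lemma~\ref{ch:one:lem:Z}), and since each $p_i>0$, $H$ is a nonnegative combination of PSD matrices, hence symmetric PSD. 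Consequently $H$ is nonsingular if and only if $\Null{H} = \{0\}$.

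Next I would identify $\Null{H}$. Because $H = \sum_i p_i M_i$ with each $M_i \succeq 0$ and $p_i > 0$, a vector $y \in \R^m$ satisfies $Hy = 0$ if and only if $y^\top H y = 0$, which (by positivity of the $p_i$) holds if and only if $y^\top M_i y = 0$ for every $i$, i.e. $y \in \Null{M_i}$ for all $i$. So $\Null{H} = \bigcap_{i=1}^r \Null{M_i}$ where $M_i = S_i (S_i^\top A B^{-1} A^\top S_i)^\dagger S_i^\top$. The key remaining step is to compute $\Null{M_i}$. Writing $C_i = S_i^\top A$ and $G = B^{-1}$, we have $M_i = S_i (C_i G C_i^\top)^\dagger S_i^\top$. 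I claim $\Null{M_i} = \Null{C_i^\top} = \Null{A^\top S_i}$. One inclusion is immediate: if $A^\top S_i y = 0$ then $S_i^\top y$... — more carefully, one shows $\myRange{M_i} = \myRange{S_i S_i^\top A}$, from which $\Null{M_i} = \myRange{S_i S_i^\top A}^\perp$ by symmetry of $M_i$. To get $\myRange{M_i} = \myRange{S_i S_i^\top A}$: apply Proposition~\ref{prop:decomposition} with $M = C_i G^{1/2} = S_i^\top A B^{-1/2}$ and $B = I$; then $Z_M = G^{1/2} C_i^\top (C_i G C_i^\top)^\dagger C_i G^{1/2}$ is an orthogonal projection onto $\myRange{M^\top} = \myRange{B^{-1/2} A^\top S_i}$, and multiplying $S_i (C_i G C_i^\top)^\dagger S_i^\top$ out and using Lemma~\ref{lem:WGW} (or $\myRange{(C_i G C_i^\top)^\dagger} = \myRange{C_i G C_i^\top} = \myRange{C_i}$ by \eqref{eq:8ys98h986ss} and Lemma~\ref{lem:pseudoproj}) gives $\myRange{M_i} = \myRange{S_i C_i} = \myRange{S_i S_i^\top A}$. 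The main obstacle here is being careful with the pseudoinverse range/nullspace bookkeeping — in particular that $\myRange{B S^\dagger} = \myRange{BS^\top}$-type identities hold because $(C_i G C_i^\top)^\dagger$ is symmetric PSD with the same range as $C_i G C_i^\top$.

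Finally I would assemble the pieces. We have
\[
\Null{H} = \bigcap_{i=1}^r \myRange{S_i S_i^\top A}^\perp = \left( \sum_{i=1}^r \myRange{S_i S_i^\top A} \right)^\perp = \myRange{[S_1 S_1^\top A, \cdots, S_r S_r^\top A]}^\perp,
\]
where the middle equality uses the standard fact that $(\bigcap_i V_i^\perp) = (\sum_i V_i)^\perp$, and the last equality is just that the range of a block-column concatenation is the sum of the ranges of the blocks. Therefore $H$ is nonsingular $\iff \Null{H} = \{0\} \iff \myRange{[S_1 S_1^\top A, \cdots, S_r S_r^\top A]}^\perp = \{0\} \iff \myRange{[S_1 S_1^\top A, \cdots, S_r S_r^\top A]} = \R^m$, which is the claim. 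The whole argument is essentially linear-algebraic once the nullspace of a single term $M_i$ is pinned down; that identification is the one spot where the properties of the pseudoinverse developed in Section~\ref{C1sec:tools} are genuinely used.
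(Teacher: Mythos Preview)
Your proposal is correct and follows essentially the same architecture as the paper's proof: write $H$ as a sum of symmetric PSD terms $p_i M_i$, reduce nonsingularity to $\bigcap_i \Null{M_i} = \{0\}$, identify each $\Null{M_i}$ with $\Null{A^\top S_i S_i^\top}$, and take orthogonal complements. The only difference is in how the nullspace of each term is pinned down: the paper uses the factorization $(K_i K_i^\top)^\dagger = (K_i^\dagger)^\top K_i^\dagger$ with $K_i = S_i^\top A B^{-1/2}$ to write $H_i = p_i (K_i^\dagger S_i^\top)^\top (K_i^\dagger S_i^\top)$ and then reads off $\Null{H_i} = \{y : S_i^\top y \in \Null{K_i^\dagger} = \Null{A^\top S_i}\} = \Null{A^\top S_i S_i^\top}$ directly, which is a bit slicker than your range computation (and avoids the unstated-but-needed observation that $\myRange{C_i} \subseteq \myRange{S_i^\top}$).
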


\begin{proof} Let $K_i = S_i^\top AB^{-1/2}$. In view of the identity $\left(K_i K_i^\top \right)^{\dagger} = (K_i^\dagger )^\top K_i^\dagger$,  we can write 
\[H \overset{\eqref{eq:H}}{=} \sum_{i=1}^r H_i,\]
where $H_i = p_i S_i (K_i^\dagger)^\top K_i^\dagger S_i^\top$. Since $H_i$ are symmetric positive semidefinite,  so is $H$.  Now,  it is easy to check that $y^\top H_i y = 0$ if and only if $y  \in \Null{H_i}$ (this holds for any symmetric positive semidefinite $H_i$). Hence, $y^\top H y = 0$ if and only if $y \in \cap_i \Null{H_i}$ and thus
$H$ is positive definite if and only if 
\begin{equation} \label{eq:0h09sh0976}\bigcap_{i} \Null{H_i} = \{0\}.\end{equation}
In view of Lemma~\ref{lem:WGW}, $\Null{H_i} = \Null{\sqrt{p_i}K_i^\dagger S_i^\top} =  \Null{K_i^\dagger S_i^\top}$. Now, $y\in \Null{K_i^\dagger S_i^\top}$ if and only of $S_i^\top y \in \Null{K_i^\dagger} = \Null{K_i^\top} = \Null{A^\top S_i}$. Hence, $\Null{H_i} = \Null{A^\top S_i S_i^\top}$, which means that \eqref{eq:0h09sh0976} is equivalent to \\ $\Null{[S_1S_1^\top A, \cdots, S_r S_r^\top A]^\top} = \{0\}$.
\end{proof}

\bigskip
We have the following corollary.\footnote{We can also prove the corollary directly as follows: The first assumption implies that $S_i^\top A B^{-1} A^\top S_i$ is invertible for all $i$ and that $V \eqdef \mbox{Diag}\left(p_i^{1/2}(S_i^\top A{B^{-1}}A^\top S_i)^{-1/2}\right)$  is nonsingular. It remains to note that
\[
H \overset{\eqref{eq:H}}{=}  \E{ S\left(S^\top AB^{-1}A^\top S\right)^{-1} S^\top} \\
= \sum_i p_i  S_i\left(S_i^\top AB^{-1}A^\top S_i \right)^{-1} S_i^\top  = \mathbf{S}V^2 \mathbf{S}^\top.
\]
}

\begin{corollary}\label{cor:09hs09hs}
Assume that $S_i^\top A$ has full row rank for all $i$ and that  $\mathbf{S} \eqdef [S_1,\ldots, S_r]$ is of full row rank. Then $H$ is nonsingular. 
\end{corollary}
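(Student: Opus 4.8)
The plan is to deduce the corollary directly from Theorem~\ref{thm:H}, which (for a finite discrete distribution) characterizes nonsingularity of $H$ by the range condition $\myRange{[S_1S_1^\top A, \cdots, S_rS_r^\top A]} = \R^m$. For a finite discrete $\cal D$ the expectation defining $H$ in~\eqref{eq:H} is a finite sum of symmetric positive semidefinite matrices, hence $H$ is automatically well defined; only the range condition has to be verified.

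First I would observe that for each $i$, since $S_i^\top A$ has full row rank, the linear map $x \mapsto S_i^\top A x$ is surjective onto its codomain, i.e.\ $\myRange{S_i^\top A}$ equals the whole space $\R^{q_i}$ (where $q_i$ is the number of columns of $S_i$). Using $\myRange{MN} = M(\myRange{N})$, this gives $\myRange{S_iS_i^\top A} = S_i\bigl(\myRange{S_i^\top A}\bigr) = S_i(\R^{q_i}) = \myRange{S_i}$. Summing the ranges over $i$,
\[\myRange{[S_1S_1^\top A, \cdots, S_rS_r^\top A]} = \sum_{i=1}^r \myRange{S_iS_i^\top A} = \sum_{i=1}^r \myRange{S_i} = \myRange{\mathbf{S}}.\]
Since $\mathbf{S} = [S_1,\dots,S_r]$ has full row rank, $\myRange{\mathbf{S}} = \R^m$, so the hypothesis of Theorem~\ref{thm:H} holds and $H$ is nonsingular.

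Alternatively — this is the route indicated in the footnote — one can argue without invoking Theorem~\ref{thm:H}: Lemma~\ref{lem:WGW} applied with $W = A^\top S_i$ and $G = B^{-1}$ yields $\Null{S_i^\top A B^{-1} A^\top S_i} = \Null{A^\top S_i} = \{0\}$, the last equality holding because $S_i^\top A$ has full row rank; hence each $S_i^\top A B^{-1} A^\top S_i$ is symmetric positive definite, in particular invertible, and the pseudoinverses in~\eqref{eq:H} become inverses. Then $H = \sum_i p_i S_i(S_i^\top A B^{-1} A^\top S_i)^{-1}S_i^\top = \mathbf{S}V^2\mathbf{S}^\top$ with $V \eqdef \mbox{Diag}\bigl(p_i^{1/2}(S_i^\top A B^{-1} A^\top S_i)^{-1/2}\bigr)$ block diagonal and nonsingular, so for any $y \in \R^m$ one has $y^\top H y = \norm{V\mathbf{S}^\top y}_2^2$, which vanishes only if $\mathbf{S}^\top y = 0$, i.e.\ only if $y=0$ by full row rank of $\mathbf{S}$. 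Either way the proof is short and there is no substantial obstacle; the only care needed is bookkeeping — keeping straight which of the two hypotheses (full row rank of each $S_i^\top A$, versus full row rank of $\mathbf{S}$) is used at each step.
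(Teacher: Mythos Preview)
Your proposal is correct and matches the paper's approach on both fronts: the first route is exactly the intended deduction from Theorem~\ref{thm:H} (the corollary is stated immediately after it with no separate proof), and your alternative route reproduces verbatim the direct argument the paper gives in the footnote, writing $H=\mathbf{S}V^2\mathbf{S}^\top$ with $V$ nonsingular. There is nothing to add.
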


We now give a few illustrative examples:

\begin{enumerate}
\item \emph{Coordinate vectors.} Let $S_i = e_i$ ($i^{\text{th}}$ unit coordinate vector) for $i=1,2,\dots,r=m$. In this case,  $\mathbf{S} = [S_1,\dots,S_m]$ is the identity matrix in $\R^m$, and $S_i^\top A$ has full row rank for all $i$ as long as the rows of $A$ are all nonzero. By Corollary~\ref{cor:09hs09hs}, $H$ is positive definite.
\item \emph{Submatrices of the identity matrix.} We can let $S$ be a random column submatrix of the $m\times m$ identity matrix $I$. There are  $2^m-1$ such potential submatrices, and  we choose $1\leq r \leq 2^m-1$.  As long as we choose $S_1,\dots,S_r$ in such a way that each column of $I$ is represented in some matrix $S_i$, the matrix $\mathbf{S}$ will have full row rank.  Furthermore, if  $S_i^\top A$ has full row rank for all $i$, then by the above corollary, $H$ is nonsingular. Note that if the row rank of $A$ is $r$, then the matrices $S_i$ selected by the  above process will  necessarily have at most $r$ columns.
\item \emph{Count sketch and Count-min sketch.} Many other ``sketching'' matrices $S$ can be employed within SDA, including the count sketch \cite{CountSketch2002} and the count-min sketch \cite{CountMinSketch2005}. In our context (recall that we sketch with the transpose of $S$), $S$ is a count-sketch matrix (resp. count-min sketch) if it is assembled from random columns of $[I,-I]$ (resp $I$), chosen uniformly with replacement, where $I$ is the $m\times m$ identity matrix.
\end{enumerate}

\subsection{Randomized Kaczmarz is the primal process associated with  randomized coordinate ascent } \label{subsec:RKvsRCA}
Let $B=I$ (the identity matrix). The primal problem then becomes
\begin{eqnarray}  \text{minimize} \quad \ && P(x)\eqdef \tfrac{1}{2}\|x-c\|_2^2 \notag\\
\text{subject to} \quad \ && Ax=b \notag\\
&& x\in \R^n,\notag
\end{eqnarray}
and the dual problem is
\begin{eqnarray}\notag \text{maximize}\quad \  && D(y)\eqdef (b-Ac)^\top y - \tfrac{1}{2}y^\top A A^\top y\\
\text{subject to} \quad \ && y \in \R^m. \notag
\end{eqnarray}

\paragraph{Dual iterates.} Let us choose $S=e^i$ (unit coordinate vector in $\R^m$) with probability $p_i>0$ (to be specified later). The SDA method (Algorithm~\ref{alg:SDA}) then takes the form
\begin{equation}\label{eq:SDA-compact08986986098}
\boxed{\quad y^{k+1} =  y^k + \frac{b_i - A_{i}c - A_{i:} A^\top y^k }{\|A_{i:}\|_2^2}e_i   \quad }
\end{equation}
This is the randomized coordinate ascent method applied to the dual problem. In the form popularized by Nesterov~\cite{Nesterov2012}, it takes the form
\[y^{k+1} = y^k + \frac{e_i^\top \nabla D(y^k)}{L_i} e_i,\]
where $e_i^\top \nabla D(y^k)$ is the $i$th partial derivative of $D$ at $y^k$ and $L_i>0$ is the Lipschitz constant of the $i$th partial derivative, i.e., constant for which the following inequality holds for all $\lambda\in \R$:
\begin{equation}\label{eq:s97g98gs} | e_i^\top \nabla D(y + \lambda e_i) - e_i^\top \nabla D(y)  | \leq L_i |\lambda|.\end{equation}
It can be easily verified that \eqref{eq:s97g98gs}  holds with $L_i=\|A_{i:}\|_2^2$ and that $e_i^\top \nabla D(y^k)=b_i - A_{i:}c - A_{i:} A^\top y^k $.

\paragraph{Primal iterates.} The associated primal iterative process (Algorithm~\ref{alg:SDA-Primal})  takes the form
\begin{equation}\label{eq:SDA-primal009s09us0098}
\boxed{\quad x^{k+1} =  x^k - \frac{A_{i:} x^k   -b_i }{\|A_{i:}\|_2^2} A_{i:}^\top  \quad }
\end{equation}
This is the randomized Kaczmarz method of Strohmer and Vershynin \cite{Strohmer2009}.

\paragraph{The rate.}

\bigskip
Let us now compute the rate $\rho$ as defined in~\eqref{ch:three:eq:rho}.
It will be convenient, but {\em not} optimal, to choose the probabilities according to Theorem~\ref{theo:convsingleS},  that is
 \begin{equation}\label{eq:089h08hs98xx}p_i = \frac{\norm{ A_{i:} }_2^2}{\norm{A}_F^2},\end{equation}  where $\|\cdot\|_F$ denotes the Frobenius norm (we assume that $A$ does not contain any zero rows).  Since
\[H \overset{\eqref{eq:H}}{=} \E{S \left(S^\top A A^\top S \right)^{\dagger}S^\top } =  \sum_{i=1}^m p_i \frac{e_i e_i^\top }{\|A_{i:}\|_2^2} \overset{\eqref{eq:089h08hs98xx}}{=} \frac{1}{\norm{A}_F^2}I,\]
we have
\begin{equation}\label{eq:98hs8h8ss}\rho = 
 1-\lambda_{\min}^+\left(A^\top H A \right) = 1-\frac{\lambda_{\min}^+\left(A^\top A\right)}{\norm{A}_F^2}.
\end{equation}
Furthermore, if $r= \Rank{A}$, then in view of \eqref{eq:nubound}, the rate  is bounded  as
\[ 1- \frac{1}{r}\leq  \rho <1. \]
Assume that $A$ is of rank $r=1$ and let  $A= uv^\top$. Then $A^\top A = (u^\top  u) v  v^\top$, and hence this matrix is also of rank 1. Therefore, $A^\top A$ has a single nonzero eigenvalue, which is equal to its trace. Therefore, $\lambda_{\min}^+(A^\top A) = \Tr{A^\top A} = \|A\|^2_F$ and hence $\rho =0$. Note that the  rate $\rho$ reaches its lower bound and the method converges in one step.

\paragraph{Remarks.} For randomized coordinate ascent applied to (non-strongly) concave quadratics, rate \eqref{eq:98hs8h8ss} has been established by Leventhal and Lewis \cite{Leventhal2010}. However, to the best of our knowledge, this is the first time this rate has also been established for the randomized Kaczmarz method. We do not only prove this, but show that this is because the iterates of the two methods are linked via a linear relationship. In the $c=0, B=I$ case, and for row-normalized matrix $A$, this linear relationship between the two methods was  recently independently observed by Wright~\cite{Wright:CoorDescMethods-survey}.  While all linear complexity results for  RK we are aware of require full rank assumptions, there exist nonstandard variants of RK which do not require such assumptions, one example being the asynchronous parallel version of RK studied by Liu, Wright and Sridhar \cite{Wright:AsyncPRK}.  Finally, no results of the type \eqref{eq:PRIMALSUBOPT} (primal suboptimality)  and \eqref{eq:GAPSUBOPT} (duality gap) previously existed for these methods in the literature. 

\subsection{Randomized block Kaczmarz is the primal process associated with randomized Newton}

Let $B=I$, so that we have the same pair of primal dual problems as in Section~\ref{subsec:RKvsRCA}.

\paragraph{Dual iterates.} Let us now choose $S$ to be a random column submatrix of the $m\times m$ identity matrix $I$. That is, we choose a  random subset $C\subset \{1,2,\dots,m\}$ and then let $S$ be the concatenation of columns $j\in C$ of $I$. We shall write $S=I_C$. Let $p_C$ be the probability that $S = I_C$. Assume that for each $j \in \{1,\dots,m\}$ there exists $C$ with $j \in C$ such that $p_C>0$. Such a random set is called {\em proper}~\cite{Qu2015}.  

The SDA method (Algorithm~\ref{alg:SDA}) then takes the form
\begin{equation}\label{eq:SDA-compact08986986098BLOCK}
\boxed{\quad y^{k+1} =  y^k + I_C \lambda^k  \quad }
\end{equation}
where $\lambda^k$ is chosen so that the dual objective is maximized (see \eqref{eq:lambda_closed_form}). This is a variant of the {\em randomized Newton method} studied in \cite{Qu2015}. By examining \eqref{eq:lambda_closed_form}, we see that this method works by ``inverting'' randomized submatrices of the ``Hessian'' $AA^\top$. Indeed, $\lambda^k$ is in each iteration computed by solving a system with the matrix $I_C^\top A A^\top I_C$. This is the random submatrix of $A A^\top$ corresponding to rows and columns in $C$.

\paragraph{Primal iterates.} In view of the equivalence between Algorithm~\ref{alg:SDA-Primal} and the sketch-and-project method~\eqref{ch:two:NF}, the  primal iterative process associated with the randomized Newton method has the form
\begin{equation}\label{eq:SDA-primal009s09us0098BLOCK}
\boxed{\quad x^{k+1} = \arg \min_{x} \left\{ \|x-x^k\| \;:\; I_C^\top Ax = I_C^\top b \right\} \quad }
\end{equation}
This method is a variant of the {\em randomized block Kaczmarz} method of Needell \cite{Needell2012}. The method proceeds by projecting the last iterate $x^k$ onto a subsystem of $Ax=b$ formed by equations indexed by the set $C$.

\paragraph{The rate.} Provided that $H$ is nonsingular, the shared  rate of the randomized Newton and randomized block Kaczmarz methods is
\[\rho   \overset{\eqref{ch:three:eq:rho} }{=} 1- \lambda_{\min}^+\left(A^\top \E{I_C\left(I_C^\top AA^\top I_C\right)^\dagger I_C^\top} A\right).\]

Qu et al \cite{Qu2015} study the randomized Newton method for the problem of minimizing a smooth strongly convex function and prove linear convergence. In particular, they study the above rate in the case when $AA^\top$ is positive definite. Here we show that linear converges also holds for {\em weakly} convex quadratics (as long as $H$ is nonsingular). 


\subsection{Self-duality  for positive definite $A$}

If $A$ is positive definite, then we can choose $B=A$. As mentioned before, in this setting SDA is self-dual: $x^k=y^k$ for all $k$. The primal problem then becomes
\begin{eqnarray}  \text{minimize} \quad \ && P(x)\eqdef \tfrac{1}{2}x^\top A x \notag\\
\text{subject to} \quad \ && Ax=b \notag\\
&& x\in \R^n.\notag
\end{eqnarray}
and the dual problem becomes
\begin{eqnarray}\notag \text{maximize}\quad \  && D(y)\eqdef b^\top y - \tfrac{1}{2}y^\top A  y\\
\text{subject to} \quad \ && y \in \R^m. \notag
\end{eqnarray}

Note that the primal objective function  does not play any role in determining the solution; indeed, the feasible set contains a single point only: $A^{-1}b$. However, it does affect the iterative process.
 
\paragraph{Primal and dual iterates.} As before, let us choose $S=e^i$ (unit coordinate vector in $\R^m$) with probability $p_i>0$, where the probabilities $p_i$ are arbitrary. Then both the  primal and the dual iterates take the form 
\[
\boxed{\quad y^{k+1} =  y^k - \frac{A_{i:}  y^k - b_i}{A_{ii}}e_i   \quad }
\]
This is the randomized coordinate ascent method applied to the dual problem. 

\paragraph{The rate.}

If we choose $p_i = A_{ii}/\Tr{A}$, then
\[H = \E{S\left(S^\top A S\right)^\dagger S^\top} = \frac{I}{\Tr{A}},\]
whence
\[\rho \overset{\eqref{ch:three:eq:rho} }{=} 1 - \lambda_{\min}^+ \left(   A^{1/2}H A^{1/2}\right)  = 1 - \frac{ \lambda_{\min}(A)}{\Tr{A}}.\]

It is known that for this problem,  randomized coordinate descent applied to the dual problem, with this choice of probabilities, converges with this rate\cite{Leventhal2010}.

\section{Application: Randomized Gossip Algorithms} \label{sec:gossip}

In this section we apply our method and results to the distributed consensus (averaging) problem.

Let $(V,E)$ be a connected network with $|V|=n$ nodes and $|E|=m$ edges, where each edge is an unordered pair $\{i,j\} \in E$ of distinct nodes. Node $i \in V$ stores a private value $c_i\in \R$. The goal of the \emph{distributed consensus problem} is for the network to compute the average of these private values in a distributed fashion \cite{Boyd2006,OlshevskyTsitsiklis2009}. This means that the exchange of information  can only occur along the edges of the network.

The nodes may represent people in a social network, with edges representing friendship and private value representing certain private information, such as salary. The goal would be to compute the average salary via an iterative process where only friends are allowed to exchange information. The nodes may represent sensors in a wireless sensor network, with an edge between two sensors if they are close to each other so that they can communicate. Private values  represent measurements of some quantity performed by the sensors, such as the temperature. The goal is for the network to compute the average temperature.

\subsection{Consensus as a projection problem}

We now show how one can model the consensus (averaging) problem in the form \eqref{eq:P}. Consider the  projection problem
\begin{eqnarray}  \text{minimize} \quad \ && \tfrac{1}{2}\|x - c\|_2^2 \notag \\
 \text{subject to} \quad \ & & x_1=x_2=\cdots = x_n, \label{eq:ohs09hud98yd}
\end{eqnarray}
and note that the optimal solution $x^*$ must necessarily satisfy \[x^*_i=\bar{c}\eqdef \frac{1}{n}\sum_{i=1}^n c_i,\] for all $i$. There are many ways in which the constraint forcing all coordinates of $x$ to be equal can be represented in the form of a linear system $Ax=b$.   Here are some examples:

\begin{enumerate}
\item {\em Each node is equal to all its neighbours.} Let the equations of the system $Ax=b$ correspond to constraints \[x_i=x_j,\] for $\{i,j\}\in E$. That is, we are enforcing all pairs of vertices joined by an edge to have the same value.  Each edge $e\in E$ can be written in two ways: $e = \{i,j\}$ and $e=\{j,i\}$, where $i,j$ are the incident vertices. In order to avoid duplicating constraints,  for each edge $e\in E$ we use $e=(i,j)$ to denote an arbitrary but fixed order of its incident vertices $i,j$. We then let $A\in \R^{m\times n}$ and $b=0\in \R^m$, where  \begin{equation}\label{eq:89hs87s8ys}(A_{e:})^\top = f_i - f_j,\end{equation} and where  $e=(i,j)\in E$, $f_i$ (resp.\ $f_j$) is the $i^{\text{th}}$ (resp.\ $j^{\text{th}}$) unit coordinate vector in $\R^n$.  Note that the constraint $x_i=x_j$ is represented only once in the linear system. Further, note that the matrix \begin{equation}\label{eq:Laplacian09709}L = A^\top A \end{equation} is the {\em Laplacian} matrix of the graph $(V,E)$:
 \[L_{ij} = \begin{cases}
 d_i & i=j\\
 -1 & i\neq j,\,\, (i,j)\in E\\
 0 & \text{otherwise,}
 \end{cases}\]
 where $d_i$ is the degree of node $i$.

\item {\em Each node is the average of its neighbours.}  Let the equations of the system $Ax=b$ correspond to constraints 
\[x_i = \frac{1}{d_i}\sum_{j \in N(i)} x_j,\]
for $i\in V$, where $N(i)\eqdef \left\{ j\in V \,\,: \,\, \{i,j\} \in E\right\}$ is the set of neighbours of node $i$ and $d_i \eqdef |N(i)|$ is the degree of node $i$.  That is, we require that the values stored at each node are equal to the average of the values of its neighbours. This corresponds to the choice $b=0$ and \begin{equation}\label{eq:s8h98s78gd}(A_{i:})^\top = f_i - \frac{1}{d_i}\sum_{j \in N(i)}f_j.\end{equation} Note that $A\in \R^{n\times n}$.

\item {\em Spanning subgraph.} Let $(V,E')$ be any connected subgraph of $(V,E)$. For instance, we can choose a spanning tree. We can now apply any of the 2 models above to this new graph and either require $x_i=x_j$ for all $\{i,j\}\in E'$, or require the value $x_i$ to be equal to the average of the values $x_j$ for all neighbours $j$ of $i$ in $(V,E')$.
 
\end{enumerate}

Clearly, the above list does not exhaust the ways in which the constraint $x_1=\dots=x_n$ can be modeled as a linear system. For instance, we could build the system from constraints such as $x_1 = x_2 + x_4 -  x_3$, $x_1 = 5 x_2 - 4 x_7$ and so on.

Different representations of the constraint $x_1=\cdots=x_n$, in combination with a choice of $\cal D$, will lead to a wide range of specific algorithms for the consensus problem \eqref{eq:ohs09hud98yd}. Some (but not all) of these algorithms will have the property that communication only happens along the edges of the network, and these are the ones we are interested in. The number of combinations is very vast. We will therefore only highlight two options, with the understanding that based on this, the interested reader can assemble other specific methods as needed.

\subsection{Model 1: Each node is equal to its neighbours}

 Let $b=0$ and $A$ be as in~\eqref{eq:89hs87s8ys}. Let the distribution $\cal D$ be defined by setting $S=e_i$  with probability $p_i>0$, where $e_i$ is the $i^{\text{th}}$ unit coordinate vector in $\R^m$. We have $B=I$, which means that Algorithm~\ref{alg:SDA-Primal} is the randomized Kaczmarz (RK) method \eqref{eq:SDA-primal009s09us0098} and Algorithm~\ref{alg:SDA} is the randomized coordinate ascent method  \eqref{eq:SDA-compact08986986098}. 

Let us take $y^0 = 0$ (which means that $x^0=c$), so that in Theorem~\ref{theo:Enormerror} we have $t=0$, and hence $x^k  \to x^*$. The particular choice of the starting point $x^0=c$ in the primal process has a very tangible meaning: for all $i$, node $i$ initially knows value $c_i$. The primal iterative process will dictate how the local values are modified in an iterative fashion so that eventually all nodes contain the optimal value $x^*_i = \bar{c}$.

\paragraph{Primal method.} In view of \eqref{eq:89hs87s8ys}, for each edge $e = (i,j)\in E$, we have  $\|A_{e:}\|_2^2=2$ and $A_{e:}x^k = x^k_i - x^k_j$. Hence, if the edge $e$ is selected by the RK method, \eqref{eq:SDA-primal009s09us0098} takes the specific form
\begin{equation}\label{eq:s98g98gsf66r6fs}\boxed{\quad x^{k+1} = x^k - \frac{x^k_i - x^k_j}{2} (f_i - f_j) \quad }\end{equation}
From \eqref{eq:s98g98gsf66r6fs} we see that only the $i^{\text{th}}$ and $j^{\text{th}}$ coordinates of $x^k$ are updated, via
\[x^{k+1}_i = x^k_i - \frac{x^k_i - x^k_j}{2} = \frac{x_i^k + x_j^k}{2}\]
and
\[x^{k+1}_j = x^k_j + \frac{x^k_i - x^k_j}{2} = \frac{x_i^k + x_j^k}{2}.\]
Note that in each iteration of RK, a random edge is selected, and the nodes on this edge replace their local values by their average. This is a basic variant of the {\em randomized gossip} algorithm \cite{Boyd2006, ZouziasFreris2009}. 

\paragraph{Invariance.}
Let $f$ be the vector of all ones in $\R^n$ and notice that from \eqref{eq:s98g98gsf66r6fs} we obtain
$f^\top x^{k+1} = f^\top x^k$ for all $k$. This means that  for all $k\geq 0$ we have the invariance property:
\begin{equation}\label{eq:invariance}\sum_{i=1}^n x_i^k = \sum_{i=1}^n c_i.\end{equation}

\paragraph{Insights from the dual perspective.} We can now bring new insight into the randomized gossip algorithm by considering the  dual iterative process.  The dual method \eqref{eq:SDA-compact08986986098} maintains  weights $y^k$ associated with the edges of $E$ via the process:
\[y^{k+1} = y^k - \frac{A_{e:} (c - A^\top y^k)}{2}e_e,\]
where $e$ is a randomly selected edge. Hence, only the weight of a single edge is updated in each iteration. At optimality, we have
$x^* =c + A^\top y^*$. That is, for each $i$
\[\delta_i\eqdef \bar{c} - c_i = x_i^* - c_i =  (A^\top y^*)_i = \sum_{e\in E} A_{ei} y^*_e,\]
where $\delta_i$ is the correction term which needs to be added to $c_i$ in order for node $i$ to contain the value $\bar{c}$. From the above we observe that these correction terms are maintained by the dual method as an inner product of the $i^{\text{th}}$ column of $A$ and $y^k$, with the optimal correction being $\delta_i = A_{:i}^\top y^*$.

\paragraph{Rate.} Both Theorem~\ref{theo:Enormerror} and Theorem~\ref{theo:2} hold, and hence we automatically get several types of convergence for the randomized gossip method. In particular, to the best of our knowledge, no primal-dual type of convergence exist in the literature.  Equation~\eqref{eq:dualitygap09709709}  gives a stopping criterion certifying convergence via the duality gap, which is also new. 

In view of \eqref{eq:98hs8h8ss} and \eqref{eq:Laplacian09709}, and since $\|A\|_F^2 = 2m$, the convergence rate appearing in  all these complexity results is given by
\[\rho = 1 - \frac{\lambda_{\min}^+(L)}{2m},\]
where $L$ is the Laplacian of $(V,E)$. While it is know that the Laplacian is singular, the rate depends on the smallest nonzero eigenvalue. This means that the number of iterations needed to output an $\epsilon$-solution in expectation scales as $O(\left(2m/\lambda_{\min}^+(L)\right)\log(1/\epsilon))$, i.e., linearly with the number of edges.

\subsection{Model 2: Each node is equal to the average of its neighbours}

Let $A$ be as in \eqref{eq:s8h98s78gd} and $b=0$. Let the distribution $\cal D$ be defined by setting $S=f_i$  with probability $p_i>0$, where $f_i$ is the $i^{\text{th}}$ unit coordinate vector in $\R^n$. Again, we have $B=I$, which means that Algorithm~\ref{alg:SDA-Primal} is the randomized Kaczmarz (RK) method \eqref{eq:SDA-primal009s09us0098} and Algorithm~\ref{alg:SDA} is the randomized coordinate ascent method  \eqref{eq:SDA-compact08986986098}. As before, we choose $y^0=0$, whence $x^0=c$.

\paragraph{Primal method.} Observe that $\|A_{i:}\|_2^2 = 1 + 1/d_i$. The RK method \eqref{eq:SDA-primal009s09us0098} applied to this formulation of the problem takes the form
\begin{equation} \label{eq:sihiuhd098d7d}
\boxed{\quad x^{k+1} =  x^k - \frac{x^k_i - \frac{1}{d_i}\sum_{j\in N(i)}x^k_j  }{1 + 1/d_i} \left(f_i - \frac{1}{d_i}\sum_{j\in N(i)}f_j \right) \quad }
\end{equation}
where $i$ is chosen at random. This means that only coordinates in $i\cup N(i)$ get updated in such an iteration, the others remain unchanged. For node $i$ (coordinate $i$), this update is

\begin{equation}\label{eq:9g8g98gssdd} x^{k+1}_i = \frac{1}{d_i+1} \left( x_i^k + \sum_{j\in N(i)}x^k_j \right).\end{equation}
That is, the updated value at node $i$ is the average of the values of its neighbours and the previous value at $i$. From \eqref{eq:sihiuhd098d7d} we see that the values at nodes $j\in N(i)$ get updated as follows:

\begin{equation}\label{eq:98889ff} x^{k+1}_j = x_j^{k} + \frac{1}{d_i+1}\left(x_i^k - \frac{1}{d_i}\sum_{j'\in N(i)}x^k_{j'} \right).\end{equation}

\paragraph{Invariance.}  Let $f$ be the vector of all ones in $\R^n$ and notice that from \eqref{eq:sihiuhd098d7d} we obtain
\[f^\top x^{k+1} = f^\top x^k -\frac{x^k_i - \frac{1}{d_i}\sum_{j\in N(i)}x^k_j  }{1 + 1/d_i} \left(1 - \frac{d_i}{d_i} \right)=f^\top x^k, \]
for all $k$.
It follows that the method satisfies the invariance property \eqref{eq:invariance}.

\paragraph{Rate.} The method converges with the rate $\rho$ given by \eqref{eq:98hs8h8ss}, where $A$ is given by \eqref{eq:s8h98s78gd}.   If $(V,E)$ is a complete graph (i.e., $m=\tfrac{n(n-1)}{2}$), then $L = \tfrac{(n-1)^2}{n}A^\top A$ is the Laplacian. In this case,  $\|A\|_F^2 = \Tr{A^\top A} = \tfrac{n}{(n-1)^2}\Tr{L}=\tfrac{n}{(n-1)^2}\sum_i d_i  = \tfrac{n^2}{n-1}$ and hence
\[\rho \overset{\eqref{eq:s8h98s78gd}}{=} 1 - \frac{\lambda_{\min}^+(A^\top A)}{\|A\|_F^2} = 1 - \frac{\tfrac{n}{(n-1)^2}\lambda_{\min}^+(L)}{\tfrac{n^2}{n-1}} = 1 - \frac{\lambda_{\min}^+(L)}{2m}.\]

\section{Proof of Theorem~\ref{theo:Enormerror}} \label{sec:proof}

In this section we prove Theorem~\ref{theo:Enormerror}. We proceed as follows:  in Section~\ref{subsec:error} we characterize the space in which the iterates move, in Section~\ref{subsec:inequality} we establish a certain key technical  inequality, in Section~\ref{subsec:convergence} we establish  convergence of iterates, in Section~\ref{subsec:residual} we derive a rate for the residual and finally, and in Section~\ref{subsec:lower_bound} we establish the lower bound on the convergence rate.

\subsection{An error lemma} \label{subsec:error}

The following result describes the space in which the iterates move.   It is an extension of the observation in item 2 of Remark~\ref{lem:5shsuss} to the case when $x^0$ is chosen arbitrarily.

\begin{lemma} \label{lem:error} Let the assumptions of Theorem~\ref{theo:Enormerror} hold. For all $k\geq 0$ there exists $w^k\in \R^m$ such that
$ x^k - x^* - t = B^{-1}A^\top w^k$.
\end{lemma}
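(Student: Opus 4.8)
The plan is to prove the lemma by induction on $k$, with the substantive work confined to the base case.

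For the base case $k=0$, the idea is to combine two facts. First, applying Proposition~\ref{prop:decomposition} with $M=A$ (and the given positive definite $B$), the vector $x^0-c$ decomposes uniquely as $x^0-c = s(x^0-c) + t(x^0-c)$ with $s(x^0-c)\in\myRange{B^{-1}A^\top}$ and $t(x^0-c)\in\Null{A}$; moreover the variational formula~\eqref{eq:98hs8htt} identifies $t(x^0-c)$ with precisely the $B$-projection of $x^0-c$ onto $\Null{A}$, which is the vector $t$ defined in~\eqref{eq:def_of_t}. Hence $x^0-c-t = s(x^0-c)\in\myRange{B^{-1}A^\top}$, so there is $u\in\R^m$ with $x^0-c-t = B^{-1}A^\top u$. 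Second, by the optimality conditions (Proposition~\ref{eq:prop_opt_cond}, see~\eqref{eq:opt_primal}) we may write $x^* = c + B^{-1}A^\top y^*$ for a dual optimal $y^*$. Subtracting, $x^0 - x^* - t = B^{-1}A^\top(u - y^*)$, so we take $w^0 \eqdef u - y^*$.

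For the inductive step, suppose $x^k - x^* - t = B^{-1}A^\top w^k$ for some $w^k\in\R^m$. The primal update~\eqref{eq:SDA-primal0} gives
\[
x^{k+1} - x^* - t = B^{-1}A^\top w^k - B^{-1}A^\top S\left(S^\top AB^{-1}A^\top S\right)^\dagger S^\top(Ax^k - b) = B^{-1}A^\top w^{k+1},
\]
where $w^{k+1} \eqdef w^k - S\left(S^\top AB^{-1}A^\top S\right)^\dagger S^\top(Ax^k - b)\in\R^m$. This closes the induction.

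There is essentially no obstacle here: the only point requiring a little care is matching the abstract decomposition of Proposition~\ref{prop:decomposition} (and its variational characterization of the $B$-orthogonal projection onto $\Null{A}$) with the specific definition of $t$ in~\eqref{eq:def_of_t}, together with recalling that $x^*$ lies in $c+\myRange{B^{-1}A^\top}$ by the optimality conditions. Everything else is a one-line algebraic propagation of the membership $x^k-x^*-t\in c+\myRange{B^{-1}A^\top}-c = \myRange{B^{-1}A^\top}$ through the update.
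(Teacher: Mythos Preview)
Your proof is correct and follows essentially the same approach as the paper: induction on $k$, with the base case handled via Proposition~\ref{prop:decomposition} (applied with $M=A$) to write $x^0-c-t\in\myRange{B^{-1}A^\top}$ and the optimality relation~\eqref{eq:opt_primal} to write $x^*-c\in\myRange{B^{-1}A^\top}$, and the inductive step propagated directly through the primal update. The only cosmetic difference is notation (your $u$ is the paper's $\hat{y}^0$).
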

\begin{proof}
We proceed by induction. Since by definition, $t$ is the projection of $x^0-c$ onto  $\Null{A}$ (see \eqref{eq:98hs8htt}), applying Proposition~\ref{prop:decomposition} we know that $x^0 -c = s+ t$, where $s = B^{-1}A^\top \hat{y}^0$ for some $\hat{y}^0\in \R^m$. Moreover, in view of \eqref{eq:opt_primal}, we know that $x^* =c + B^{-1}A^\top y^*$, where $y^*$ is any dual optimal solution. Hence, 
\[ x^0 - x^* - t = B^{-1}A^\top (\hat{y}^0-y^*).\]
Assuming the relationship holds for $k$, we have
\begin{eqnarray*} x^{k+1} - x^* - t
 &\overset{(\text{Alg}~\ref{alg:SDA-Primal})}{=} & \left[x^k - B^{-1}A^\top S (S^\top A B^{-1} A^\top S)^\dagger S^\top (A x^k  -b ) \right] - x^*-t\\
&= &  \left[x^* + t + B^{-1}A^\top w^k - B^{-1}A^\top S (S^\top A B^{-1} A^\top S)^\dagger S^\top (A x^k  -b )\right] - x^* - t\\
&=& B^{-1}A^\top w^{k+1},
\end{eqnarray*}
where $w^{k+1} = w^k - S (S^\top A B^{-1} A^\top S)^\dagger S^\top (A x^k  -b )$. 
\end{proof}

\subsection{A key inequality} \label{subsec:inequality}
The following inequality is of key importance in the proof of the main theorem.

\begin{lemma}\label{lem:WGWtight}
Let $0\neq W \in  \R^{m\times n}$ and
$G \in \R^{m\times m}$ be symmetric positive definite. Then the matrix $W^\top G W$ has a positive eigenvalue, and the following inequality holds for all $y\in \R^m$:
\begin{equation}\label{eq:WGWtight}
 y^\top WW^\top G WW^\top y \geq \lambda_{\min}^+(W^\top G W)\|W^\top y\|_2^2.
 \end{equation}
 Furthermore, this bound is tight.
\end{lemma}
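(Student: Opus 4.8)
The plan is to reduce the inequality to the standard Rayleigh-quotient bound $z^\top M z \geq \lambda_{\min}^+(M)\|z\|_2^2$, which holds for any symmetric positive semidefinite matrix $M$ and any $z$ lying in $\myRange{M}$. First I would set $M \eqdef W^\top G W$, which is symmetric positive semidefinite since $G$ is positive definite; because $W\neq 0$, Lemma~\ref{lem:WGW} (applied with the roles of $W$ and $G$ here, noting $\Null{W^\top G W}=\Null{W}$) gives $\myRange{W^\top G W}=\myRange{W^\top}\neq \{0\}$, so $M$ has at least one positive eigenvalue and $\lambda_{\min}^+(M)$ is well defined. Then, given $y\in\R^m$, I would apply the Rayleigh bound to the vector $z \eqdef W^\top y$: since $z\in\myRange{W^\top}=\myRange{M}$, we get $z^\top M z\geq \lambda_{\min}^+(M)\|z\|_2^2$. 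Unpacking, $z^\top M z = (W^\top y)^\top (W^\top G W)(W^\top y) = y^\top W W^\top G W W^\top y$ and $\|z\|_2^2=\|W^\top y\|_2^2$, which is exactly \eqref{eq:WGWtight}.

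The only genuine content is the Rayleigh-quotient statement restricted to the range: if $M$ is symmetric positive semidefinite with spectral decomposition $M=\sum_i \mu_i q_i q_i^\top$ (eigenvalues $\mu_i\geq 0$, orthonormal $q_i$), and $z\in\myRange{M}$, then $z=\sum_{i:\mu_i>0} \langle q_i,z\rangle q_i$, so $z^\top M z = \sum_{i:\mu_i>0}\mu_i\langle q_i,z\rangle^2 \geq \lambda_{\min}^+(M)\sum_{i:\mu_i>0}\langle q_i,z\rangle^2 = \lambda_{\min}^+(M)\|z\|_2^2$. I would include this short computation for completeness. Note the hypothesis $z\in\myRange{M}$ is essential; it is what lets us replace the full sum over $\mu_i=0$ components (which contribute nothing to $z^\top M z$ but could inflate $\|z\|_2^2$) — here it is automatic because $z=W^\top y\in\myRange{W^\top}=\myRange{M}$.

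For tightness, I would exhibit a $y$ achieving equality: take $q$ to be a unit eigenvector of $M=W^\top G W$ associated with the eigenvalue $\lambda_{\min}^+(M)$, so $q\in\myRange{W^\top}$, and pick $y$ with $W^\top y = q$ (possible since $q\in\myRange{W^\top}$). For this choice $z=q$, $z^\top M z=\lambda_{\min}^+(M)$ and $\|z\|_2^2=1$, so \eqref{eq:WGWtight} holds with equality. I do not anticipate a real obstacle here; the only point requiring a little care is invoking Lemma~\ref{lem:WGW} correctly to identify $\myRange{W^\top G W}$ with $\myRange{W^\top}$ and to guarantee $W^\top G W$ is nonzero (hence has a positive eigenvalue) — this is where the assumption $W\neq 0$ is used.
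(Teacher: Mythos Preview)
Your proposal is correct and follows essentially the same approach as the paper: both invoke Lemma~\ref{lem:WGW} to place $W^\top y$ in $\myRange{W^\top G W}$, expand in the orthonormal eigenbasis of $W^\top G W$ restricted to its range, and read off the Rayleigh-quotient lower bound $\lambda_{\min}^+(W^\top G W)$, with tightness witnessed by choosing $y$ so that $W^\top y$ equals the bottom eigenvector. The only cosmetic difference is that you factor out the Rayleigh-quotient step as a separate general statement before applying it, whereas the paper carries out the eigen-expansion directly inline.
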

\begin{proof} Fix arbitrary $y\in \R^m$. By Lemma~\ref{lem:WGW}, $W^\top y\in \myRange{W^\top G W}$. Since $W$ is nonzero the positive semidefinite matrix $W^\top G W$ is also nonzero, and hence it has a positive eigenvalue. Hence, $\lambda_{\min}^+(W^\top G W)$ is well defined. 
Let $\lambda_{\min}^+(W^\top G W) = \lambda_{1}\leq \cdots\leq  \lambda_{\tau}$ be the positive eigenvalues of $W^\top GW$, with associated orthonormal eigenvectors $q_1,\dots,q_\tau$. We thus have
\[W^\top G W = \sum_{i=1}^\tau \lambda_i q_i q_i^\top.\] It is easy to see that these eigenvectors span $\myRange{W^\top GW}$. Hence,  we can write $W^\top y = \sum_{i=1}^{\tau} \alpha_i q_i$ and therefore
\[y^\top WW^\top G W W^\top y = \sum_{i=1}^{\tau} \lambda_{i}\alpha_i^2 \geq  
\lambda_1\sum_{i=1}^{\tau} \alpha_i^2 = \lambda_1 \|W^\top y\|_2^2.\]
Furthermore this bound is tight, as can be seen by selecting $y$ so that $W^\top y = q_1$. 
\end{proof}

\subsection{Convergence of the iterates} \label{subsec:convergence}
Subtracting $x^*+t$ from both sides of the update step of Algorithm~\ref{alg:SDA-Primal}, and letting 
\begin{equation} \label{ch:three:Z}
Z\eqdef A^\top S (S^\top A B^{-1} A^\top S)^\dagger S^\top  A,\end{equation}
 we obtain the identity
\begin{equation} \label{eq:fixed0}
 x^{k+1} - (x^{*}+t) = (I-B^{-1}Z)(x^k-(x^{*}+t)),
 \end{equation}
 where we used that $t\in \Null{A}$. Left multiplying~\eqref{eq:fixed0} by $B^{1/2}$ we see that the residual 
 \[r^k \eqdef  B^{1/2}(x^{k+1} - (x^{*}+t)),\]
  satisfies the recurrence 
\begin{equation} \label{eq:fixedr}
r^{k+1} = (I-B^{-1/2}ZB^{-1/2})r^k.
 \end{equation}
 In view of $\E{Z} = A^\top H A$, and taking norms and expectations (in $S$) on both sides of~\eqref{eq:fixedr}  gives 
 \begin{eqnarray}
\E{\norm{r^{k+1}}_{2}^2\, | \, r^k} &=& 
\E{\norm{(I-{B^{-1/2}}ZB^{-1/2})r^k }^2_2} \nonumber \\
&\overset{\eqref{eq:B12ZB12proj}}{=}& \E{(r^k)^\top (I-{B^{-1/2}}ZB^{-1/2}) r^k} \nonumber\\
& =& \norm{r^k}_{2}^2-
(r^k)^\top  B^{-1/2} \E{Z} B^{-1/2}r^k \nonumber \\
&=& \norm{r^k}_{2}^2- (r^k)^\top B^{-1/2} A^\top H A  B^{-1/2} r^k, \label{eq:theostep1}
\end{eqnarray}
 In view of Lemma~\ref{lem:error}, let $w^k\in \R^{m}$ be such that $r^k = B^{1/2}(B^{-1}A^\top w^k) = B^{-1/2}A^\top w^k.$  Thus
\begin{eqnarray}
(r^k)^\top B^{-1/2}A^\top H A B^{-1/2}r^k \quad &=&(w^k)^\top AB^{-1}A^\top H A B^{-1}A^\top w^k\nonumber \\
&\overset{(\text{Lemma}~\ref{lem:WGWtight})}{\geq}& \lambda_{\min}^+(B^{-1/2}A^\top H A B^{-1/2})\cdot
\|B^{-1/2}A^\top w^k\|^2_2 \nonumber\\
&=& (1- \rho) \cdot\norm{r^k}_2^2,\label{eq:theostep2}
\end{eqnarray}
where we applied Lemma~\ref{lem:WGWtight} with $W = AB^{-1/2}$ and $G = H$,  so that $W^\top GW = B^{-1/2}A^\top H A B^{-1/2}.$
Substituting~\eqref{eq:theostep2} into~\eqref{eq:theostep1} gives
$\E{\norm{r^{k+1}}_{2}^2\, | \, r^k} \leq  \rho \cdot \norm{r^k}_2^2$.
Using the tower property of expectations, we obtain the recurrence
\[\E{\norm{r^{k+1}}_{2}^2} \leq  \rho \cdot \E{\norm{r^k}_2^2}. \]
To prove~\eqref{eq:Enormerror} it remains to  unroll the recurrence.

\subsection{Convergence of the residual} \label{subsec:residual}

We now prove \eqref{eq:s98h09hsxxx}. Letting $V_k = \|x^k-x^*-t\|_B^2$, we have
\begin{eqnarray*}
\E{\|Ax^k - b\|_B} &=& \E{\|A(x^k-x^*-t) + At\|_B}\\
&\leq &\E{\|A(x^k-x^*-t) \|_B} + \|At\|_B\\
&\leq &  \|A\|_B \E{\sqrt{V_k}} + \|At\|_B\\
&\leq &\|A\|_B \sqrt{\E{V_k}} + \|At\|_B\\
&\overset{\eqref{eq:Enormerror}}{\leq}&\|A\|_B \sqrt{\rho^k V_0 } + \|At\|_B,
 \end{eqnarray*} 
 where in the step preceding the last one we have used Jensen's inequality.

\subsection{Proof of the lower bound}\label{subsec:lower_bound}
Now we prove~\eqref{eq:nubound}.
Using Lemma~\ref{lem:WGW} with $G=H$ and $W=A B^{-1/2}$ gives
\begin{align*}
\myRange{B^{-1/2}A^\top HAB^{-1/2}}=\myRange{B^{-1/2}A^\top},
\end{align*}
from which we deduce that \begin{eqnarray*}\Rank{A} &=&
 \dim\left(\myRange{A^\top}\right) \\
&=& \dim\left(\myRange{B^{-1/2}A^\top}\right)\\
&=&  \dim\left(\myRange{B^{-1/2}A^\top HAB^{-1/2}}\right)\\
&=& \Rank{B^{-1/2} A^\top HA B^{-1/2}}.\end{eqnarray*}

Hence, $\Rank{A}$ is equal to the number of nonzero eigenvalues of $B^{-1/2}A^\top HAB^{-1/2}$, from which we immediately obtain the bound
\begin{eqnarray*}
\Tr{B^{-1/2}A^\top HA B^{-1/2}} &\geq & \Rank{A }\, \lambda_{\min}^+(B^{-1/2}A^\top  HAB^{-1/2}). 
\end{eqnarray*}
To conclude the proof, note that $\E{Z} = A^\top H A$ where $Z$ is defined in~\eqref{ch:three:Z}.
In order to obtain \eqref{eq:nubound},  it only remains to combine the above inequality with 
\[
\E{\Rank{S^\top A}}  \overset{ \eqref{eq:B12ZB12trace}}{=} =  \E{\Tr{B^{-1/2}Z B^{-1/2}}} \\
= \Tr{B^{-1/2}A^\top HAB^{-1/2}}.
\]
%
%

\section{Proof of Theorem~\ref{theo:2}} \label{sec:proof2}

In this section we prove Theorem~\ref{theo:2}. We dedicate a subsection to each of the three complexity bounds.

\subsection{Dual suboptimality}

Since $x^0\in c+\myRange{B^{-1}A^\top}$, we have $t=0$ in Theorem~\ref{theo:Enormerror}, and hence \eqref{eq:Enormerror} says that \begin{equation}\label{eq:s98h98shs}\E{U_k} \leq \rho^k U_0.\end{equation}
It remains to apply Proposition~\ref{lem:correspondence}, which says that $U_k = D(y^*)-D(y^k)$.

\subsection{Primal suboptimality}

Letting $U_k = \tfrac{1}{2}\|x^k- x^*\|_B^2$, we can write
\begin{eqnarray}
P(x^k) - OPT &=& \tfrac{1}{2}\|x^k - c\|_B^2 - \tfrac{1}{2}\|x^* - c\|_B^2\notag\\
&=& \tfrac{1}{2}\|x^k - x^* + x^* - c\|_B^2  - \tfrac{1}{2}\|x^* - c\|_B^2\notag\\
&=& \tfrac{1}{2}\|x^k- x^*\|_B^2 +  (x^k-x^*)^\top B (x^*-c)\notag\\
&\leq & U_k + \|B^{1/2}(x^k-x^*)\|_2 \|B^{1/2}(x^*-c)\|_{2} \notag\\
&=& U_k +\|x^k-x^*\|_B  \|x^*-c\|_B \notag \\
&= & U_k + 2 \sqrt{U_k} \sqrt{OPT}.\label{eq:iuhs89h98s6s}
\end{eqnarray}
By taking expectations on both sides of \eqref{eq:iuhs89h98s6s}, and using Jensen's inequality, we obtain
\[\E{P(x^k)-OPT} \leq \E{U_k} + 2\sqrt{OPT} \sqrt{\E{U_k}} \overset{\eqref{eq:s98h98shs}}{\leq} \rho^k U_0 + 2 \rho^{k/2} \sqrt{OPT \times U_0},\]
which establishes the bound on primal suboptimality \eqref{eq:PRIMALSUBOPT}. 

\subsection{Duality gap}

Having established rates for primal and dual suboptimality, the rate for the duality gap follows easily:
\begin{eqnarray*}
\E{P(x^k) - D(y^k)} &=& \E{P(x^k) - OPT + OPT - D(y^k)}\\
&=&  \E{P(x^k) - OPT} + \E{OPT - D(y^k)}\\
&\overset{\eqref{eq:DUALSUBOPT}  + \eqref{eq:PRIMALSUBOPT}}{=}& 2 \rho^k U_0 + 2 \rho^{k/2} \sqrt{OPT \times U_0}.
\end{eqnarray*}

\section{Numerical Experiments: Randomized Kaczmarz Method with Rank-Deficient System}
\label{sec:experiments}
To illustrate some of the novel aspects of our theory, we perform numerical experiments with the Randomized Kaczmarz method~\eqref{eq:SDA-primal009s09us0098} (or equivalently the randomized coordinate ascent method applied to the dual problem~\eqref{eq:Dualfunc}) and compare the empirical convergence to the convergence predicted by our theory. We test several randomly generated rank-deficient systems and compare the evolution of the empirical primal error $\norm{x^k-x^*}_2^2/\norm{x^0-x^*}_2^2$ to the convergence dictated by the rate $\rho = 1-\lambda_{\min}^+\left(A^\top A\right)/\norm{A}_F^2$ given in~\eqref{eq:98hs8h8ss} and the lower bound $1-1/\Rank{A}\leq\rho$. From Figure~\ref{ch:three:fig:rand} we can see that the RK method converges despite the fact that the linear systems are rank deficient. While previous results do not guarantee that RK converges for rank-deficient matrices, our theory does as long as the system matrix has no zero rows. Furthermore, we observe in Figure~\ref{ch:three:fig:rand} that the lower the rank of the system matrix, the faster the  convergence of the RK method, and moreover, the closer the empirical convergence is to the convergence dictated by the rate $\rho $ and lower bound on $\rho$. In particular, on the low rank system in Figure~\ref{ch:three:fig:randa}, the empirical convergence is very close to both the convergence dictated by $\rho$ and the lower bound.
While on the full rank system in Figure~\ref{ch:three:fig:randd}, the convergence dictated by $\rho$ and the lower bound on $\rho$ are no longer an accurate estimate of the empirical convergence.

\begin{figure}[!h]
    \centering
\begin{subfigure}[t]{0.45\textwidth}
        \centering
\includegraphics[width =  \textwidth, trim= 80 270 80 280, clip ]{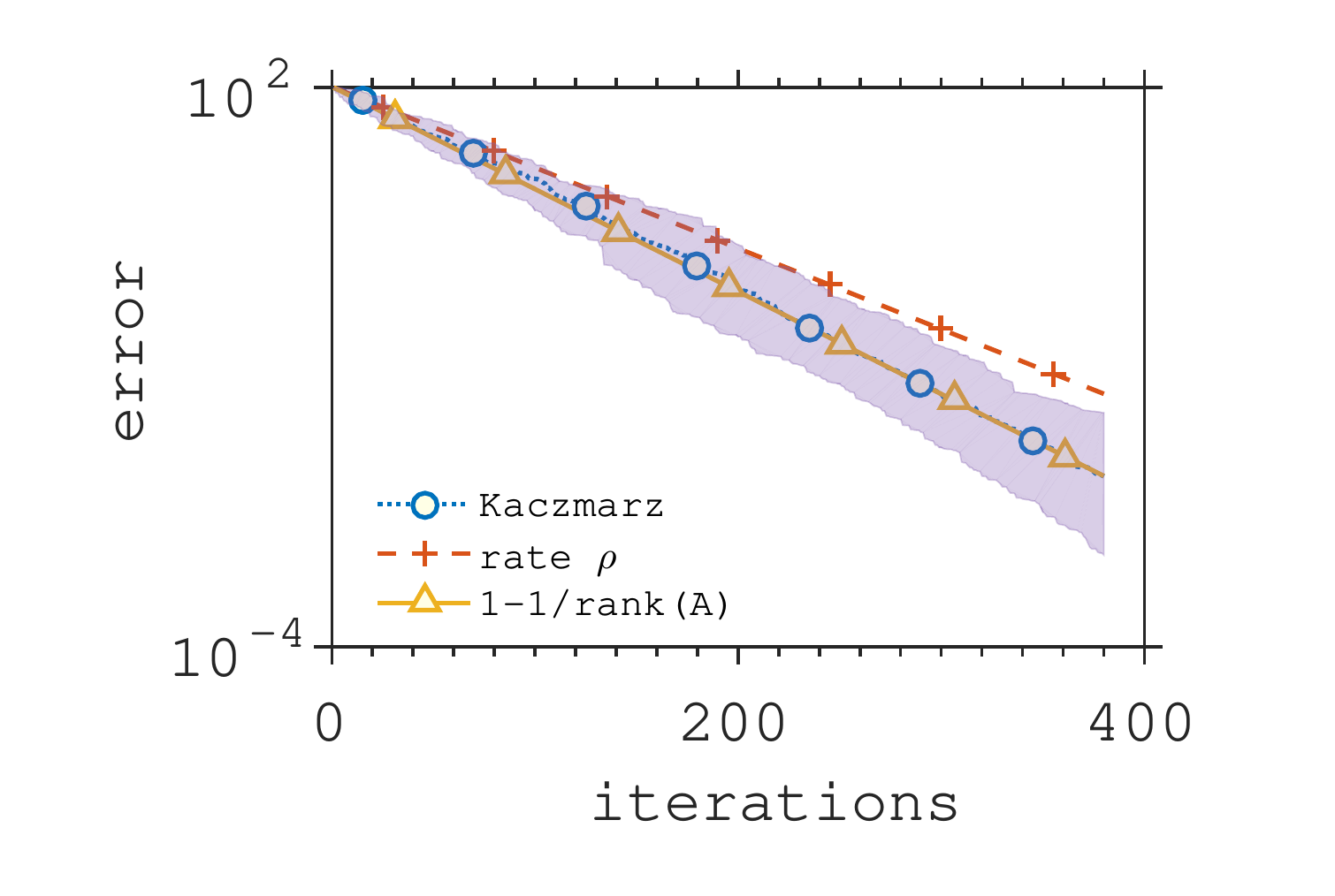}
        \caption{$\Rank{A}= 40$}\label{ch:three:fig:randa}
\end{subfigure}%
\begin{subfigure}[t]{0.45\textwidth}
        \centering
\includegraphics[width =  \textwidth, trim= 80 270 80 280, clip ]{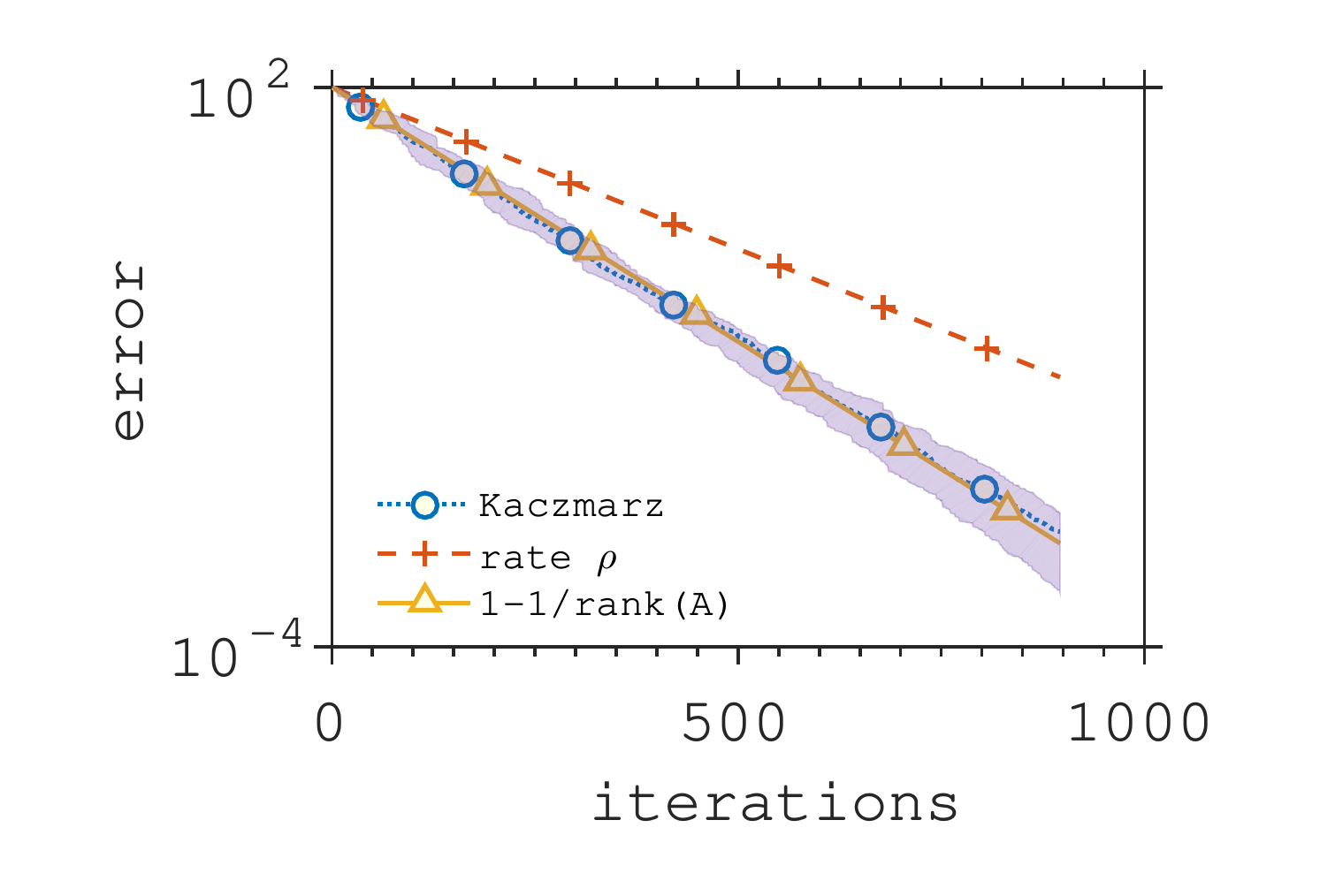}
        \caption{$\Rank{A} = 80$}\label{ch:three:fig:randb}
\end{subfigure}\\%
\begin{subfigure}[t]{0.45\textwidth}
        \centering
\includegraphics[width =  \textwidth, trim= 80 270 80 280, clip ]{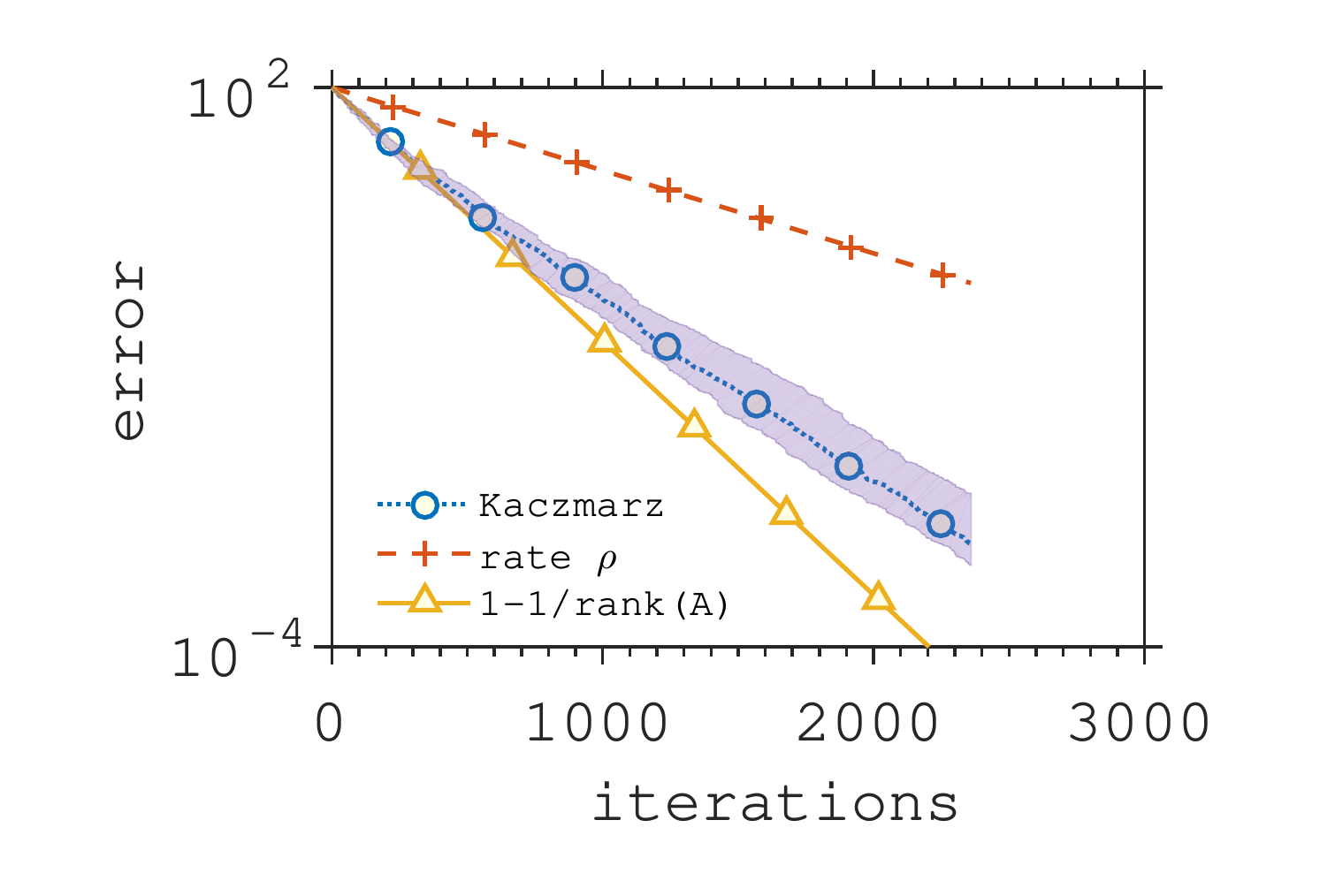}
        \caption{$\Rank{A} = 160$}\label{ch:three:fig:randc}
\end{subfigure}%
\begin{subfigure}[t]{0.45\textwidth}
        \centering
\includegraphics[width =  \textwidth, trim= 80 270 80 280, clip ]{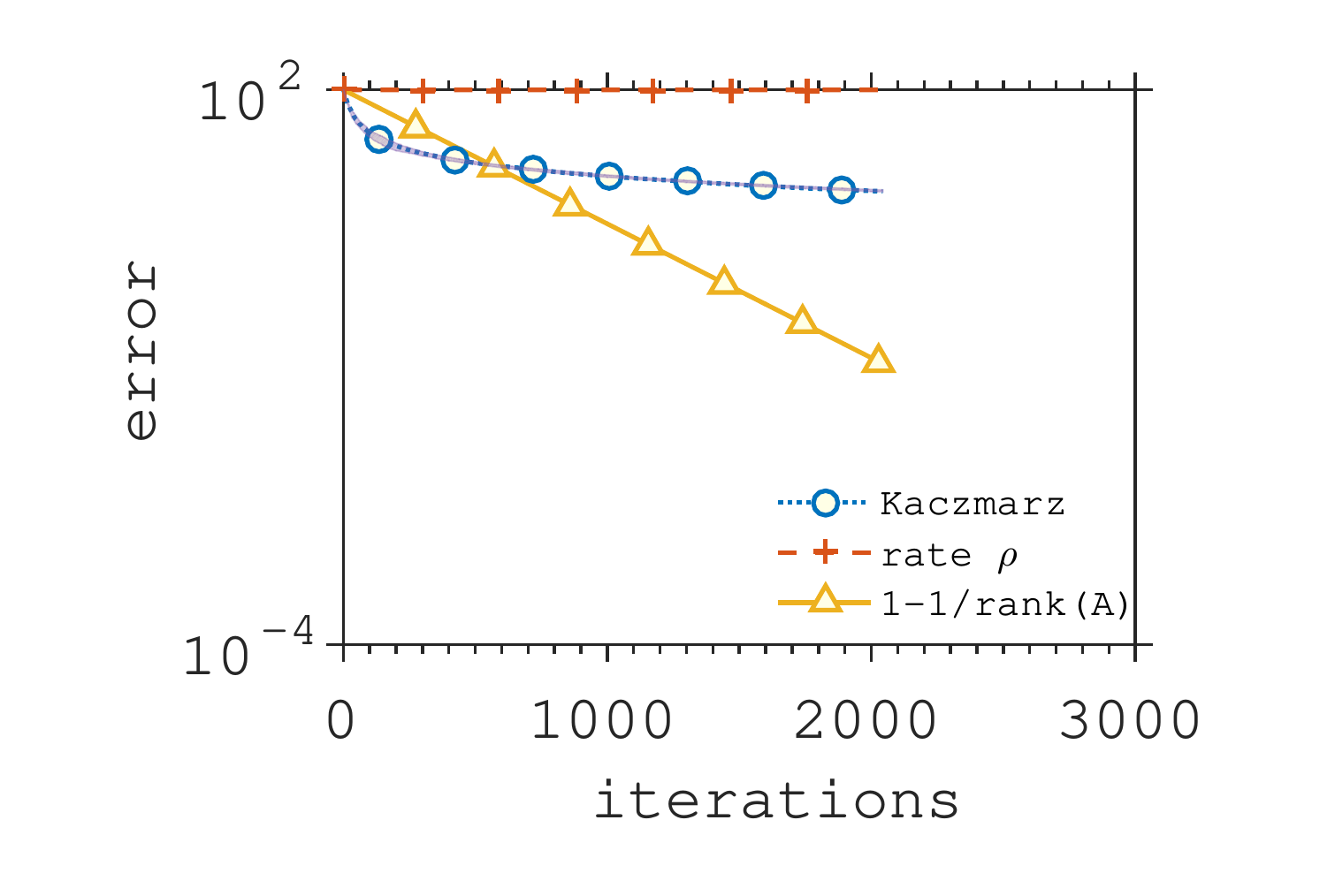}
        \caption{$\Rank{A} = 300$}\label{ch:three:fig:randd}
\end{subfigure}%
    \caption{Synthetic MATLAB generated problems.  Rank deficient matrix $A~=~\sum_{i=1}^{\Rank{A}} \sigma_i u_i v_i^\top $ where $\sum_{i=1}^{300} \sigma_i u_i v_i^\top =$\texttt{rand}$(300,300)$ is an svd decomposition of a $300\times 300$ uniform random matrix. We repeat each experiment ten times. The blue shaded region is the $90\%$ percentile of relative error achieved in each iteration.
    }\label{ch:three:fig:rand}
\end{figure}

\section{Summary} \label{sec:conclusion}

We have developed a versatile and powerful  algorithmic framework for solving linear systems: {\em stochastic dual ascent (SDA)}. The SDA method finds the projection of a given point, in a fixed but arbitrary Euclidean norm, onto the solution space of the system.  Our method is  dual in nature, but can also be described in terms of primal iterates via a simple affine transformation of the dual variables. Viewed as a dual method, SDA belongs to a  novel class of randomized optimization algorithms:  it updates the current iterate by adding the product of a random matrix, drawn independently from a fixed distribution, and a vector. The update is chosen as the best point lying in the random subspace spanned by the columns of this random matrix. 

While SDA is the first method of this type, particular choices for the distribution of the random matrix lead to several known algorithms: randomized coordinate descent \cite{Leventhal2010} and randomized Kaczmarz \cite{Strohmer2009} correspond to a discrete distribution over the columns of the identity matrix, randomized Newton method \cite{Qu2015} corresponds to a discrete distribution over column submatrices of the identity matrix, and Gaussian descent~\cite{Stich2014} corresponds to the case when the random matrix is a  Gaussian vector. 

We equip the method with several complexity results with the same rate of exponential decay in expectation (aka linear convergence) and establish a tight lower bound on the rate. In particular, we prove convergence of primal iterates, dual function values, primal function values, duality gap and of the residual. The method converges under very weak conditions beyond consistency of the linear system. In particular, no rank assumptions on the system matrix are needed. For instance,   randomized Kaczmarz method converges linearly as long as the system matrix contains no zero rows.

Further, we show that SDA can be applied to the distributed (average) consensus problem. We recover a standard randomized gossip algorithm as a special case, and show that its complexity is proportional to the number of edges in the graph and inversely proportional to the smallest nonzero eigenvalue of the graph Laplacian.  Moreover, we illustrate how our framework can be used to obtain new randomized algorithms for the distributed consensus problem. 

Our framework extends to several other problems in optimization and numerical linear algebra. For instance, one can apply it to develop new stochastic algorithms for computing the inverse of a matrix and obtain state-of-the art performance for inverting matrices of huge sizes, which is the subject of the next chapter.

%

\chapter{Randomized Matrix Inversion}
\chaptermark{Randomized Matrix Inversion}
\label{ch:inverse}

%
\section{Introduction}

Here we extend our randomized methods for solving linear systems to methods for inverting matrices. Though there exists applications where one needs the explicit inverse of a matrix\footnote{for instance when one needs to store a Schur complement or a projection matrix}, inverting a matrix is seldom required. In contrast, calculating the approximate inverse of a matrix finds many applications. Most notably, calculating an approximate inverse finds applications in preconditioning~\cite{Saad2003} and, if the approximate inverse is guaranteed to be positive definite, then an iterative scheme for inverting a matrix can be used to design variable metric optimization methods. The methods we propose here converge globally and linearly to the inverse matrix and thus are well suited for quickly calculating approximate inverse matrices. 

When only an approximate inverse is required, then iterative methods 
are the methods of choice, for they can terminate the iterative process when the desired accuracy is reached. This can be far more efficient than using an all-or-nothing direct method. 
Furthermore, iterative methods can make use of an initial estimate of the inverse when available.

The driving motivation of this work is the need to develop algorithms capable of computing the approximate inverse of very large matrices, where standard techniques take an excessive amount of time or simply fail. In particular, using the sketch-and-project technique  we develop a family of randomized/stochastic methods for inverting a matrix with  specialized variants maintaining symmetry or positive definiteness of the iterates. All methods in the family  converge globally (i.e., from any starting point) and linearly (i.e., the error decays exponentially). 

  As special cases, we obtain stochastic block variants of several quasi-Newton updates, including   bad Broyden (BB), good Broyden (GB),  Powell-symmetric-Broyden (PSB), Davidon-Fletcher-Powell (DFP) and Broyden-Fletcher-Goldfarb-Shanno (BFGS). To the best of our knowledge, these are the first stochastic versions of quasi-Newton updates. Moreover, this is the first time that randomized quasi-Newton methods are shown to be iterative methods for inverting a matrix. We also offer a new interpretation of the quasi-Newton methods through a Lagrangian dual viewpoint. This new viewpoint uncovers a fundamental link between quasi-Newton updates and approximate inverse preconditioning.  
  
  We develop an adaptive variant  of randomized block BFGS, in which we modify the distribution underlying the stochasticity of the method throughout the iterative process to achieve faster convergence. Through extensive numerical experiments with matrices arising  from several applications, we demonstrate that AdaRBFGS is highly competitive when compared with the well established Newton-Schulz and minimal residual methods.  In particular, on large-scale problems our method outperforms the standard methods by orders of magnitude.
    
    The development of efficient methods for estimating the inverse of very large matrices is a  much needed tool for preconditioning and variable metric methods in the advent of the big data era.

 \subsection{Chapter outline} 
In Section~\ref{sec:contributions} we summarize the main contributions of this chapter. In Section~\ref{sec:QN} we describe the quasi-Newton methods, which is the main inspiration of our methods. Subsequently, Section~\ref{sec:SIMI-nonsym}  describes two  algorithms, each corresponding to a variant of the inverse equation,  for inverting general square matrices. We also provide insightful dual viewpoints for both methods. In Section~\ref{sec:SIMI-sym} we describe a method specialized to inverting symmetric matrices. Convergence in expectation is examined in Section~\ref{sec:conv}, were we consider two types of convergence: the convergence of i) the expected norm of the error, and the convergence of ii) the norm of the expected error. In Section~\ref{sec:discrete} we specialize our methods to discrete distributions, and comment on how one may construct a probability distribution leading to better complexity rates (i.e., importance sampling), and how to construct an adaptive probability distribution.
 In Section~\ref{sec:discretemethods} we detail several instantiations of our family of methods, and their resulting convergence rates. We show how via the choice of the parameters of the method,  we obtain {\em stochastic block variants} of several well known quasi Newton methods.
We also describe the simultaneous randomized Kaczmarz method here.
Section~\ref{sec:AdaRBFGS} is dedicated to the development of an adaptive variant of our randomized BFGS method, AdaRBFS, for inverting positive definite matrices. 
  Finally, in Section~\ref{sec:numerical} we show through numerical tests that AdaRBFGS significantly outperforms state-of-the-art  iterative matrix inversion methods on large-scale matrices.

\subsection{Notation} \label{subsec:notation}

Let $I $ denote the $n\times n$ identity matrix. Let
\[\dotprod{X,Y}_{F(B)} \eqdef \Tr{X^\top BYB},\]
denote the weighted Frobenius inner product,
where $X,Y \in \R^{n\times n}$ and $B \in \R^{n\times n}$ is a symmetric positive definite ``weight'' matrix. As the trace is invariant under cyclic permutations, a fact we use repeatedly  throughout this chapter, we have 
\begin{equation}\label{eq:98y988ff}
\norm{X}_{F(B)}^2 = \Tr{X^\top BX B} = \Tr{B^{1/2}X^\top B X B^{1/2}} =
 \norm{B^{1/2}XB^{1/2}}_{F}^2,\end{equation}
where we have used the convention $F=F(I)$, since  $\|\cdot\|_{F(I)}$ is the standard Frobenius norm. Let $\norm{\cdot}_{2}$ denote the induced operator norm for square matrices defined via \[\|Y\|_{2} \eqdef \max_{\norm{v}_2=1} \norm{Yv}_2.\] Finally, we define the weighted induced norm via 
\[\norm{Y}_{B}^* \eqdef  \norm{B^{1/2}YB^{1/2}}_2. \]

\subsection{Previous work}\label{sec:previous}

A widely used iterative method for inverting matrices is the Newton-Schulz method~\cite{Schulz1933} introduced in 1933, and its variants which is still subject of ongoing research~\cite{Li2010}. The drawback of the Newton-Schulz methods is that they do not converge for any initial estimate. Instead,  an initial estimate $X_0$ such that 
$\norm{I-X_0A}_2 <1$ is required to guarantee convergence. Though note that such a $X_0$ always exists\footnote{Take for example $X_0 = \alpha A^T$ with $0 < \alpha < 2/\norm{A}_2$.}.
The Newton-Schulz method enjoys local quadratic convergence. As has been 
observed before~\cite{Pan1991}, and as we observe in our numerical experiments, the Newton-Schulz method can experience slow initial convergence before the asymptotic second order convergence rate sets in.
This is contrast with the methods we present here that enjoy global linear convergence and a fast initial convergence.


 Bingham~\cite{Bingham1941} describes a method that uses the characteristic polynomial to recursively calculate the inverse, though it requires the calculating the coefficients of the polynomial when initiated, which is costly, and the method has fallen into disuse.  Goldfarb~\cite{Goldfarb1972} uses Broyden's method~\cite{Broyden1965} for iteratively inverting matrices. Our methods include a stochastic variant of Broyden's method.
 
 The approximate inverse preconditioning (\emph{AIP}) methods~\cite{Chow1998,Saad2003,Gould1998,Benzi1999} calculate an approximate inverse by minimizing in $X \in \R^{n\times n}$ the residual $\norm{XA-I}_F$ (Frobenius norm). They accomplish this by
applying a number of iterations of the steepest descent or minimal residual method.  
   A considerable drawback of the AIP methods, is that the approximate inverses  are not guaranteed to be positive definite nor symmetric, even when $A$ is both. 
A solution to the lack of symmetry is to ``symmetrize'' the estimate between iterations, but then it is difficult to guarantee the quality of the new symmetric estimate.
Another solution is to calculate directly a factored form $LL^\top =X$ and minimize in $L$ the residual $\norm{L^\top AL-I}_F$. But now this residual is a non-convex function, and is thus difficult to minimize. A variant of our method naturally maintains symmetry of the iterates.

\section{Contributions and Overview} \label{sec:contributions}

In this section we  describe the main contributions of this chapter.
\subsection{New algorithms} \label{subsec:primal}
We develop a novel and surprisingly simple  family of stochastic algorithms for inverting matrices. The problem of finding the inverse of an $n\times n$ invertible matrix $A$ can be characterized as finding the solution to either one of the two {\em inverse equations}\footnote{One may use other equations uniquely defining the inverse, such as $AXA = A$, but we do not explore these in this thesis.} $AX=I$ or $XA=I.$
Our methods make use of randomized sketching~\cite{Pilanci2014,Gower2015,Pilanci2015,Pilanci2015a} to reduce the dimension of the inverse equations in an iterative fashion. To the best of our knowledge, these are the first stochastic algorithms for inverting a matrix with global complexity rates.

In particular, our nonsymmetric  method (Algorithm~\ref{alg:asym-row}) is based on the inverse equation $AX=I$,  and performs the  {\em sketch-and-project} iteration
\begin{equation}\label{eq:98hs8s}X_{k+1} = \arg \min_{X\in \R^{n\times n} } \tfrac{1}{2}\norm{X - X_{k}}_{F(B)}^2 \quad \mbox{subject to } \quad  S^\top AX =S^\top,\end{equation}
where $S\in \R^{n\times q}$ is a random matrix drawn in an i.i.d.\ fashion from a fixed distribution $\cal{D}$, and  $B \in \R^{n\times n}$ is symmetric positive definite.
 The distribution $\cal D$ and matrix $B$ are the parameters of the method. Note that if we choose $q\ll n$, the constraint in the projection problem  \eqref{eq:98hs8s} will be of a much smaller dimension than the original inverse equation, and hence the iteration \eqref{eq:98hs8s} will become cheap.

In an analogous way, we   design a method based on the inverse equation $XA=I$ (Algorithm~\ref{alg:asym-col}).
 By adding the symmetry constraint $X=X^\top $ to~\eqref{eq:98hs8s}, we obtain Algorithm~\ref{alg:sym}---a specialized method for inverting symmetric matrices capable of maintaining symmetric iterates.

\subsection{Dual formulation}

  Besides the {\em primal formulation} described in Section~\ref{subsec:primal}---\emph{sketch-and-project}---we also provide  {\em dual formulations} of all three  methods (Algorithms~\ref{alg:asym-row}, \ref{alg:asym-col} and \ref{alg:sym}).  For instance, the dual formulation of \eqref{eq:98hs8s} is \begin{equation}\label{eq:98hs8sssds}X_{k+1} = \arg_X \min_{X\in \R^{n\times n}, Y\in \R^{n\times q}} \tfrac{1}{2}\norm{X_{k}-A^{-1}}_{F(B)}^2 \quad \mbox{subject to } \quad  X = X_k + B^{-1}A^\top SY^\top .\end{equation}
 We call the dual formulation  \emph{constrain-and-approximate} as one seeks to perform the best approximation of the inverse (with respect to the weighted Frobenius distance) while constraining the search to  a random affine space of matrices passing through $X_k$. While the projection \eqref{eq:98hs8sssds} cannot be performed directly since $A^{-1}$ is not known, it can be performed indirectly via the equivalent primal formulation \eqref{eq:98hs8s}.

\subsection{Quasi-Newton updates and approximate inverse preconditioning}

As we will discuss in Section~\ref{sec:QN}, through the lens of the sketch-and-project formulation, Algorithm~\ref{alg:sym} can be seen as {\em randomized block extension of the quasi-Newton updates}~\cite{Broyden1965,Fletcher1960,Goldfarb1970,Shanno1971}. 
We distinguish here between quasi-Newton methods, which are algorithms  used in optimization, and quasi-Newton updates, which are the {\em matrix-update} rules used in the quasi-Newton methods.   Standard quasi-Newton updates work with $q=1$ (``block'' refers to the choice $q>1$) and $S$ chosen in a deterministic way, depending on the sequence of iterates of the underlying optimization problem.  To the best of our knowledge, this is the first time stochastic versions of quasi-Newton updates were designed and analyzed. On the other hand,  through the lens of the constrain-and-approximate formulation, our methods can be seen as {\em new variants of the approximate inverse preconditioning (\emph{AIP}) methods}~\cite{Chow1998,Saad2003,Gould1998,Benzi1999}. Moreover, the equivalence between these two formulations reveals deep connections  between what were before seen as distinct fields: the quasi-Newton and AIP literature.  Our work also provides several new insights for {\em deterministic} quasi-Newton updates. For instance, the {\em bad Broyden update}~\cite{Broyden1965,Griewank2012} is a particular best rank-1 update that minimizes the distance to the inverse of $A$ under the Frobenius norm. 
The {\em BFGS update}~\cite{Broyden1965,Fletcher1960,Goldfarb1970,Shanno1971} can be seen as a projection of $A^{-1}$ onto a space of rank-2 symmetric matrices. To the best of our knowledge, this has not been observed before.

\subsection{Complexity} Our framework leads to global linear convergence (i.e., exponential decay) under very weak assumptions on $\cal D$.  In particular, we provide an explicit convergence rate $\rho$ for the exponential decay of the norm of the expected error of the iterates (line~2 of Table~\ref{tab:complexity}) and the expected norm of the error (line~3 of Table~\ref{tab:complexity}), where $\rho$ is the same rate provided in Chapter~\ref{ch:linear_systems}, namely 
\begin{equation}\label{ch:four:eq:rho}
\rho = 1- \lambda_{\min}(B^{-1/2}\E{Z}B^{-1/2}),
\end{equation}
where 
\[ Z \eqdef A^\top S(S^\top  A B^{-1}A^\top  S)^{-1}S A^\top .\] 

 This sets our method apart from current methods for inverting matrices that lack global guarantees, such as Newton-Schulz, or the self-conditioning variants of the minimal residual method.
\begin{table}
\centering
\begin{tabular}{|c|c|}
\hline
& \\
 $\E {X_{k+1} -A^{-1}} = \left(I  - B^{-1}\E{Z}\right) \E{X_{k+1} - A^{-1}}  $ & Theorem 4.1\\
 & \\
$\norm{\E {X_{k+1} -A^{-1}}}_{B}^* \leq \rho \; \cdot \; \norm{\E{X_{k+1} - A^{-1}}}_{B}^*
 $ & Theorem~\ref{theo:normEconv}\\
 & \\
$ \E {\norm{X_{k+1} -A^{-1} }_{F(B)}^2 } \leq \rho \;\cdot\; \E{ \norm{X_{k+1} - A^{-1}}_{F(B)}^2}$  & Theorem~\ref{theo:Enormconv}\\
& \\
 \hline
 \end{tabular}
 \caption{Our main complexity results.}
 \label{tab:complexity}
 \end{table}
 By optimizing an upper bound on~\eqref{ch:four:eq:rho}, we also obtain a new practical importance sampling. This should be contrasted with the optimized rate in Section~\ref{ch:two:sec:optprob} which results in a SDP, which is rarely practical to solve.

\subsection{Adaptive randomized BFGS}
  
We develop an additional highly efficient method---adaptive randomized BFGS (AdaRBFGS)---for calculating an approximate inverse of {\em positive definite matrices}. 
In extensive numeric tests in Section~\ref{sec:numerical} we show that the AdaRBFGS method greatly outperforms the Newton-Schulz and approximate inverse preconditioning methods at obtaining an approximate inverse (with a relative precision of $99\%$). Furthermore, the AdaRBFGS method preserves positive definiteness, a quality not present in previous methods.
%
  Therefore, AdaRBFGS can be used to precondition positive definite systems and to design new variable-metric optimization methods.  Since the inspiration behind this method comes from the desire to design an {\em optimal adaptive} distribution for $S$ by examining the complexity rate $\rho$, this work  also highlights the importance of developing algorithms with explicit convergence rates.

%
%

\subsection{Extensions}  This work opens up many possible avenues for extensions. For instance, new efficient methods could be achieved by experimenting and analyzing through our framework with different sophisticated sketching matrices $S$, such as the Walsh-Hadamard matrix~\cite{Lu2013,Pilanci2014}. Furthermore, our method produces low rank estimates of the inverse and can be adapted to calculate low rank estimates of any matrix. Our methods can  be applied to singular matrices, in which case they converge to a particular pseudo-inverse.  
  
   Our results can be used to push forward work into stochastic variable metric methods. Such as the work by Leventhal and Lewis~\cite{Leventhal2011}, where they present a randomized iterative method for estimating Hessian matrices that converge in expectation with known convergence rates for any initial estimate. Stich et al.\ \cite{Stich2015} use Leventhal and Lewis' method to design a stochastic variable metric method for black-box minimization, with explicit convergence rates, and promising numeric results. We leave these and other extensions to future work.

\section{Randomization of Quasi-Newton Updates}\label{sec:QN}

Our methods are inspired by, and in some cases can be considered to be, randomized block variants of the quasi-Newton updates. In this section we explain how our algorithms arise naturally from the quasi-Newton setting. Readers familiar with  quasi-Newton methods may  jump ahead to Section~\ref{subsec:iuh9898}.

\subsection{Quasi-Newton methods}

A problem of fundamental interest in  optimization is the unconstrained minimization problem
\begin{equation}\label{eq:opt}\min_{x\in \R^n} f(x),\end{equation}
where $f:\R^n\to \R$ is a sufficiently smooth  function. Quasi-Newton (QN) methods, first proposed by Davidon in 1959~\cite{Davidon1959}, are an extremely powerful and popular class of algorithms for solving this problem, especially in the regime of moderately large $n$. In each iteration of a QN method, one approximates the function locally around the current iterate $x_k$ by a quadratic of the form
\begin{equation}\label{eq:98h98hff}f(x_k+s)\approx f(x_k) + (\nabla f(x_k))^\top  s + \frac{1}{2}s^\top  B_k s,\end{equation}
where $B_k$ is a suitably chosen approximation of the Hessian: $B_k\approx \nabla^2 f(x_k)$. After this, a direction $s_k$ is computed by minimizing the quadratic approximation in $s$, obtaining
\begin{equation}\label{eq:QNdirection}s_k = - B_k^{-1}\nabla f(x_k),\end{equation}
if the matrix $B_k$ is invertible. The   next iterate is then set to
\[x_{k+1} = x_k+h_k, \quad h_k =\alpha_k s_k,\]
for a suitable choice of stepsize $\alpha_k$, often chosen by a line-search procedure (i.e., by approximately minimizing $f(x_k+ \alpha s_k)$ in $\alpha$). 

Gradient descent arises as a special case of this process by choosing $B_k$ to be constant throughout the iterations. A popular choice is $B_k=L I$, where $I$ is the identity matrix and $L\in \R_+$ is the Lipschitz constant of the gradient of $f$. In such a case, the quadratic approximation \eqref{eq:98h98hff} is a global upper bound on $f(x_k+s)$, which means that $f(x_k+s_k)$ is guaranteed to be at least as good (i.e., smaller or equal) as $f(x_k)$, leading to guaranteed descent. Newton's method also arises as a special case: by choosing $B_k = \nabla^2 f(x_k)$. These two algorithms are extreme cases on the opposite end of  a spectrum. Gradient descent benefits from a trivial  update rule for $B_k$ and from cheap iterations due to the fact that no linear systems need to be solved. However, curvature information is largely ignored, which slows down the practical convergence of the method. Newton's method utilizes the full curvature information contained in the Hessian, but requires the computation of the Hessian in each step, which is expensive for large $n$.  QN methods aim to find a sweet spot on the continuum between these two extremes. In particular, the QN methods choose  $B_{k+1}$ to be a matrix for which  the {\em secant equation} is satisfied:
\begin{equation}\label{eq:secant}B_{k+1}(x_{k+1}-x_k) = \nabla f(x_{k+1})-\nabla f(x_k).\end{equation}

The basic reasoning behind this requirement is the following: if $f$ is a convex quadratic  then the Hessian matrix satisfies the secant equation for all pairs of vectors $x_{k+1}$ and $x_k$. If $f$ is not a quadratic, the reasoning is as follows. Using the fundamental theorem of calculus, we have that 
\[ \left(\int_{0}^1 \nabla^2 f(x_k + t h_k) \; dt\right) (x_{k+1}-x_k)= \nabla f(x_{k+1}) -\nabla f(x_k). \]
By selecting $B_{k+1}$ that satisfies the secant equation, we are enforcing that $B_{k+1}$ mimics the action of the integrated Hessian along the line segment joining $x_k$ and $x_{k+1}$. Unless $n=1$, the secant equation \eqref{eq:secant} does not have a unique solution in $B_{k+1}$. All QN methods differ only in which particular solution is used. The formulas transforming $B_k$ to $B_{k+1}$ are called {\em QN updates}.

Since these matrices are used to compute the direction $s_k$ via \eqref{eq:QNdirection}, it is often more reasonable to instead maintain a sequence of inverses $X_k = B_k^{-1}$. By multiplying both sides of \eqref{eq:secant} by $X_{k+1}$, we arrive at the {\em secant equation for the inverse:} \begin{equation}\label{eq:secant2}X_{k+1}(\nabla f(x_{k+1})-\nabla f(x_k)) = x_{k+1}-x_k.\end{equation} 

 The most popular classes of QN updates choose $X_{k+1}$ as the closest matrix to $X_k$, in a suitable norm (usually a weighted Frobenius norm with various weight matrices), subject to the secant equation, often with an  explicit symmetry constraint:
\begin{equation}\label{eq:QN:project_form}X_{k+1} = \arg \min_{X\in \R^{n\times n}} \left\{ \|X-X_k\| \;:\;  X y_k = h_k, \; X = X^\top \right\},\end{equation}
where  $y_k = \nabla f(x_{k+1}) - \nabla f(x_k)$,  

\subsection{Quasi-Newton updates}

Consider now problem \eqref{eq:opt} with the quadratic objective
\begin{equation} \label{eq:QN_quadratic}f(x) = \frac{1}{2}x^\top  A x - b^\top  x + c,\end{equation}
where $A$ is an $n\times n$ symmetric positive definite matrix, $b\in \R^n$ and $c\in \R$. Granted, this is not a typical problem for which QN methods would be used by a practitioner. Indeed, the Hessian of $f$ does not change, and hence one {\em does not have to} track it. The problem  can simply be solved by setting the gradient to zero, which leads to the system $Ax = b$,  the  solution being $x_*=A^{-1}b$. As solving a linear system is much simpler than computing the inverse $A^{-1}$, approximately tracking the (inverse) Hessian of $f$ along the path of the iterates $\{x_k\}$---the basic strategy of all  QN methods---seems like too much effort for what is ultimately a much simpler problem.

However, and this is one of the main insights of this work, instead of viewing QN methods as optimization algorithms, we can alternatively interpret them as iterative algorithms producing a sequence of matrices, $\{B_k\}$ or $\{X_k\}$, hopefully converging to some matrix of interest. In particular, one would hope that if  a QN method is applied to the quadratic problem \eqref{eq:QN_quadratic}, with any symmetric positive definite initial guess $X_0$, then the sequence $\{X_k\}$  converges to $A^{-1}$. 

For $f$ given by \eqref{eq:QN_quadratic}, the QN updates of the minimum distance variety given by \eqref{eq:QN:project_form} take the form
\begin{equation}\label{eq:QN:project_form2}X_{k+1} = \arg \min_{X\in \R^{n\times n}} \left\{ \|X-X_k\| \;:\;  X A h_k = h_k , \; X = X^\top \right\}.\end{equation}

\subsection{Randomized quasi-Newton updates}\label{subsec:iuh9898}

While the motivation for our work comes from optimization, having arrived at the update \eqref{eq:QN:project_form2}, we can dispense of some of the implicit assumptions and propose and analyze a wider class of methods. In particular, in this chapter we analyze a large class of {\em randomized algorithms} of the type \eqref{eq:QN:project_form2}, where the vector $h_k$ is replaced by a random matrix $S$ and $A$ is \emph{any} invertible, and not necessarily symmetric or positive definite matrix. This constitutes a randomized block extension of the QN updates.

\section{Inverting Nonsymmetric Matrices} \label{sec:SIMI-nonsym}

In this chapter we are concerned with the development and complexity analysis of a family of stochastic algorithms for computing the inverse of a nonsingular matrix $A\in \R^{n\times n}$. The starting point in the development of our methods is the simple observation that the inverse $A^{-1}$ is the (unique) solution of a linear matrix equation, which we shall refer to as {\em inverse equation}: 
\begin{equation} \label{eq:inverse_equations} AX = I.\end{equation}
Alternatively, one can use the inverse equation $XA = I$ instead. Since \eqref{eq:inverse_equations} is difficult to solve directly, our approach is to iteratively solve a small  randomly relaxed version of \eqref{eq:inverse_equations}. That is, we choose a random matrix $S\in \R^{n \times q}$, with $q\ll n$, and instead solve  the following {\em sketched inverse equation}: \begin{equation}\label{eq:inverse_eq_sketched}S^\top  AX = S^\top .\end{equation}
If we base the method on the second inverse equation, the  sketched inverse equation  $XA S = S$ should be used instead. Note that $A^{-1}$ satisfies \eqref{eq:inverse_eq_sketched}. If $q\ll n$, the sketched inverse equation is of a much smaller dimension than the original inverse equation, and hence easier to solve. However, the equation will no longer have a unique solution and  in order to design an algorithm, we need a way of picking a particular solution. Our algorithm defines $X_{k+1}$ to be the solution that is closest to the current iterate $X_k$ in a weighted Frobenius norm. This is repeated in an iterative fashion, each time drawing $S$ independently from a fixed distribution $\cal D$. The distribution $\cal D$ and the matrix $B$ can be seen as parameters of our method. The flexibility of being able to adjust $\cal D$ and $B$ is important: by varying these parameters we obtain various specific instantiations of the generic method, with varying properties and convergence rates. This gives the practitioner the flexibility to adjust the method to the structure of $A$, to the computing environment and so on.


\subsection{Projection viewpoint: sketch-and-project}
 
The next iterate $X_{k+1}$ is the nearest point to $X_k$ that satisfies a \emph{sketched} version of  the inverse equation:
\begin{align}
\boxed{ X_{k+1} = \arg \min_{X\in \R^{n\times n}} \frac{1}{2} \norm{X - X_{k}}_{F(B)}^2 \quad \mbox{subject to } \quad  S^\top AX =S^\top } \label{eq:NF}
\end{align}
  In the special case when $S=I$, the only such matrix is the inverse itself, and 
\eqref{eq:NF} is not helpful. However, if  $S$ is ``simple'', \eqref{eq:NF} will be easy to compute and the hope is that through a sequence of such steps, where the matrices $S$ are sampled in an i.i.d. fashion from some distribution, $X_k$ will converge to $A^{-1}$. 

 
 Alternatively, we can sketch the equation $XA=I$ and project onto $XAS=S$:
\begin{equation}\label{eq:NFcols}
\boxed{X_{k+1} = \arg \min_{X\in \R^{n\times n}} \frac{1}{2} \norm{X - X_{k}}_{F(B)}^2 \quad \mbox{subject to } \quad  XAS =S}\end{equation}

While the method~\eqref{eq:NF} sketches the rows of $A$, the method~\eqref{eq:NF} sketches the columns of $A.$ Thus we refer to~\eqref{eq:NF} as the row variant and to~\eqref{eq:NFcols} as the column variant. The two variants~\eqref{eq:NF} and~\eqref{eq:NFcols} both converge to the inverse of $A$, as will be established in Section~\ref{sec:conv}.

 If $A$ is singular, then the iterates of~\eqref{eq:NFcols} converge to the left inverse, while the iterates of~\eqref{eq:NF} converge to the right inverse, an observation  we
 leave to future work.
%

\subsection{Optimization viewpoint: constrain-and-approximate} 

The row sketch-and-project method can be cast in an apparently different yet equivalent viewpoint:
\begin{align}
\boxed{X_{k+1} = \arg_X  \min_{X\in \R^{n\times n}, Y\in \R^{n\times q}} \frac{1}{2}\norm{X - A^{-1}}_{F(B)}^2 \quad \mbox{subject to } \quad  X=X_{k} +B^{-1}A^\top SY^\top }\label{eq:RF}
\end{align}
In this viewpoint, at each iteration~\eqref{eq:RF}, we select a random affine space
that passes through $X_k.$ After that, we select the point in this space that is as close as possible to the inverse. This random search space is special in that, independently of the input pair $(B,S)$ we can efficiently compute
the projection of $A^{-1}$ onto this space, without knowing $A^{-1}$ explicitly. 

The column variant~\eqref{eq:NFcols} also has an equivalent constrain-and-approximate formulation:
\begin{align}
\boxed{X_{k+1} = \arg_X  \min_{X\in \R^{n\times n}, Y\in \R^{n\times q}} \frac{1}{2}\norm{X - A^{-1}}_{F(B)}^2 \quad \mbox{subject to } \quad  X=X_{k} +YS^\top A^\top B^{-1}}\label{eq:RFcols}
\end{align}

 These two variants~\eqref{eq:RF} and~\eqref{eq:RFcols} can be viewed as new variants of the approximate inverse preconditioner (AIP) methods~\cite{Benzi1999,Gould1998,Kolotilina1993,Huckle2007}.
The AIP methods are a class of methods for computing approximate inverses of $A$ by  minimizing  $\norm{XA-I}_F$ via iterative optimization algorithms. In particular, the AIP methods use variants of the steepest descent or a minimal residual method to minimize $\norm{XA-I}_F$. The idea behind the AIP methods is to minimize the distance of $X$ from $A^{-1}$ in some sense. Our variants do just that, but under a weighted Frobenius norm. 
 Furthermore, our methods project onto a randomly generated affine space instead of employing steepest descent of a minimal residual method. 

\subsection{Equivalence}

We now prove that~\eqref{eq:NF} and~\eqref{eq:NFcols} are equivalent to~\eqref{eq:RF} and~\eqref{eq:RFcols}, respectively, and give their explicit solution.
 \begin{theorem}\label{theo:NFRF}
The viewpoints~\eqref{eq:NF} and~\eqref{eq:RF} are equivalent to~\eqref{eq:NFcols} and~\eqref{eq:RFcols}, respectively. 
Furthermore, if $S$ has full column rank, then the explicit solution to~\eqref{eq:NF}  is
   \begin{equation}
  \boxed{ X_{k+1} = X_{k} +B^{-1}A^\top S(S^\top  A B^{-1}A^\top  S)^{-1}S^\top (I-AX_{k})} \label{eq:Xupdate}
   \end{equation}
   and the explicit solution to~\eqref{eq:NFcols} is
   \begin{equation}
   \boxed{X_{k+1} = X_{k} +(I-X_{k}A^\top )S(S^\top  A^\top  B^{-1}A S)^{-1}S^\top  A^\top B^{-1}} \label{eq:Xupdatecols}
   \end{equation}
\end{theorem}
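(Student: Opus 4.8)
The plan is to treat the row variant in detail; the column variant follows by a symmetric argument (or by applying the row result to $A^\top$ with the substitution $X \mapsto X^\top$). First I would establish the equivalence of the sketch-and-project formulation \eqref{eq:NF} and the constrain-and-approximate formulation \eqref{eq:RF}. The key observation is that \eqref{eq:NF} is a projection problem in the $F(B)$-inner product: we are projecting $X_k$ onto the affine subspace $\mathcal{L} = \{X : S^\top A X = S^\top\}$. Since $A^{-1}$ itself lies in $\mathcal{L}$ (because $S^\top A A^{-1} = S^\top$), the set $\mathcal{L}$ equals $A^{-1} + \{W : S^\top A W = 0\}$, i.e. $\mathcal{L} = A^{-1} + \mathcal{N}$ where $\mathcal{N} = \{W : S^\top A W = 0\}$ is the nullspace constraint written column-wise. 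By the standard characterization of projections in an inner product space, the projection of $X_k$ onto $\mathcal{L}$ coincides with the point of $\mathcal{L}$ closest to \emph{any} fixed point of $\mathcal{L}$, in particular closest to $A^{-1}$, provided we restrict the search to directions orthogonal (in $F(B)$) to $\mathcal{N}$. I would show that the $F(B)$-orthogonal complement of $\mathcal{N}$ (acting on the displacement $X - X_k$) is exactly the set of matrices of the form $B^{-1} A^\top S Y^\top$ for $Y \in \R^{n \times q}$: indeed, $\langle B^{-1}A^\top S Y^\top, W\rangle_{F(B)} = \Tr{Y S^\top A W B} = 0$ for all $W$ with $S^\top A W = 0$, and a dimension count (using that $S$ has full column rank, so $\dim \mathcal{N} = n(n-q)$) shows this accounts for the entire complement. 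This gives the equivalence of \eqref{eq:NF} and \eqref{eq:RF}.

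Next I would derive the closed form \eqref{eq:Xupdate}. Substituting the parametrization $X = X_k + B^{-1}A^\top S Y^\top$ into the constraint $S^\top A X = S^\top$ of \eqref{eq:NF} yields
\[
S^\top A X_k + S^\top A B^{-1} A^\top S Y^\top = S^\top,
\]
so $Y^\top = (S^\top A B^{-1} A^\top S)^{-1} S^\top (I - A X_k)$, where the matrix $S^\top A B^{-1}A^\top S$ is invertible precisely because $S$ has full column rank, $A$ is nonsingular, and $B$ is positive definite (so $A^\top S$ has full column rank and Lemma~\ref{lem:WGW} with $G = B^{-1}$, $W = A^\top S$ applies, giving $\Null{S^\top A B^{-1}A^\top S} = \Null{A^\top S} = \{0\}$). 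Back-substituting gives \eqref{eq:Xupdate} directly. I should also check that among all solutions $Y$ of the (now fully determined) system the parametrized $X$ is unique, which is immediate here since the system for $Y^\top$ has a unique solution; alternatively one observes that the displacement lies in the complement of $\mathcal{N}$, so by the projection characterization it is the unique minimizer.

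For the column variant, the cleanest route is to note that $XA = I \iff A^\top X^\top = I$, and $\|X - X_k\|_{F(B)} = \|X^\top - X_k^\top\|_{F(B)}$ by \eqref{eq:98y988ff} and symmetry of $B$; thus \eqref{eq:NFcols} for $(A, X_k, S)$ transforms into an instance of \eqref{eq:NF} for $(A^\top, X_k^\top, S)$, and \eqref{eq:RFcols} likewise transforms into an instance of \eqref{eq:RF}. Applying the already-proven row results and transposing back yields both the equivalence of \eqref{eq:NFcols} with \eqref{eq:RFcols} and the explicit formula \eqref{eq:Xupdatecols}. The main obstacle I anticipate is being careful and precise about the $F(B)$-orthogonality computation and the dimension count that identifies the range of $Y \mapsto B^{-1}A^\top S Y^\top$ as the full orthogonal complement of the constraint space — this is the crux of why the sketch-and-project and constrain-and-approximate viewpoints coincide, and it is where the full-column-rank hypothesis on $S$ is genuinely used. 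Everything else is routine linear algebra of the kind already carried out in Proposition~\ref{prop:decomposition} and Lemma~\ref{lem:pseudoleastnorm}, which I would cite rather than repeat.
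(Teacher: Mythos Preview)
Your proposal is correct and takes a genuinely different route from the paper's proof. The paper first reduces to $B=I$ via the change of variables $\hat{X}=B^{1/2}XB^{1/2}$, $\hat{A}=B^{-1/2}AB^{-1/2}$, $\hat{S}=B^{1/2}S$, and then establishes the equivalence of \eqref{eq:NF} and \eqref{eq:RF} by \emph{Lagrangian duality}: it forms $L(\hat X,\hat Y)=\tfrac12\|\hat X-\hat X_k\|_F^2-\langle \hat Y^\top,\hat S^\top\hat A(\hat X-\hat A^{-1})\rangle_F$, differentiates in $\hat X$ to obtain $\hat X=\hat X_k+\hat A^\top\hat S\hat Y^\top$, substitutes back, and recognises the resulting dual as the constrain-and-approximate problem; the closed form \eqref{eq:Xupdate} is then read off by differentiating the unconstrained dual objective. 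You instead work directly with general $B$ and argue geometrically: you identify the $F(B)$-orthogonal complement of the constraint space $\mathcal N=\{W:S^\top A W=0\}$ as $\{B^{-1}A^\top S Y^\top : Y\in\R^{n\times q}\}$ via an inner-product check and a dimension count, and then invoke the projection identity that the $F(B)$-projection of $X_k$ onto $A^{-1}+\mathcal N$ equals $X_k+P_{\mathcal N^\perp}(A^{-1}-X_k)$, which is exactly the minimiser in \eqref{eq:RF}. Your approach is more elementary and avoids duality machinery altogether; the paper's Lagrangian template is more mechanical but has the advantage of being reused verbatim for the harder symmetric case (the theorem for \eqref{eq:NFsym} and \eqref{eq:RFsym}), where the additional constraint $X=X^\top$ makes the direct orthogonal-complement identification less clean. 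For the column variant, both you and the paper rely on the transposition trick; the paper only says ``analogous arguments'' here and formalises the swap $A\leftrightarrow A^\top$ later in Remark~\ref{rem:alg2conv}, whereas you carry it out explicitly.
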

\begin{proof}
We will prove all the claims for the row variant, that is, we prove that~\eqref{eq:NF} are~\eqref{eq:RF} equivalent and that their solution is given by~\eqref{eq:Xupdate}. The remaining claims, that~\eqref{eq:NFcols} are~\eqref{eq:RFcols} are equivalent and that their solution is given by~\eqref{eq:Xupdatecols}, follow with analogous arguments. 

It suffices to consider the case when $B=I$, as we can perform a change of variables to recover the solution for any $B$. Indeed, in view of \eqref{eq:98y988ff}, with the change of variables
\begin{equation}\label{eq:varchangeW}\hat{X}\eqdef B^{1/2}X B^{1/2}, \quad \hat{X}_k\eqdef B^{1/2}X_kB^{1/2}, \quad \hat{A} \eqdef B^{-1/2}AB^{-1/2} \quad \mbox{and} \quad \hat{S} \eqdef B^{1/2}S,
\end{equation}
\eqref{eq:NF} becomes
\begin{align}
\min_{\hat{X} \in \R^{n\times n}} \frac{1}{2} \norm{\hat{X} - \hat{X}_{k}}_{F}^2 \quad \mbox{subject to } \quad  \hat{S}^\top \hat{A}\hat{X} =\hat{S}^\top . \label{eq:NFbar}
\end{align}
Moreover, if we let $\hat{Y} = B^{1/2}Y$, then ~\eqref{eq:RF} becomes
\begin{align}
\min_{\hat{X} \in \R^{n\times n}, \hat{Y}\in \R^{n\times q}} \frac{1}{2} \norm{\hat{X} - \hat{A}^{-1}}_{F}^2 \quad \mbox{subject to } \quad  \hat{X} =\hat{X}_k + \hat{A}^\top \hat{S} \hat{Y}^\top . \label{eq:RFbar}
\end{align}

By substituting the constraint in~\eqref{eq:RFbar} into the objective function, then differentiating to find the stationary point, we obtain that
\begin{equation} \label{eq:Xupdatebar}
\hat{X} = \hat{X}_k +\hat{A}^\top \hat{S}(\hat{S}^\top \hat{A}\hat{A}^\top \hat{S})^{-1}\hat{S
}^\top (I-\hat{A}\hat{X}_k),\end{equation}
is the solution to~\eqref{eq:RFbar}. After changing the variables back using~\eqref{eq:varchangeW}, the update~\eqref{eq:Xupdatebar} becomes~\eqref{eq:XZupdate}.

Now we prove the equivalence of~\eqref{eq:NFbar} and~\eqref{eq:RFbar} using Lagrangian duality. The sketch-and-project viewpoint~\eqref{eq:NFbar} has a convex quadratic objective function with linear constraints, thus strong duality holds. 
Introducing Lagrangian multiplier $\hat{Y} \in \R^{n\times q}$, the Langrangian dual of~\eqref{eq:NFbar} is given by
\begin{equation}\label{eq:lagdual}
L(\hat{X},\hat{Y})=   \frac{1}{2}\norm{\hat{X}-\hat{X}_k}_{F}^2 - \dotprod{{\hat{Y}}^\top ,\hat{S}^\top \hat{A}(\hat{X}-\hat{A}^{-1})}_{F}.
\end{equation}
Clearly
\[\eqref{eq:NFbar} =\min_{X\in\R^{n\times n}} \max_{\hat{Y} \in\R^{n\times q}} L(\hat{X}, \hat{Y}). \]
  We will now prove that
\[\eqref{eq:RFbar} =\max_{\hat{Y} \in\R^{n\times q}} \min_{X\in\R^{n\times n}}  L(\hat{X}, \hat{Y}), \]  
thus proving that~\eqref{eq:NFbar} and~\eqref{eq:RFbar} are equivalent by strong duality. 
Differentiating the Lagrangian in $\hat{X}$ and setting to zero gives
\begin{equation}\label{eq:primopt1}
\hat{X} = \hat{X}_k +\hat{A}^\top \hat{S}{\hat{Y}}^\top .
\end{equation}
Substituting back into~\eqref{eq:lagdual} gives
\begin{align*}
L(\hat{X},\hat{Y}) &=  \frac{1}{2}\norm{\hat{A}^\top \hat{S}\hat{Y}^\top }_{F}^2 - \dotprod{\hat{A}^\top \hat{S}{\hat{Y}}^\top , \hat{X}_k +\hat{A}^\top \hat{S}{\hat{Y}}^\top  -\hat{A}^{-1}}_{F} \\
&= -  \frac{1}{2}\norm{\hat{A}^\top \hat{S}{\hat{Y}}^\top }_{F}^2 - \dotprod{\hat{A}^\top \hat{S}{\hat{Y}}^\top , \hat{X}-\hat{A}^{-1} }_{F}.
\end{align*}
Adding $\pm \frac{1}{2}\norm{\hat{X}_k -\hat{A}^{-1}}_{F}^2$ to the above gives
\[L(\hat{X},\hat{Y}) =   -\frac{1}{2}\norm{\hat{A}^\top \hat{S}{\hat{Y}}^\top +\hat{X}_k-\hat{A}^{-1}}_{F}^2+\frac{1}{2}\norm{\hat{X}_k -\hat{A}^{-1}}_{F}^2.\]
Finally,  substituting~\eqref{eq:primopt1} into the above,  minimizing in $\hat{X}$ then maximizing in $\hat{Y}$, and dispensing of the term $\frac{1}{2}\norm{\hat{X}_k -\hat{A}^{-1}}_{F}^2$ as it does not depend on $\hat{Y}$ nor $\hat{X}$, we have that the dual problem is 
\[ \max_{\hat{Y}}\min_{\hat{X}} L(\hat{X},\hat{Y}) =\min_{\hat{X},\hat{Y}} \frac{1}{2}\norm{\hat{X}-\hat{A}^{-1}}_{F}^2 \quad \mbox{subject to} \quad \hat{X} =\hat{X}_k+ \hat{A}^\top \hat{S}{\hat{Y}}^\top .\]
It now remains to change variables using~\eqref{eq:varchangeW} and set $Y = B^{-1/2}\hat{Y}$ to obtain~\eqref{eq:RF}.
\end{proof}

Based on Theorem~\ref{theo:NFRF}, we can summarize the methods described in this section as Algorithm~\ref{alg:asym-row} and Algorithm~\ref{alg:asym-col}.

\begin{algorithm}[!t]
\begin{algorithmic}[1]
\State \textbf{input:} invertible matrix $A \in \R^{n\times n}$
\State \textbf{parameters:} ${\cal D}$ = distribution over random matrices; positive definite matrix $B\in \R^{n\times n}$
\State \textbf{initialize:} arbitrary square matrix $X_0\in \R^{n\times n}$
\For {$k = 0, 1, 2, \dots$}
	\State Sample an independent copy $S\sim {\cal D}$
	\State Compute $\Lambda = S(S^\top  A B^{-1}A^\top  S)^{-1}S^\top $
    \State $X_{k+1} = X_{k} + B^{-1}A^\top \Lambda (I-AX_{k})$
    \Comment This is equivalent to \eqref{eq:NF} and \eqref{eq:RF}
\EndFor
\State \textbf{output:} last iterate $X_k$
\end{algorithmic}
\caption{Stochastic Iterative Matrix Inversion (SIMI) -- nonsymmetric row variant}
\label{alg:asym-row}
\end{algorithm}

\begin{algorithm}[!h]
\begin{algorithmic}[1]
\State \textbf{input:} invertible matrix $A \in \R^{n\times n}$
\State \textbf{parameters:} ${\cal D}$ = distribution over random matrices; positive definite matrix $B\in \R^{n\times n}$
\State \textbf{initialize:} arbitrary square matrix $X_0\in \R^{n\times n}$
\For {$k = 0, 1, 2, \dots$}
	\State Sample an independent copy $S\sim {\cal D}$
	\State Compute $\Lambda = S(S^\top  A^\top  B^{-1}A S)^{-1}S^\top $
    \State $X_{k+1} = X_{k} + (I-X_{k} A^\top ) \Lambda  A^\top  B^{-1}$
    \Comment This is equivalent to \eqref{eq:NFcols} and \eqref{eq:RFcols}
\EndFor
\State \textbf{output:} last iterate $X_k$
\end{algorithmic}
\caption{Stochastic Iterative Matrix Inversion (SIMI) -- nonsymmetric column variant}
\label{alg:asym-col}
\end{algorithm}

The explicit formulas~\eqref{eq:Xupdate} and~\eqref{eq:Xupdatecols} for \eqref{eq:NF} and~\eqref{eq:NFcols} allow us to efficiently implement these methods, and  facilitate  convergence analysis. In particular, we can now see that the convergence analysis of~\eqref{eq:Xupdatecols} will follow trivially from analyzing~\eqref{eq:Xupdate}. 
 This is because~\eqref{eq:Xupdate} and~\eqref{eq:Xupdatecols} differ only in terms of a transposition.
That is, transposing~\eqref{eq:Xupdatecols} gives
\[ X_{k+1}^\top  = X_{k}^\top  +B^{-1}AS(S^\top  A^\top  B^{-1}A S)^{-1}S^\top (I-A^\top X_{k}^\top ),\]
which is the solution to the row variant of the sketch-and-project viewpoint but where the equation $A^\top X^\top  = I$ is sketched instead of $AX=I.$ Thus, since the weighted Frobenius norm is invariant under transposition, it suffices to study the convergence of~\eqref{eq:Xupdate}, then the convergence of~\eqref{eq:Xupdatecols} follows by simply swapping the role of $A$ for $A^\top .$ We collect this observation is the following remark.
\begin{remark}\label{rem:alg2conv}
The expression for the rate of convergence of Algorithm~\ref{alg:asym-col}  is the same as the expression for the rate of convergence of Algorithm~\ref{alg:asym-row}, but with every occurrence of $A$ swapped for $A^\top .$ 
\end{remark}

%
%
%
%

\subsection{Relation to multiple linear systems}
\label{sec:relationlinear}
Any iterative method for solving linear systems can be applied to the $n$ linear systems that define the inverse through $AX=I$ to obtain an approximate inverse. Though   
not all methods for solving linear systems can be applied to solve these $n$ linear systems simultaneously, that is calculating each column of $X$ simultaneously, which is necessary for an efficient matrix inversion method. 

The sketch-and-project methods we described in Chapters~\ref{ch:linear_systems} and~\ref{ch:SDA} can be easily and efficiently generalized to inverting a matrix, and the resulting method is equivalent to our row variant method~\eqref{eq:NF} and~\eqref{eq:RF}.  To show this, we perform the change of variables  $\hat{X}_k= X_kB^{1/2},$ 
$\hat{A} =B^{-1/2} A$ and $\hat{S} = B^{1/2}S$ then~\eqref{eq:NF} becomes
\[\hat{X}_{k+1} \eqdef X_{k+1}B^{1/2} = \arg \min_{\hat{X} \in \R^{n\times n}} \frac{1}{2} \norm{B^{1/2}(\hat{X} - \hat{X}_{k})}_{F}^2 \quad \mbox{subject to } \quad   \hat{S}^\top \hat{A}\hat{X}=\hat{S}^\top .\]
The above is a separable problem and each column of $\hat{X}_{k+1}$ can be calculated separately. 
Let $\hat{x}_{k+1}^i$ be the $i$th column of $\hat{X}_{k+1}$ which can be calculated through
\[\hat{x}_{k+1}^i = \arg \min_{\hat{x} \in \R^n}\frac{1}{2} \norm{B^{1/2}(\hat{x} - \hat{x}_{k}^i)}_{2}^2 \quad \mbox{subject to } \quad  \hat{S}^\top  \hat{A}\hat{x} =\hat{S}^\top e_i.
\]
The above is exactly an iteration of the sketch-and-project method~\eqref{ch:two:NF}
 applied to the system $\hat{A}\hat{x} = e_i.$ Thus the convergence results established in~\cite{Gower2015} carry over to our row variant~\eqref{eq:NF} and~\eqref{eq:RF}. In particular, the theory in~\cite{Gower2015} proves that the expected norm difference of each column of $B^{1/2}X_k$ converges to $B^{1/2}A^{-1}$ with rate $\rho$ as defined in~\eqref{ch:four:eq:rho}.
This equivalence breaks down when we impose additional matrix properties through constraints, such as symmetry.


\section{Inverting Symmetric Matrices} \label{sec:SIMI-sym}

When $A$ is symmetric, it may be useful to maintain symmetry in the iterates, in which case the nonsymmetric methods---Algorithms~\ref{alg:asym-row} and \ref{alg:asym-col}---have an issue, as they do not guarantee that the iterates are symmetric. However, we can modify~\eqref{eq:NF} by adding a symmetry constraint. The resulting \emph{symmetric} method naturally maintains symmetry in the iterates. 
 
\subsection{Projection viewpoint: sketch-and-project}

The new iterate $X_{k+1}$ is the result of projecting $X_k$ onto the space of matrices that satisfy a sketched inverse equation and that are also symmetric, that is
\begin{align}
\boxed{ X_{k+1} = \arg \min_{X\in \R^{n\times n}}\frac{1}{2} \norm{X - X_{k}}_{F(B)}^2 \quad \mbox{subject to }\quad   S^\top AX =S^\top ,  \quad X = X^\top } \label{eq:NFsym}
\end{align}
 See Figure~\ref{fig:proj} for an illustration of the symmetric update~\eqref{eq:NFsym}. 

 This viewpoint can be seen as a randomized block version of the quasi-Newton methods~\cite{Goldfarb1970,Greenstadt1969}, as detailed in Section~\ref{sec:QN}.
 The flexibility in using a weighted norm is important for choosing a norm that better reflects the geometry of the problem. For instance, when $A$ is symmetric positive definite, it turns out that $B =A$ results in a good method.  This added freedom of choosing an appropriate weighting matrix has proven very useful in the quasi-Newton literature, in particular, the highly successful BFGS method~\cite{Broyden1965,Fletcher1960,Goldfarb1970,Shanno1971} selects $B$ as an estimate of the Hessian matrix.

\begin{figure}
\centering
\resizebox{0.6\textwidth}{!}{
\begin{tikzpicture}[>=triangle 45,font=\sffamily, ] 
   \node (planes)  at (0,0) {\includegraphics[width =10cm]{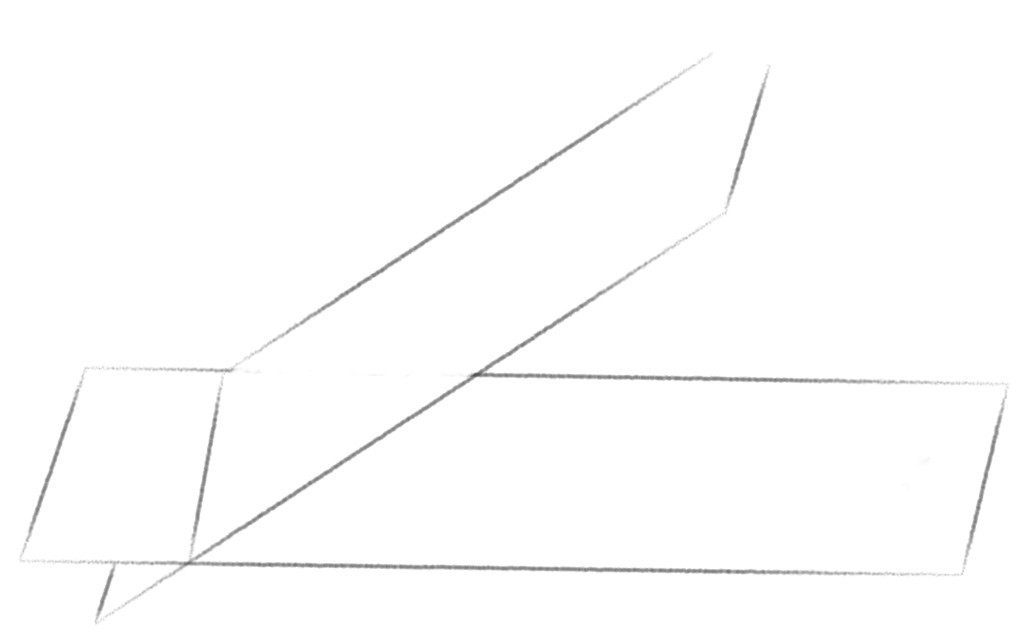}};
   \node (sym) at (3.5cm,-1cm) {$\{X \; : \; X= X^\top \}$};   
   \nodepoint{Prevp}{3.7cm}{-2.1cm};
   \node[inner sep =0mm, outer sep =0mm, right = 0pt of  Prevp] (Prev) {$X_{k}$};
   \node (action) at (2.5cm,2.5cm) {$\left\{ X \; : \; S^\top AX= S^\top \right\}$};
   \nodepoint{Nextp}{-3.1cm}{-2cm};
   \node[inner sep =0mm, outer sep =0mm, left = 0pt of  Nextp] (Next) {$X_{k+1}$}; 
   \nodepoint{Invp}{-2.9cm}{-1cm};
   \node[inner sep =0mm, outer sep =0mm, left = 0pt of  Invp] (Inv) {$A^{-1}$};  
    \draw [semithick,->] (Prevp) -- (Nextp)  node [midway,above=0.0cm] {Projection};
    \draw (Nextp)++(0.05,0.3cm) --  ++ (0.3cm,0.0cm) -- ++ (-0.05cm, -0.3cm);
    \node[inner sep =0mm, outer sep =0mm] (orthodot) at (-3.1cm+0.17cm,-2cm+0.15cm) {$\cdot$};
\end{tikzpicture}}
\caption{\footnotesize The new estimate $X_{k+1}$ is obtained by projecting $X_{k}$ onto the affine space formed by intersecting $\{X \; : \; X= X^\top \}$ and $\left\{ X \; :\; S^\top AX= S^\top \right\}$.}
\label{fig:proj}
\end{figure}

\subsection{Optimization viewpoint: constrain-and-approximate} 
The viewpoint~\eqref{eq:NFsym} also has an interesting dual viewpoint:
\begin{equation}
\boxed{X_{k+1} = \arg_{X}  \min_{X \R^{n\times n}, Y\in \R^{n\times q}} \frac{1}{2}\norm{X -A^{-1}}_{F(B)}^2 \quad \mbox{subject to} \quad
X = X_k + \frac{1}{2}(YS^\top AB^{-1} + B^{-1}A^\top SY^\top )} \label{eq:RFsym}
\end{equation}
The minimum is taken over matrices $X\in \R^{n\times n}$ and $Y\in \R^{n\times q}$. The next iterate $X_{k+1}$ is the best approximation to $A^{-1}$ restricted to a random affine space of symmetric matrices.
Furthermore, \eqref{eq:RFsym} is a symmetric equivalent of~\eqref{eq:RF}; that is, the constraint in~\eqref{eq:RFsym} is the result of projecting the constraint in~\eqref{eq:RF} onto the space of symmetric matrices. 

When $A$ is symmetric positive definite and we choose $B =A$ in~\eqref{eq:RF} and~\eqref{eq:RFcols}, then  
\[\norm{X - A^{-1}}_{F(A)}^2 = \Tr{ (X-A^{-1})A(X-A^{-1})A} = \norm{XA - I}_{F}^2.\] 
The above is exactly the objective function used in most  approximate inverse preconditioners (AIP)~\cite{Benzi1999,Gould1998,Kolotilina1993,Huckle2007}. 

\subsection{Equivalence}

We now prove that the two viewpoints~\eqref{eq:NFsym} and~\eqref{eq:RFsym} are equivalent, and show their explicit solution.
 \begin{theorem}
If $A$ and $X_k$ are symmetric, then the viewpoints~\eqref{eq:NFsym} and~\eqref{eq:RFsym} are equivalent. That is, they define the same $X_{k+1}$. Furthermore, if $S$ has full column rank, then the explicit solution to ~\eqref{eq:NFsym} and~\eqref{eq:RFsym} is
\begin{align}&\boxed{X_{k+1} =X_{k}-(X_{k}AS-S)\Lambda S^\top  A B^{-1} +B^{-1}AS\Lambda(S^\top AX_{k}-S^\top )\left(AS\Lambda S^\top AB^{-1}-I\right)} \label{eq:Xupdatesym}
 \end{align}
 where $\Lambda \eqdef (S^\top  AB^{-1}A S)^{-1}$.   
\end{theorem}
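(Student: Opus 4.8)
The plan is to mimic the proof of Theorem~\ref{theo:NFRF}, adapting the change-of-variables and Lagrangian-duality argument to the presence of the extra symmetry constraint. First I would reduce to the case $B=I$ via the substitution $\hat{X}=B^{1/2}XB^{1/2}$, $\hat{A}=B^{-1/2}AB^{-1/2}$, $\hat{S}=B^{1/2}S$, using the isometry \eqref{eq:98y988ff}; note that the symmetry constraint $X=X^\top$ is equivalent to $\hat{X}=\hat{X}^\top$, and that $\hat{A},\hat{X}_k$ remain symmetric, so the reduced problem has the same structure. In the reduced variables, \eqref{eq:NFsym} becomes the projection of $\hat{X}_k$ in the (unweighted) Frobenius norm onto the affine subspace $\{\hat{X}:\hat{S}^\top\hat{A}\hat{X}=\hat{S}^\top,\ \hat{X}=\hat{X}^\top\}$, which is a strictly convex quadratic with linear constraints, so a unique minimizer exists and strong duality holds.

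Next I would set up the Lagrangian with a multiplier $\hat{Y}\in\R^{n\times q}$ for the sketched equation and a symmetric multiplier $\hat{W}=\hat{W}^\top$ for the constraint $\hat{X}=\hat{X}^\top$ (or, equivalently, handle symmetry by restricting the variable and writing the stationarity condition symmetrically). Differentiating in $\hat{X}$ and setting to zero gives a stationarity condition of the form $\hat{X}=\hat{X}_k+\tfrac12(\hat{A}^\top\hat{S}\hat{Y}^\top+\hat{Y}\hat{S}^\top\hat{A})$ after symmetrizing; substituting this back into the Lagrangian and completing the square exactly as in the proof of Theorem~\ref{theo:NFRF} (adding and subtracting $\tfrac12\|\hat{X}_k-\hat{A}^{-1}\|_F^2$) will show that the dual problem is precisely \eqref{eq:RFsym} in reduced variables, namely minimizing $\tfrac12\|\hat{X}-\hat{A}^{-1}\|_F^2$ subject to $\hat{X}=\hat{X}_k+\tfrac12(\hat{Y}\hat{S}^\top\hat{A}+\hat{A}^\top\hat{S}\hat{Y}^\top)$. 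Undoing the change of variables with $Y=B^{-1/2}\hat{Y}$ recovers \eqref{eq:RFsym}, establishing the equivalence.

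For the explicit formula \eqref{eq:Xupdatesym}, I would substitute the parametrization $X=X_k+\tfrac12(YS^\top AB^{-1}+B^{-1}A^\top SY^\top)$ into $\|X-A^{-1}\|_{F(B)}^2$ (equivalently, work in the hatted variables and minimize $\tfrac12\|\hat{X}-\hat{A}^{-1}\|_F^2$), differentiate in $Y$, and solve the resulting linear matrix equation for $Y$. Here the invertibility of $\Lambda=(S^\top AB^{-1}AS)^{-1}$ — guaranteed by the full-column-rank assumption on $S$ together with Lemma~\ref{lem:WGW} — is what makes the normal equations solvable in closed form. Plugging the optimal $Y$ back into the parametrization and simplifying, using $AX_kAS$-type cancellations and the identity $\Lambda S^\top AB^{-1}AS\Lambda=\Lambda$, yields \eqref{eq:Xupdatesym}. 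The main obstacle I anticipate is the bookkeeping in this last step: unlike the nonsymmetric case, the symmetrized update has two coupled terms, so solving the stationarity equation for $Y$ requires care (the coefficient operator acting on $Y$ is a sum of two Sylvester-type terms), and then expanding the substitution without error to match the stated closed form is the delicate part — everything else is a routine adaptation of the already-established nonsymmetric argument.
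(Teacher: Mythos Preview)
Your plan for the equivalence of \eqref{eq:NFsym} and \eqref{eq:RFsym} is correct and essentially identical to the paper's: reduce to $B=I$ via \eqref{eq:varchangeW}, set up the Lagrangian with multipliers for both the sketched constraint and the symmetry constraint, derive the symmetrized stationarity condition $\hat{X}=\hat{X}_k+\tfrac12(\hat{Y}\hat{S}^\top\hat{A}+\hat{A}^\top\hat{S}\hat{Y}^\top)$, and complete the square. The paper uses one small extra observation you did not spell out: because $\Theta+\hat{X}_k-\hat{A}^{-1}$ is symmetric, one has $\langle\hat{A}^\top\hat{S}\hat{Y}^\top,\Theta+\hat{X}_k-\hat{A}^{-1}\rangle_F=\langle\Theta,\Theta+\hat{X}_k-\hat{A}^{-1}\rangle_F$, which is what makes the completion of the square go through cleanly.

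For the explicit formula \eqref{eq:Xupdatesym}, your plan has a gap at exactly the point you flag as delicate. Differentiating the dual objective in $\hat{Y}$ (or, equivalently, imposing the primal sketched constraint on the stationarity condition, which is what the paper does) yields $\Gamma^\top\Gamma\hat{Y}^\top+\Gamma^\top\hat{Y}\Gamma^\top=2\Gamma^\top R$ with $\Gamma=\hat{A}^\top\hat{S}$ and $R=\hat{A}^{-1}-\hat{X}_k$. You are right that this is a coupled Sylvester-type system, but your claim that invertibility of $\Lambda$ ``is what makes the normal equations solvable in closed form'' is not enough: inverting $\Gamma^\top\Gamma$ alone does not decouple the two occurrences of $\hat{Y}$. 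The paper's device is to introduce the orthogonal projector $\hat{Z}=\Gamma(\Gamma^\top\Gamma)^\dagger\Gamma^\top$, observe that $\Gamma\hat{Y}^\top=\hat{Z}\Psi$ for some $\Psi$, and then split $\hat{Z}\Psi=\hat{Z}\Psi\hat{Z}+\hat{Z}\Psi(I-\hat{Z})$ into components along and orthogonal to $\myRange{\hat{Z}}$. Right-multiplying the fixed-point equation $\Gamma\hat{Y}^\top=\hat{Z}(2R-\hat{Y}\Gamma^\top)$ by $\hat{Z}$ and by $I-\hat{Z}$ separately decouples the system and yields both components explicitly; substituting back into the stationarity condition then gives \eqref{eq:Xupdatesym} after undoing the change of variables. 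Without this projection-splitting trick (or an equivalent), the ``solve and simplify'' step in your plan does not go through by routine algebra.
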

\begin{proof}
 We first prove the equivalence of~\eqref{eq:NFsym} and~\eqref{eq:RFsym} using Lagrangian duality.  It suffices to prove the claim for  $B=I$ as we did in the proof of Theorem~\ref{theo:NFRF}, since using the change of variables~\eqref{eq:varchangeW} applied to~\eqref{eq:NFsym} we have that~\eqref{eq:NFsym} is equivalent to
\begin{equation}
\min_{\hat{X} \in \R^{n\times n}} \frac{1}{2} \norm{\hat{X} - \hat{X}_{k}}_{F}^2 \quad \mbox{subject to } \quad  \hat{S}^\top \hat{A}\hat{X} =\hat{S}^\top , \quad \hat{X}= \hat{X}^\top . \label{eq:NFsymbar}
\end{equation}
Since~\eqref{eq:NFsym} has a convex quadratic objective with linear constraints, strong duality holds. Thus we will derive a dual formulation for~\eqref{eq:NFsymbar} then use the change of coordinates~\eqref{eq:varchangeW} to recover the solution to~\eqref{eq:NFsym}. 
Let $\hat{Y} \in \R^{n\times q}$ and $W \in \R^{n\times n}$ and consider the Lagrangian of~\eqref{eq:NFsymbar} which is
\begin{equation}\label{eq:lagsym}
L(\hat{X},\hat{Y}, W)= \frac{1}{2}\norm{\hat{X}-\hat{X}_k}_F^2 - \dotprod{\hat{Y}^\top ,\hat{S}^\top \hat{A}(\hat{X}-\hat{A}^{-1})}_{F} - \dotprod{W,\hat{X}-\hat{X}^\top }_F.
\end{equation}
 Differentiating in $\hat{X}$ and setting to zero gives
\begin{equation}\label{eq:optsym}
\hat{X} = \hat{X}_k+ \hat{A}^\top \hat{S}\hat{Y}^\top  +W-W^\top .
\end{equation}
Applying the symmetry constraint $X=X^\top $ gives
 \[W-W^\top  = \frac{1}{2}\left(\hat{Y}\hat{S}^\top \hat{A} - \hat{A}^\top \hat{S}\hat{Y}^\top \right).\]
 Substituting the above into~\eqref{eq:optsym} gives
\begin{equation}\label{eq:optsym2}
\hat{X} = \hat{X}_k +\frac{1}{2}\left(\hat{Y}\hat{S}^\top \hat{A} + \hat{A}^\top \hat{S}\hat{Y}^\top  \right).
\end{equation}
Now let $\Theta = \frac{1}{2}(\hat{Y}\hat{S}^\top \hat{A} + \hat{A}^\top \hat{S}\hat{Y}^\top )$ and note that, since the matrix $\Theta+\hat{X}_k -\hat{A}^{-1}$ is symmetric, we get
\begin{equation}\label{eq:symmob}
 \dotprod{\hat{A}^\top \hat{S}\hat{Y}^\top ,\Theta+\hat{X}_k -\hat{A}^{-1}}_F =  \dotprod{\Theta ,\Theta+\hat{X}_k -\hat{A}^{-1}}_F.  \end{equation}
Substituting~\eqref{eq:optsym2}  into~\eqref{eq:lagsym} gives
\begin{align}
L(\hat{X},\hat{Y},W) &= \frac{1}{2}\norm{\Theta}_F^2 - \dotprod{\hat{A}^\top \hat{S}\hat{Y}^\top ,\Theta+\hat{X}_k -\hat{A}^{-1}}_F \nonumber 
 \overset{\eqref{eq:symmob}}{=} \frac{1}{2}\norm{\Theta}_F^2 - \dotprod{\Theta,\Theta+\hat{X}_k -\hat{A}^{-1}}_F \nonumber \\
& = -\frac{1}{2}\norm{\Theta}_F^2 -\dotprod{\Theta,\hat{X}_k -\hat{A}^{-1}}_F . \label{eq:dualsym2}
 \end{align}
Adding $\pm\frac{1}{2}\norm{\hat{X}_k-\hat{A}^{-1}}_F^2$ to~\eqref{eq:dualsym2} gives
\[ L(\hat{X},\hat{Y},W) = -\frac{1}{2}\norm{\Theta +\hat{X}_k -\hat{A}^{-1}}_F^2+\frac{1}{2}\norm{\hat{X}_k-\hat{A}^{-1}}_F^2.  \]
Finally, using~\eqref{eq:optsym2} and maximizing over $\hat{Y}$ then minimizing over $X$ gives the dual problem
\[\min_{\hat{X},\hat{Y}} \frac{1}{2}\norm{\hat{X} -\hat{A}^{-1}}_F^2 \quad \mbox{subject to} \quad
\hat{X} = \hat{X}_k + \frac{1}{2}(\hat{Y}	\hat{S}^\top \hat{A} + \hat{A}^\top \hat{S}\hat{Y}^\top ). \]
It now remains to change variables according to~\eqref{eq:varchangeW} and set $Y = B^{-1/2}\hat{Y}.$

It was recently shown in~\cite[Section~2]{Gower2014c} and~\cite[Section~4]{Hennig2015}\footnote{To re-interpret methods for solving linear systems through Bayesian inference, Hennig constructs estimates of the inverse system matrix using the sampled action of a matrix taken during a linear solve~\cite{Hennig2015}.}  that~\eqref{eq:Xupdatesym} is the solution to~\eqref{eq:NFsym}.  But for completion, we now give a new simple proof.

From~\eqref{eq:optsym2} we see that the solution is solely determined by $\hat{Y}\hat{S}^\top \hat{A}$, and thus we focus on obtaining this matrix. To simplify notation, let $\Gamma =\hat{A}^\top  \hat{S}$ and let $\hat{Z} =\Gamma(\Gamma^\top  \Gamma)^{\dagger}\Gamma^\top .$ As $\hat{Z}$ is a projection matrix we have that $\hat{Z}^2 = \hat{Z}$ and $(I-\hat{Z})\hat{Z}=0,$ two properties we will use repeatedly.

 Using the sketch constraint in~\eqref{eq:NFsymbar} we have
\begin{equation} \label{eq:Gammasketch}
\Gamma^\top  \hat{X} =\hat{S}^\top \hat{A}\hat{X}=\hat{S}^\top  = \Gamma^\top  \hat{A}^{-1},\end{equation} 
therefore left multiplying~\eqref{eq:optsym2} by $\Gamma^\top $ gives
\begin{equation} \label{eq:sketchdualconst}
\Gamma^\top \hat{X} = \Gamma^\top \hat{X}_k +\frac{1}{2}\Gamma^\top \left(\hat{Y}\Gamma^\top  + \Gamma\hat{Y}^\top  \right) \overset{\eqref{eq:Gammasketch}}{=} \Gamma^\top  \hat{A}^{-1}.
\end{equation}
Let $R = \hat{A}^{-1}-\hat{X}_k$ which is a symmetric matrix. Rearranging~\eqref{eq:sketchdualconst} gives 
\[  (\Gamma^\top \Gamma)\hat{Y}^\top  =\Gamma^\top \left( 2R -\hat{Y}\Gamma^\top \right). \]
The least norm solution of the above in term of $\hat{Y}^\top $ is given by
\[\hat{Y}^\top   =(\Gamma^\top \Gamma)^{\dagger}\Gamma^\top \left( 2R -\hat{Y}\Gamma^\top \right).\]
Left multiplying the above by $\Gamma$ gives
\begin{equation}\label{eq:LDproj}
\Gamma\hat{Y}^\top   = \Gamma(\Gamma^T\Gamma)^{\dagger}\Gamma^T\left( 2R -\hat{Y}\Gamma^\top \right) =\hat{Z}\left( 2R -\hat{Y}\Gamma^\top \right).
\end{equation}
This shows that $\Gamma\hat{Y}^\top $ is equal to a projection matrix times an unknown matrix that is
\begin{equation}\label{eq:PsiZ}\Gamma\hat{Y}^\top  = \hat{Z}\Psi = \hat{Z}\Psi \hat{Z} +\hat{Z}\Psi (I-\hat{Z}), \end{equation}
where $\Psi \in \R^{n \times n}$ is  the unknown matrix. Note that in~\eqref{eq:PsiZ} we have decomposed the rows of $\hat{Z}\Psi$ into orthogonal components.
Substituting~\eqref{eq:PsiZ} into~\eqref{eq:LDproj} gives
\begin{equation} \label{eq:ZPsiZ}
\hat{Z}\Psi \hat{Z} +\hat{Z}\Psi (I-\hat{Z})  =\hat{Z}\left( 2R -\hat{Z}\Psi^\top  \hat{Z} \right),\end{equation}
where we used that $\hat{Z}(I-\hat{Z})=0.$
Right multiplying~\eqref{eq:ZPsiZ} by $\hat{Z}$ and re-arranging gives
\begin{equation}\label{eq:lefthatZ}
\hat{Z}(\Psi +\Psi^\top )\hat{Z} = 2\hat{Z}R\hat{Z}. 
\end{equation}
Right multiplying~\eqref{eq:ZPsiZ} by $I-\hat{Z}$  and re-arranging gives
\begin{equation}\label{eq:lefthatIZ} \hat{Z}\Psi (I-\hat{Z}) =2\hat{Z}R(I-\hat{Z}).
\end{equation}
Finally, inserting~\eqref{eq:PsiZ} into~\eqref{eq:optsym2} gives
\begin{eqnarray*}
\hat{X} &= & \hat{X}_k +\frac{1}{2}\left(\hat{Z}(\Psi+\Psi^\top ) \hat{Z} +(I-\hat{Z})\Psi^\top \hat{Z}  +\hat{Z}\Psi (I-\hat{Z}) \right)\\
&\overset{\eqref{eq:lefthatZ}+\eqref{eq:lefthatIZ}}{=}&\hat{X}_k 
+ \hat{Z}R\hat{Z} + \hat{Z}R(I-\hat{Z})+(I-\hat{Z})R\hat{Z}\\
&=&  \hat{X}_k+ R\hat{Z} +\hat{Z}R(I-\hat{Z})\\
&=& \hat{X}_k+ (\hat{S}-\hat{X}_k\hat{A}\hat{S})\Lambda \hat{S}^\top \hat{A} +\hat{A}\hat{S}\Lambda (\hat{S}^\top -\hat{S}^\top \hat{A}\hat{X}_k)(I-\hat{A}\hat{S}\Lambda\hat{S}^\top \hat{A}),
\end{eqnarray*} 
where we used that $\Lambda =(\hat{S}^\top \hat{A}\hat{A}^\top \hat{S})^{\dagger} = (S^\top AB^{-1}A^\top S)^{-1}.$ It now remains to use the change of variables~\eqref{eq:varchangeW} to obtain~\eqref{eq:Xupdatesym}.
\end{proof}


\begin{algorithm}[!h]
\begin{algorithmic}[1]
\State \textbf{input:} symmetric invertible matrix $A \in \R^{n\times n}$
\State \textbf{parameters:} ${\cal D}$ = distribution over random matrices; symmetric positive definite  $B\in \R^{n\times n}$
\State \textbf{initialize:} symmetric matrix $X_0\in \R^{n\times n}$
\For {$k = 0, 1, 2, \dots$}
	\State Sample an independent copy $S\sim {\cal D}$
	\State Compute $\Lambda = S(S^\top  AB^{-1}A S)^{-1}S^\top $
	\State Compute $\Theta = \Lambda A B^{-1}$	
	\State Compute $M_k = X_k A - I$
    \State $X_{k+1} =X_{k}- M_k \Theta  - (M_k \Theta)^\top  + \Theta^\top  (AX_{k}A-A) \Theta$
    \Comment This is equivalent to \eqref{eq:NFsym} \& \eqref{eq:RFsym}    
\EndFor
\State \textbf{output:} last iterate $X_k$
\end{algorithmic}
\caption{Stochastic Iterative Matrix Inversion (SIMI) -- symmetric variant}
\label{alg:sym}
\end{algorithm}

\section{Convergence} \label{sec:conv}
We now analyze the convergence of the \emph{error}, $X_{k}-A^{-1}$, for iterates of Algorithms~\ref{alg:asym-row}, \ref{alg:asym-col}   and~\ref{alg:sym}. For the sake of economy of space, we only analyze Algorithms~\ref{alg:asym-row} and \ref{alg:sym}. Convergence of  Algorithm~\ref{alg:asym-col}  follows from convergence of Algorithm~\ref{alg:asym-row} by observing Remark~\ref{rem:alg2conv}.

 The first analysis we present in Section~\ref{sec:normEconv} is concerned with the convergence of $\norm{\E{X_{k}-A^{-1}}}^2,$ that is, the {\em norm of the expected error}.   We then analyze the convergence of $\E{\norm{X_{k}-A^{-1}}}^2,$ the {\em expected norm of the error}. The latter is a stronger type of convergence,  as explained in Lemma~\ref{lem:convrandvar}.

The convergence of  Algorithms~\ref{alg:asym-row} and \ref{alg:sym} can  be entirely characterized by studying the following random matrix
   \begin{equation}\label{eq:Z}
   Z \eqdef A^\top S(S^\top  AB^{-1}A^\top  S)^{-1}S^\top  A.
   \end{equation}
With this definition, the update step of Algorithm~\ref{alg:asym-row} can be re-written as a simple fixed point formula
   \begin{align}
   X_{k+1} -A^{-1}&=   \left(I-B^{-1}Z\right)(X_{k}-A^{-1}). \label{eq:XZupdate}
   \end{align}
We can also simplify the iterates of Algorithm~\ref{alg:sym} to 
   \begin{align}
X_{k+1}-A^{-1} &=  \left(I-B^{-1}Z\right) (X_{k}-A^{-1})\left(I -ZB^{-1} \right).
 \label{eq:XZupdatesym}
\end{align}

Much like our convergence proofs in Section~\ref{C2sec:convergence}, the
only stochastic component in our methods is contained in the matrix $Z$, and thus the convergence of the iterates will depend on the properties of $Z$ and its expected value $\E{Z}.$ In particular, recall from Lemma~\ref{ch:one:lem:Z} that $B^{-1}ZB^{-1}$ is an orthogonal projection.

%
%

\subsection{Norm of the expected error} \label{sec:normEconv}

We start by proving that the norm of the expected error of the iterates of Algorithm~\ref{alg:asym-row} and Algorithm~\ref{alg:sym} converges to zero. The following theorem is remarkable in that we do not need to make any assumptions on the distribution $S$, except that $S$ has full column rank. Rather, the theorem pinpoints that convergence  depends solely on the spectrum of $I-B^{1/2}\E{Z}B^{1/2}.$  

\begin{theorem} \label{theo:normEconv}
Let $S$ be a random matrix which has full column rank with probability~$1$ (so that $Z$ is well defined). Then the iterates $X_{k+1}$ of Algorithm~\ref{alg:asym-row} satisfy
\begin{equation} \label{eq:XXXX}
\E{ X_{k+1} -A^{-1} } =(I-B^{-1}\E{Z}) \E{X_{k} - A^{-1}}.
 \end{equation}
Let $X_0 \in \R^{n\times n}$. If $X_k$ is calculated in either one of these two ways
 \begin{enumerate}
 \item Applying $k$ iterations of  Algorithm~\ref{alg:asym-row}, 
 \item   Applying $k$ iterations of Algorithm~\ref{alg:sym} (assuming $A$ and $X_0$ are  symmetric),
\end{enumerate}
then $X_k$ converges to the inverse exponentially fast, according to
\begin{equation}\label{eq:normexpconv}
\norm{\E{X_{k}-A^{-1}}}_{B}^* \leq \rho^k \norm{X_{0}-A^{-1}}_{B}^*, 
\end{equation}
where 
\begin{equation} \label{eq:rhoequiv}\rho \eqdef 1-\lambda_{\min}(B^{-1/2}\E{Z}B^{-1/2}).\end{equation}
Moreover, we have the following lower and upper bounds on the convergence rate:
\begin{equation}\label{eq:rholower}
0 \leq 1-\frac{\E{q}}{n} \leq \rho \leq 1.
\end{equation}
\end{theorem}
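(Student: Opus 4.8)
\emph{Overview.} The statement bundles four facts, and I would prove them in this order: (i) the fixed point identity \eqref{eq:XXXX} together with the bound \eqref{eq:normexpconv} for the row variant (Algorithm~\ref{alg:asym-row}); (ii) the reduction of the symmetric variant (Algorithm~\ref{alg:sym}) to a one-step contraction estimate; (iii) the proof of that estimate, which is the only real obstacle; and (iv) the two-sided bound \eqref{eq:rholower} on $\rho$. For (i): conditioning \eqref{eq:XZupdate} on $X_k$ and averaging over $S$ (the only source of randomness in $Z$) gives $\E{X_{k+1}-A^{-1}\mid X_k}=(I-B^{-1}\E{Z})(X_k-A^{-1})$; taking total expectation and moving the deterministic factor $I-B^{-1}\E{Z}$ outside the expectation proves \eqref{eq:XXXX}, and unrolling yields $\E{X_k-A^{-1}}=(I-B^{-1}\E{Z})^k(X_0-A^{-1})$. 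Since $\norm{Y}_{B}^*=\norm{B^{1/2}YB^{1/2}}_2$ and $B^{1/2}(I-B^{-1}\E{Z})B^{-1/2}=I-B^{-1/2}\E{Z}B^{-1/2}$, submultiplicativity of the spectral norm together with the symmetry of $I-B^{-1/2}\E{Z}B^{-1/2}$ (so that the spectral norm of a power is the power of the spectral norm) reduces \eqref{eq:normexpconv} to computing $\norm{I-B^{-1/2}\E{Z}B^{-1/2}}_2$. By Lemma~\ref{ch:one:lem:Z}, $B^{-1/2}ZB^{-1/2}$ is an orthogonal projection, hence $0\preceq B^{-1/2}\E{Z}B^{-1/2}\preceq I$; its eigenvalues lie in $[0,1]$, so $\norm{I-B^{-1/2}\E{Z}B^{-1/2}}_2=1-\lambda_{\min}(B^{-1/2}\E{Z}B^{-1/2})=\rho$, which finishes (i).

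\emph{Reduction for the symmetric variant.} By \eqref{eq:XZupdatesym} and the symmetry of $Z$ (Lemma~\ref{ch:one:lem:Z}), the error obeys $X_{k+1}-A^{-1}=M(X_k-A^{-1})M^\top$ with $M\eqdef I-B^{-1}Z$; hence, when $A$ and $X_0$ are symmetric, every error matrix $E_k\eqdef X_k-A^{-1}$ is symmetric by induction. After the change of variables $\hat E_k\eqdef B^{1/2}E_kB^{1/2}$ and $\hat Z\eqdef B^{-1/2}ZB^{-1/2}$ (an orthogonal projection, by Lemma~\ref{ch:one:lem:Z}), the recursion becomes $\hat E_{k+1}=(I-\hat Z)\hat E_k(I-\hat Z)$. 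Conditioning on $\hat E_k$ and averaging over $S$ defines a \emph{linear} map $\mathcal T(\hat Y)\eqdef\E{(I-\hat Z)\hat Y(I-\hat Z)}$ that sends symmetric matrices to symmetric matrices, so by linearity $\E{\hat E_k}=\mathcal T^k(\hat E_0)$. Since $\norm{\hat E_k}_2=\norm{E_k}_{B}^*$, it suffices to prove the one-step contraction $\norm{\mathcal T(\hat Y)}_2\le\rho\,\norm{\hat Y}_2$ for every symmetric $\hat Y$ and then iterate it along the symmetric matrices $\hat E_0,\mathcal T(\hat E_0),\dots$.

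\emph{The contraction estimate (main obstacle).} The naive bound $\norm{\mathcal T(\hat Y)}_2\le\E{\norm{I-\hat Z}_2^2}\,\norm{\hat Y}_2$ only gives the factor $1$, because $\norm{I-\hat Z}_2\le1$; recovering the factor $\rho$ is the crux. I would decompose $\hat Y=\hat Y^+-\hat Y^-$ into its positive and negative semidefinite parts, so $\hat Y^\pm\succeq0$ and $\norm{\hat Y^\pm}_2\le\norm{\hat Y}_2$. Conjugation by the projection $I-\hat Z$ preserves the Loewner order and $(I-\hat Z)^2=I-\hat Z$, hence $0\preceq(I-\hat Z)\hat Y^+(I-\hat Z)\preceq\norm{\hat Y}_2(I-\hat Z)$; taking expectations, $0\preceq\E{(I-\hat Z)\hat Y^+(I-\hat Z)}\preceq\norm{\hat Y}_2(I-\E{\hat Z})$, and similarly for $\hat Y^-$. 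Subtracting yields $-\norm{\hat Y}_2(I-\E{\hat Z})\preceq\mathcal T(\hat Y)\preceq\norm{\hat Y}_2(I-\E{\hat Z})$, and since $\mathcal T(\hat Y)$ is symmetric this forces $\norm{\mathcal T(\hat Y)}_2\le\norm{\hat Y}_2\,\lambda_{\max}(I-\E{\hat Z})=\rho\,\norm{\hat Y}_2$, using $\lambda_{\max}(I-\E{\hat Z})=1-\lambda_{\min}(B^{-1/2}\E{Z}B^{-1/2})=\rho$ by \eqref{eq:rhoequiv}. Iterating establishes \eqref{eq:normexpconv} for Algorithm~\ref{alg:sym}.

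\emph{Bounds on $\rho$.} The inequality $\rho\le1$ is immediate: $Z\succeq0$ (Lemma~\ref{ch:one:lem:Z}) gives $\lambda_{\min}(B^{-1/2}\E{Z}B^{-1/2})\ge0$. For the lower bound, since $S$ has full column rank almost surely and $A$ is invertible, $A^\top S$ has full column rank, so $\Rank{A^\top S}$ equals the (possibly random) number of columns $q$ of $S$, and in particular $q\le n$. By \eqref{eq:B12ZB12trace}, $\Tr{B^{-1/2}ZB^{-1/2}}=q$, hence $\Tr{B^{-1/2}\E{Z}B^{-1/2}}=\E{q}$. As $B^{-1/2}\E{Z}B^{-1/2}$ is an $n\times n$ symmetric positive semidefinite matrix, its smallest eigenvalue is at most the mean of its $n$ eigenvalues, so $\lambda_{\min}(B^{-1/2}\E{Z}B^{-1/2})\le\E{q}/n$, whence $\rho\ge1-\E{q}/n\ge0$, which is exactly \eqref{eq:rholower}.
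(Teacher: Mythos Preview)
Your proof is correct. Parts (i) and (iv) match the paper's argument essentially verbatim (the paper defers (iv) to the earlier Lemma~\ref{lem:rho1}, but the content is identical). The interesting comparison is the symmetric case.

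For Algorithm~\ref{alg:sym}, the paper also reduces to the map $\bar P(R)=\E{(I-\hat Z)R(I-\hat Z)}$ and needs $|||\bar P|||_2=\rho$. Its key step is to observe that $\bar P$ is a \emph{positive linear map} and then invoke the Russo--Dye type result (Bhatia, Corollary~2.3.8) that every positive linear map attains its operator norm at the identity, giving $|||\bar P|||_2=\norm{\bar P(I)}_2=\norm{I-\E{\hat Z}}_2=\rho$ in one line. Your route is different: you bypass the external reference by splitting a symmetric input $\hat Y=\hat Y^+-\hat Y^-$, using idempotence of $I-\hat Z$ to sandwich $\mathcal T(\hat Y)$ in Loewner order between $\pm\norm{\hat Y}_2(I-\E{\hat Z})$, and reading off the spectral-norm bound. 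This is, in effect, a short self-contained proof of the special case of the Bhatia result needed here (Hermitian inputs only). The trade-off is clear: the paper's version is shorter if one is willing to cite operator-theoretic machinery, while yours is elementary and requires nothing beyond basic Loewner-order manipulations. Both yield exactly the same constant $\rho$.
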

\begin{proof}
Let
\begin{equation}\label{eq:oihsoi8dhyY8J} R_k \eqdef B^{1/2}R_k B^{1/2}\quad and \quad  \hat{Z} \eqdef B^{-1/2}ZB^{-1/2},\end{equation}
for all $k$. Thus $\hat{Z}$ is a projection matrix (see Lemma~\ref{ch:one:lem:Z}) and
$\norm{R_k}_2 = \norm{X_{k}-A^{-1}}_B.$ Left and right multiplying~\eqref{eq:XZupdate} by $B^{1/2}$ gives
\begin{equation}\label{eq:barRknext}
R_{k+1} = (I -\hat{Z})R_k.
\end{equation}
Taking expectation with respect to $S$ in~\eqref{eq:barRknext} gives
\begin{equation}\label{eq:EXinXk} \E{R_{k+1} \;| \; R_k} = (I - \E{\hat{Z}}) R_k. 
\end{equation}
Taking full expectation in~\eqref{eq:barRknext} and using the tower rule gives
\begin{eqnarray}
 \E{ R _{k+1}} &=& \E{\E{ R _{k+1} \;|\; R_{k}}} \nonumber \\
 &\overset{\eqref{eq:EXinXk}}{=}& \E{(I -\E{\hat{Z}})R_{k}} \nonumber\\
 &=& (I -\E{\hat{Z}})\E{R_{k}}. \label{eq:Efullasym}
\end{eqnarray}
Applying the norm in~\eqref{eq:Efullasym} gives
\begin{align} \label{eq:normErecur}
 \norm{\E{ R_{k+1}}}_{2} &\leq \norm{I -\E{\hat{Z}}}_2 \norm{\E{ R_{k}}}_{2}.
\end{align}
Furthermore 
\begin{align}
\norm{I -\E{\hat{Z}}}_2 &= \lambda_{\max}\left(I -\E{\hat{Z}}\right) \nonumber \\
 &=1-\lambda_{\min}(\E{\hat{Z}}) \overset{\eqref{eq:rhoequiv}}{=} \rho, \label{eq:rhonorm}
\end{align}
where we used to symmetry of $(I-\E{\hat{Z}})$ when passing from the operator norm to the spectral radius. Note that the symmetry of $\E{\hat{Z}}$ derives from the symmetry of $\hat{Z}$.
It now remains to unroll the recurrence in~\eqref{eq:normErecur} to get~\eqref{eq:normexpconv}.

Now we analyze the iterates of  Algorithm~\ref{alg:sym}. Left and right multiplying~\eqref{eq:XZupdatesym} by $B^{1/2}$  we have 
\begin{equation}\label{eq:barRevol}
R_{k+1}= P(R_k)\eqdef \left(I-\hat{Z}\right) R_k\left(I -\hat{Z} \right). 
\end{equation}
Defining 
$\bar{P}: R \mapsto \E{P(R) \, | \, R_k}$, 
taking expectation in \eqref{eq:barRevol} conditioned on $R_{k}$, gives
\[ \E{R_{k+1}\; | \; R_k} =  \bar{P}(R_{k}).\]
As $\bar{P}$ is a linear operator, taking expectation again yields
\begin{equation} \label{eq:barPZERk}
 \E{R_{k+1}} = \E{\bar{P}(R_{k} )} =  \bar{P}(\E{R_{k} }). 
 \end{equation}


Let $||| \bar{P}|||_2 \eqdef \max_{\norm{R}_2=1} \norm{\bar{P}(R)}_2$
be the operator induced norm. 
Applying norm in~\eqref{eq:barPZERk} gives
\begin{eqnarray} 
\norm{\E{X_{k+1}-A^{-1}}}_{B}^* &=& \norm{\E{R_{k+1}}}_2 \\
&\leq & ||| \bar{P}|||_2 \norm{\E{R_{k}}}_2 \nonumber\\
& = &  ||| \bar{P}|||_2 \norm{\E{X_{k}-A^{-1}}}_{B}^*.\label{eq:normsplitbarP}
\end{eqnarray}

Clearly, $P$ is a \emph{positive linear map}, that is, it is linear and maps positive semi-definite matrices to positive semi-definite matrices. Thus, by Jensen's inequality, the map $\bar{P}$ is also a positive linear map.  As every positive linear map attains its norm at the identity matrix (see Corollary 2.3.8 in~\cite{bhatia07}), we have that 
\begin{eqnarray*}
||| \bar{P}|||_2 &= & \norm{\bar{P}(I)}_2 \\
&\overset{\eqref{eq:barRevol}}{=} &\norm{\E{ \left(I-\hat{Z}\right) I\left(I -\hat{Z} \right)}}_2\\
&\overset{(\text{Lemma}~\ref{ch:one:lem:Z})}{=}&\norm{\E{ I-\hat{Z}}}_2
 \overset{\eqref{eq:rhonorm}}{=} \rho.
\end{eqnarray*}
Inserting the above equivalence in~\eqref{eq:normsplitbarP}, unrolling the recurrence and using the substitution \eqref{eq:oihsoi8dhyY8J} gives~\eqref{eq:normexpconv}.

Finally~\eqref{eq:rholower} follows immediately from Lemma~\ref{lem:rho1} as $A$ is invertible and $S$ has full column rank.

\end{proof}
If $\rho =1$, this theorem does not guarantee convergence.
But when $\E{Z}$ is positive definite, as it will transpire in all practical variants of our method, some of which we describe in Section~\ref{sec:discretemethods},  the rate $\rho$ will be strictly less than one, and the norm  of the expected error will converge to zero.

\subsection{Expectation of the norm of the error}

Now we consider the convergence of the expected norm of the error.

\begin{theorem} \label{theo:Enormconv}
Let $S$ be a random matrix that has full column rank with probability~$1$ and such that
$\E{Z}$ is positive definite, where $Z$ is defined in~\eqref{eq:Z}.
Let $X_0 \in \R^{n\times n}$. If $X_k$ is calculated in either one of these two ways
 \begin{enumerate}
 \item Applying $k$ iterations of  Algorithm~\ref{alg:asym-row}, 
 \item   Applying $k$ iterations of Algorithm~\ref{alg:sym} (assuming both $A$ and $X_0$ are symmetric matrices),
\end{enumerate}
then $X_k$ converges to the inverse according to
\begin{equation} \label{eq:Enormconv}
\E{\norm{X_{k} -A^{-1} }_{F(B)}^2} \leq \rho^k \norm{X_{0} - A^{-1}}_{F(B)}^2.
 \end{equation}
\end{theorem}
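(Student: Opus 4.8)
The plan is to follow the same pattern as the proof of Theorem~\ref{ch:two:theo:Enormconv} (the ``Expectation of norm'' result in Chapter~\ref{ch:linear_systems}), adapted to the matrix setting and to both the nonsymmetric row variant and the symmetric variant. The key object is the random matrix $Z$ defined in~\eqref{eq:Z}, for which Lemma~\ref{ch:one:lem:Z} tells us that $\hat{Z} \eqdef B^{-1/2}ZB^{-1/2}$ is an orthogonal projection with respect to the standard Euclidean geometry. As in~\eqref{eq:oihsoi8dhyY8J}, I would introduce $R_k \eqdef B^{1/2}(X_k-A^{-1})B^{1/2}$, so that $\norm{R_k}_F = \norm{X_k-A^{-1}}_{F(B)}$ by~\eqref{eq:98y988ff}, and the goal~\eqref{eq:Enormconv} becomes $\E{\norm{R_k}_F^2}\leq \rho^k \norm{R_0}_F^2$.

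First I would treat Algorithm~\ref{alg:asym-row}. Left and right multiplying the fixed point identity~\eqref{eq:XZupdate} by $B^{1/2}$ gives $R_{k+1} = (I-\hat{Z})R_k$. Taking the squared Frobenius norm, writing it as a trace, and using that $I-\hat{Z}$ is an orthogonal projection (so $(I-\hat{Z})^\top(I-\hat{Z}) = I-\hat{Z}$), I get $\norm{R_{k+1}}_F^2 = \Tr{R_k^\top (I-\hat{Z}) R_k}$. Taking expectation conditioned on $R_k$ and using linearity of trace and expectation, $\E{\norm{R_{k+1}}_F^2 \mid R_k} = \Tr{R_k^\top (I-\E{\hat{Z}}) R_k}$. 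Since $I-\E{\hat{Z}}$ is symmetric positive semidefinite with $\norm{I-\E{\hat{Z}}}_2 = \rho$ (by the argument in~\eqref{eq:rhonorm}, i.e. Lemma~\ref{lem:rho1}), we have $\Tr{R_k^\top (I-\E{\hat{Z}}) R_k} \leq \rho\,\Tr{R_k^\top R_k} = \rho\norm{R_k}_F^2$; concretely one bounds the trace column-by-column, $\Tr{R_k^\top (I-\E{\hat{Z}})R_k} = \sum_i (R_k e_i)^\top (I-\E{\hat{Z}}) (R_k e_i) \leq \rho \sum_i \norm{R_k e_i}_2^2$. Then the tower property gives $\E{\norm{R_{k+1}}_F^2}\leq \rho\,\E{\norm{R_k}_F^2}$, and unrolling the recurrence yields~\eqref{eq:Enormconv}. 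Here $\rho<1$ because $\E{Z}$ is assumed positive definite, hence $\lambda_{\min}(\E{\hat{Z}})>0$.

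Next I would handle Algorithm~\ref{alg:sym}. From~\eqref{eq:XZupdatesym}, multiplying by $B^{1/2}$ on both sides gives $R_{k+1} = (I-\hat{Z})R_k(I-\hat{Z})$. Using symmetry of $\hat{Z}$ and that $(I-\hat{Z})^2 = I-\hat{Z}$, I compute $\norm{R_{k+1}}_F^2 = \Tr{(I-\hat{Z})R_k(I-\hat{Z})(I-\hat{Z})R_k(I-\hat{Z})} = \Tr{(I-\hat{Z})R_k(I-\hat{Z})R_k}$, and since $R_k$ is symmetric (as $A$ and $X_0$ are symmetric, symmetry is preserved), this equals $\Tr{(I-\hat{Z})R_k(I-\hat{Z})R_k^\top}$. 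Writing $W = R_k(I-\hat{Z})$, this is $\Tr{W^\top (I-\hat{Z})^\top W} \cdot$... more carefully: $\Tr{(I-\hat{Z})R_k(I-\hat{Z})R_k} = \Tr{(R_k(I-\hat{Z}))^\top (I-\hat{Z})(R_k(I-\hat{Z}))}$ after using symmetry of $R_k$ and $(I-\hat{Z})$. Taking conditional expectation replaces the inner $(I-\hat{Z})$ by $(I-\E{\hat{Z}})$, and then the same spectral bound gives $\E{\norm{R_{k+1}}_F^2 \mid R_k}\leq \rho\,\norm{R_k(I-\hat{Z})}_F^2 \leq \rho\,\norm{R_k}_F^2$, the last step because $I-\hat{Z}$ is a projection hence nonexpansive. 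Actually the cleanest route, mirroring the existing Chapter~1 proof, is to invoke the positive-linear-map argument used in Theorem~\ref{theo:normEconv}: the map $P: R\mapsto (I-\hat{Z})R(I-\hat{Z})$ satisfies $\norm{P(R)}_F^2 \leq \Tr{R^\top P(R)}$ when $R$ is symmetric (by a projection computation), and then $\bar P = \E{P(\cdot)\mid R_k}$ controls $\E{\norm{R_{k+1}}_F^2 \mid R_k}$ by $\rho\norm{R_k}_F^2$ using that $\bar P$ attains its norm at the identity. Either way the recurrence $\E{\norm{R_{k+1}}_F^2}\leq \rho\,\E{\norm{R_k}_F^2}$ follows and unrolling completes the proof.

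The main obstacle I anticipate is the bookkeeping for the symmetric variant: one must carefully use the symmetry of $R_k$ (preserved along the iteration), the symmetry and idempotency of $\hat{Z}$, and cyclicity of the trace, to reduce the quartic-looking expression $\Tr{(I-\hat{Z})R_k(I-\hat{Z})(I-\hat{Z})R_k(I-\hat{Z})}$ to something of the form $\Tr{R_k^\top (I-\E{\hat{Z}}) (\text{something})}$ to which the spectral bound $\lambda_{\max}(I-\E{\hat{Z}}) = \rho$ applies; a naive expansion could produce cross terms that need the conditional-expectation step to simplify. The nonsymmetric case is essentially a verbatim transcription of the Chapter~1 argument with vectors replaced by matrices (columns), so it should be routine.
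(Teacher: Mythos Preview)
Your treatment of Algorithm~\ref{alg:asym-row} is correct and matches the paper's proof essentially verbatim: the substitution $R_k = B^{1/2}(X_k-A^{-1})B^{1/2}$, the identity $R_{k+1}=(I-\hat Z)R_k$, idempotency of $I-\hat Z$ to collapse the square, conditional expectation, and the spectral bound $\Tr{(I-\E{\hat Z})R_kR_k^\top}\le \rho\,\|R_k\|_F^2$.

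For Algorithm~\ref{alg:sym}, however, your argument has a genuine gap. After reaching $\|R_{k+1}\|_F^2=\Tr{(I-\hat Z)R_k(I-\hat Z)R_k}$ you write that ``taking conditional expectation replaces the inner $(I-\hat Z)$ by $(I-\E{\hat Z})$'' and then bound by $\rho\,\|R_k(I-\hat Z)\|_F^2$. This is not valid: the random matrix $\hat Z$ appears \emph{twice} (or three times in your rewritten form), so linearity of expectation does not let you replace only one occurrence; and the quantity $\|R_k(I-\hat Z)\|_F^2$ you end up with is still random, so it cannot equal a conditional expectation given $R_k$. Your fallback suggestion of invoking the positive-linear-map argument from Theorem~\ref{theo:normEconv} is also misplaced: that argument bounds the operator norm of $\bar P(\cdot)$ and yields control of $\|\E{R_{k+1}}\|$, not of $\E{\|R_{k+1}\|_F^2}$.

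The fix, which is what the paper does and what your ``nonexpansive'' remark is groping toward, is to \emph{first} reduce the number of occurrences of $\hat Z$ \emph{pathwise} (before any expectation). From $\|R_{k+1}\|_F^2=\Tr{R_k(I-\hat Z)R_k(I-\hat Z)}$ one expands the first factor as $R_k(I-\hat Z)=R_k-R_k\hat Z$ and observes that the cross term $\Tr{R_k\hat Z R_k(I-\hat Z)}=\Tr{\hat Z^{1/2}R_k(I-\hat Z)R_k\hat Z^{1/2}}\ge 0$ (using that $\hat Z$ and $I-\hat Z$ are both symmetric positive semidefinite). This yields the deterministic inequality $\|R_{k+1}\|_F^2\le \Tr{R_k^2(I-\hat Z)}$, in which $\hat Z$ now appears only once. \emph{Now} take conditional expectation and apply the spectral bound exactly as in the nonsymmetric case to obtain $\E{\|R_{k+1}\|_F^2\mid R_k}\le \rho\,\|R_k\|_F^2$. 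The order of operations---drop one projection first, expect second---is the missing idea.
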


\begin{proof}
First consider Algorithm~\ref{alg:asym-row}, where $X_{k+1}$ is calculated by iteratively applying~\eqref{eq:XZupdate}. Using again the substitution~\eqref{eq:oihsoi8dhyY8J}, then from~\eqref{eq:XZupdate} we have 
\begin{equation}\label{eq:js9hf7HyT}  R_{k+1} = \left(I-\hat{Z}\right) R_k. \end{equation}
From this we obtain
\begin{eqnarray}
 \norm{ R_{k+1}}_{F}^2 & \overset{\eqref{eq:js9hf7HyT}}{=} & \norm{\left(I-\hat{Z}\right) R_{k}}_{F}^2 \nonumber\\
 & = &\Tr{\left(I-\hat{Z}\right)\left(I-\hat{Z}\right) R_k R_{k}^\top } \nonumber\\
&\overset{(\text{Lemma}~\ref{ch:one:lem:Z})}{=}& \Tr{\left(I-\hat{Z}\right) R_k R_{k}^\top } \label{eq:asymmapplylem}\\
&= & \norm{ R_k}_{F}^2 - \Tr{\hat{Z} R_{k} R_k^\top }. \nonumber
\end{eqnarray}
Taking expectations, conditioned on $ R_k$, we get
\[\E{\norm{ R_{k+1}}_{F}^2  \, | \,  R_k} =\norm{ R_k}_{F}^2 - \Tr{\E{\hat{Z}} R_{k} R_k^\top }.\]
 Using that $\Tr{\E{\hat{Z}} R_{k} R_k^\top } \geq  \lambda_{\min}\left(\E{\hat{Z}}\right)\Tr{ R_k R_{k}^\top } $, which relies on the symmetry of $\E{\hat{Z}},$ we have that
\begin{align*}\E{\norm{ R_{k+1}}_{F}^2 \,| \,  R_k} 
&\leq  \left(1-\lambda_{\min}\left(\E{\hat{Z}}\right) \right)
\norm{ R_k}_{F}^2 =\rho\cdot  \norm{ R_k}_{F}^2.
\end{align*}
In order to arrive at \eqref{eq:Enormconv},
it now remains to take full expectation,  unroll the recurrence and use the substitution \eqref{eq:oihsoi8dhyY8J} together with $\norm{R_k}_{F}^2 \overset{\eqref{eq:98y988ff}}{=} \norm{X_k-A^{-1}}_{F(B)}^2.$
  
Now we assume that $A$ and $X_0$ are symmetric and $\{X_k\}$ are the iterates computed by Algorithm~\ref{alg:sym}. 
Left and right multiplying~\eqref{eq:XZupdatesym} by $B^{1/2}$  we have 
\begin{equation}\label{eq:barRevol2}
 R_{k+1}= \left(I-\hat{Z}\right)  R_k\left(I -\hat{Z} \right). 
\end{equation}
Taking norm we have
\begin{eqnarray}
\norm{ R_{k+1}}_{F}^2 & \overset{(\text{Lemma}~\ref{ch:one:lem:Z})}{=} &
\Tr{ R_k\left(I -\hat{Z} \right)  R_k\left(I -\hat{Z} \right)} \nonumber \\
&=& \Tr{ R_k  R_k\left(I -\hat{Z} \right)} -\Tr{ R_k \hat{Z} R_k\left(I -\hat{Z} \right)} \nonumber\\
& \leq& \Tr{ R_k  R_k\left(I -\hat{Z}\right) }, \label{eq:symEnorm1}
\end{eqnarray}
where in the last inequality we used that $I -\hat{Z}$ is an orthogonal projection and thus it is symmetric positive semi-definite, whence
\[\Tr{ R_k \hat{Z} R_k\left(I -\hat{Z} \right)} = 
\Tr{\hat{Z}^{1/2} R_k\left(I -\hat{Z} \right)R_k \hat{Z}^{1/2}} \geq 0. \]
The remainder of the proof follows similar steps as those we used in the first part of the proof from~\eqref{eq:asymmapplylem} onwards.

\end{proof}

Theorem~\ref{theo:Enormconv} establishes that for all three  methods, the expected norm of the error converges exponentially fast to zero. Moreover, the convergence rate $\rho$ is the same that appeared in Theorem~\ref{theo:normEconv}, where we established the convergence of the norm of the expected error. 

Using Lemma~\ref{lem:itercomplex}, both of the convergence results in Theorems~\ref{theo:normEconv} and~\ref{theo:Enormconv}  can be recast as iteration complexity bounds.
For instance, for a given $0<\epsilon <1$, Theorem~\ref{theo:normEconv} combined with Lemma~\ref{lem:itercomplex} (with $\alpha_k =(\norm{\E{X_k-A^{-1}}}_{B}^*)^2$) gives
\begin{equation} \label{eq:itercomplex}k \geq \left(\frac{1}{2}\right)\frac{1}{1-\rho} \log\left(\frac{1}{\epsilon}\right) \quad \Rightarrow \quad (\norm{\E{X_k-A^{-1}}}_{B}^*)^2 \leq \epsilon (\norm{X_0-A^{-1}}_{B}^*)^2.
\end{equation}
On the other hand, Theorem~\ref{theo:Enormconv} combined with Lemma~\ref{lem:itercomplex} (with $\alpha_k =\mathbf{E}[\norm{X_k-A^{-1}}_{F(B)}^2]$) gives
\begin{equation} \label{eq:itercomplex2}k \geq \frac{1}{1-\rho} \log\left(\frac{1}{\epsilon}\right) \quad \Rightarrow \quad \E{\norm{X_k-A^{-1}}_{F(B)}^2} \leq \epsilon \norm{X_0-A^{-1}}_{F(B)}^2.
\end{equation}
To push the expected norm of the error below the $\epsilon$ tolerance~\eqref{eq:itercomplex2}, we require double the amount of iterates, as compared with bringing the norm of expected error below the same tolerance~\eqref{eq:itercomplex}.  This is because in Theorem~\ref{theo:Enormconv} we determined that $\rho$ is the rate at which the expectation of the \emph{squared} norm error converges, while in Theorem~\ref{theo:normEconv} we determined that $\rho$ is the rate at which the norm, without the square, of the expected error converges. Though it takes double the number of iterations to decrease the expectation of the norm error, as proven in Lemma~\ref{lem:convrandvar}, the former is a stronger form of convergence. Thus, Theorem~\ref{theo:normEconv} does not give a stronger result than Theorem~\ref{theo:Enormconv}, but rather, these theorems give qualitatively different results and ultimately enrich our understanding of the iterative process.

\section{Discrete Random Matrices} \label{sec:discrete}

We now consider the case of a discrete random matrix $S$. We show that when $S$ is a \emph{complete discrete sampling}, then $\E{Z}$ is 
positive definite, and thus from Theorems~\ref{theo:normEconv} and~\ref{theo:Enormconv} together with Remark~\ref{rem:alg2conv}, Algorithms~\ref{alg:asym-row}, \ref{alg:asym-col} and~\ref{alg:sym} converge.

\begin{definition}[Complete Discrete Sampling]\label{def:complete}
The random matrix $S$ has a finite discrete distribution with $r$ outcomes. In particular,  $S= S_i \in \R^{n \times q_i}$ with probability  $p_i>0$ for $i=1,\ldots, r$, where $S_i$ is of full column rank.   We say that $S$ is a complete discrete sampling when
 $\mathbf{S} \eqdef [S_1, \ldots, S_r] \in \R^{m\times \sum_{i=1}^r q_i}$ has full row rank.
\end{definition}
Since we consider $A$ to be invertible in this chapter, 
the above definition of complete discrete sampling is in synchrony with the definition presented in Chapter~\ref{ch:linear_systems}.

As an  example of a complete discrete sampling, let $S =e_i$ (the $i$th unit coordinate vector in $\R^n$) with probability $p_i =1/n$, for $i =1,\ldots, n.$ Then $\mathbf{S}$, as defined in Definition~\ref{def:complete},  is equal to the identity matrix: $\mathbf{S} =I$. Consequently, $S$ is a complete discrete sampling. In fact, from any basis of $\R^n$ we could construct a complete discrete sampling in an analogous way.

Next we establish that when $S$ is discrete random matrix, that $S$ having a complete discrete distribution is a necessary and sufficient condition for $\E{Z}$ to be positive definite. 


\begin{proposition}\label{prop:Ediscrete} 
Let $S$ be a discrete random matrix with $r$ outcomes $S_r$ all of which have full column rank. The matrix $\E{Z}$ is  positive definite if and only if $S$ is a complete discrete sampling. Furthermore
\begin{equation}\E{Z} =  A^\top  \mathbf{S} D^2 \mathbf{S}^\top   A \label{eq:EZdiscrete}, \end{equation}
where \begin{equation} \label{eq:D}
D~\eqdef~\mbox{Diag}\left( \sqrt{p_1}(S_1^\top  AB^{-1}A^\top  S_1)^{-1/2}, \ldots, \sqrt{p_r}(S_r^\top  AB^{-1}A^\top  S_r)^{-1/2}\right).\end{equation}
\end{proposition}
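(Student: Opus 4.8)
The plan is to follow the same template as the proof of Proposition~\ref{pro:Ediscrete} in Chapter~\ref{ch:linear_systems}, but to upgrade its one-directional implication to an equivalence. First I would record the basic well-definedness fact: since $A$ is invertible and each $S_i$ has full column rank, the product $A^\top S_i$ has full column rank, and hence $S_i^\top A B^{-1} A^\top S_i = (A^\top S_i)^\top B^{-1}(A^\top S_i)$ is symmetric positive definite. Consequently each block of $D$ in~\eqref{eq:D} is a well-defined invertible matrix, the pseudoinverse appearing in $Z$ (see~\eqref{eq:Z}) coincides with a genuine inverse for every outcome $S = S_i$, and $D$ itself is a nonsingular block-diagonal matrix.

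Next I would compute $\E{Z}$ directly by writing the expectation as a finite sum over the $r$ outcomes and inserting the factors defining $D$:
\[
\E{Z} = \sum_{i=1}^r p_i\, A^\top S_i (S_i^\top A B^{-1} A^\top S_i)^{-1} S_i^\top A = A^\top \mathbf{S} D^2 \mathbf{S}^\top A = (A^\top \mathbf{S} D)(D \mathbf{S}^\top A),
\]
which is exactly~\eqref{eq:EZdiscrete} and settles the ``furthermore'' claim. This computation also exhibits $\E{Z}$ in the form $M M^\top$ with $M \eqdef A^\top \mathbf{S} D$.

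Finally, for the equivalence I would use the elementary fact that $\E{Z} = M M^\top$ is positive definite if and only if $M$ has full row rank, equivalently $\Null{M^\top} = \{0\}$, equivalently $\myRange{M} = \R^n$. Since $A^\top$ is invertible and $D$ is invertible, multiplying on the left by $A^\top$ and on the right by $D$ does not change the rank, so $\Rank{M} = \Rank{\mathbf{S}}$; hence $\E{Z}$ is positive definite if and only if $\mathbf{S}$ has full row rank, which is precisely the definition of a complete discrete sampling. Both directions of the iff fall out of this single rank identity. The only mild subtlety — the ``main obstacle'' such as it is — is the bookkeeping of the block structure of $D$ and $\mathbf{S}$, so that the factorization $\sum_i S_i (S_i^\top A B^{-1} A^\top S_i)^{-1} S_i^\top = \mathbf{S} D^2 \mathbf{S}^\top$ is transparent; once that is written out, the rest is a one-line argument. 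I would close by noting that this recovers Proposition~\ref{pro:Ediscrete} as a special case, and that, combined with Theorems~\ref{theo:normEconv} and~\ref{theo:Enormconv} and Remark~\ref{rem:alg2conv}, it guarantees convergence of Algorithms~\ref{alg:asym-row}, \ref{alg:asym-col} and~\ref{alg:sym} whenever $S$ is a complete discrete sampling.
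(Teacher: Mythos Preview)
Your proposal is correct and follows essentially the same approach as the paper. The paper cites Proposition~\ref{pro:Ediscrete} for the factorization~\eqref{eq:EZdiscrete} and then argues the equivalence via the nullspace (showing $v\in\Null{\E{Z}}$ iff $\mathbf{S}^\top A v = 0$, hence iff $\mathbf{S}$ has full row rank since $A$ is invertible), whereas you phrase the same step as a rank identity $\Rank{A^\top\mathbf{S}D}=\Rank{\mathbf{S}}$; these are the same argument in different clothing.
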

\begin{proof} 
The equation~\eqref{eq:EZdiscrete} was established in Proposition~\ref{pro:Ediscrete}.
Since we assume that $S$ has full column rank with probability $1$, the matrix $D$ is well defined and nonsingular. Given that $\E{Z}$ is positive semi-definite, we need only show that $\Null{\E{Z}}$ contains only the zero vector if and only if $S$ is a complete discrete sampling.
 Let $v \in \Null{\E{Z}}$ and $v \neq 0,$ thus 
\[0=v^\top A^\top  \mathbf{S}  D^2 \mathbf{S}^\top   Av = \norm{D\mathbf{S}^\top   Av}_2^2, \]
which shows that $\mathbf{S}^\top   Av =0$ and thus $v \in \Null{\mathbf{S}^\top A}.$  As $A$ is nonsingular, it follows that $v=0$ if and only if  $\mathbf{S}^\top $ has full column rank. 
\end{proof}

With a closed form expression for $\E{Z}$ we can optimize $\rho$ over the possible distributions of $S$ to yield a better convergence rate.

\subsection{Optimizing an Upper Bound on the Convergence Rate} \label{sec:discreteopt}

So far we have proven two different types of convergence for Algorithms~\ref{alg:asym-row}, \ref{alg:asym-col} and  \ref{alg:sym} in Theorems~\ref{theo:normEconv} and~\ref{theo:Enormconv}. Furthermore, both forms of convergence depend on the same convergence rate $\rho$ for which we have a closed form expression~\eqref{eq:rhoequiv}.

The availability of a closed form expression for the convergence rate opens up the possibility of designing particular distributions for $S$ optimizing the rate. In Section~\ref{ch:two:sec:optprob} we showed that for a complete discrete sampling, computing the optimal probability distribution, assuming that the matrices $\{S_i\}_{i=1}^r$ are fixed, leads to  a semi-definite program (SDP). Here we propose a more practical alternative: to optimize the following upper bound on the convergence rate:
\[ \rho = 1- \lambda_{\min}(B^{-1/2}\E{Z} B^{-1/2}) \leq  1 - \frac{1}{\Tr{B^{1/2} (\E{Z})^{-1} B^{1/2}}} \eqdef \gamma.\]

To emphasize the dependence of $\gamma$ and $Z$ on the probability distribution $p = (p_1,\ldots, p_r)\in \R^r$, let us denote
\begin{equation}\label{eq:98h9s8h8s}\gamma(p) \eqdef 1 - \frac{1}{\Tr{B^{1/2} (\E{Z_p})^{-1} B^{1/2}}}, \end{equation}
where we have added a subscript to $Z$ to indicate that it is a function of $p$. We now minimize $\gamma(p)$ over the probability simplex:
\[\Delta_r \eqdef \left\{p = (p_1,\dots,p_r) \in \R^r \;:\; \sum_{i=1}^r p_i =1, \; p\geq 0\right\}.\]
\begin{theorem}
Let $S$ be a complete discrete sampling and let $\overline{S}_i\in \R^{n\times q_i}$, for $i=1,2,\dots,r$, be such that 
$\mathbf{S}^{-T} = [\overline{S}_1, \ldots, \overline{S}_r]$.
 Then
\begin{equation}\label{eq:discoptrate}
\min_{p\in \Delta_r}\gamma(p) \quad=\quad 1 -\frac{1}{\left(\sum_{i=1}^r \norm{B^{-1/2}A^\top  S_i \overline{S}^\top _i  A^{-T} B^{1/2}}_F\right)^2} .
\end{equation}
\end{theorem}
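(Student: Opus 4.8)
The plan is to reduce the minimization of $\gamma(p)$ to minimizing a simple separable function over the simplex $\Delta_r$, and then to invoke the Cauchy--Schwarz inequality. First I would use Proposition~\ref{prop:Ediscrete}, which gives $\E{Z_p} = A^\top \mathbf{S} D^2 \mathbf{S}^\top A$ with $D$ the block-diagonal matrix in~\eqref{eq:D}. Since $A$ is invertible, $D$ is invertible (each $S_i$ has full column rank), and $\mathbf{S}$ is invertible (a complete discrete sampling has $\mathbf{S}$ of full row rank, and the notation $\mathbf{S}^{-\top}$ in the statement presumes $\sum_i q_i = n$, so $\mathbf{S}$ is square), one may invert directly to obtain $(\E{Z_p})^{-1} = A^{-1}\mathbf{S}^{-\top} D^{-2}\mathbf{S}^{-1}A^{-\top}$. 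Using cyclicity of the trace this yields
\[\Tr{B^{1/2}(\E{Z_p})^{-1}B^{1/2}} = \Tr{\mathbf{S}^{-1} A^{-\top} B A^{-1}\mathbf{S}^{-\top} D^{-2}}.\]

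Next I would exploit the block structure. Writing $M \eqdef A^{-\top}BA^{-1}$ and using the partition $\mathbf{S}^{-\top} = [\overline{S}_1,\dots,\overline{S}_r]$ (so that $\mathbf{S}^{-1}$ has row blocks $\overline{S}_i^\top$), the matrix $\mathbf{S}^{-1}M\mathbf{S}^{-\top}$ has $(i,j)$-block $\overline{S}_i^\top M \overline{S}_j$, whereas $D^{-2}$ is block-diagonal with $i$-th block $\tfrac{1}{p_i}(S_i^\top AB^{-1}A^\top S_i)$. Hence the trace of the product picks out only the diagonal blocks, giving
\[\Tr{B^{1/2}(\E{Z_p})^{-1}B^{1/2}} = \sum_{i=1}^r \frac{1}{p_i}\Tr{\overline{S}_i^\top A^{-\top}BA^{-1}\overline{S}_i \, S_i^\top AB^{-1}A^\top S_i} \eqdef \sum_{i=1}^r \frac{c_i}{p_i}.\]
A short computation with $\norm{X}_F^2 = \Tr{X^\top X}$ and cyclicity of the trace, applied to $X \eqdef B^{-1/2}A^\top S_i \overline{S}_i^\top A^{-\top}B^{1/2}$, then shows $c_i = \norm{B^{-1/2}A^\top S_i \overline{S}_i^\top A^{-\top}B^{1/2}}_F^2$; in particular each $c_i$ is nonnegative.

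Finally I would minimize $p \mapsto \sum_{i=1}^r c_i/p_i$ over $\Delta_r$. By Cauchy--Schwarz, $\big(\sum_i \sqrt{c_i}\big)^2 = \big(\sum_i \tfrac{\sqrt{c_i}}{\sqrt{p_i}}\sqrt{p_i}\big)^2 \le \big(\sum_i \tfrac{c_i}{p_i}\big)\big(\sum_i p_i\big) = \sum_i \tfrac{c_i}{p_i}$, with equality precisely when $p_i$ is proportional to $\sqrt{c_i}$, i.e. $p_i = \sqrt{c_i}/\sum_j\sqrt{c_j}$, which indeed lies in $\Delta_r$. Therefore $\min_{p\in\Delta_r}\Tr{B^{1/2}(\E{Z_p})^{-1}B^{1/2}} = \big(\sum_i \sqrt{c_i}\big)^2$, and substituting into the definition~\eqref{eq:98h9s8h8s} of $\gamma(p)$ together with the formula for $\sqrt{c_i}$ yields~\eqref{eq:discoptrate}. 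The only mildly delicate point is the block-trace bookkeeping in the middle step (and recording the implicit squareness of $\mathbf{S}$); everything else is routine linear algebra plus one application of Cauchy--Schwarz.
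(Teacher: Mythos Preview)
Your proof is correct and follows essentially the same route as the paper: invert $\E{Z_p}=A^\top\mathbf{S}D^2\mathbf{S}^\top A$ using Proposition~\ref{prop:Ediscrete}, expand the trace block-by-block to reduce to $\sum_i c_i/p_i$, then minimize over the simplex. The only difference is in that last step: you use Cauchy--Schwarz, whereas the paper invokes Lemma~\ref{lem:fracsum} (a Lagrangian argument) for the same conclusion; both are standard and yield identical optimal probabilities $p_i\propto\sqrt{c_i}$. Your explicit remark that $\mathbf{S}$ must be square for $\mathbf{S}^{-\top}$ to make sense is a useful clarification that the paper leaves implicit.
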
 
 \begin{proof} In view of \eqref{eq:98h9s8h8s}, minimizing $\gamma$ in $p$ is equivalent to minimizing  $\Tr{B^{1/2} (\E{Z_p})^{-1} B^{1/2}}$ in $p$.
Further, we have
\begin{eqnarray}
 \Tr{B^{1/2} (\E{Z_p})^{-1}B^{1/2}} &\overset{\eqref{eq:EZdiscrete}}{=} & \Tr{B^{1/2} ( A^\top  \mathbf{S} D^2 \mathbf{S}^\top   A)^{-1}B^{1/2}}\\
 &=& \Tr{B^{1/2} A^{-1} \mathbf{S}^{-T} D^{-2} \mathbf{S}^{-1}  A^{-T} B^{1/2}} \nonumber \\
&\overset{\eqref{eq:D}}{=}&\sum_{i=1}^r \frac{1}{p_i}\Tr{B^{1/2} A^{-1} \overline{S}_i (S_i^\top  AB^{-1}A^\top  S_i) \overline{S}^\top _i  A^{-T} B^{1/2}} \nonumber\\
&=&\sum_{i=1}^r \frac{1}{p_i}\norm{B^{-1/2}A^{-1} \overline{S}_i S_i^\top   A B^{1/2}}_F^2.
\label{eq:pifracsum}
\end{eqnarray}
Applying Lemma~\ref{lem:fracsum} in the Appendix,  the optimal probabilities are given by
\begin{equation} \label{eq:discoptprob}
p_i = \frac{\norm{B^{-1/2}A^{-1}\overline{S}_i S_i^\top   A B^{1/2}}_F}{\sum_{j=1}^r \norm{B^{-1/2}A^{-1}\overline{S}_j S_j^\top   A B^{1/2}}_F}, \quad i=1,2,\dots,r
\end{equation}
Plugging this into~\eqref{eq:pifracsum} gives the result~\eqref{eq:discoptrate}.
 \end{proof}
 
Observe that in general, the optimal probabilities~\eqref{eq:discoptprob} cannot be calculated, since the formula involves the inverse of $A$, which is not known. However, if $A$ is symmetric positive definite, we can choose $B^{-1}=A^2$, which eliminates this issue. If $A$ is not symmetric positive definite, or if we do not wish to choose $B^{-1}=A^2$, we can approach the formula \eqref{eq:discoptprob}  as a recipe for a heuristic choice of the probabilities: we can use the iterates $\{X_{k}\}$ as a proxy for $A^{-1}$. With this setup, the resulting method is not guaranteed to converge by the theory developed in this thesis. However, in practice one would expect it to work well. We have not done extensive experiments to test this, and leave this to future research. To illustrate, let us consider a concrete simple example. Choose $B =I$ and $S_i =e_i$ (the unit coordinate vector in $\R^n$). We have $\mathbf{S} = [e_1,\dots,e_n] = I$, whence $\overline{S}_i =e_i$ for $i=1,\dots,r$. Plugging into \eqref{eq:discoptprob},   we obtain 
\[p_i = \frac{\norm{X_k e_i e_i^\top   A}_F}{\sum_{j=1}^r \norm{X_k e_j e_j ^\top  A}_F} = \frac{\norm{X_k e_i}_2\norm {e_i^\top   A}_2}{\sum_{j=1}^r \norm{X_k e_j}_2\norm{ e_j ^\top  A}_2}.\]

%
%
%
%
%

\subsection{Adaptive Samplings}

In Theorem~\ref{theo:convsingleS} we determined a discrete probability distribution $\mathbf{P}(S= S_i) =p_i$ that yields a convergence rate $\rho$ that is easy to interpret. We will now make use of this convenient probability distribution and pose the question:  Having decided on the probabilities $p_1,\dots,p_r$, how should we choose the matrices $S_1,\ldots, S_r$ if we want $\rho $ to be as small as possible?

To answer this question, first we observe that the convergence rate in  Theorem~\ref{theo:convsingleS} is proportional to the scaled condition number defined by
\begin{align}\label{eq:kappalower}
\kappa_{2,F}(B^{-1/2}A^\top \mathbf{S}) &\eqdef \norm{(B^{-1/2}A^\top \mathbf{S})^{-1}}_2 \norm{B^{-1/2}A^\top \mathbf{S}}_F  =
\sqrt{ \frac{\Tr{\mathbf{S}^\top AB^{-1}A^\top \mathbf{S}}}{\lambda_{\min}\left(\mathbf{S}^\top  AB^{-1}A^\top \mathbf{S} \right)} } \geq \sqrt{n}.
\end{align}
That is, 
 if we select the probabilities
\begin{equation} \label{eq:convprob}
p_i = \left.\norm{B^{-1/2}A^\top  S_i}_F^2\right/\norm{B^{-1/2}A^\top \mathbf{S}}_F^2,
\end{equation}
then Theorem~\ref{theo:convsingleS} combined with~\eqref{eq:kappalower}  gives that the resulting convergence rate is
\begin{equation}
\rho = 1- \frac{\Tr{\mathbf{S}^\top AB^{-1}A^\top \mathbf{S}}}{\lambda_{\min}\left(\mathbf{S}^\top  AB^{-1}A^\top \mathbf{S} \right)}= 1- \frac{1}{\kappa_{2,F}^2(B^{-1/2}A^\top \mathbf{S})}. \label{eq:rhoconv}  
\end{equation}
Furthermore, following from Remark~\ref{rem:alg2conv}, we can determine a convergence rate for Algorithm~\ref{alg:asym-col} based on~\eqref{eq:rhoconv}. That is, by merely transposing each occurrence of $A$ we have that, by selecting $S_i$ with probability 
\begin{equation} \label{eq:convprob2}
p_i = \left.\norm{B^{-1/2}A S_i}_F^2\right/\norm{B^{-1/2}A\mathbf{S}}_F^2,
\end{equation}
then Algorithm~\ref{alg:asym-col}  converges at the rate 
 \begin{equation}\label{eq:rhoconv2}
\rho_2 = 1- \frac{1}{ \kappa_{2,F}^2(B^{-1/2}A\mathbf{S})}.
\end{equation}

 Since these rates improve as the condition number $\kappa^2_{2,F}(B^{-1/2}A^\top \mathbf{S})$ (or $\kappa_{2,F}^2(B^{-1/2}A\mathbf{S})$ for Algorithm~\ref{alg:asym-col}) decreases, we should aim for matrices $S_1,\ldots, S_r$ that minimize the condition number. For instance, the lower bound in~\eqref{eq:kappalower} is reached for $\mathbf{S} = (B^{-1/2}A^\top )^{-1} = A^{-T}B^{1/2}$. While we do not know $A^{-1}$, we can use our best current approximation of it, $X_k$, in its place. This leads to a method which {\em adapts} the probability distribution governing $S$ throughout the iterative process. This observation inspires a very efficient modification of Algorithm~\ref{alg:sym}, which we call AdaRBFGS (Adaptive Randomized BFGS), and describe in Section~\ref{sec:AdaRBFGS}.

Notice that, luckily and surprisingly, our twin goals of computing the inverse and optimizing the convergence rate via the above adaptive trick are compatible.  Indeed, we wish to find $A^{-1}$, whose knowledge gives us the  optimal rate. This should be contrasted with the SDP approach mentioned earlier in Section~\ref{ch:two:sec:optprob}: i) the SDP could potentially be harder than the inversion problem, and ii) having found the optimal probabilities $\{p_i\}$, we are still not guaranteed the optimal rate. Indeed, optimality is relative to the choice of the matrices $S_1,\dots,S_r$, which can be suboptimal.

\begin{remark}[Adaptive sampling]
The convergence rate~\eqref{eq:rhoconv2} suggests how one can select a sampling distribution for $S$ that would result in faster practical convergence. We now detail several practical choices for $B$ and indicate how to sample $S$. These suggestions require that the distribution of $S$ depends on the iterate $X_k$, and thus no longer fit into our framework. Nonetheless, we collect these suggestions here in the hope that others will wish to extend these ideas further, and as a  demonstration of the utility of developing convergence rates.  
\begin{enumerate}
\item If $B =I$, then Algorithm~\ref{alg:asym-row} converges at the rate $\rho = 1- 1/\kappa_{2,F}^2(A^\top \mathbf{S})$, and hence $S$ should be chosen so that $\mathbf{S}$ is a preconditioner of $A^\top $. For example $\mathbf{S}=X_{k}^\top ,$ that is, $S$ should be a sampling of the rows of $X_{k}$. 
\item If $B =I$, then Algorithm~\ref{alg:asym-col} converges at the rate $\rho = 1- 1/\kappa_{2,F}^2(A\mathbf{S})$, and hence $S$ should be chosen so that $\mathbf{S}$ is a preconditioner of $A$. For example $\mathbf{S}=X_{k}$; that is, $S$ should be a sampling of the columns of $X_{k}$. 
\item If $A $ is symmetric positive definite, we can choose $B=A$, in which case Algorithm~\ref{alg:sym} converges at the rate $\rho =1- 1/\kappa_{2,F}^2(A^{1/2}\mathbf{S}).$  This rate suggests that $S$ should be chosen so that $\mathbf{S}$ is an approximation of $A^{-1/2}.$ In Section~\ref{sec:AdaRBFGS} we develop this idea further, and design the AdaRBFGS algorithm.
\item If $B=A^\top A$, then Algorithm~\ref{alg:asym-row} can be efficiently implemented with $S=AV$, where $V$ is a complete discrete sampling. Furthermore $\rho = 1- 1/\kappa_{2,F}^2(A\mathbf{V}),$ where  $\mathbf{V} \eqdef [V_1,\ldots, V_r]$.  This rate suggests that $V$ should be chosen so that  $\mathbf{V}$ is a preconditioner of  $A$. For example $\mathbf{V}=X_{k}$; that is, $V$ should be a sampling of the rows of $X_{k}$. 
\item If $B=AA^\top $, then Algorithm~\ref{alg:asym-col} can be efficiently implemented with $S=A^\top V$, where $V$ is a complete discrete sampling. From~\eqref{eq:rhoconv2}, the convergence rate of the resulting method is given by
$1- 1/ \kappa_{2,F}^2(A^\top \mathbf{V}).$
This rate suggests that $V$ should be chosen so that  $\mathbf{V}$ is a  preconditioner of  $A^\top $. For example, $\mathbf{V}=X_{k}^\top $; that is, $V$ should be a sampling of the columns of $X_{k}$. 
\item If $A$ is symmetric positive definite, we can choose $B = A^{-2}$, in which case Algorithm~\ref{alg:sym} can be efficiently implemented with  $S =AV.$ Furthermore $\rho = 1- 1/\kappa_{2,F}^2(A\mathbf{V}).$ This rate suggests that $V$ should be chosen so that  $\mathbf{V}$ is a  preconditioner of  $A$. For example $\mathbf{V}=X_{k},$ that is, $V$ should be a sampling of the rows or the columns of $X_{k}$. 
\end{enumerate}
\end{remark}

\section{Randomized Quasi-Newton Updates} \label{sec:discretemethods}

Algorithms~\ref{alg:asym-row}, \ref{alg:asym-col} and \ref{alg:sym} are in fact families of algorithms indexed by the two parameters: i) positive definite matrix $B$ and ii) distribution $\cal D$ (from which we pick random matrices  $S$). This allows us to design a myriad of specific methods by varying these parameters. Here we highlight some of these possibilities, focusing on complete discrete distributions for $S$ so that convergence of the iterates is guaranteed through Theorems~\ref{theo:normEconv} and~\ref{theo:Enormconv}.
We also compute the convergence rate $\rho$ for these special methods for the convenient probability distribution given by~\eqref{eq:convprob} and~\eqref{eq:convprob2} (Theorem~\ref{theo:convsingleS}) so that the convergence rates~\eqref{eq:rhoconv} and~\eqref{eq:rhoconv2} depend on a scaled condition number which is easy to interpret. We will also make some connections to existing quasi-Newton  and Approximate Inverse Preconditioning methods. Table~\ref{tbl:QN} provides a guide through this section.

\begin{table}[!h]
\begin{center}
{
\footnotesize
\begin{tabular}{|c|c|c|c|c|c|}
\hline
$A$                                          & $B^{-1}$      & $S$ &  Inverse Equation & Randomized Update & Section\\
\hline
any        &  any & invertible & any & One Step & \ref{subsec:1step}\\
any        &  $ I$ & $e_i$ & $AX = I$ & Simultaneous Kaczmarz (SK) & \ref{subsec:09u09u09Kacz}\\
any       &  $I$ & vector & $XA = I$ & Bad Broyden (BB) & \ref{subsec:09u09u09}\\
sym. & $I$ & vector & $AX = I, X=X^\top $ & Powell-Symmetric-Broyden (PSB) & \ref{subsec:PSB} \\
any       &  $ I$ & vector & $XA^{-1} = I$ & Good Broyden (GB) & \ref{subsec:09u09u09good}\\
sym.      &  $A^{-1}-X_k$ & vector & $AX =I$ or $XA=I$ & Symmetric Rank 1 (SR1) & \ref{subsec:SR1}\\
s.p.d.       &  $A$ & vector & $XA^{-1} =I , X=X^\top $ & Davidon-Fletcher-Powell (DFP) & \ref{sec:RDFP} \\
s.p.d.       &  $A^{-1}$ & vector & $AX = I, X=X^\top $ & Broyden-Fletcher-Goldfarb-Shanno (BFGS) & \ref{sec:RBFGS} \\
any      & $(A^\top  A)^{-1}$ & vector & $AX=I$ & Column & \ref{sec:RCM}\\
\hline
\end{tabular}
}
\end{center}
\caption{Specific randomized updates for inverting matrices discussed in this section, obtained as special cases of our algorithms. First column: ``sym'' means ``symmetric'' and ``s.p.d.'' means ``symmetric positive definite''. Block versions of all these updates are obtained by choosing $S$ as a matrix with more than one column (i.e., not as a vector).}\label{tbl:QN}
\end{table}
 

\subsection{One Step Update}\label{subsec:1step}

We have the freedom to select $S$ as almost any random matrix that has full column rank. This includes choosing $S$ to be a constant and invertible matrix, such as the identity matrix $I$,  in which case 
$X_{1}$ must be equal to the inverse. Indeed, the sketch-and-project formulations of all our algorithms reveal that. For Algorithm~\ref{alg:asym-row}, for example, the sketched system is $S^\top  AX = S^\top $, which is equivalent to $AX=I$, which has as its unique solution $X=A^{-1}$.  Hence, $X_1 = A^{-1}$, and we have convergence in one iteration/step. Through inspection of the complexity rate, we see that $B^{-1/2}\E{Z}B^{-1/2} = I$ and  $\rho = \lambda_{\min}(B^{-1/2}\E{Z}B^{-1/2}) =1$, thus this one step convergence  is predicted in theory by Theorems~\ref{theo:normEconv} and~\ref{theo:Enormconv}.

\subsection{Simultaneous randomized Kaczmarz update}\label{subsec:09u09u09Kacz}

Perhaps the most natural choice for the weighting matrix $B$ is the identity $B=I.$ With this choice, Algorithm~\ref{alg:asym-row} is equivalent to applying the randomized  Kaczmarz update simultaneously to the $n$ linear systems encoded in $AX=I$. To see this, 
note that the sketch-and-project viewpoint~\eqref{eq:NF} of Algorithm~\ref{alg:asym-row} is
 \begin{align} \label{eq:NFbadbroyadj}
X_{k+1} = &\arg \min_{X\in \R^{n\times n}} \frac{1}{2}\norm{X - X_{k}}_{F}^2 \quad \mbox{subject to }\quad  S^\top AX =S^\top ,
\end{align} 
which, by~\eqref{eq:Xupdate}, results in the explicit update 
\begin{equation} \label{eq:badbroyadj}
X_{k+1} = X_{k}+ A^\top S(S^\top AA^\top S)^{-1}S^\top  (I-AX_{k}).
\end{equation}
If $S$ is a random coordinate vector, then~\eqref{eq:NFbadbroyadj} is equivalent to projecting the $j$th column of $X_k$ onto the solution space of $A_{i:}x = \delta_{ij},$ which is exactly an iteration of the randomized Kaczmarz update applied to solving $Ax=e_j.$
In particular, if $S = e_i$ with probability $p_i = \norm{A_{i:}}_2^2/\norm{A}_F^2$ then according to~\eqref{eq:rhoconv}, the rate of convergence of update~\eqref{eq:badbroyadj} is given by
\[\E{\norm{X_{k} -A^{-1} }_F^2} \leq \left(1- \frac{1}{\kappa_{2,F}^2(A)}\right)^k \norm{X_{0} -A^{-1} }_F^2 \]
where we used that $\kappa_{2,F}(A) = \kappa_{2,F}(A^\top ).$ This is exactly the rate of convergence given by Strohmer and Vershynin in~\cite{Strohmer2009} for the randomized Kaczmarz method.

\subsection{Randomized bad Broyden update}\label{subsec:09u09u09}

The update~\eqref{eq:badbroyadj} can also be viewed as an adjoint form of the bad Broyden update~\cite{Broyden1965,Griewank2012}. To see this,
if we use  Algorithm~\ref{alg:asym-col} with $B=I$, then the iterative process is
\begin{equation} \label{eq:badbroy}
X_{k+1} = X_{k}+ (I-X_{k}A)S(S^\top A^\top AS)^{-1}S^\top A^\top . 
\end{equation}
 This update~\eqref{eq:badbroy} is a randomized block form of the {\em bad Broyden update}~\cite{Broyden1965,Griewank2012}. In the quasi-Newton setting, $S$ is not random, but rather the previous step direction $S = \delta \in \R^n$. Furthermore, if we rename $\gamma \eqdef AS\in \R^n$, then~\eqref{eq:badbroy} becomes  
\begin{equation}\label{eq:badbroyden}
X_{k+1} = X_{k}+ \frac{\delta-X_k\gamma}{\norm{\gamma}_2^2}\gamma^\top ,
\end{equation}
which is the standard way of writing the bad Broyden update~\cite{Griewank2012}.
The update~\eqref{eq:badbroyadj} is an adjoint form of the bad Broyden in the sense that, if we transpose~\eqref{eq:badbroyadj}, then set $S = \delta$ and denote $\gamma = A^\top S$, we obtain the bad Broyden update, but applied to $X_k^\top $  instead.

From the constrain-and-approximate viewpoint~\eqref{eq:RFcols} we give a new interpretation to the bad Broyden update, namely, the update~\eqref{eq:badbroyden} can be written as
\[X_{k+1} = \arg_X \min_{X \in \R^{n\times n}, \; y\in \R^n} \frac{1}{2}\norm{X - A^{-1}}_{F}^2 \quad \mbox{subject to } \quad  X=X_{k} +y \gamma^\top . \]
Thus, {\em the bad Broyden update is the best rank-one update  approximating the inverse.}

We can determine the rate at which our randomized variant of the BB update~\eqref{eq:badbroy} converges by using~\eqref{eq:rhoconv2}.
In particular, if $S =S_i$ with probability   $p_i = \left.\norm{A S_i}_F^2\right/\norm{A\mathbf{S}}_F^2$, then~\eqref{eq:BFGSas} converges  with the rate
\[\E{\norm{X_{k} -A^{-1} }_F^2} \leq \left(1- \frac{1}{\kappa_{2,F}^2(A\mathbf{S})}\right)^k \norm{X_{0} -A^{-1} }_F^2. \]

\subsection{Randomized Powell-Symmetric-Broyden update}\label{subsec:PSB}

If $A$ is symmetric and we use  Algorithm~\ref{alg:sym} with $B=I$, the iterates are given by 
\begin{align}
 X_{k+1} &= X_{k} +AS^\top  (S^\top A^2S)^{-1}SA
(X_{k}AS -S)\left((S^\top A^2S)^{-1} S^\top  A-I \right) \nonumber \\
&-(X_{k}AS -S)(S^\top A^2S)^{-1} S^\top  A, \label{eq:PSB}
 \end{align}
which is a randomized block form of the Powell-Symmetric-Broyden update~\cite{Gower2014c}.
If $S =S_i$ with probability $p_i = \norm{AS_i}_F^2/\norm{A\mathbf{S}}_F^2$, then according to~\eqref{eq:rhoconv}, the iterates~\eqref{eq:PSB} and~\eqref{eq:badbroyadj} converge according to
\[\E{\norm{X_{k} -A^{-1} }_F^2} \leq \left(1  - \frac{1}{\kappa^2_{2,F}(A^\top \mathbf{S})}\right)^k \norm{X_{0} - A^{-1}}_F^2.\]

\subsection{Randomized good Broyden update}
\label{subsec:09u09u09good}

Next we present a method that shares certain properties with Gaussian elimination and can be viewed as a randomized block variant of the good Broyden update~\cite{Broyden1965,Griewank2012}. This method requires the following adaptation of Algorithm~\ref{alg:asym-col}: instead of sketching the inverse equation, consider the update~\eqref{eq:guasssketch} that performs a column sketching of the equation $XA^{-1} = I$ by right multiplying with $Ae_i$, where $e_i$ is the $i$th coordinate vector. Projecting an  iterate $X_k$ onto this sketched equation gives 
\begin{align}\label{eq:guasssketch}
X_{k+1} = &\arg \min_{X \in \R^{n\times n}} \frac{1}{2} \norm{X - X_{k}}_{F}^2 \quad \mbox{subject to }\quad  Xe_i =Ae_i.
\end{align}
 The iterates defined by the above are given by
\begin{equation}\label{eq:gaussinv}X_{k+1}=X_{k} +(A-X_k)e_ie_i^\top .
\end{equation}
Given that we are sketching and projecting onto the solution space of $XA^{-1}=I$, the iterates of this method converge to $A$. Therefore the inverse iterates $X^{-1}_k$ converge to $A^{-1}.$ We can efficiently compute the inverse iterates 
by using the Woodbury formula~\cite{Woodbury1950} which gives
\begin{equation}\label{eq:gauss}
X_{k+1}^{-1} = X_k^{-1}-\frac{(X_k^{-1}A-I)e_ie_i^\top X_k^{-1}}{e_i^\top X_k^{-1}Ae_i}.
\end{equation}
This update~\eqref{eq:gauss} behaves like Gaussian elimination in the sense that, if $i$ is selected in a cyclic fashion, that is $i=k$ on the $k$th iteration, then from~\eqref{eq:gaussinv} it is clear that 
\[ X_{k+1}e_i = Ae_i, \quad \mbox{thus} \quad X_{k+1}^{-1}Ae_i =e_i, \quad \mbox{for }i=1\ldots k.\]
That is, on the $k$th iteration, the first $k$ columns of the matrix $X_{k+1}^{-1}A$ are equal to the first $k$ columns of the identity matrix. Consequently,  $X_n = A$ and $X^{-1}_n = A^{-1}.$ If instead, we select $i$ uniformly at random, then we can adapt~\eqref{eq:rhoconv} by swapping each occurrence of $A^\top $ for $A^{-1}$ and observing that $S_i = Ae_i$ thus $\mathbf{S}=A$. Consequently the iterates~\eqref{eq:gaussinv} converge to $A$  at a rate of
\[\rho = 1-\kappa_{2,F}^2 \left(A^{-1}A\right)= 1-\frac{1}{n}, \]
and thus the lower bound~\eqref{eq:rholower} is achieved and $X_k$ converges to $A$ according to
\[\E{\norm{X_{k} -A }_F^2} \leq \left(1-\frac{1}{n}\right)^k \norm{X_{0} -A}_F^2. \]
 Despite this favourable convergence rate, this does not say anything about how fast 
$X^{-1}_k$ converges to $A^{-1}$. Therefore~\eqref{eq:gauss} is not an efficient method for calculating an approximate inverse.
If we replace $e_i$  by a \emph{step} direction $\delta_k \in \R^d$, then the update~\eqref{eq:gauss} is known as the {\em good Broyden} update~\cite{Broyden1965,Griewank2012}.

\subsection{Approximate inverse preconditioning}\label{sec:AIP} 

When $A$ is symmetric positive definite, we can choose $B =A$, and Algorithm~\ref{alg:asym-row} is given by
\begin{equation} \label{eq:BFGSas} X_{k+1} = X_{k} +S(S^\top AS)^{-1}S^\top (I-AX_{k}). \end{equation}
The constrain-and-approximate viewpoint~\eqref{eq:RF} of this update is
 \[X_{k+1} = \arg_X \min_{X\in \R^{n\times n}, Y\in \R^{n\times q}}  \frac{1}{2}\norm{A^{1/2}XA^{1/2} - I}_{F}^2 \quad \mbox{subject to } \quad  X=X_{k} + S Y^\top .\]
This viewpoint reveals that the update~\eqref{eq:BFGSas} is akin to the Approximate Inverse Preconditioning (\emph{AIP}) methods~\cite{Benzi1999,Gould1998,Kolotilina1993,Huckle2007}. 


 We can determine the rate a which~\eqref{eq:BFGSas}  converges using~\eqref{eq:rhoconv}. In particular, if $S =S_i$ with probability   $p_i = \Tr{S_i^\top  A S_i}/\Tr{\mathbf{S}^\top A\mathbf{S}}$, then~\eqref{eq:BFGSas} converges with rate 
\begin{equation}
\rho \overset{\eqref{eq:rhoconv}}{=} 1- \frac{1}{\kappa_{2,F}^2(A^{1/2}\mathbf{S})} = 1- \frac{\lambda_{\min}(\mathbf{S}^\top A\mathbf{S})}{\Tr{\mathbf{S}^\top A\mathbf{S}}}, \label{eq:BBonv}  
\end{equation}
and according to
\[\E{\norm{A^{1/2}X_k A^{1/2} - I }_F^2} \leq \left(1- \frac{\lambda_{\min}(\mathbf{S}^\top A\mathbf{S})}{\Tr{\mathbf{S}^\top A\mathbf{S}}}\right)^k \norm{A^{1/2}X_0A^{1/2} - I}_F^2.\]
This, as we will see in Section~\ref{sec:RBFGS}, 
is the same rate of convergence as a randomized variant of the BFGS method.
\subsection{Randomized SR1} \label{subsec:SR1}

The Symmetric Rank-1 (SR1) update~\cite{Davidon1968,Murtagh1969} does not explicitly fit into our framework, and nor does it fit into the traditional quasi-Newton framework, since it requires a $B$ that is not positive definite.  
Despite this, we present the update since it is  still commonly used.

Before choosing $B$, note that, though our theory requires that $B$ be positive definite, Algorithms~\ref{alg:asym-row} and~\ref{alg:asym-col} can be defined with a matrix $B^{-1}$ even if $B$ does not exist! For instance, 
when $A$ is symmetric and $B^{-1} = A^{-1}-X_k$ then from~\eqref{eq:Xupdate} or~\eqref{eq:Xupdatecols} we get
\begin{equation} \label{eq:SR1}
X_{k+1} = X_k +(I-AX_k)^\top S(S^\top (A-AX_kA)S)^{-1}S^\top (I-AX_k).
\end{equation}
This choice for $B$ presents problems, namely, the update~\eqref{eq:SR1} is not always well defined because it requires inverting $S^\top (A-AX_kA)S$ which is not necessarily invertible.
To fix this, we should select the sketching matrix $S$ so that $ S^\top (A-AX_kA)S$ is invertible. But this in turn means that $S$ will depend on $X_k$ and most likely cannot be sampled in an i.i.d fashion. Alternatively,   we can use the pseudoinverse of $S^\top (A-AX_kA)S$ in place of the inverse.

Since $B$ is not positive definite, our convergence theory says nothing about this update.

\subsection{Randomized DFP update}\label{sec:RDFP}

If $A$ is symmetric positive definite then we can choose $B =A^{-1}.$ Furthermore, if we adapt the sketch-and-project formulation~\eqref{eq:NF} to sketch the equation $XA^{-1} = I$ by right multiplying by $AS,$ and additionally impose symmetry on the iterates, we arrive at the following update.
\begin{align}\label{eq:DFPsketch}
X_{k+1} = &\arg \min_{X \in \R^{n\times n}} \frac{1}{2} \norm{X - X_{k}}_{F(A^{-1})}^2 \quad \mbox{subject to }\quad  XS =AS, \quad X =X^\top .
\end{align}
The solution to the above is given by\footnote{To arrive at this solution, one needs to swap the occurrences of $AS$ for $S$ and plug in  $B=A^{-1}$ in~\eqref{eq:Xupdatesym}. This is because, by swapping $AS$ for $S$ in~\eqref{eq:DFPsketch} gives~\eqref{eq:NFsym}.}
\begin{equation}\label{eq:DFPinv}
X_{k+1}= AS (S^\top AS)^{-1} S^\top A +\left( I- AS (S^\top AS)^{-1} S^\top 
\right)X_k\left(I -S (S^\top AS)^{-1} S^\top  A\right).  
\end{equation}
Using the Woodbury formula~\cite{Woodbury1950}, we find that 
\begin{equation}\label{eq:DFP} X_{k+1}^{-1} = X_{k}^{-1} + AS (S^\top AS)^{-1} S^\top A  -
X_k^{-1}  S \left( S^\top  X_k^{-1}  S  \right)^{-1} S^\top  X_k^{-1}.\end{equation}
The update~\eqref{eq:DFP} is a randomized variant of the Davidon-Fletcher-Powell (DFP) update~\cite{Davidon1959,Fletcher1960}. We can adapt~\eqref{eq:rhoconv} to determine the rate at which $X_k$ converges to $A$ by swapping each occurrence of $A^\top $ for $A^{-1}$. Indeed, for  example, let $S_i= Ae_i$  with probability $ p_i = \left. \lambda_{\min}(A) \right/ \Tr{A},$
then the iterates~\eqref{eq:gaussinv} converge to $A$  at a rate of
\begin{equation}\label{eq:convDFP}
\E{\norm{X_{k} -A }_{F(A^{-1})}^2} \leq \left(1-\frac{\lambda_{\min}(A)}{\Tr{A}}\right)^k \norm{X_{0} - A}_{F(A^{-1})}^2.
\end{equation}
Thus $X_k$ converges to $A$ at a favourable rate.
But this does not indicate at what rate does $X_{k}^{-1}$ converge to $A^{-1}$. This is in contrast to the randomized BFGS, which produces iterates that converge to $A^{-1}$ at this same favourable rate, as we show in the next section.  
This sheds new light on why BFGS update performs better than the DFP update.

\subsection{Randomized BFGS update}\label{sec:RBFGS}

 
If $A$ is symmetric and positive definite, we can  choose $B=A$ and apply  Algorithm~\ref{alg:sym} to maintain symmetry of the iterates. The iterates are given by
\begin{equation}\label{eq:qunac}
X_{k+1}  =S(S^\top AS)^{-1}S^\top + \left(I-S(S^\top AS)^{-1}S^\top A\right) X_{k} \left(I -AS(S^\top AS)^{-1}S^\top  \right).
\end{equation}
This is a block variant, see~\cite{Gower2014c}, of the BFGS update~\cite{Broyden1965,Fletcher1960,Goldfarb1970,Shanno1971}.
The constrain-and-approximate viewpoint gives a new interpretation to the Block BFGS update. That is, from~\eqref{eq:NFsym}, the iterates~\eqref{eq:qunac} can be equivalently defined by
\[X_{k+1} = \arg_X  \min_{X\in \R^{n\times n}, Y\in \R^{n\times q}}  \frac{1}{2}\norm{XA -I}_{F}^2 \quad \mbox{subject to} \quad
X = X_k + SY^\top  +YS^\top .\]

Thus the block BFGS update, and the standard BFGS update, can be seen as a method for calculating an approximate inverse subject to a particular symmetric affine space passing through $X_k.$ This is a completely new way of interpreting the BFGS update.

If $p_i = \Tr{S_i^\top AS_i}/\Tr{\mathbf{S}A\mathbf{S}^\top }$, then according to ~\eqref{eq:rhoconv}, the update~\eqref{eq:qunac}  converges according to
\begin{equation}\label{eq:convqunac}
\E{\norm{X_{k}A -I }_{F}^2} \leq \left(1 - \frac{1}{\kappa^2_{2,F}(A^{1/2}\mathbf{S})}\right)^k \norm{X_{0}A - I}_{F}^2.
\end{equation}

A remarkable property of the update~\eqref{eq:qunac} is that it preserves positive definiteness of $A$.  Indeed, assume that $X_k$ is positive definite and let $v \in \R^n$ and $P \eqdef S(S^\top AS)^{-1}S^\top .$ Left and right multiplying~\eqref{eq:qunac} by $v^\top $ and $v$, respectively, gives
\[v^\top X_{k+1}v = v^\top Pv+ v^\top \left(I-PA\right) X_{k} \left(I -AP\right)v \geq 0.\]
Thus $v^\top X_{k+1}v =0$ implies that $Pv=0$ and $ \left(I -AP \right)v=0,$  which when combined gives $v =0.$ This proves that $X_{k+1}$ is positive definite.
Thus the update~\eqref{eq:qunac} is particularly well suited for calculating the inverse of a positive definite matrices.

In Section~\eqref{sec:AdaRBFGS}, we detail an update designed to improve the convergence rate in~\eqref{eq:convqunac}. The result is a method that is able to invert large scale positive definite matrices orders of magnitude faster than the state-of-the-art.

\subsection{Randomized Column update}\label{sec:RCM}
We now describe an update that has no connection to any previous updates, yet the convergence rate we determine~\eqref{eq:RRMconv} is favourable, and comparable to all the other updates we develop.
 
For this update, we need to perform a linear transformation of the sampling matrices. For this, let $V$ be a complete discrete sampling where $V= V_i\in \R^{n\times q_i}$ with probability $p_i>0,$ for $i =1,\ldots,r.$ Let $\mathbf{V} =[V_1,\ldots, V_r].$
 Let the sampling matrices be defined as $S_i=AV_i \in \R^{n \times q_i}$ for $i =1,\ldots,r$. As $A$ is nonsingular, and $\mathbf{S} = A \mathbf{V}$, then $S$ is a complete discrete sampling. With these choices and $B=A^\top A$, the sketch-and-project viewpoint~\eqref{eq:NF} is given by 
\begin{align}
X_{k+1} =\arg \min_{X\in \R^{n\times n}} \frac{1}{2}\norm{X - X_{k}}_{F(A^\top A)}^2 \nonumber\quad  \mbox{subject to } \quad V_i^\top A^\top AX = V_i^\top A^\top .
\end{align} 
The solution to the above are the iterates of  Algorithm~\ref{alg:asym-row}, which is given by
\begin{equation}\label{eq:RRM}
X_{k+1} = X_{k} +V_i(V_i^\top  A^\top  AV_i)^{-1}V_i^\top (A^\top -A^\top AX_{k}).
\end{equation}
 From the constrain-and-approximate viewpoint~\eqref{eq:RF}, this can be written as
 \[X_{k+1} = \arg  \min_{X\in \R^{n\times n}, Y\in \R^{n\times q}}  \frac{1}{2}\norm{A(XA^\top -I)}_{F}^2 \quad \mbox{subject to } \quad  X=X_{k} + V_i Y^\top.\]
 
With these same parameter choices for $S$ and $B$, the iterates of Algorithm~\ref{alg:sym} are given by
    \begin{align}
X_{k+1}  &=X_{k} + V_i(V_i^\top A^2 V_i)^{-1}V_i^\top (AX_{k} -I)\left(A^2V_i(V_i^\top  A^2V_i)^{-1}V_i^\top  - I \right)\nonumber \\
& -(X_{k}A -I)AV_i(V_i^\top  A^2 V_i)^{-1}V_i^\top . \label{eq:symROM}
\end{align}
If we choose
$p_i = \norm{(AA^\top )^{-1/2}AA^\top V_i}_F^2 /\norm{(AA^\top )^{-1/2}AA^\top \mathbf{V}}_F^2 = \norm{A^\top V_i}_F^2/\norm{A^\top \mathbf{V}}_F^2,$
 then according to ~\eqref{eq:rhoconv}, the iterates~\eqref{eq:RRM} and~\eqref{eq:symROM} converge exponentially in expectation to the inverse according to
\begin{equation}\label{eq:RRMconv}
\E{\norm{A(X_kA^\top -I) }_{F}^2} \leq \left(1 - \frac{1}{\kappa^2_{2,F}(A\mathbf{V})}\right)^k \norm{A(X_0A^\top -I)}_{F}^2.
\end{equation}
 There also exists an analogous ``row'' variant of~\eqref{eq:RRM}, which arises by using  Algorithm~\ref{alg:asym-col}, but we do not explore it here.

\section{AdaRBFGS: Adaptive Randomized BFGS}\label{sec:AdaRBFGS}

All the updates we have developed thus far use a sketching matrix $S$ that is sampled in an i.i.d.\ fashion from a fixed distribution $\cal D$ at each iteration. In this section we assume that $A$ is symmetric positive definite, and propose AdaRBFGS: a variant of the RBFGS update, discussed in Section~\ref{sec:RBFGS}, which  {\em adaptively} changes the distribution $\cal D$ throughout the iterative process. Due to this change,  Theorems~\ref{theo:normEconv} and~\ref{theo:Enormconv} are no longer applicable. Superior numerical efficiency of this update is verified through extensive numerical experiments in Section~\ref{sec:numerical}.  

\subsection{Motivation}
We now motivate the design of this new update by examining the convergence rate~\eqref{eq:convqunac} of the RBFGS iterates~\eqref{eq:qunac}. Recall that in RBFGS we choose $B=A$ and $S=S_i$ with probability 
\begin{equation}\label{eq:ihsih0(0hI}p_i = \Tr{S_i^\top AS_i}/\Tr{\mathbf{S}A\mathbf{S}^\top }, \quad i=1,2,\dots,r,\end{equation} 
where $S$ is a complete discrete sampling and ${\bf S} = [S_1,\dots,S_r]$. The convergence rate is
\[\rho = 1 - \frac{1}{\kappa_{2,F}^2(A^{1/2}\mathbf{S})}  \overset{\eqref{eq:kappalower}}{=} 1 - \frac{\lambda_{\min}({\bf S}^\top  A {\bf S})}{\Tr{{\bf S}^\top  A {\bf S}}}.\]

Consider now the question of choosing the  matrix $\bf S$ in such a way that $\rho$ is as small as possible. Note that the  optimal choice is any $\bf S$ such that \[{\bf S}^\top  A {\bf S}= I.\] Indeed, then $\rho = 1-1/n$, and the lower bound~\eqref{eq:kappalower} is attained. For instance, the choice ${\bf S} = A^{-1/2}$ would be optimal. This means that in each iteration we would choose $S$ to be a  random column (or random column submatrix) of $A^{-1/2}$. Clearly, this is not a feasible choice, as we do not know the inverse of $A$. In fact, it is $A^{-1}$  which we are trying to find! However, this  leads to the following interesting observation: {\em the goals of finding the inverse of $A$ and of designing an optimal distribution $\cal D$ are in synchrony.} 

\subsection{The algorithm}

While we do not know $A^{-1/2}$, we can use the information of the iterates $\{X_k\}$ themselves to construct a good {\em adaptive} sampling. Indeed,  the iterates contain information about the inverse and hence we can use them to design a better sampling $S$. In order to do so, it will be useful to maintain a factored form of the iterates,  \begin{equation}\label{eq:X_k_98987H}X_k  = L_kL_k^\top ,\end{equation} where $L_k \in \R^{n\times n}$ is invertible. With this in place, let us choose $S$  to be a random column submatrix of $L_k$. In particular, let $C_1,C_2,\dots,C_r$  be nonempty subsets that form a partition of $\{1,2,\dots,n\}$, and at iteration $k$ choose \begin{equation}\label{eq:siug98sUJi}S = L_k I_{:C_i} \eqdef S_i, \end{equation} with probability $p_i$ given by \eqref{eq:ihsih0(0hI} for $i=1,2,\dots,r$. 
For simplicity, assume that $C_1=\{1,\dots,c_1\}$, $C_2 = \{c_1+1,\dots,c_2\}$ and so on, so that, by the definition of $\bf S$, we have
\begin{equation}\label{eq:S_9898y8}{\bf S} = [S_1, \dots, S_r] = L_k.\end{equation}
Note that now both $\bf S$ and $p_i$ depend on $k$.  The method described above satisfies the following recurrence.

\begin{theorem}\label{lem:9hs9hIKKK} Consider one step of the AdaRBFGS method described above. Then
\begin{equation}\label{eq:98h98hJJJ}\E{\norm{X_{k+1} -A^{-1}}_{F(A)}^2\, | \, X_k} \leq \left(1-\frac{\lambda_{\min }(AX_k )}{\Tr{AX_k}}\right) \norm{X_k -A^{-1}}_{F(A)}^2. \end{equation}
\end{theorem}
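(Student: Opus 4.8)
The plan is to recognize that the AdaRBFGS step is \emph{exactly} one step of the symmetric method (Algorithm~\ref{alg:sym}) with $B=A$ applied to the matrix $A$, but with the distribution $\cal D$ of the sketching matrix $S$ depending on $X_k$ (through the factorization $X_k=L_kL_k^\top$ and the choice $S=L_k I_{:C_i}$). So although the global Theorems~\ref{theo:normEconv} and~\ref{theo:Enormconv} no longer apply, the \emph{single-step, conditional-on-$X_k$} contraction estimate from the proof of Theorem~\ref{theo:Enormconv} (symmetric case, the chain of (in)equalities following~\eqref{eq:barRevol2}) does apply verbatim, since that argument only uses that $S$ is a \emph{fixed} complete discrete sampling for the given $A$ and $B$. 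First I would invoke that computation: conditioned on $X_k$, with $\hat Z = B^{-1/2}ZB^{-1/2}$ and $R_k = B^{1/2}(X_k-A^{-1})B^{1/2}$, one gets $\E{\|X_{k+1}-A^{-1}\|_{F(B)}^2 \mid X_k}\le (1-\lambda_{\min}(\E{\hat Z\mid X_k}))\,\|X_k-A^{-1}\|_{F(B)}^2$, i.e.\ the conditional rate is $\rho_k = 1-\lambda_{\min}(B^{-1/2}\E{Z\mid X_k}B^{-1/2})$.

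Next I would plug in $B=A$ and the specific adaptive sampling, and evaluate $\E{Z\mid X_k}$ in closed form using Proposition~\ref{prop:Ediscrete} (the discrete formula $\E{Z} = A^\top \mathbf{S}D^2\mathbf{S}^\top A$ with the probabilities~\eqref{eq:convprob}). With $B=A$, $\mathbf{S}=L_k$ (by~\eqref{eq:S_9898y8}) and $p_i = \Tr{S_i^\top A S_i}/\Tr{\mathbf{S}^\top A\mathbf{S}}$, the argument in Section~\ref{sec:RBFGS} (leading to~\eqref{eq:convqunac}) gives the conditional rate
\[
\rho_k \;=\; 1 - \frac{\lambda_{\min}(\mathbf{S}^\top A\mathbf{S})}{\Tr{\mathbf{S}^\top A\mathbf{S}}} \;=\; 1 - \frac{\lambda_{\min}(L_k^\top A L_k)}{\Tr{L_k^\top A L_k}}.
\]
Then I would simplify using $X_k = L_kL_k^\top$: by similarity (cyclic invariance of trace, and $\lambda_{\min}(L_k^\top A L_k) = \lambda_{\min}(A L_k L_k^\top) = \lambda_{\min}(AX_k)$ since $L_k^\top A L_k$ and $A L_k L_k^\top = AX_k$ are similar), we get $\Tr{L_k^\top A L_k} = \Tr{AX_k}$ and $\lambda_{\min}(L_k^\top A L_k) = \lambda_{\min}(AX_k)$, so $\rho_k = 1-\lambda_{\min}(AX_k)/\Tr{AX_k}$, which is precisely the factor in~\eqref{eq:98h98hJJJ}. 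Finally I would note $\|X_{k+1}-A^{-1}\|_{F(A)}^2$ and $\|X_k-A^{-1}\|_{F(A)}^2$ are the $B=A$ instances of $\|\cdot\|_{F(B)}^2$, completing the bound.

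The one genuine thing to check carefully — the main obstacle, though a mild one — is that the single-step estimate from Theorem~\ref{theo:Enormconv}'s proof is legitimately usable here despite $\cal D$ depending on $X_k$. The point is that \emph{conditioning on $X_k$ fixes $L_k$, hence fixes the partition-based sampling and the probabilities $p_i$}, so conditionally on $X_k$ the matrix $S$ is drawn from an ordinary (deterministic) complete discrete sampling, and all the ingredients used in that proof — $B^{-1/2}ZB^{-1/2}$ being an orthogonal projection (Lemma~\ref{ch:one:lem:Z}), symmetry of $\E{\hat Z\mid X_k}$, and $\Tr{\E{\hat Z\mid X_k}R_kR_k^\top}\ge\lambda_{\min}(\E{\hat Z\mid X_k})\Tr{R_kR_k^\top}$ — hold with expectations replaced by conditional expectations given $X_k$. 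I would also remark that $S=S_i$ has full column rank (as $L_k$ is invertible and $I_{:C_i}$ has full column rank), so $Z$ and the update are well defined, and that $\E{Z\mid X_k}$ is positive definite by Proposition~\ref{prop:Ediscrete} since $\mathbf{S}=L_k$ has full (row) rank; hence $\rho_k<1$, consistent with~\eqref{eq:98h98hJJJ}. With these observations in place the proof is a short assembly of already-established pieces.
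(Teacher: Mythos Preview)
Your proposal is correct and follows essentially the same approach as the paper: condition on $X_k$ so that the sampling becomes a fixed complete discrete sampling, invoke the single-step contraction from the proof of Theorem~\ref{theo:Enormconv} (symmetric case) to get the bound with rate $1-\lambda_{\min}(A^{-1/2}\E{Z\mid X_k}A^{-1/2})$, then use Proposition~\ref{prop:Ediscrete} with $\mathbf{S}=L_k$ and the probabilities~\eqref{eq:ihsih0(0hI} to bound this eigenvalue from below by $\lambda_{\min}(AX_k)/\Tr{AX_k}$. The paper carries out this last eigenvalue bound inline (splitting $\lambda_{\min}(A^{1/2}L_kD^2L_k^\top A^{1/2})\ge \lambda_{\min}(A^{1/2}X_kA^{1/2})\lambda_{\min}(D^2)$ and computing $\lambda_{\min}(D^2)$), whereas you invoke the packaged rate from Theorem~\ref{theo:convsingleS}/Section~\ref{sec:RBFGS} and then simplify via the similarity $L_k^\top A L_k \sim A L_kL_k^\top = AX_k$; these are the same computation. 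One small notational point: the quantity $1-\lambda_{\min}(\mathbf{S}^\top A\mathbf{S})/\Tr{\mathbf{S}^\top A\mathbf{S}}$ is in general an \emph{upper bound} on the true conditional rate (equality holds only when each $S_i$ is a single column), so your intermediate ``$\rho_k=$'' should be ``$\rho_k\le$'', but this is harmless since an upper bound is exactly what is needed for~\eqref{eq:98h98hJJJ}.
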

\begin{proof}
Using the same arguments as those in the proof of Theorem~\ref{theo:Enormconv}, we obtain
\begin{equation}\label{eq:Ec}
\E{\norm{X_{k+1} -A^{-1}}_{F(A)}^2\, | \, X_k} \leq \left(1-\lambda_{\min}\left(A^{-1/2}\E{Z \;|\; X_k}A^{-1/2}\right)\right) \norm{X_k -A^{-1}}_{F(A)}^2,
\end{equation}
where
   \begin{equation}\label{eq:Zxxx}
   Z \overset{\eqref{eq:Zxxx}}{=}AS_i(S_i^\top  A S_i)^{-1}S_i^\top  A.
\end{equation}
So, we only need to show that
\[\lambda_{\min}\left(A^{-1/2}\E{Z \;|\; X_k}A^{-1/2}\right) \geq \frac{\lambda_{\min }(AX_k )}{\Tr{AX_k}}.\]
Since $S$ is a complete discrete sampling, Proposition~\ref{prop:Ediscrete} applied to our setting says that
\begin{equation}\label{eq:hYhGfR6}\E{Z \;|\; X_k} =  A \mathbf{S} D^2 \mathbf{S}^\top   A, \end{equation}
where \begin{equation} \label{eq:Dxxx}
D~\eqdef~\mbox{Diag}\left( \sqrt{p_1}(S_1^\top  A  S_1)^{-1/2}, \ldots, \sqrt{p_r}(S_r^\top  A  S_r)^{-1/2}\right).\end{equation}
We now have
\begin{eqnarray*}
\lambda_{\min}\left(A^{-1/2}\E{Z \;|\; X_k}A^{-1/2}\right) &\overset{\eqref{eq:hYhGfR6}+\eqref{eq:S_9898y8}}{\geq}& \lambda_{\min}\left(A^{1/2}L_k L_k^\top  A^{1/2}\right) \lambda_{\min}(D^2) \\
& \overset{\eqref{eq:X_k_98987H}}{=} &   \frac{\lambda_{\min }(AX_k )}{\lambda_{\max}(D^{-2})}\\
&\overset{\eqref{eq:Dxxx}}{=}& \frac{\lambda_{\min }(AX_k )}{\max_{i} \lambda_{\max}(S_i^\top  A S_i) / p_i}\\
&\geq & \frac{\lambda_{\min }(AX_k )}{\max_{i} \Tr{S_i^\top  A S_i} / p_i}\\
&\overset{\eqref{eq:ihsih0(0hI} + \eqref{eq:S_9898y8}}{=} &
\frac{\lambda_{\min }(AX_k )}{\Tr{A X_k}},
\end{eqnarray*}
where in the second equality we have used the fact that the largest eigenvalue of  a block diagonal matrix is equal to the maximum of the largest eigenvalues of the blocks.
\end{proof}

If $X_k$ converges to $A^{-1}$, then necessarily the one-step rate of AdaRBFGS proved in Theorem~\ref{lem:9hs9hIKKK} asymptotically reaches the lower bound
\[\rho_k \eqdef  1 - \frac{\lambda_{\min}(AX_k)}{\Tr{A X_k}} \to 1-\frac{1}{n}.\]

In other words, as long as this method works, the convergence rate gradually improves, and  becomes asymptotically optimal and independent of the condition number. We leave a deeper analysis of this and other adaptive variants of the methods developed in this chapter to future work.

\subsection{Implementation}
To implement the AdaRBFGS update, we need to maintain the iterates $X_k$ in the factored form \eqref{eq:X_k_98987H}. Fortunately,  a factored form of the update~\eqref{eq:qunac} was introduced in~\cite{Gratton2011}, which we shall now describe and adapt to our objective. Assuming that $X_k$ is symmetric positive definite such that $X_k = L_k L_k^\top $, we shall describe how to obtain a corresponding factorization of $X_{k+1}$. Letting $G_k =  (S^\top L_k^{-T}L_k^{-1}S)^{1/2}$ and $R_k= (S^\top AS)^{-1/2}$, it can be verified through direct inspection~\cite{Gratton2011} that $X_{k+1} = L_{k+1} L_{k+1}^\top $, where
\begin{equation}\label{eq:Chol}L_{k+1} = L_k +S R_k \left(G_k^{-1}S^\top  L_k^{-T}-R_k ^\top  S^\top A L_k\right).
\end{equation}
If we instead of \eqref{eq:siug98sUJi} consider the more general update $S = L_k \tilde{S}$, where $\tilde{S}$ is chosen in an i.i.d.\ fashion from some fixed distribution $\cal \tilde{D}$, then 
\begin{equation}
 L_{k+1}  = L_k +L_k \tilde{S} R_k \left(({\tilde{S}}^\top  \tilde{S})^{-1/2}\tilde{S}^\top  -R_k^\top  \tilde{S}^\top  L_k^\top  A L_k\right). \label{eq:Cholimprov}
\end{equation}
The above can now be implemented efficiently, see Algorithm~\ref{alg:AdaRBFGS}. 

\begin{algorithm}[!h]
\begin{algorithmic}[1]
\State \textbf{input:} symmetric positive definite matrix $A$
\State \textbf{parameter:} ${\cal \tilde{D}}$ = distribution over random matrices with $n$ rows 
\State \textbf{initialize:} pick invertible $L_0 \in \R^{n\times n}$

\For {$k = 0, 1, 2, \dots$}
	\State Sample an independent copy $\tilde{S} \sim {\cal \tilde{D}}$
	\State Compute $S = L_k \tilde{S}$ 
	\Comment $S$ is sampled adaptively, as it depends on $k$
	\State Compute $R_k = (\tilde{S}^\top  A \tilde{S})^{-1/2}$ 
	\State $L_{k+1} = L_k +S R_k \left((\tilde{S}^\top  \tilde{S})^{-1/2}\tilde{S}^\top  -R_k^\top  S^\top  A L_k\right)$
	\Comment Update the factor
\EndFor
\State \textbf{output:} $X_k = L_k L_k^\top $
\end{algorithmic}

\caption{ Adaptive Randomized BFGS  (AdaRBFGS)}
\label{alg:AdaRBFGS}
\end{algorithm}

In Section~\ref{sec:numerical} we test two variants based on~\eqref{eq:Cholimprov}. The first is  the \emph{AdaRBFGS\_gauss} update, in which the entries of $\tilde{S}$ are standard Gaussian. The second is \emph{AdaRBFGS\_cols}, where $\tilde{S} =I_{:C_i}$, as described above, and  $|C_i| =q$ for all $i$ for some $q$.

\section{Numerical Experiments} \label{sec:numerical}

Given the demand for approximate inverses of positive definite matrices in preconditioning and in variable metric methods in optimization, and the author's own interest in the aforementioned applications, we restrict our test to inverting positive definite matrices. 

We test four iterative methods for inverting matrices. This rules out the all-or-nothing direct methods such as Gaussian elimination of LU based methods.

For our tests we use two variants of  Algorithm~\ref{alg:AdaRBFGS}: AdaRBFGS\_gauss, where $\tilde{S}\in\R^{n\times q}$ is a normal Gaussian matrix, and  
AdaRBFGS\_cols, where $\tilde{S}$ consists of a collection of $q$ distinct coordinate vectors in $\R^n$, selected uniformly at random. 
At each iteration the AdaRBFGS methods compute the inverse of a small matrix $S^\top AS$ of dimension $q\times q$.
To invert this matrix we use MATLAB's inbuilt \texttt{inv} function, which uses $LU$ decomposition or Gaussian elimination, depending on the input. Either way, \texttt{inv} costs $O(q^3).$ For simplicity, we selected $q=\sqrt{n}$ in all our tests. 

We compare our method to two well established and competitive methods, the \emph{Newton-Schulz method}~\cite{Schulz1933}
and the global self-conditioned Minimal Residual (\emph{MR}) method~\cite{Chow1998}.  The Newton-Schulz method arises 
from applying the Newton-Raphson method to solve the equation $X^{-1}=A$, which gives
\begin{equation}\label{eq:Newton-Schulz}
X_{k+1} = 2X_{k} -X_{k} A X_{k}.
\end{equation}
The MR method was designed to calculate  approximate inverses, and it does so by minimizing the norm of the residual along the preconditioned residual direction, that is
\begin{equation} \label{eq:MRdef}
\norm{I-AX_{k+1}}_F^2 = \min_{\alpha \in \R} \left\{\norm{I-AX}_F^2 \quad \mbox{subject to} \quad X = X_k +\alpha X_k(I-AX_k)\right\},
\end{equation}
see~\cite[chapter 10.5]{Saad2003} for a didactic introduction to MR methods.
The resulting iterates of the MR method are given by
\begin{equation} \label{eq:MRiter}
 X_{k+1} =X_k +\frac{\Tr{R_k^\top AX_kR_k}}{\Tr{(AX_kR_k)^\top AX_kR_k}}X_kR_k,
\end{equation}
where $R_k = I-AX_k$.

We perform two sets of tests. On the first set, we choose a different starting matrix for each method which is optimized, in some sense, for that method. We then compare the empirical convergence of each method, including the time taken to calculate $X_0$. In particular, 
  the  Newton-Schulz is only guaranteed to converge for an initial matrix $X_0$ such that $\rho(I-X_0A)< 1$. Indeed, the Newton-Schulz method did not converge in most of our experiments when $X_0$ was not carefully chosen according to this criteria.
  To remedy this, we choose $X_0 = 0.99 \cdot A^\top /\rho^2(A)$ for the Newton-Schulz method\footnote{During a revision of this thesis, it was kindly pointed out by the committee that setting $X_0 = \alpha A^\top$ with $\alpha = 1/\norm{A}_F^2$ would suffice to guarantee convergence and avoid the cost of calculating $\rho(A).$ Though we should note that the cost in calculating $\rho(A)$ was partly offset by using a C++ implementation, while all other methods were coded in native MATLAB.
    }, so that $\rho(I-X_0A)<1$ is satisfied. To compute $\rho(A)$ we used the inbuilt MATLAB function \texttt{normest} which is coded in C++. While for MR we followed the suggestion in~\cite{Saad2003} and used the projected identity for the initial matrix $X_0 = (\Tr{A}/\Tr{AA^\top }) \cdot I.$ For our AdaRBFGS methods we simply used $X_0 =I$, as this worked well in practice.
  
  In the second set of tests, which we relegate to the Appendix of this chapter,  we compare the empirical convergence of the methods starting from the same matrix, namely the identity matrix $X_0 = I$. 
%
%

We run each method until the relative residual $\norm{I-AX_{k}}_{F}/\norm{I-AX_0}_{F}$ is below $10^{-2}.$  All experiments were performed  and run in MATLAB R2014b. To appraise the performance of each method we plot the relative residual against time taken and against the number of floating point operations (\emph{flops}).


\subsection{Experiment 1: synthetic matrices}
First we compare the four methods on synthetic matrices generated 
using the \texttt{rand} function as follows:  $A~=~\bar{A}^\top \bar{A}$ where $\bar{A}=$\texttt{rand}$(n)$. The resulting matrix $A$ is positive definite with high probability. To appraise the difference in performance of the methods as the dimension of the problem grows, we tested for $n=1000$, $2000$ and $5000.$ As the dimension grows, only the two variants of the AdaRBFGS method are able to reach the $10^{-2}$ desired tolerance in a reasonable amount time and number of flops (see Figure~\ref{fig:pdsynth}).

\begin{figure}
    \centering
    \begin{subfigure}[t]{0.80\textwidth}
        \centering 
\includegraphics[width =  \textwidth, trim= 40 300 50 300, clip ]{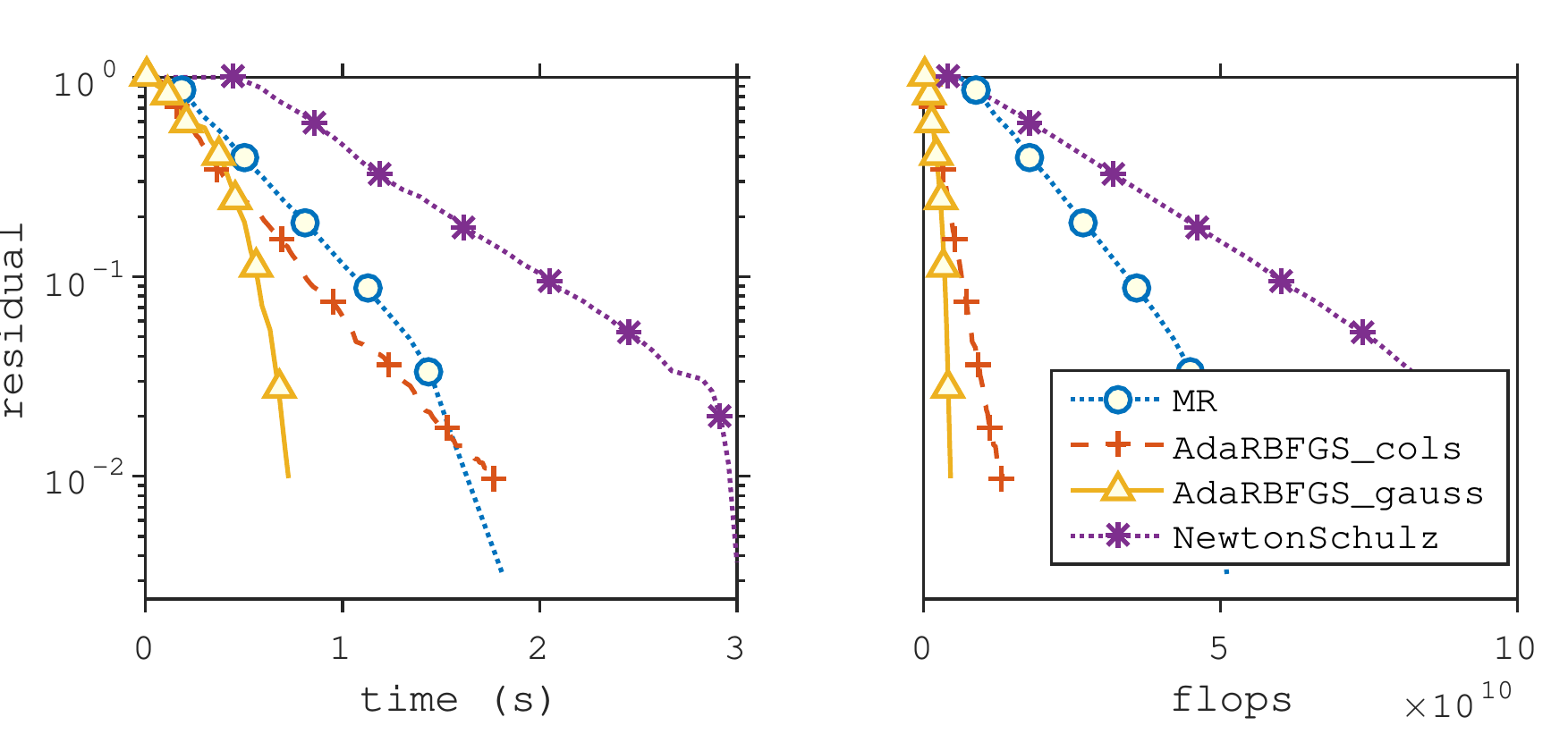}
        \caption{\texttt{rand} with $n=10^4$ }\label{fig:rand}
    \end{subfigure}
         \begin{subfigure}[t]{0.80\textwidth}
        \centering
\includegraphics[width =  \textwidth, trim= 40 300 50 300, clip ]{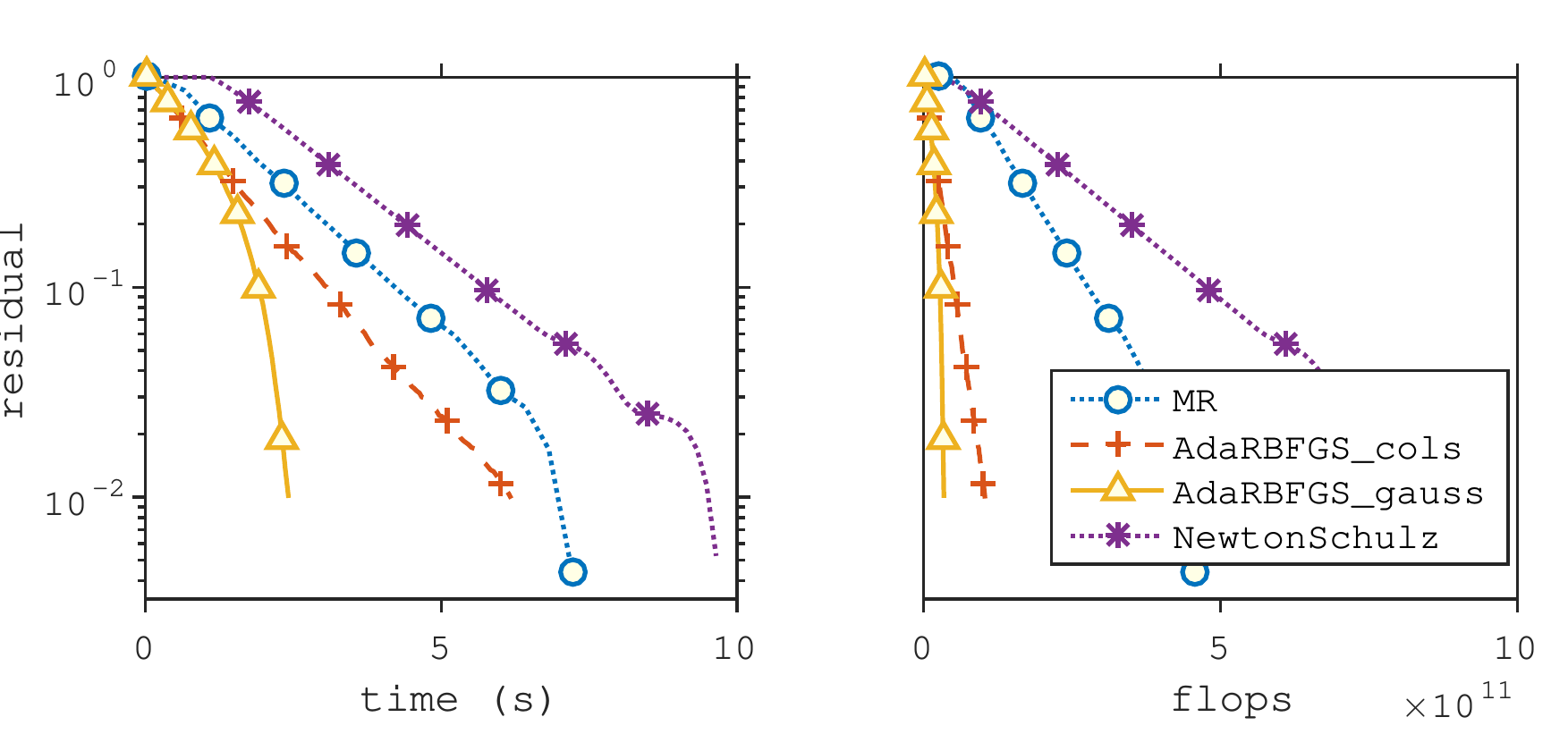}
        \caption{\texttt{rand} with $n=2 \cdot 10^4$ }\label{fig:rand2}
    \end{subfigure}\\
         \begin{subfigure}[t]{0.80\textwidth}
        \centering
\includegraphics[width =  \textwidth, trim= 40 300 50 300, clip ]{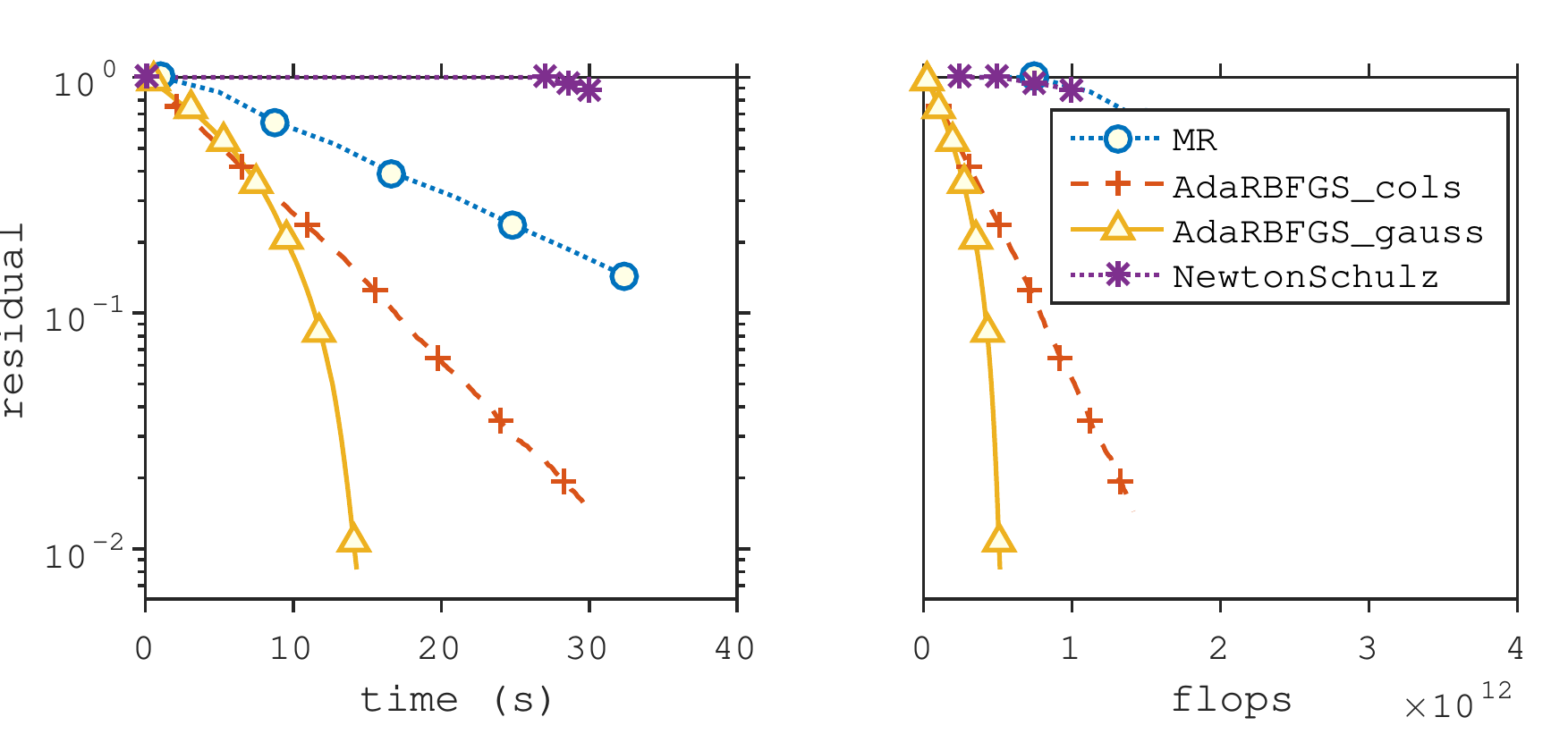}
        \caption{\texttt{rand} with $n=5 \cdot 10^4$ }\label{fig:rand3}
    \end{subfigure}%
    \caption{Synthetic MATLAB generated problems.  Uniform random matrix $A~=~\bar{A}^\top \bar{A}$ where $\bar{A}=$\texttt{rand}$(n).$ 
}\label{fig:pdsynth}
\end{figure}


\subsection{Experiment 2: LIBSVM matrices} 

Next we invert the Hessian matrix $\nabla^2 f(x)$ of four ridge-regression problems of the form
 \begin{equation}\label{eq:ridgeMatrix}
\min_{x\in \R^n}f(x)\eqdef \frac{1}{2}\norm{Ax-b}_2^2 + \frac{\lambda}{2} \norm{x}_2^2,\quad  \quad\nabla^2 f(x) = A^\top A+\lambda I,
\end{equation}
using data from LIBSVM~\cite{Chang2011}, see Figure~\ref{fig:LIBSVM}. We use $\lambda =1$ as the regularization parameter. On the two problems of smaller dimension, \texttt{aloi} and \texttt{protein}, the four methods have a similar performance, and encounter the inverse in a few seconds. On the two larger problems, \texttt{gisette-scale} and \texttt{real-sim}, the two variants of AdaRBFGS significantly outperform the MR and the Newton-Schulz method.
\begin{figure}
    \centering
    \begin{subfigure}[t]{0.65\textwidth}
        \centering
\includegraphics[width =  \textwidth, trim= 40 300 50 300, clip ]{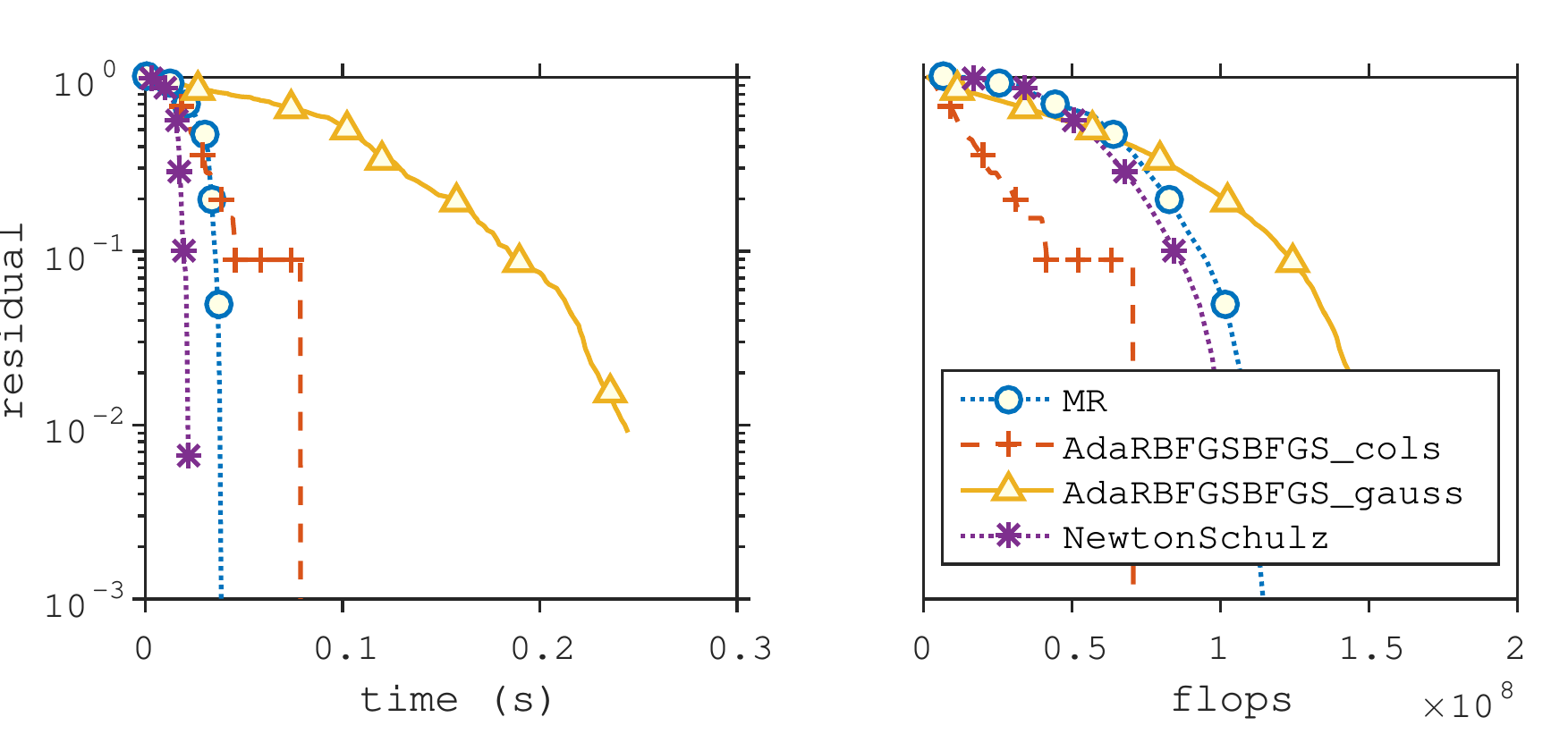}
        \caption{\texttt{aloi}}
    \end{subfigure}\\
    \begin{subfigure}[t]{0.65\textwidth}
        \centering
\includegraphics[width =  \textwidth, trim=40 300 50 300, clip ]{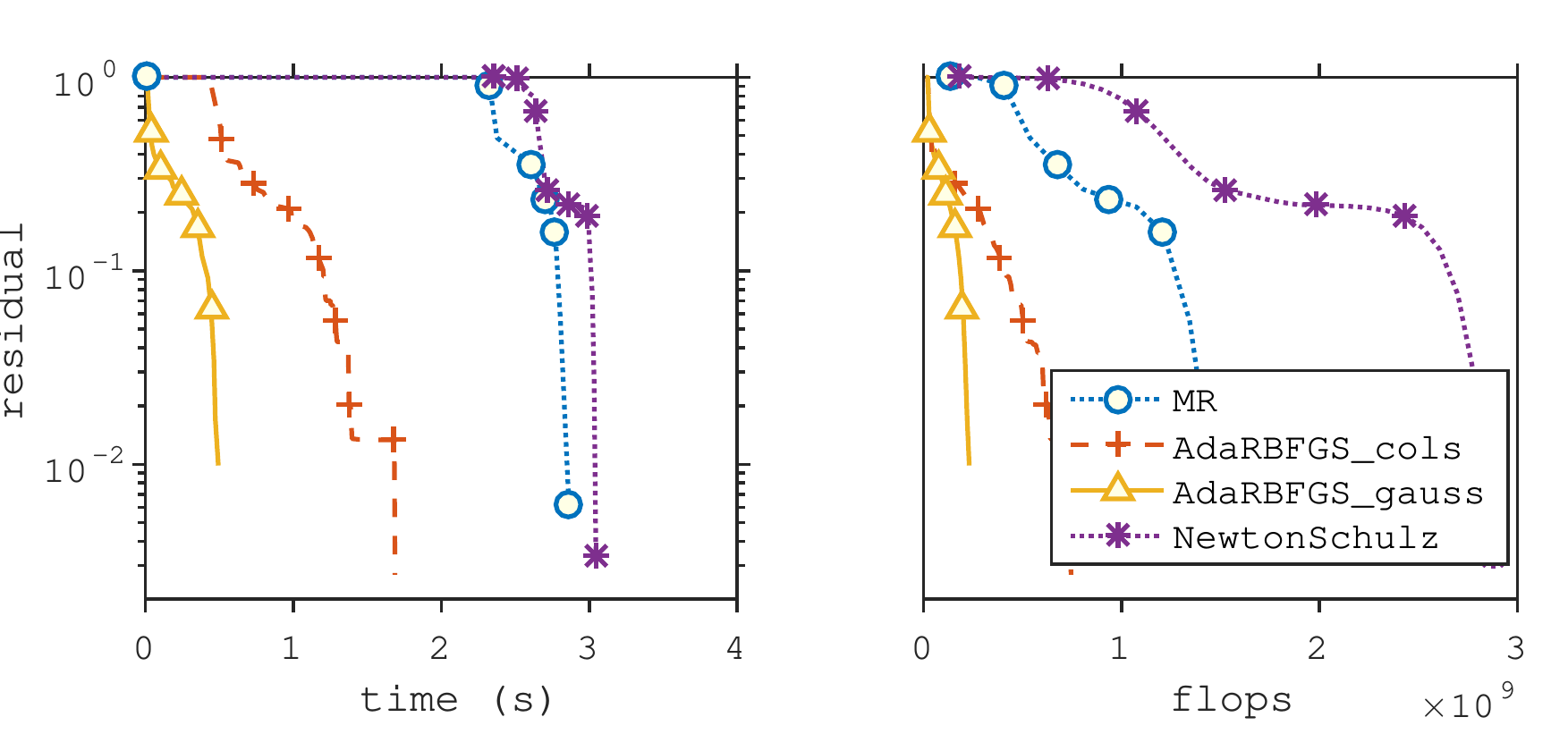}
        \caption{\texttt{protein}}
    \end{subfigure}\\
        \begin{subfigure}[t]{0.65\textwidth}
        \centering
\includegraphics[width =  \textwidth, trim=40 300 50 300, clip ]{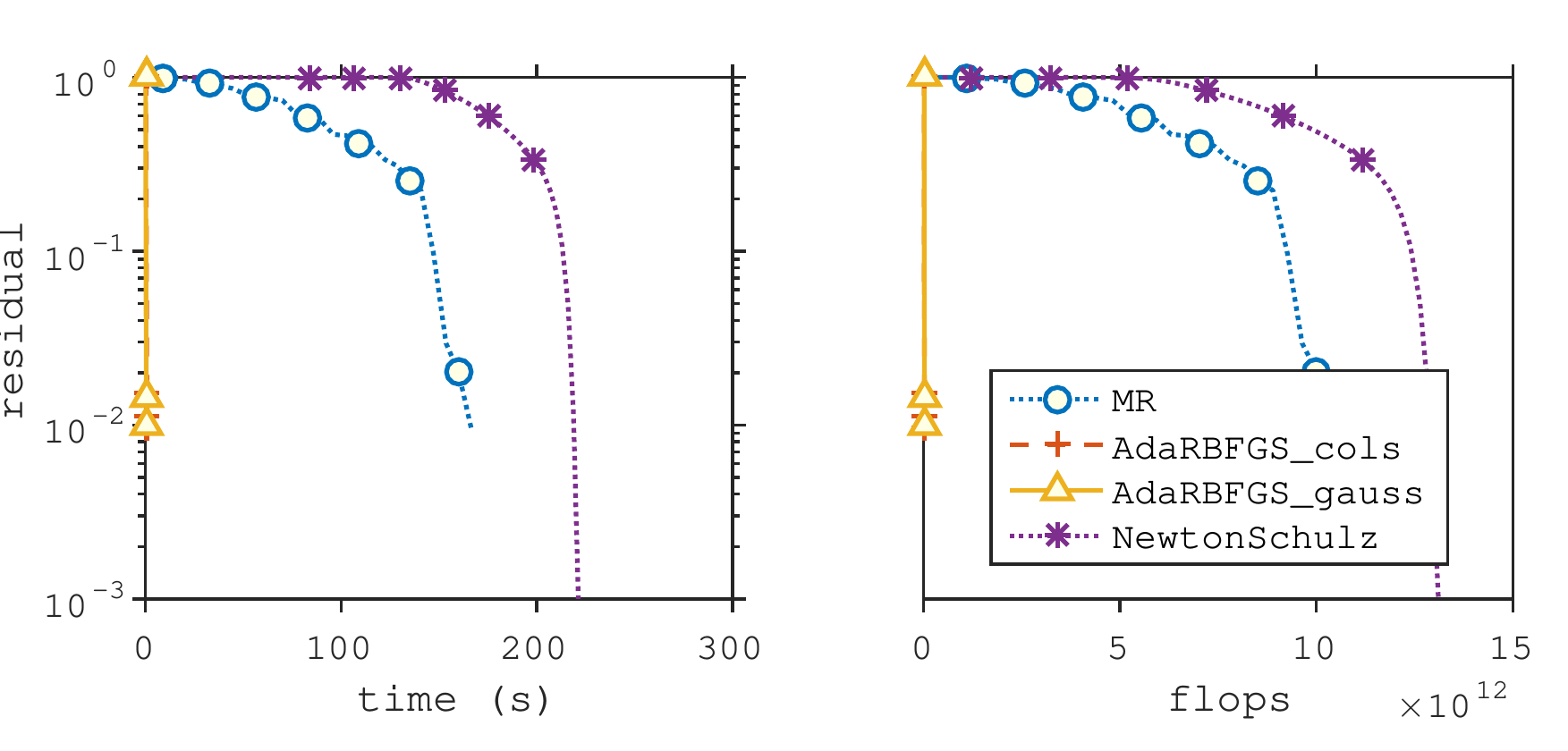}
        \caption{\texttt{gisette\_scale}}
    \end{subfigure}\\
    \begin{subfigure}[t]{0.65\textwidth}
        \centering
\includegraphics[width =  \textwidth, trim= 40 300 50 300, clip ]{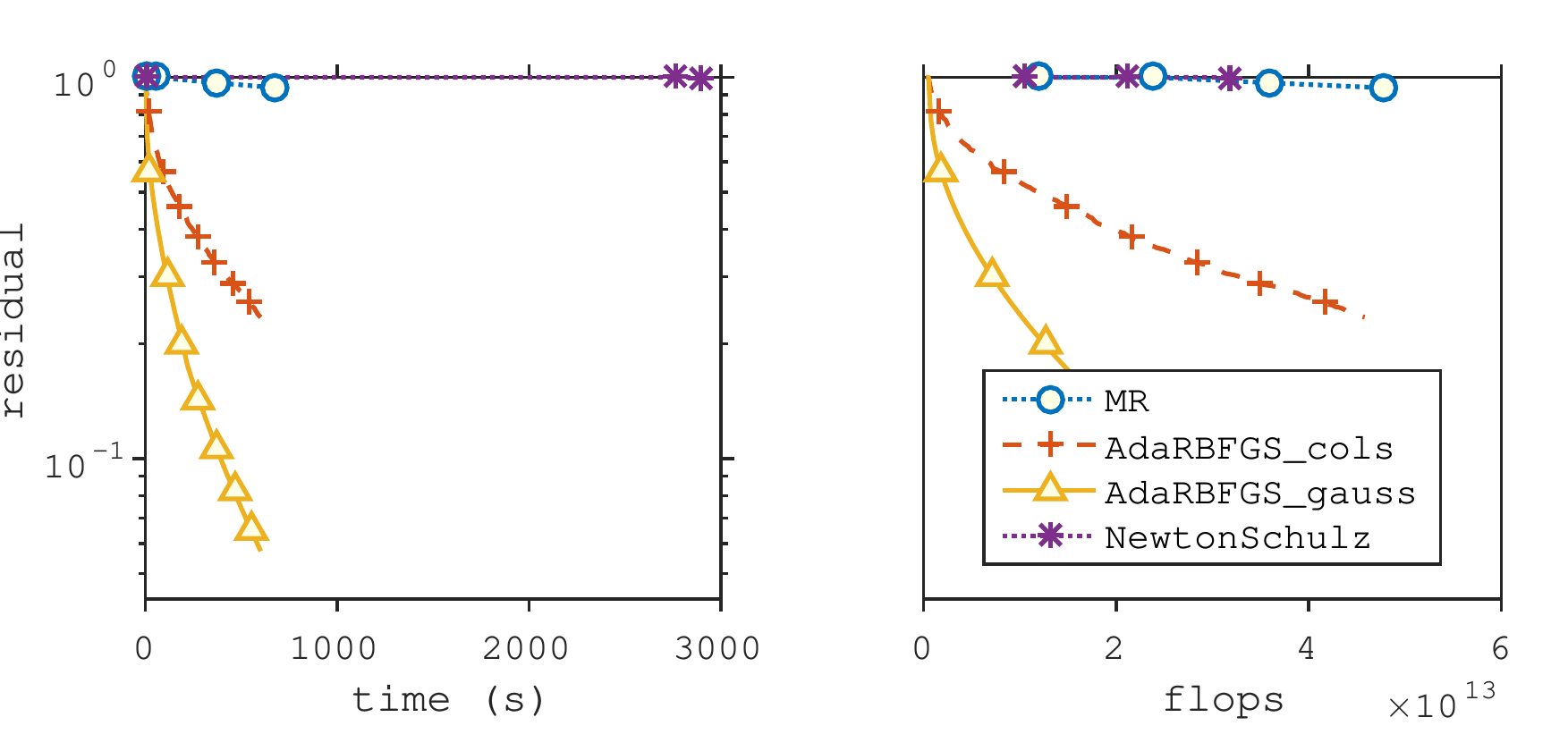}
        \caption{\texttt{real\_sim}}
    \end{subfigure}
    \caption{The performance of Newton-Schulz, MR, AdaRBFGS\_gauss and AdaRBFGS\_cols methods on the Hessian matrix of four LIBSVM test problems: (a) \texttt{aloi}: $(m;n)=(108,000;128)$ (b) \texttt{protein}: $(m; n)=(17,766; 357)$ (c)  \texttt{gisette\_scale}: $(m;n)=(6000; 5000)$ (d) \texttt{real-sim}: $(m;n)=(72,309; 20,958)$.} \label{fig:LIBSVM}
\end{figure}

\subsection{Experiment 3: UF sparse matrices}

For our final batch of tests, we calculate an approximate inverse of several sparse matrices from the Florida sparse matrix collection~\cite{Davis:2011}\footnote{One should never calculate an exact inverse of a sparse matrix in practice since the resulting matrix is dense. But given that the AdaBFGS method performs low rank updates, this allows us to \emph{implicitly} form an approximate inverse by storing the updates, and not explicitly forming the matrix. We explore this in the paper~\cite{GowerGold2016}}. We have selected six problems from six different applications, so that the set of matrices display a varied sparsity pattern and  structure, see Figures~\ref{fig:UF1} and~\ref{fig:UF2}.

On the matrix \texttt{Bates/Chem97ZtZ} of moderate size, the four methods perform well, with the Newton-Schulz method converging first in time and AdaRBFGS\_cols first in flops. While on the matrices of larger dimension, the two variants of AdaRBFGS converge much faster, often orders of magnitude before the MR and Newton-Schulz method reach the required precision.

\begin{figure}
    \centering
\begin{subfigure}[t]{0.65\textwidth}
        \centering
\includegraphics[width =  \textwidth, trim=40 300 50 300, clip ]{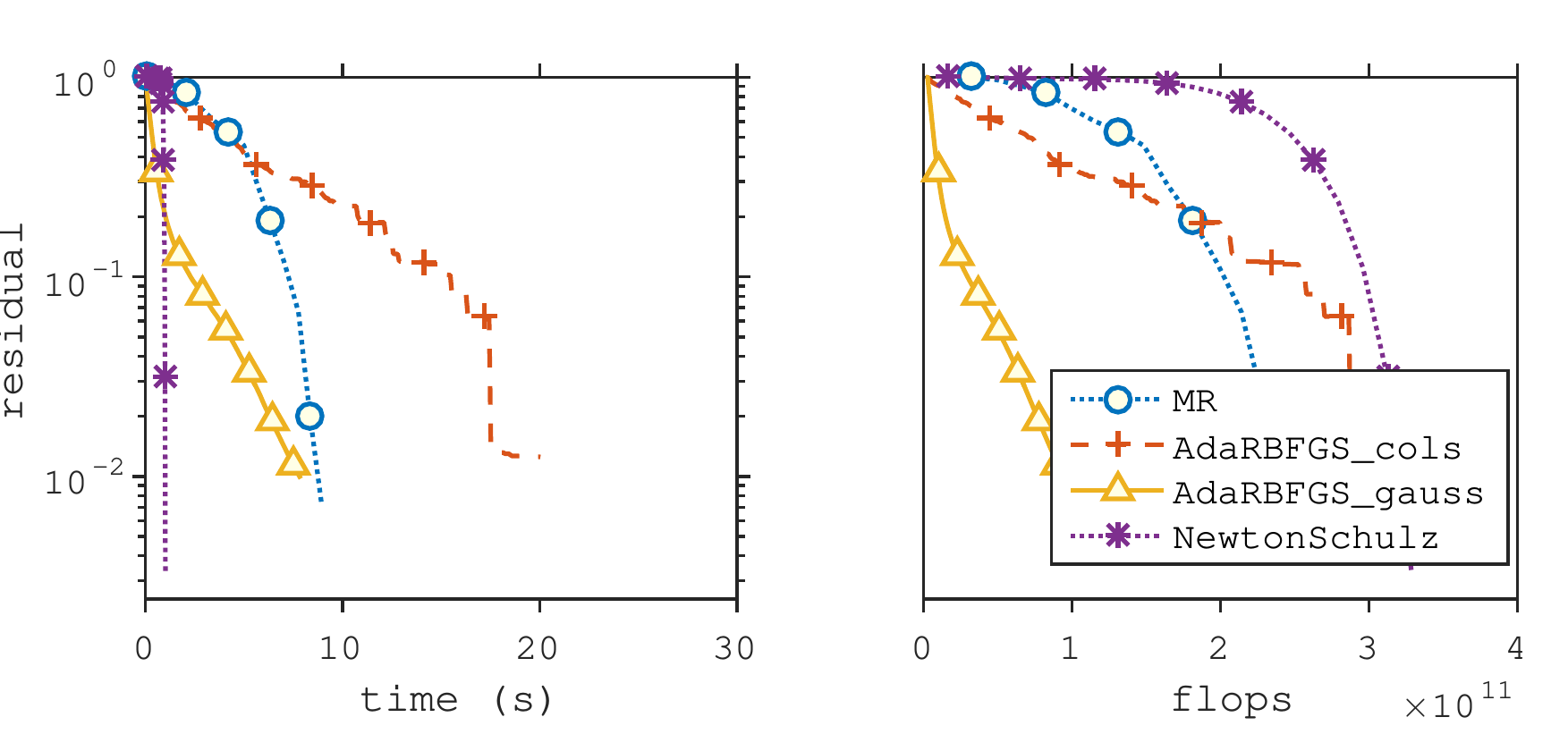}
        \caption{\texttt{Bates/Chem97ZtZ}}
\end{subfigure}\\ 
\begin{subfigure}[t]{0.65\textwidth}
        \centering
\includegraphics[width =  \textwidth, trim= 40 300 50 300, clip ]{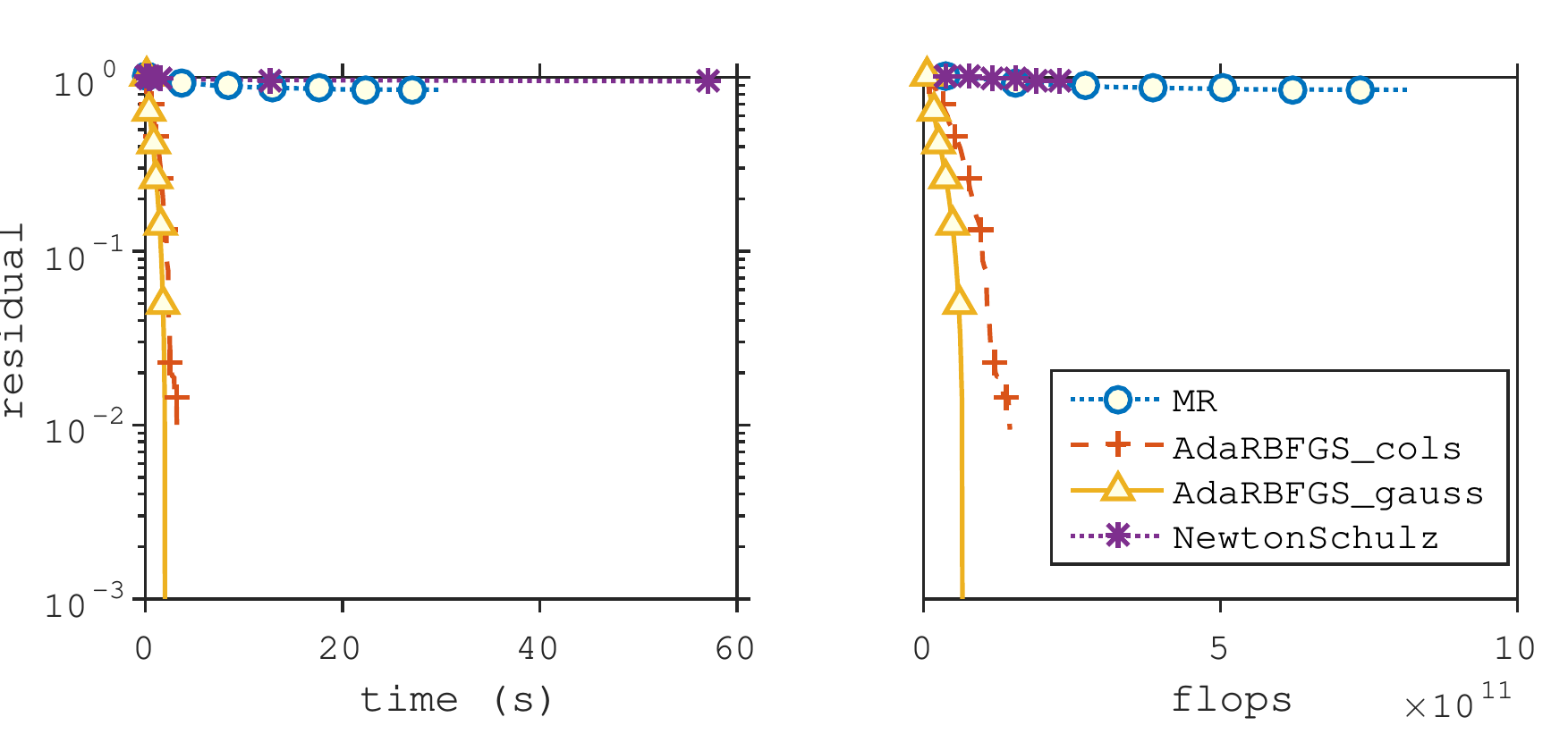}
        \caption{\texttt{FIDAP/ex9}}
\end{subfigure} \\%
\begin{subfigure}[t]{0.65\textwidth}
        \centering
\includegraphics[width =  \textwidth, trim=40 300 50 300, clip ]{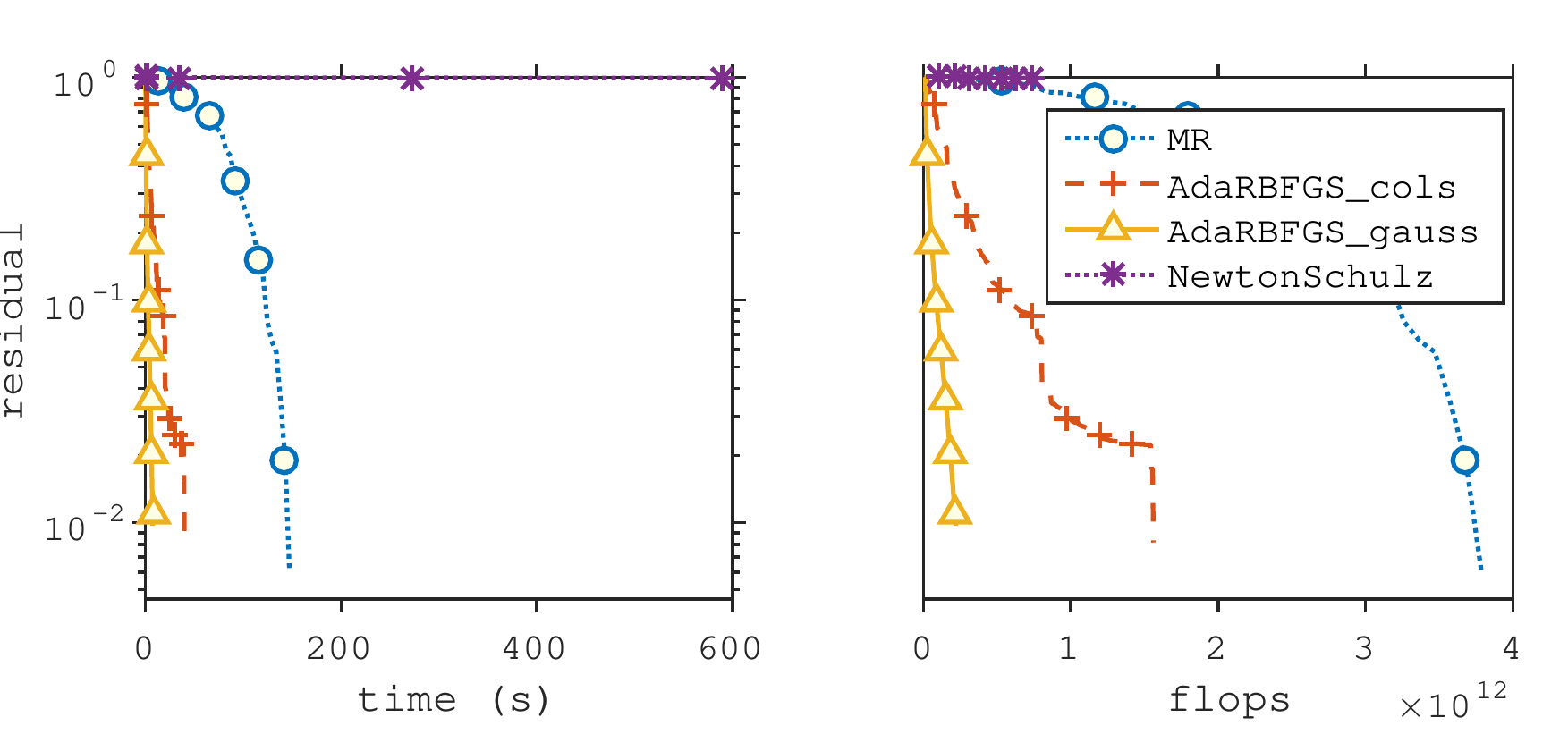}
        \caption{\texttt{Nasa/nasa4704}}
\end{subfigure} \\ 
\begin{subfigure}[t]{0.65\textwidth}
        \centering
\includegraphics[width =  \textwidth, trim=40 300 50 300, clip ]{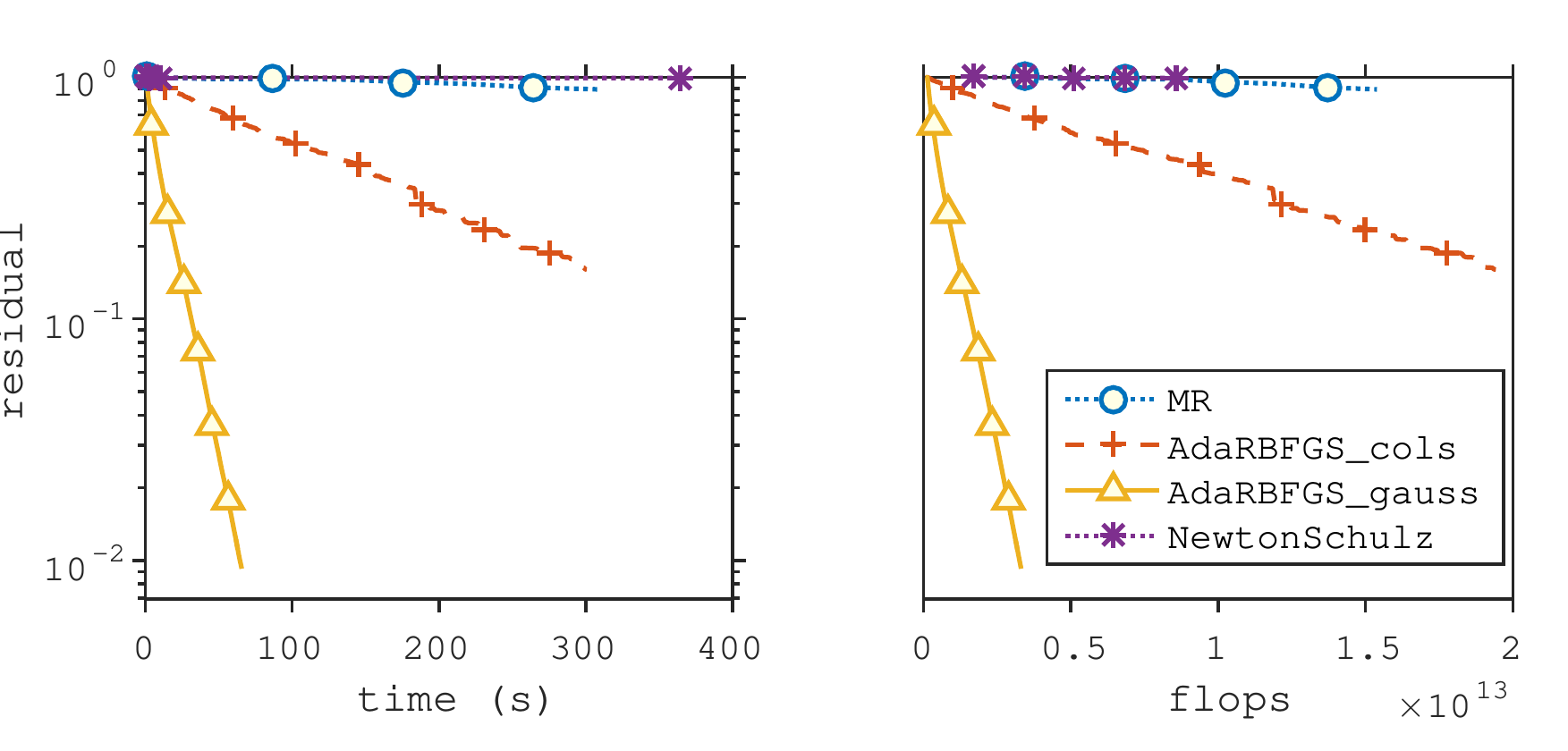}
        \caption{\texttt{HB/bcsstk18}}
\end{subfigure}%
  \caption{The performance of Newton-Schulz, MR, AdaRBFGS\_gauss and AdaRBFGS\_cols on 
(a) \texttt{Bates-Chem97ZtZ}: $n= 2\,541$,   
  (b) \texttt{FIDAP/ex9}: $n = 3,\,363 $, (c) \texttt{Nasa/nasa4704}: $n= 4\,,704$, 
  (d) \texttt{HB/bcsstk18}: $n=11,\,948$.  } \label{fig:UF1}
  \end{figure}

\begin{figure}
    \centering
\begin{subfigure}[t]{0.65\textwidth}
        \centering
\includegraphics[width =  \textwidth, trim=40 300 50 300, clip ]{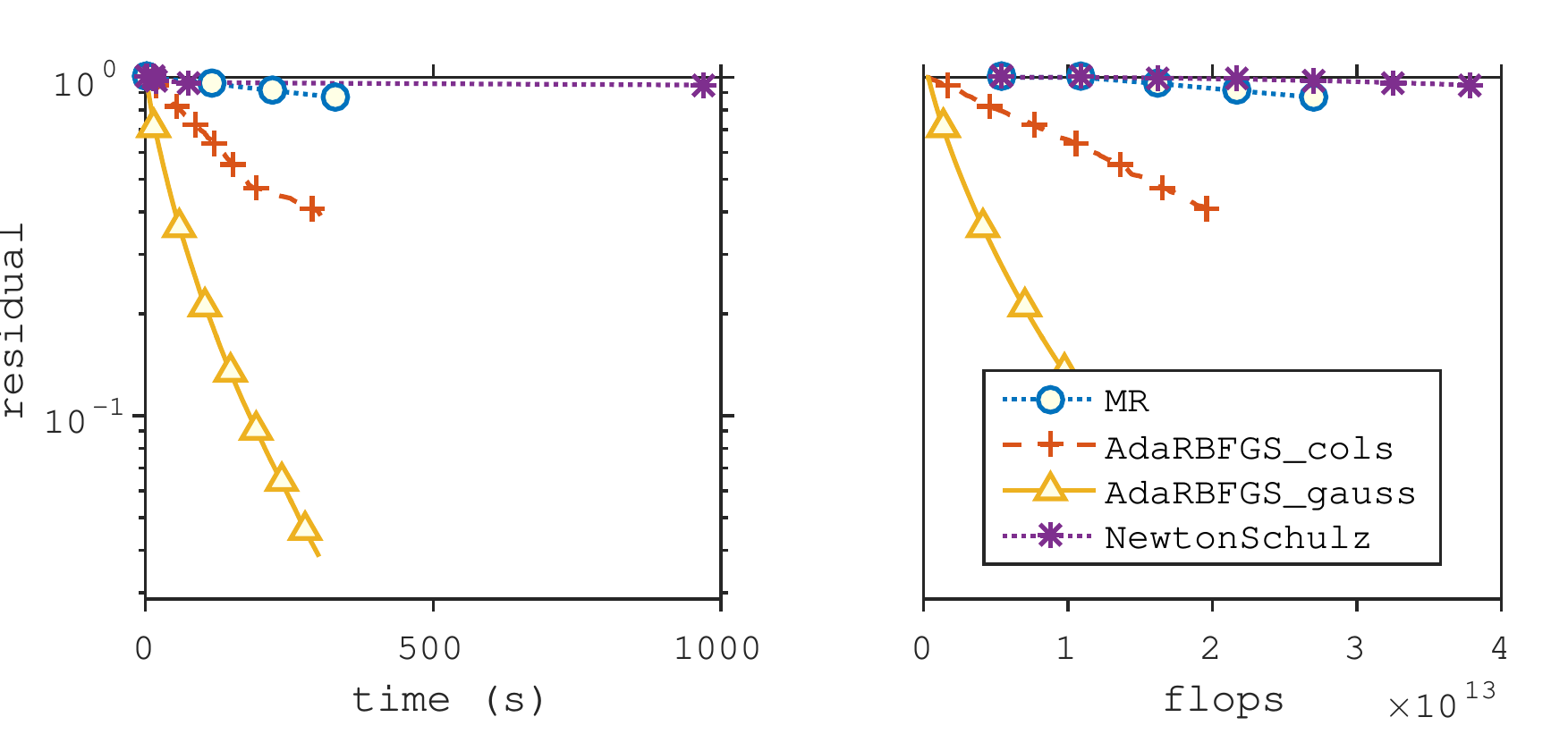}
        \caption{\texttt{Pothen/bodyy4}}
\end{subfigure} \\
\begin{subfigure}[t]{0.65\textwidth}
        \centering
\includegraphics[width =  \textwidth, trim=40 300 50 300, clip ]{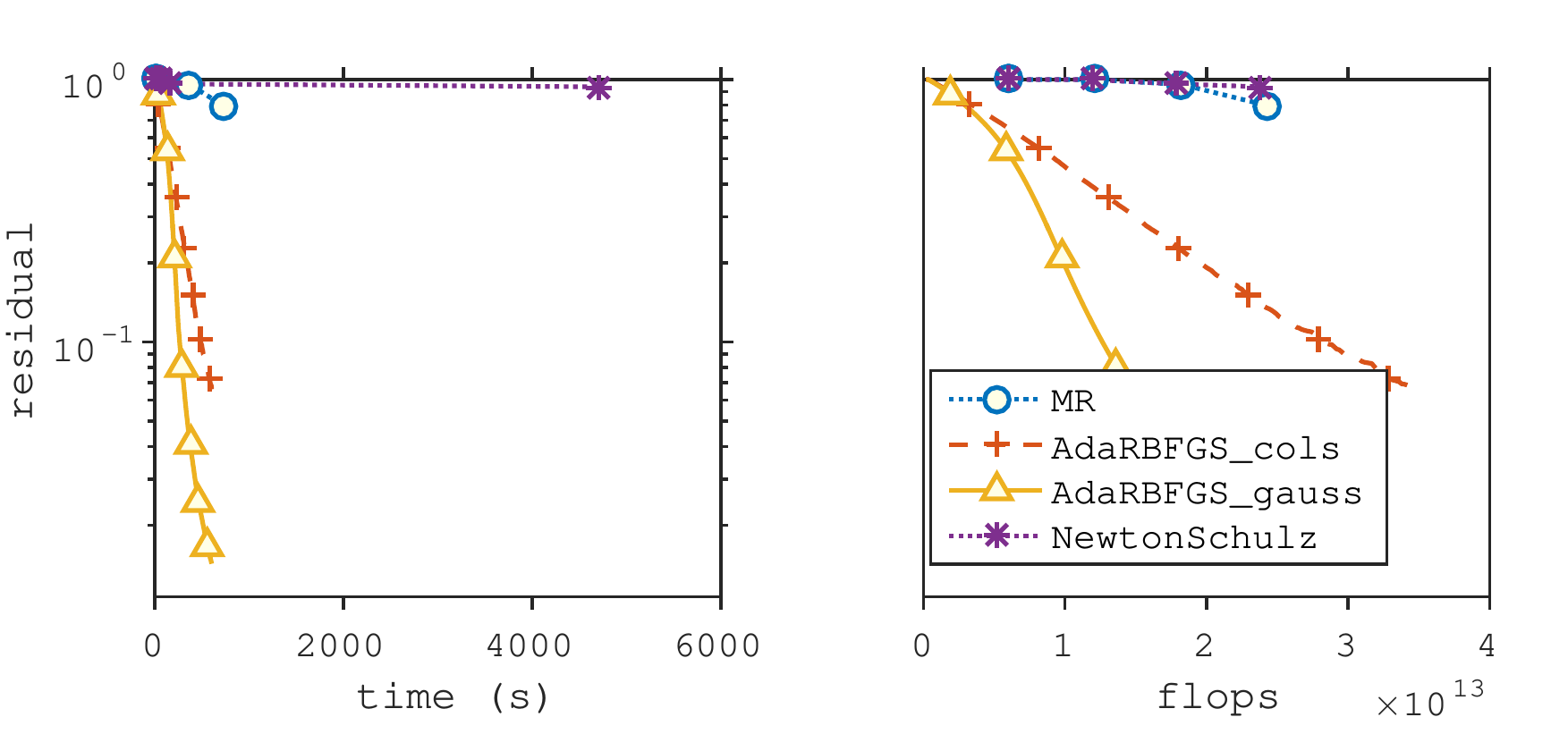}
        \caption{\texttt{ND/nd6k}}
\end{subfigure}\\%
\begin{subfigure}[t]{0.65\textwidth}
        \centering
\includegraphics[width =  \textwidth, trim=40 300 50 300, clip ]{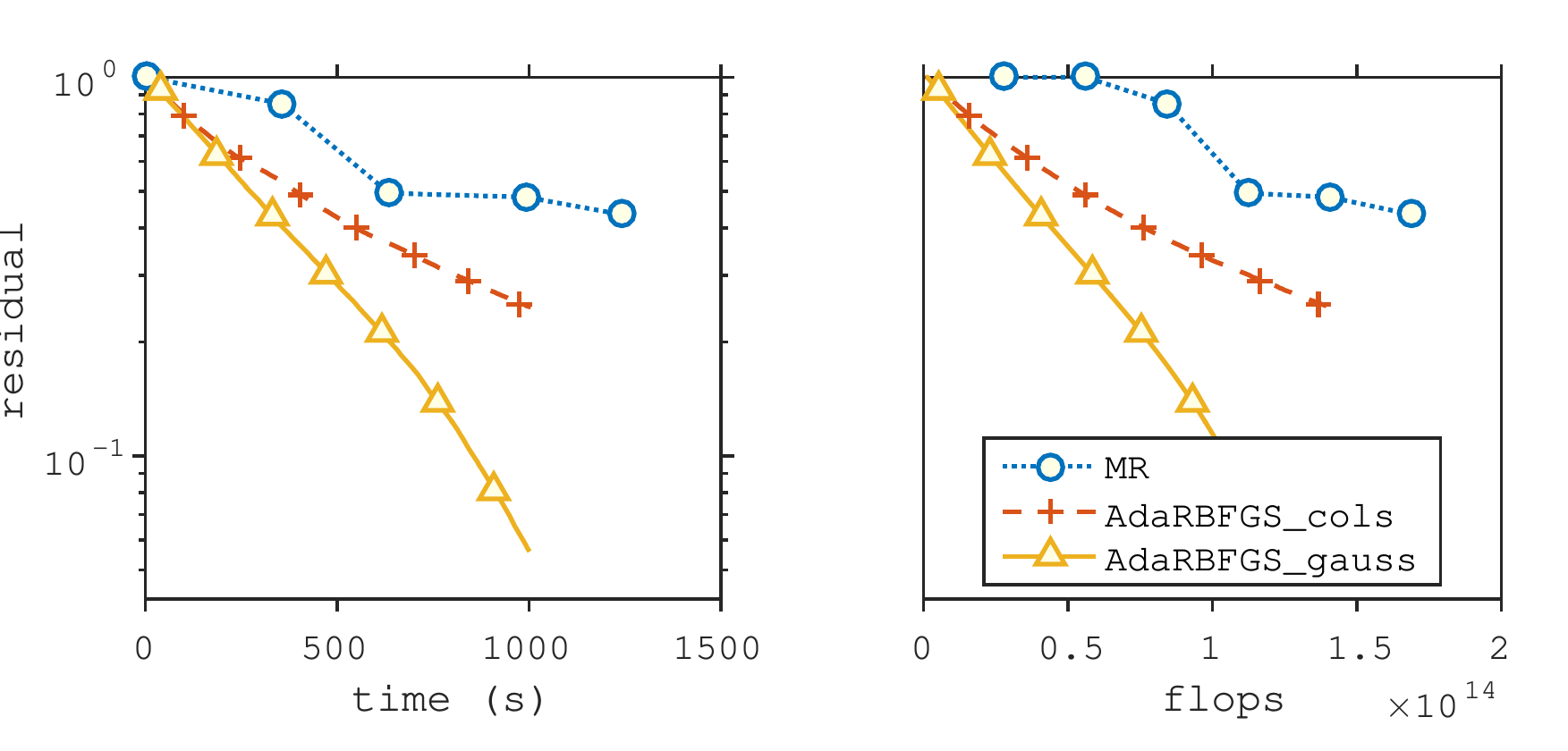}
        \caption{\texttt{GHS\_psdef/wathen100}}
\end{subfigure}%
  \caption{The performance of Newton-Schulz, MR, AdaRBFGS\_gauss and AdaRBFGS\_cols on 
  (a) \texttt{Pothen/bodyy4}: $n =17,\,546$ (b) \texttt{ND/nd6k}: $n=18,\,000$ (c) \texttt{GHS\_psdef/wathen100}: $n = 30, \,401$.  } \label{fig:UF2}
  \end{figure}

 The significant difference between the performance of the methods on large scale problems can be, in part, explained by their iteration cost. The iterates of the Newton-Schulz and MR method compute $n\times n$ matrix-matrix products. While the cost of an iteration of the AdaRBFGS methods is dominated by the cost of a $n\times n$ matrix by $n\times q$ matrix product. As a result, and because we set $q = \sqrt{n},$ this is difference of $n^3$ to $n^{2+1/2}$ in iteration cost, which clearly shows on the larger dimensional instances.

 \subsection{Conclusion of numeric experiments}
 Through our extensive numeric experiments, it is clear that the AdaRBFGS methods  are highly efficient at calculating a low precision approximate inverse of a positive definite matrix. Furthermore, in many of these experiments the 
 Newton-Schulz and MR method suffer from an initially slow convergence, particularly so on large-scale problems, see Figures~\ref{fig:pdsynth}c,~\ref{fig:LIBSVM}d,~\ref{fig:UF1}b,
 ~\ref{fig:UF1}c and~\ref{fig:UF1}d  for example. But after a sufficient number of iteration, the asymptotic second order convergence rate of the Newton-Schulz and MR method sets in, see Figures~\ref{fig:pdsynth}a,~\ref{fig:pdsynth}b,~\ref{fig:LIBSVM}a ,~\ref{fig:LIBSVM}b and~\ref{fig:LIBSVM}c for example. 

 These experiments also indicate that the AdaRBFGS method enjoys super linear convergence as can be seen, for example, in Figure~\ref{fig:pdsynth} the residual decreases superlinearly in both time and flops. Yet we have still to prove that the AdaRBFGS methods are even linearly convergent. Thus it is now an open question whether or not the AdaRBFGS convergent linearly or superlinearly.

\section{Summary}

%
%
%
%
%

We developed a family of stochastic methods for iteratively inverting matrices, with a specialized variant for asymmetric, symmetric and positive definite matrices. The methods have two dual viewpoints, a sketch-and-project viewpoint which is an extension of the least-change formulation of the quasi-Newton methods,  
 and a constrain-and-approximate viewpoint which is related to the approximate inverse preconditioning (API) methods. The equivalence between these two viewpoints reveals a new connection between the quasi-Newton and the API methods, which were previously considered to be unrelated. 
   
     Under mild conditions, we prove convergence rates through two different perspectives, the convergence of the expected norm of the error, and the norm of the expected error. Our convergence theorems are general enough to accommodate discrete samplings and continuous  samplings, though we only explore discrete sampling here in more detail.  
       
         For discrete samplings, we determine a probability distribution for which the convergence rates are equal to a scaled condition number, and thus are easily interpretable. Furthermore, for discrete sampling, we determining a practical optimized sampling distribution, that is obtained by minimizing an upper bound on the convergence rate. We develop new randomized block variants of the quasi-Newton updates, including the BFGS update, complete with convergence rates, and provide new insights into these methods using our dual viewpoint. 
           
            For positive definite matrices, we develop an Adaptive Randomized BFGS methods (AdaRBFGS), which in large-scale numerical experiments, prove to be orders of magnitude faster (in time and flops) then the self-conditioned minimal residual method and the Newton-Schulz method. In particular, only the AdaRBFGS methods are able to approximately invert  
the $20,958 \times 20,958$ ridge regression matrix based on the \texttt{real-sim} data set in reasonable time and flops. 

   This work opens up many possible venues for future work, including,  developing methods that use continuous random sampling, implementing a limited memory approach akin to the LBFGS~\cite{Nocedal1980} method, which could maintain an operator that serves as an approximation to the inverse.  As recently shown in~\cite{Gower2015c}, an analogous method applied to linear systems converges with virtually no assumptions on the system matrix. This can be extended to calculating the pseudoinverse matrix, something we leave for future work. 

\section{Appendix: Optimizing an Upper Bound on the Convergence Rate}
  
\begin{lemma}\label{lem:fracsum} Let $a_1,\dots,a_r$ be positive real numbers. Then
\[\left[ \frac{\sqrt{a_1}}{\sum_{i=1}^r \sqrt{a_i}},\ldots, \frac{\sqrt{a_n}}{\sum_{i=1}^r \sqrt{a_i}} \right] =\arg\min_{p\in \Delta_r} \sum_{i=1}^r \frac{a_i}{p_i}.\]
\end{lemma}
\begin{proof}
Incorporating the constraint $\sum_{i=1}^r p_i =1$ into the Lagrangian we have 
\[\min_{p\geq 0} \sum_{i=1}^r \frac{a_i}{p_i} + \mu\sum_{i=1}^r (p_i-1),\]
where $\mu \in \R.$ Differentiating in $p_i$ and setting to zero, then isolating $p_i$ gives
\begin{equation} \label{eq:muaipi}
 p_i = \sqrt{\frac{a_i}{\mu}}, \quad \mbox{for }i=1,\ldots r.\end{equation}
Summing over $i$ gives
\[ 1 = \sum_{i=1}^r \sqrt{\frac{a_i}{\mu}} \quad \Rightarrow \quad \mu = \left(\sum_{i=1}^r \sqrt{a_i}\right)^2.\]
Inserting this back into~\eqref{eq:muaipi} gives $p_i = \sqrt{a_i}/\sum_{i=1}^r \sqrt{a_i}.$
\end{proof}

\section{Appendix: Numerical Experiments with the Same Starting Matrix}\label{app:numerics}
We now investigate the empirical convergence of the methods MR, AdaRBFGS\_cols and AdaRGFBS\_gauss when initiated with the same starting matrix $X_0 = I,$ see Figures~\ref{fig:LIBSVM2},~\ref{fig:UFapp1} and~\ref{fig:UFapp2}. We did not include the Newton-Schultz method in these figures because it diverged on all experiments when initiated from $X_0 =I.$
Again we observe that, as the dimension grows, only the two variants of the AdaRBFGS are capable of inverting the matrix to the desired $10^{-2}$ precision in a reasonable amount of time. Furthermore, the AdaRBFGS\_gauss variant had the overall best best performance.

\begin{figure}
    \centering
    \begin{subfigure}[t]{0.65\textwidth}
        \centering
\includegraphics[width =  \textwidth, trim= 40 300 50 300, clip ]{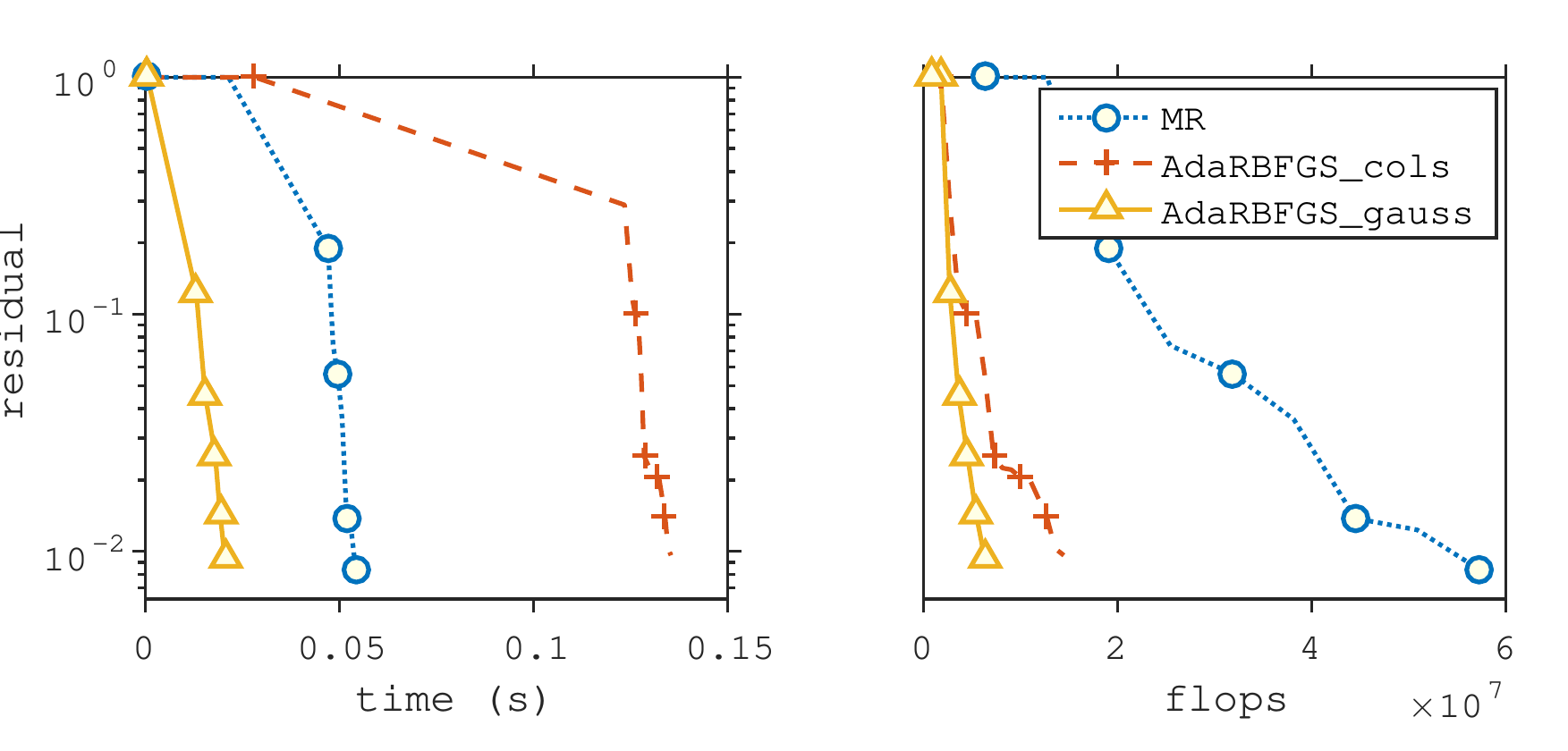}
        \caption{\texttt{aloi}}
    \end{subfigure} \\%
    \begin{subfigure}[t]{0.65\textwidth}
        \centering
\includegraphics[width =  \textwidth, trim=40 300 50 300, clip ]{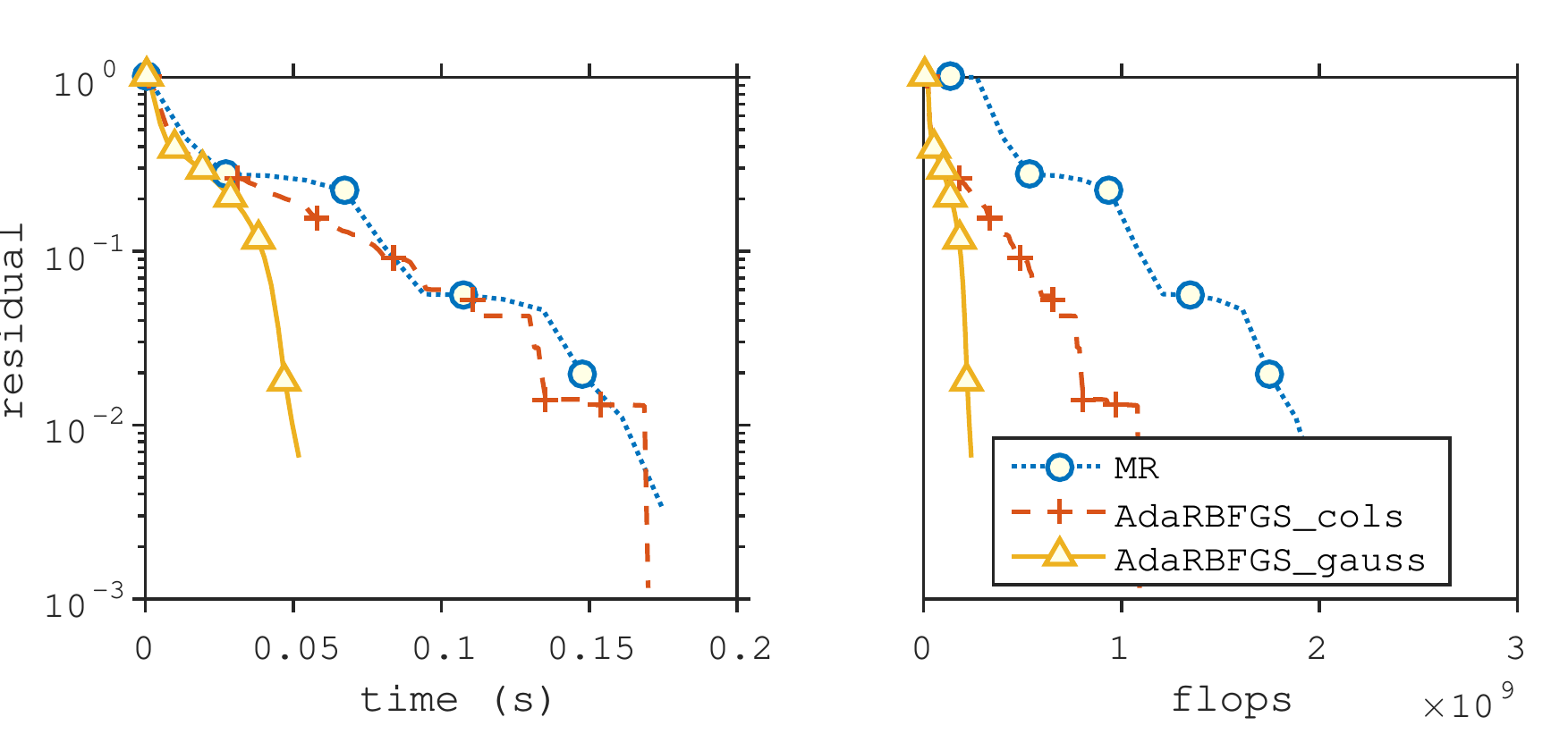}
        \caption{\texttt{protein}}
    \end{subfigure}
        \begin{subfigure}[t]{0.65\textwidth}
        \centering
\includegraphics[width =  \textwidth, trim=40 300 50 300, clip ]{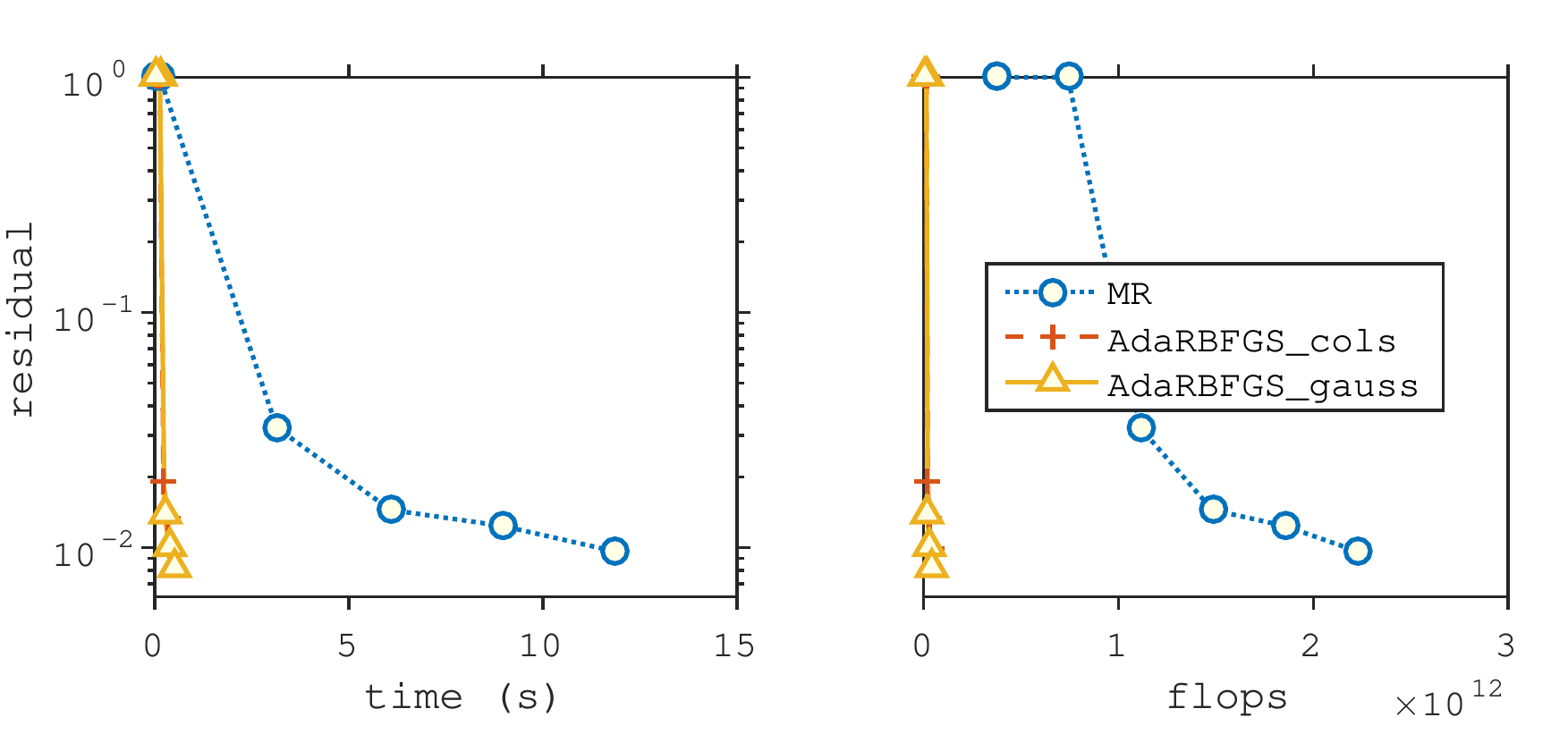}
        \caption{\texttt{gisette\_scale}}
    \end{subfigure} \\%
    \begin{subfigure}[t]{0.65\textwidth}
        \centering
\includegraphics[width =  \textwidth, trim= 40 300 50 300, clip ]{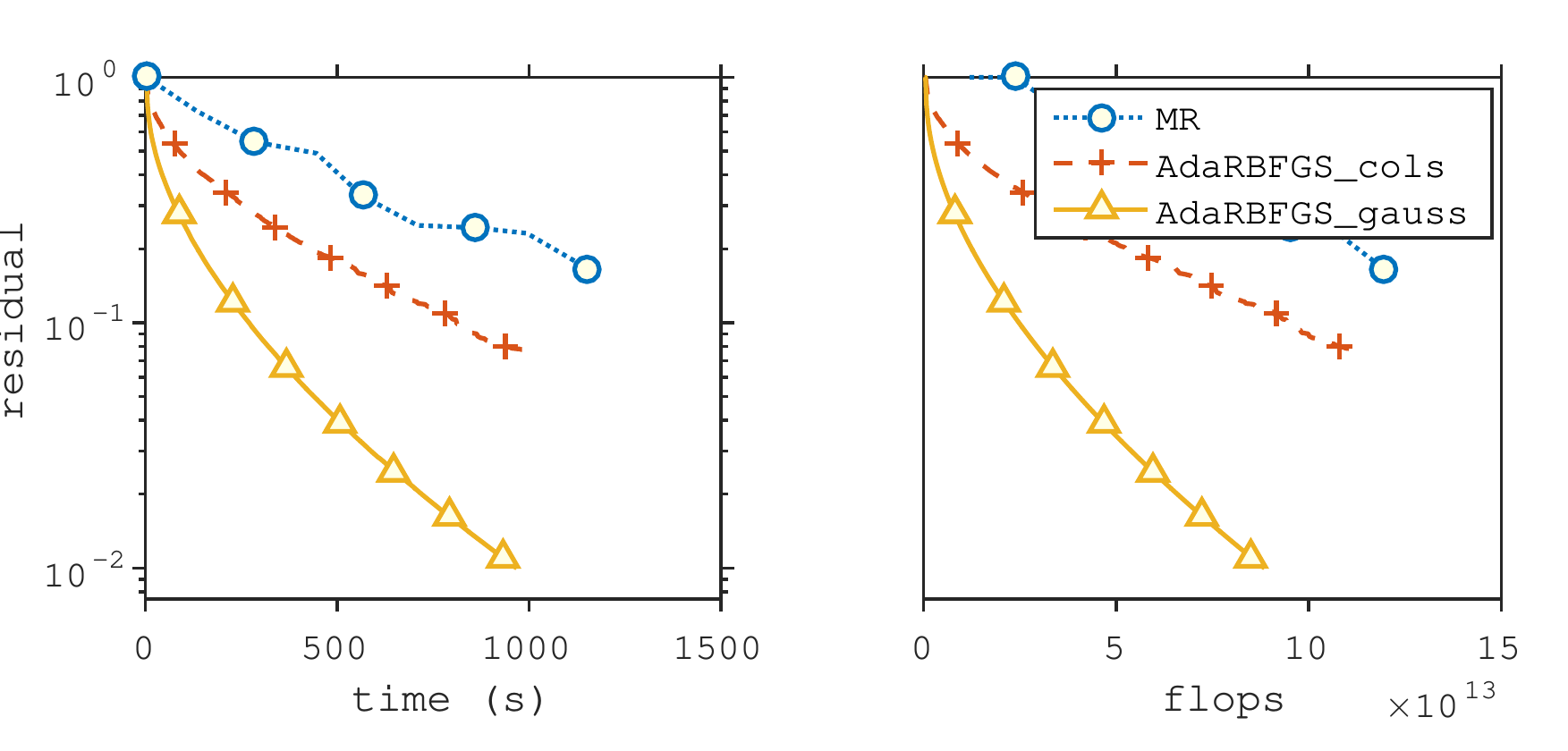}
        \caption{\texttt{real\_sim}}
    \end{subfigure}
    \caption{The performance of Newton-Schulz, MR, AdaRBFGS\_gauss and AdaRBFGS\_cols methods on the Hessian matrix of four LIBSVM test problems: (a) \texttt{aloi}: $(m;n)=(108,000;128)$ (b) \texttt{protein}: $(m; n)=(17,766; 357)$ (c)  \texttt{gisette\_scale}: $(m;n)=(6000; 5000)$ (d) \texttt{real-sim}: $(m;n)=(72,309; 20,958)$. The starting matrix $X_0=I$ was used for all methods.} \label{fig:LIBSVM2}
\end{figure}

\begin{figure}
    \centering
\begin{subfigure}[t]{0.65\textwidth}
        \centering
\includegraphics[width =  \textwidth, trim=40 300 50 300, clip ]{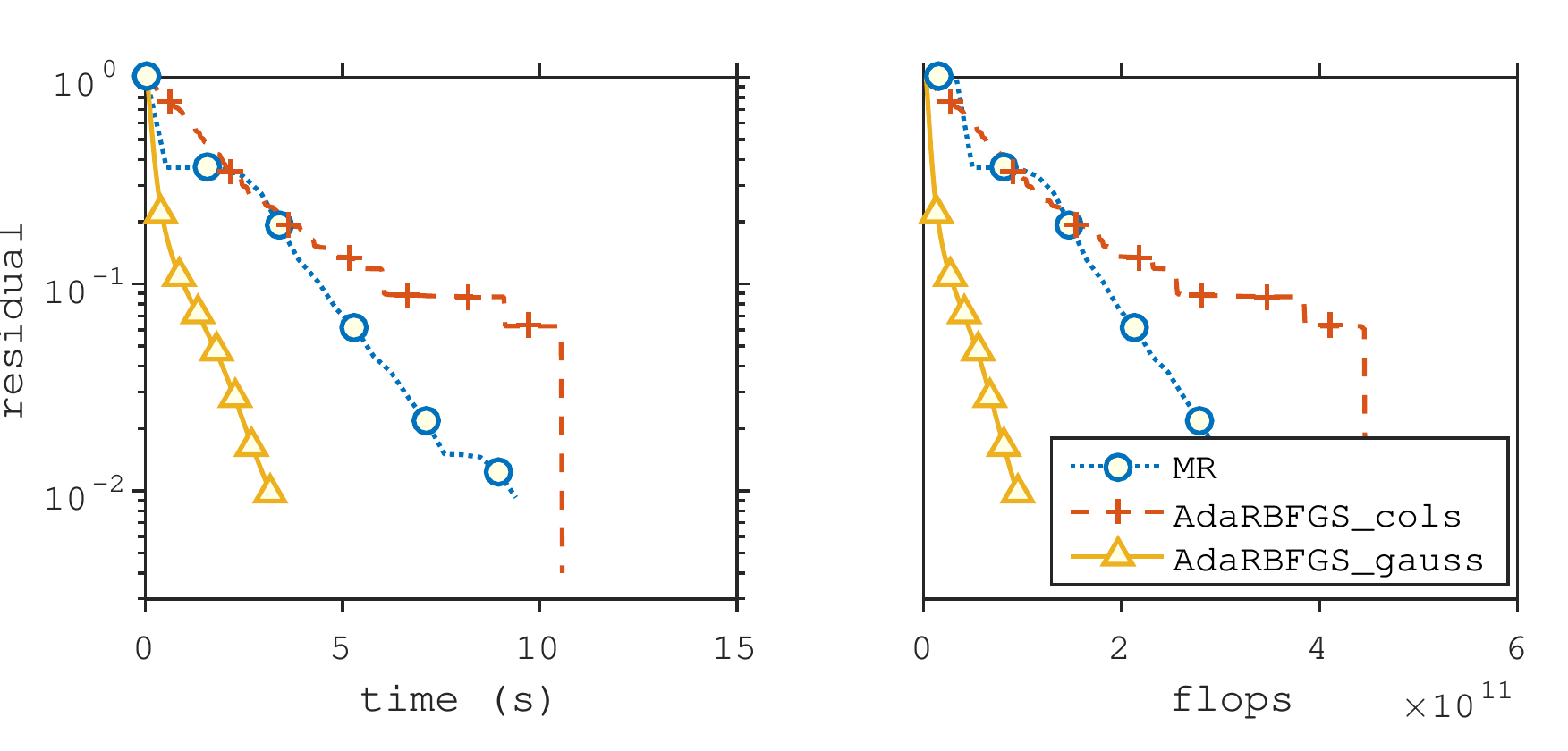}
        \caption{\texttt{Bates/Chem97ZtZ}}
\end{subfigure}  \\ 
\begin{subfigure}[t]{0.65\textwidth}
        \centering
\includegraphics[width =  \textwidth, trim= 40 300 50 300, clip ]{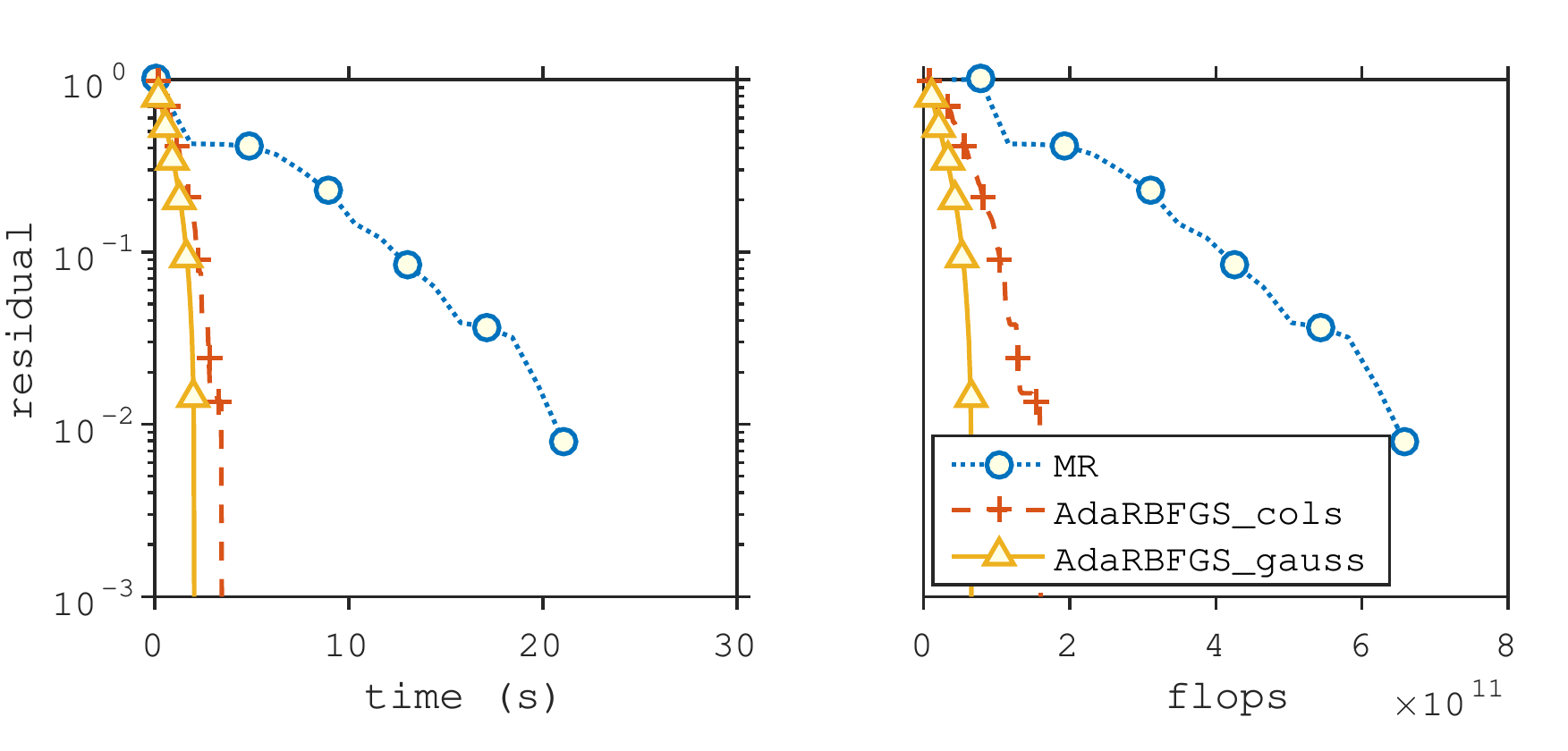}
        \caption{\texttt{FIDAP/ex9}}
\end{subfigure} \\%
\begin{subfigure}[t]{0.65\textwidth}
        \centering
\includegraphics[width =  \textwidth, trim=40 300 50 300, clip ]{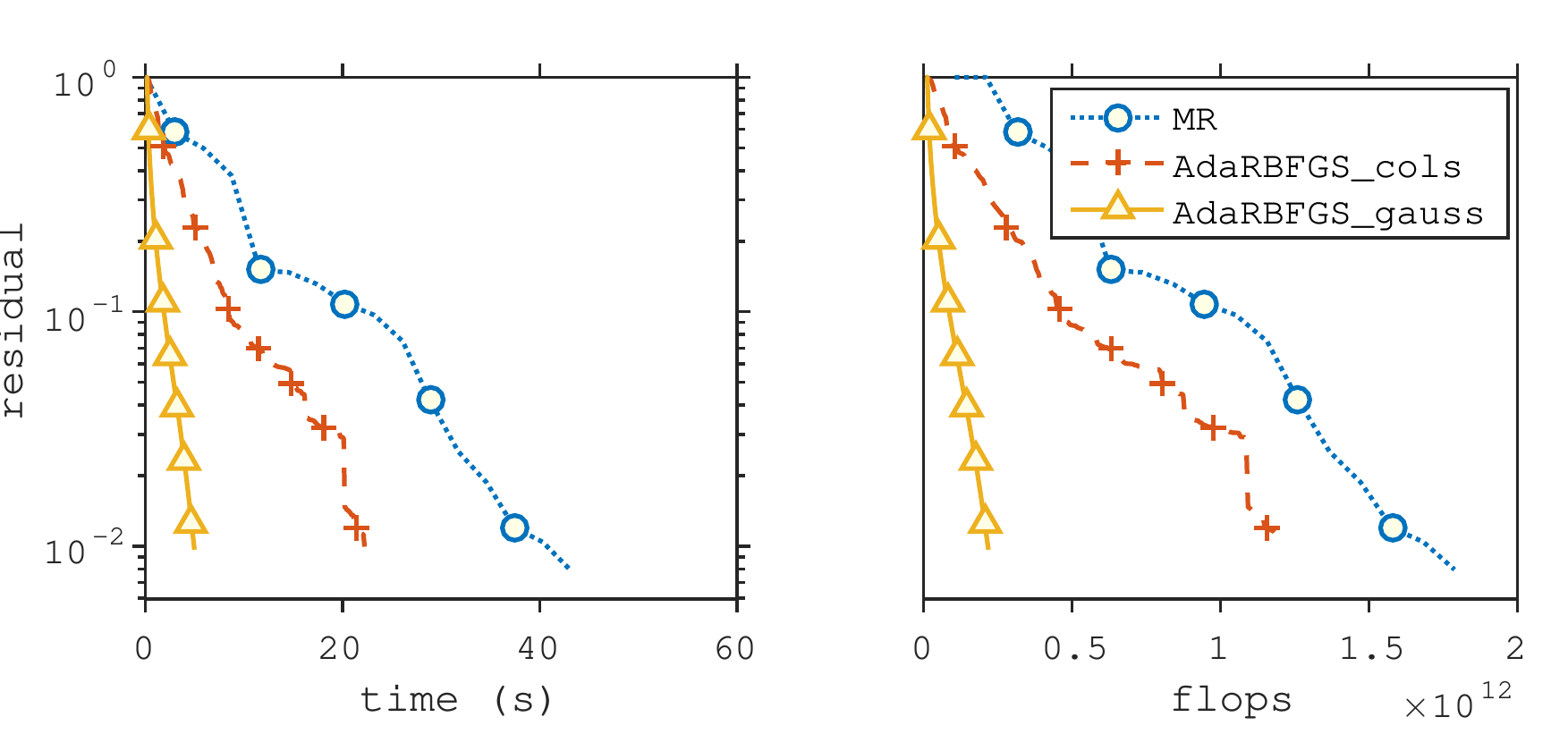}
        \caption{\texttt{Nasa/nasa4704}}
\end{subfigure}  \\
\begin{subfigure}[t]{0.65\textwidth}
        \centering
\includegraphics[width =  \textwidth, trim=40 300 50 300, clip ]{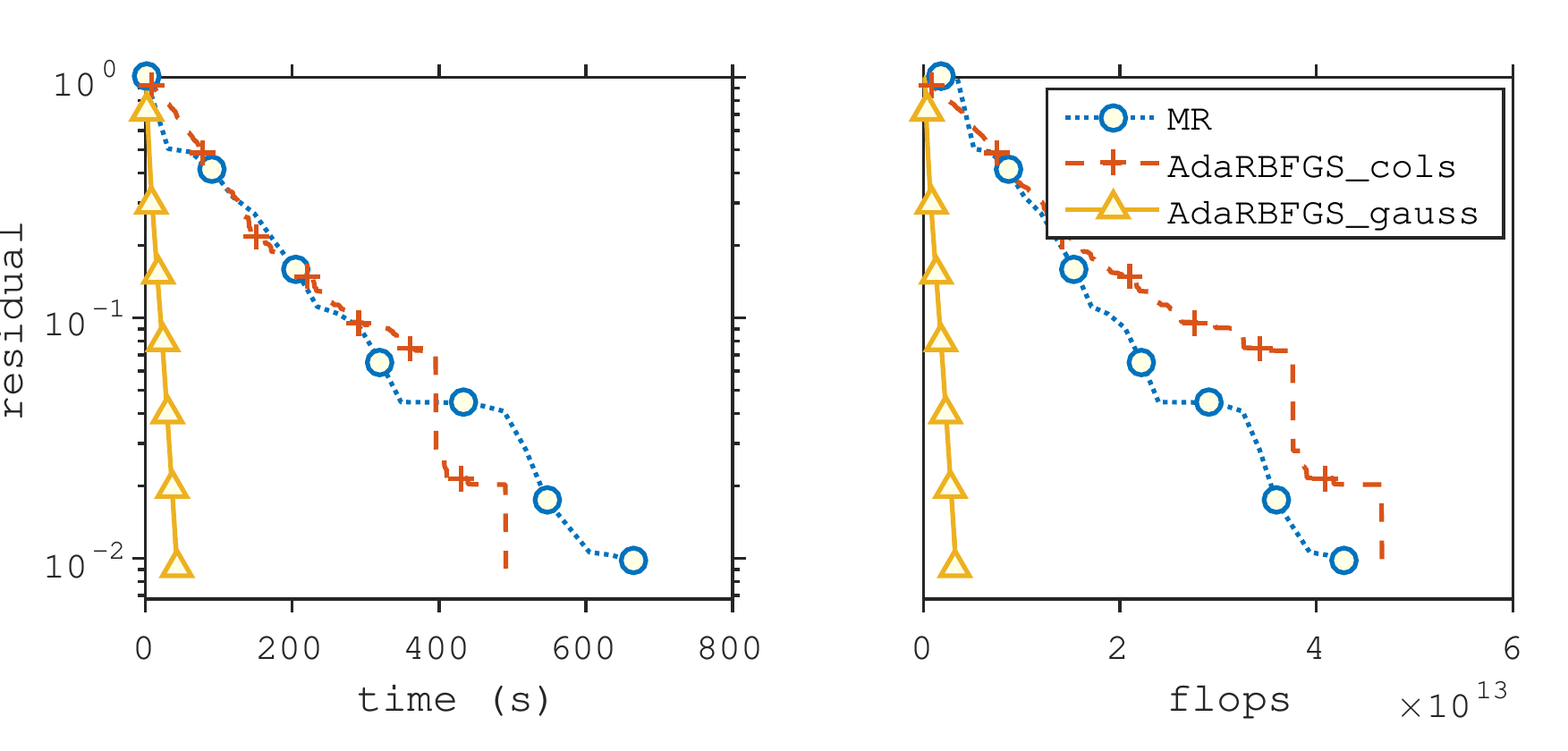}
        \caption{\texttt{HB/bcsstk18}}
\end{subfigure}%
  \caption{The performance of Newton-Schulz, MR, AdaRBFGS\_gauss and AdaRBFGS\_cols on 
(a) \texttt{Bates-Chem97ZtZ}: $n= 2\,541$,   
  (b) \texttt{FIDAP/ex9}: $n = 3,\,363 $, (c) \texttt{Nasa/nasa4704}: $n= 4\,,704$, 
  (d) \texttt{HB/bcsstk18}: $n=11,\,948$. The starting matrix $X_0=I$ was used for all methods. } \label{fig:UFapp1}
  \end{figure}
  
\begin{figure}
    \centering
\begin{subfigure}[t]{0.65\textwidth}
        \centering
\includegraphics[width =  \textwidth, trim=40 300 50 300, clip ]{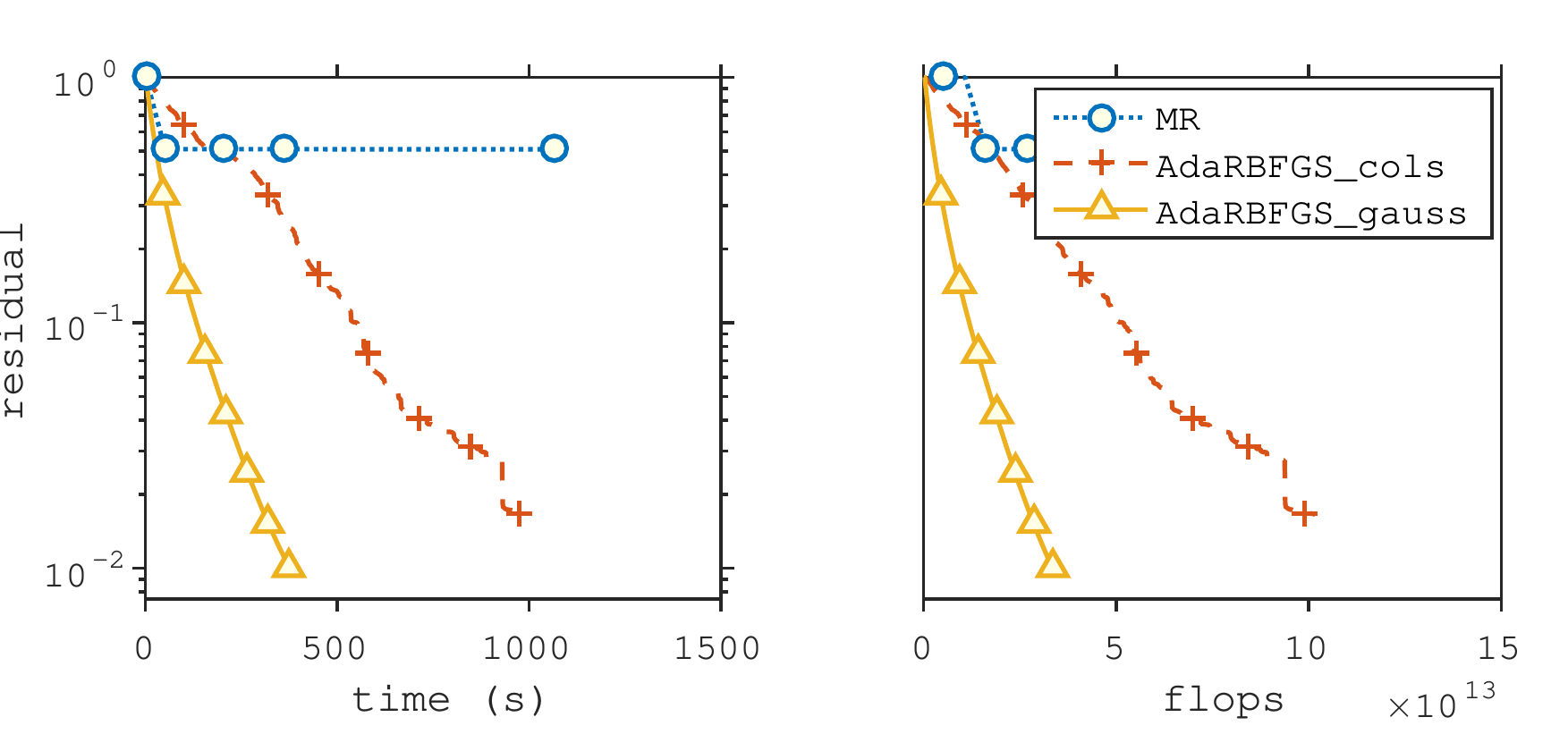}
        \caption{\texttt{Pothen/bodyy4}}
\end{subfigure} \\ 
\begin{subfigure}[t]{0.65\textwidth}
        \centering
\includegraphics[width =  \textwidth, trim=40 300 50 300, clip ]{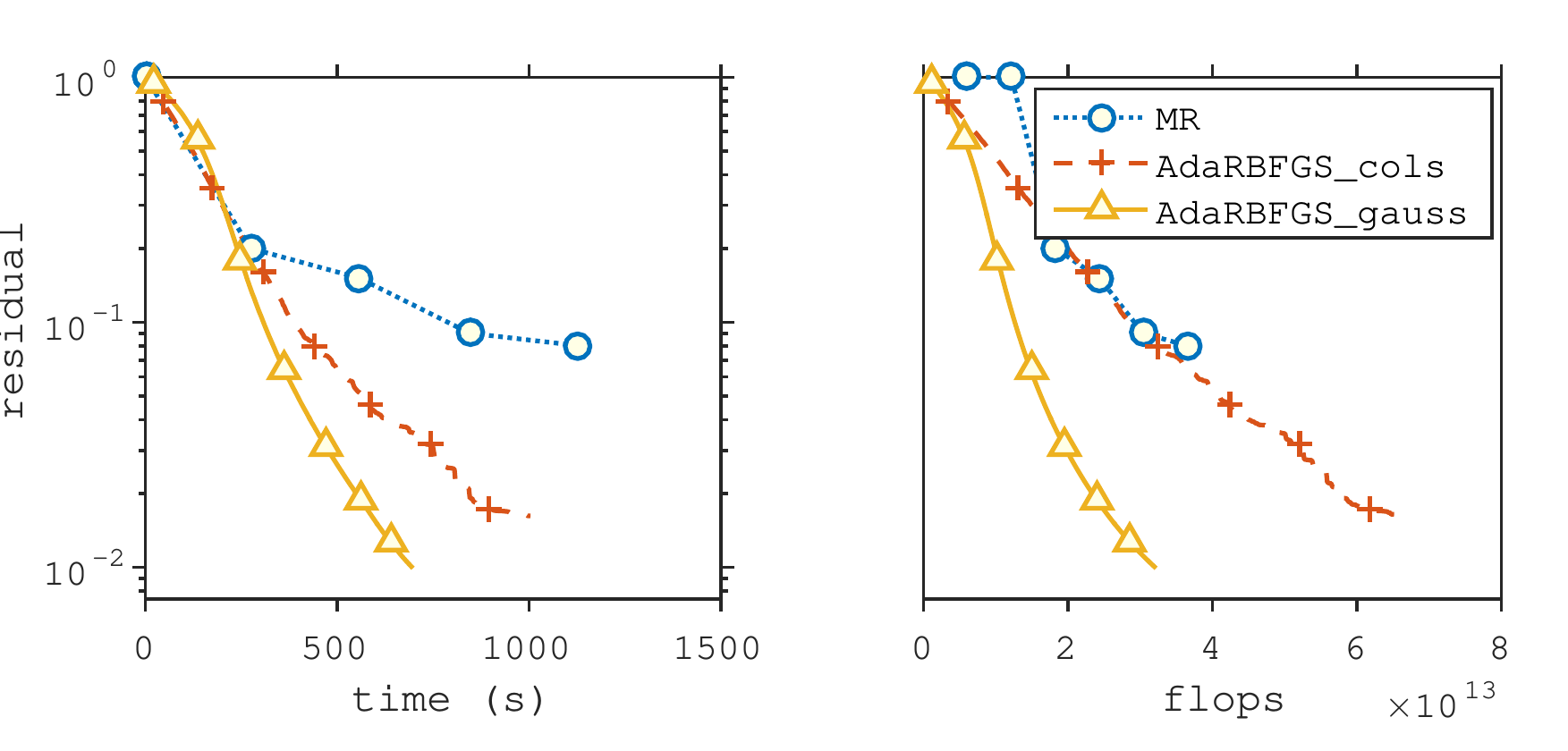}
        \caption{\texttt{ND/nd6k}}
\end{subfigure} \\%
\begin{subfigure}[t]{0.65\textwidth}
        \centering
\includegraphics[width =  \textwidth, trim=40 300 50 300, clip ]{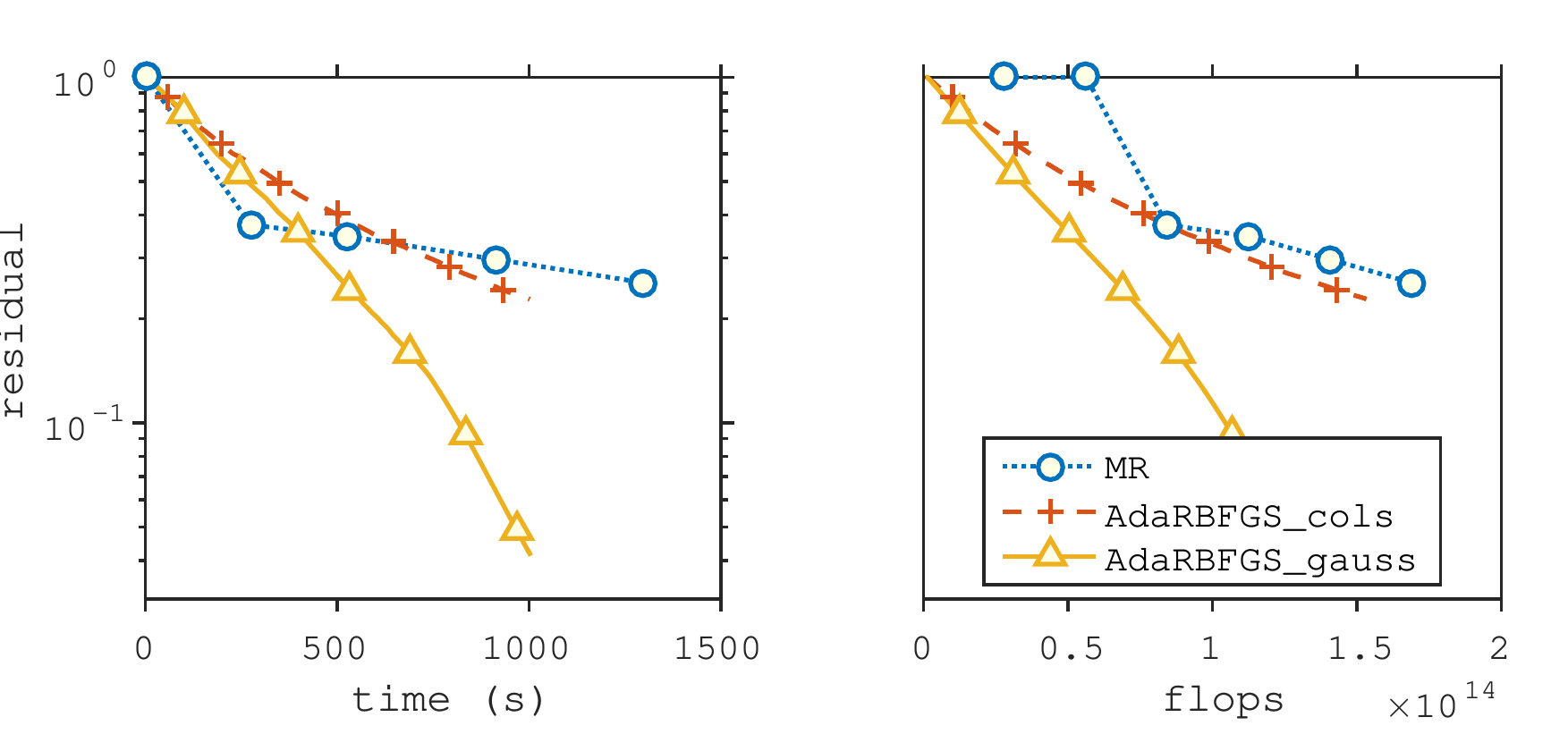}
        \caption{\texttt{GHS\_psdef/wathen100}}
\end{subfigure}%
  \caption{The performance of Newton-Schulz, MR, AdaRBFGS\_gauss and AdaRBFGS\_cols on   (a) \texttt{Pothen/bodyy4}: $n =17,\,546$ (b) \texttt{ND/nd6k}: $n=18,\,000$ (c) \texttt{GHS\_psdef/wathen100}: $n = 30, \,401$. The starting matrix $X_0=I$ was used for all methods. } \label{fig:UFapp2}
  \end{figure}

%

%
\chapter{Conclusion and Future Work}
\chaptermark{Conclusion and Future Work}
\label{ch:Conclusion}
{
\epigraph{\emph{Fuils an bairns soud never see things hauf duin.} \\
 It needs powers of perception and mature judgement to visualise the result of an enterprise.
}{Scottish proverb.} 

\let\clearpage\relax
}

This thesis laid the foundational work for a class of randomized methods for solving linear systems, equipt with convergence analysis, and general enough to accommodate several existing methods (Randomized Kaczmarz and Coordinate descent), but also allows for the design of 
 completely  new methods with continuous samplings, optimized samplings and block variants. Thus there is still much to explore in designing new  randomized methods for solving linear systems. One particularly promising direction is to use new sophisticated sketching matrices $S$, such as the Walsh-Hadamard matrix~\cite{Lu2013,Pilanci2014}, to design new practical and efficient methods.

Using duality theory, we redeveloped the sketch-and-project method through a dual perspective which we refer to as the Stochastic Dual Ascent (SDA) method. This perspective allowed us to extend the application of the sketch-and-project methods to that of finding the projection of a given vector onto the solution space of a linear system. 
This extensions enables us to solve the distributed consensus problem and it reveals that a standard randomized gossip algorithm is a special case of the sketch-and-project methods.

 Applying the same sketch and project principle to the inverse equations ($AX=I$ or $XA=I$), we developed new randomized methods for inverting nonsymmetric and symmetric matrices. These randomized methods can be viewed as randomized quasi-Newton updates. Through a dual perspective, we establish a new connection  connection between the quasi-Newton methods and the approximate inverse preconditioning methods. Furthermore, we design a highly efficient method, AdaRBFGS, for calculating approximate inverses of positive definite matrices. This opens up many avenues for developing stochastic preconditioning and  variable metric methods. Indeed, the AdaRBFGS method has already been used as the basis of a new stochastic variable metric method~\cite{GowerGold2016}.

Perhaps the most exciting direction for future work is to extend the 
sketch-and-project framework to solve linear equations in other settings. For instance, our framework could be extended to solve linear equations in a general Euclidean place. This would open up several new application areas including linear matrix equations such Sylvester equation, Lyapunov equation and more~\cite{Simoncini2014}. Finally, perhaps the 
sketch-and-project framework can be extended to solving linear equations defined by bounded linear operators between two Hilbert spaces. Such a development would lead to new randomized methods for solving  differential and integral equations.

%

%
%
\titleformat{\chapter}[hang]{\normalfont\Huge}{\thechapter}{0pt}{}  
\titlespacing*{\chapter}{0cm}{10pt}{20pt}[0pt]
%
%
\addcontentsline{toc}{chapter}{References}
\baselineskip=10pt  																					
\printbibliography
%
\end{document}